\documentclass[oneside,article]{memoir}
\usepackage{amsmath}         % align-like environments
\usepackage{amsthm,thmtools,thm-restate} % theorem-like environments
\usepackage{enumitem}        % enumerate-like environments
\usepackage{mathtools}

\usepackage{tikz}
  \usetikzlibrary{matrix,arrows,positioning,decorations.markings,backgrounds}
  
\usepackage{tikz-cd}

\usepackage{float} %to force figure into right place
\usepackage{caption, subcaption} % for subfigures

\usepackage[nobysame,alphabetic,initials]{amsrefs} % bibliography tools
\usepackage[hidelinks]{hyperref}                   % hyperlinks
\usepackage[capitalize,nosort]{cleveref}           % named references

\hypersetup{linktoc=all} % Adds hyperlinks to Table of Contents

\usepackage{amssymb}
\usepackage[mathscr]{euscript}
\usepackage{relsize}
\usepackage{stmaryrd}
\usepackage{stackengine}
\usepackage{mathdots} % for inverse diagonal dots

\usepackage{indentfirst} % indent the first paragraph
\usepackage{multicol}

\usepackage{lmodern}
\usepackage{libertine}
\usepackage[libertine]{newtxmath}

\usepackage{booktabs}

\usepackage{arydshln} % Dashed lines in tabular environments

\usepackage{algorithm}
\usepackage{algpseudocode}
\usepackage{algorithmicx}

\usepackage[inline,nomargin]{fixme} % fixme notes
  \fxsetup{targetlayout=color}

\usepackage{float} %to force figure into right place
\usepackage{wrapfig} % to wrap figures around text
\usepackage{stackrel} % for stacking arrows

\isopage[12]

\setlrmargins{*}{*}{1}
\checkandfixthelayout

\counterwithout{section}{chapter}
\usepackage{appendix}

\newcommand{\cSet}{\mathsf{cSet}}
\newcommand{\Graph}{\mathsf{Graph}}
\newcommand{\Set}{\mathsf{Set}}
\newcommand{\Kan}{\mathsf{Kan}}

\newcommand{\reali}[2][]{\lvert #2 \rvert_{#1}} % realization from cSet to Graph - optional argument is for length-m realization
\newcommand{\dom}{\operatorname{dom}}

\newcommand{\face}[2]{\partial^{#1}_{#2}} % face operator
\newcommand{\degen}[2]{\sigma^{#1}_{#2}} % degeneracy operator

  \newcommand{\boxcat}{\mathord{\square}} % box category
  
  \newcommand{\conn}[2]{\gamma^{#1}_{#2}} % negative connection operator on a cubical set
  \newcommand{\cube}[1]{\mathord{\square^{#1}}} % standard n-cube
  \newcommand{\obox}[2]{\mathord{\sqcap^{#1}_{#2}}} % open box (place i and epsilon in same argument)
  \newcommand{\dfobox}[1][n]{\mathord{\sqcap^{#1}_{i,\varepsilon}}} % open box with default argument (optional argument is n)
\newcommand{\restr}[2]{{#1}|_{#2}} % restriction of a map
\newcommand{\pbtick}{\lrcorner}
\newcommand{\potick}{\ulcorner}
\newcommand{\const}[1]{\mathsf{const}_{#1}} % constant

  \DeclareFontFamily{U}{dmjhira}{}
  \DeclareFontShape{U}{dmjhira}{m}{n}{ <-> dmjhira }{}

  \DeclareRobustCommand{\yo}{\text{\usefont{U}{dmjhira}{m}{n}\symbol{"48}}}

  \newcommand{\adj}{\dashv}

  \newcommand{\we}{\mathrel\sim}
  
  \newcommand{\Ho}{\operatorname{Ho}}

  \newcommand{\op}{{\mathord\mathrm{op}}}

  \newcommand{\ob}{\operatorname{ob}}

  \newcommand{\push}{\cup}

  \newcommand{\colim}{\operatorname*{colim}}
  \newcommand{\slice}{\mathbin\downarrow}

  \newcommand{\Ex}{\operatorname{Ex}}

  \newcommand{\sd}{\operatorname{sd}} % subdivision of a simplex is written in lowercase
  \newcommand{\Sd}{\operatorname{Sd}}

  \newcommand{\Sk}{\operatorname{sk}}

  \newcommand{\bd}{\partial}

  \newcommand{\gprod}{\otimes}

  \newcommand{\id}[1][]{\operatorname{id}_{#1}}
  
  \newcommand{\cod}{\operatorname{cod}}

  \newcommand{\simp}[1]{\mathord\Delta^{#1}}

  \newcommand{\uvar}{\mathord{\relbar}}

  \renewcommand{\tilde}{\widetilde}

  \renewcommand{\hat}{\widehat}

  \newcommand{\nat}{{\mathord\mathbb{N}}}

  \newcommand{\cat}[1]{\mathscr{#1}}

  \newcommand{\ncat}[1]{\mathsf{#1}}

  \newcommand{\sSet}{\ncat{sSet}}

  \newcommand{\from}{\colon}

  \newcommand{\ito}{\hookrightarrow}

  \newcommand{\weto}{\mathrel{\ensurestackMath{\stackon[-2pt]{\xrightarrow{\makebox[.8em]{}}}{\mathsmaller{\mathsmaller\we}}}}}
  \newcommand{\weot}{\mathrel{\ensurestackMath{\stackon[-2pt]{\xleftarrow{\makebox[.8em]{}}}{\mathsmaller{\mathsmaller\we}}}}}

  \declaretheorem[style=definition,within=section]{definition}
  \declaretheorem[style=definition,numberlike=definition]{example}
  \declaretheorem[style=definition,numberlike=definition]{remark}

  \declaretheorem[style=definition,numberlike=definition]{construction}

  \declaretheorem[style=plain,numberlike=definition]{corollary}
  \declaretheorem[style=plain,numberlike=definition]{lemma}
  \declaretheorem[style=plain,numberlike=definition]{proposition}
  \declaretheorem[style=plain,numberlike=definition]{theorem}
  \declaretheorem[style=plain,numberlike=definition]{conjecture}
  
  \declaretheorem[style=plain,numbered=no,name=Theorem]{theorem*}
  \declaretheorem[style=plain,numbered=no,name=Conjecture]{conjecture*}

  \Crefname{corollary}{Corollary}{Corollaries}
  \Crefname{definition}{Definition}{Definitions}
  \Crefname{lemma}{Lemma}{Lemmas}
  \Crefname{proposition}{Proposition}{Propositions}
  \Crefname{remark}{Remark}{Remarks}
  \Crefname{theorem}{Theorem}{Theorems}
  \Crefname{conjecture}{Conjecture}{Conjectures}
  \Crefname{notation}{Notation}{Notations}
  \Crefname{convention}{Convention}{Conventions}
  \Crefname{construction}{Construction}{Constructions}

  \newlist{axioms}{enumerate}{1}
  \Crefname{axiomsi}{}{}

  \newenvironment{tikzeq*}
  {
    \begingroup
    \begin{equation*}
    \begin{tikzpicture}[baseline=(current bounding box.center)]
  }
  {
    \end{tikzpicture}
    \end{equation*}
    \endgroup
    \ignorespacesafterend
  }

  \tikzset
  {
    diagram/.style=
    {
      matrix of math nodes,
      column sep={4.3em,between origins},
      row sep={4em,between origins},
      text height=1.5ex,
      text depth=.25ex
    },
    over/.style={preaction={draw=white,-,line width=6pt}},
    every to/.style={font=\footnotesize},
    inj/.style={right hook->},
    surj/.style={-{Latex[open]}},
    cof/.style={>->},
    fib/.style={->>},
  }

  \DeclareFontFamily{U}{mathx}{\hyphenchar\font45}

  \DeclareFontShape{U}{mathx}{m}{n}{
    <5> <6> <7> <8> <9> <10>
    <10.95> <12> <14.4> <17.28> <20.74> <24.88>
    mathx10}{}

  \DeclareSymbolFont{mathx}{U}{mathx}{m}{n}

  \DeclareFontFamily{U}{mathb}{\hyphenchar\font45}

  \DeclareFontShape{U}{mathb}{m}{n}{
    <5> <6> <7> <8> <9> <10>
    <10.95> <12> <14.4> <17.28> <20.74> <24.88>
    mathb10}{}

  \DeclareSymbolFont{mathb}{U}{mathb}{m}{n}

  \DeclareMathSymbol{\Rsh}{\mathrel}{mathb}{"E9}

  \DeclareFontFamily{U}{MnSymbolA}{}

  \DeclareFontShape{U}{MnSymbolA}{m}{n}{
    <-6> MnSymbolA5
    <6-7> MnSymbolA6
    <7-8> MnSymbolA7
    <8-9> MnSymbolA8
    <9-10> MnSymbolA9
    <10-12> MnSymbolA10
    <12-> MnSymbolA12}{}

  \DeclareSymbolFont{MnSyA}{U}{MnSymbolA}{m}{n}

  \DeclareMathSymbol{\twoheaddownarrow}{\mathrel}{MnSyA}{27}

  \newcommand{\MSC}[1]{%
    \let\thempfn\relax
    \footnotetext[0]{2020 Mathematics Subject Classification: #1.}
  }

\tikzstyle{vertex}=[circle, fill, minimum size=5pt, inner sep=0pt]
\tikzstyle{openvertex}=[circle, draw, fill=white, line width=0.85pt, opacity=0.6, minimum size=5pt, inner sep=0pt]
\tikzstyle{colorvertex}=[circle, draw, minimum size=6pt, inner sep=0pt]
\tikzstyle{Zcube}=[circle, fill, minimum size=3pt, inner sep=0pt, outer sep = 3pt]
\definecolor{vertex1}{RGB}{255,0,0}
\definecolor{vertex2}{RGB}{0,0,255}
\definecolor{vertex3}{RGB}{0,255,0}
\definecolor{vertex4}{RGB}{255,255,0}
\definecolor{vertex5}{RGB}{255,0,255}
\definecolor{grayfill}{gray}{0.92}

\newcommand{\NN}{\mathbb{N}}
\newcommand{\ZZ}{\mathbb{Z}}

\newcommand{\eps}{\varepsilon}

\renewcommand{\cat}[1]{\mathcal{#1}}

\renewcommand{\Top}{\ncat{Top}}
\newcommand{\Fun}{\operatorname{Fun}}
\newcommand{\bang}{\mathord{!}}
\newcommand{\im}{\operatorname{im}} % image 

\newcommand{\catname}[1]{\mathsf{#1}} % font for named categories

\renewcommand{\Top}{\catname{Top}}

\newcommand{\Pos}{\catname{Pos}}

\newcommand{\aSet}{\catname{aSet}} % presheaves on a small category A
\newcommand{\aSeti}{\catname{aSet}_+} % presheaves on A_+
\newcommand{\boxcati}{\boxcat_{\mathrm{inj}}} % the cube category with only injections
\newcommand{\boxcatin}[1][n]{\boxcat^{\leq #1}_{\mathrm{inj}}} % the cube category with only injections
\newcommand{\cSeti}{\cSet_{\mathrm{semi}}} % the category of semicubical sets

\newcommand{\cosk}{\operatorname{cosk}}

\newcommand{\Tfib}{{T}^{\mathrm{fib}}}

\newcommand{\gnerve}[1][]{\operatorname{N}_{#1}} % Nerve of a graph
\newcommand{\gtimes}{\mathop{\square}}
\newcommand{\gcube}[2]{I_{#1}^{\gtimes #2}} % the n-cube graph
\newcommand{\Cyl}[1]{\operatorname{Cyl}_{#1}}
\newcommand{\susp}[1][]{\Sigma_{#1}}

\newcommand{\Gn}[1][m]{\Gamma_{#1}} % same macro without mandatory argument

\thanksmarkseries{arabic}

\begin{document}

\author{Daniel Carranza \and Krzysztof Kapulkin}
\title{Discrete homotopy hypothesis for $n$-types}

\maketitle

 \begin{abstract} 
  We show that discrete and classical homotopy theories are equivalent after localizing at $n$-equivalences for any non-negative integer $n$.
  By constructing an explicit homotopy inverse to the graph nerve functor associating an $n$-fibrant cubical set to a graph, we are also able to give explicit computations of several previously unknown discrete homotopy groups of boundaries of cubes and suspensions of cycles.
 \end{abstract}
 
% \begin{flushright}
%   \emph{Dedicated to H{\'e}l{\`e}ne Barcelo \\ on the occasion of her retirement.}
% \end{flushright}
\begin{flushright}
  \emph{Dedicated to H{\'e}l{\`e}ne Barcelo.}
\end{flushright}

\tableofcontents*

 \section*{Introduction}

Discrete homotopy theory is an emerging area of mathematics applying intuitions and constructions from continuous settings to rigid and discrete objects such as graphs.
It associates discrete homotopy and homology groups to graphs which carry important combinatorial information about the structure thereof \cite{barcelo-kramer-laubenbacher-weaver,babson-barcelo-longueville-laubenbacher,barcelo-capraro-white}.
As such, it has found applications both within and outside mathematics; the former include areas such as matroid theory \cite{maurer:basis-graphs-i,chalopin-chepoi-osajda} and hyperplane arrangements \cite{barcelo-laubenbacher:perspectives}, while the latter feature application to network analysis \cite{atkin:i, atkin:ii}, combinatorial time series \cite{barcelo-laubenbacher:perspectives}, and, most recently, topological data analysis \cite{kapulkin-kershaw:data}, or TDA for short.
In TDA in particular, the combination of discrete homology and persistence has proven to be more noise resistant than the ``standard'' techniques based on the Vietoris--Rips complex.

Despite these successes, little is known about discrete homotopy and homology groups of a graph \cite{barcelo-greene-jarrah-welker:comparison}.
For example, the discrete homology groups of the \emph{Greene sphere}, a graph with 10 vertices and 16 edges, are known only up to degree 3, and even then the result was obtained using large parallel computations \cite{kapulkin-kershaw:computations}.
Its discrete homotopy groups are only known in degrees 0, 1, and 2.
Speaking of Quillen's algebraic K-theory, Friedlander once remarked: ``Quillen has played a nasty trick on us, giving us very interesting invariants, with which we struggle to make the most basic calculation.''
It is striking how well this quote, modulo the use of Quillen's name, describes discrete homotopy and homology groups.

Some of the aforementioned computations were made possible thanks to a recent proof \cite{carranza-kapulkin:cubical-graphs} of the conjecture of Babson, Barcelo, de Longueville, and Laubenbacher \cite{babson-barcelo-longueville-laubenbacher} that discrete homotopy and homology groups of a graph can be topologically realized.
More precisely, the conjecture asserted that the discrete homotopy and homology groups of a graph $G$ agree with the classical, or continuous, homotopy and homology groups of the space $\reali{\gnerve[1] G}$ that one can associate to $G$.
Here, $\gnerve[1] G$ denotes the $1$-nerve of $G$, a cubical set whose $n$-cubes are given by probing $G$ with hypercube graphs $Q^n$, i.e., $(\gnerve[1] G)_n = \{ \text{graph maps } Q^n \to G \}$.
A cubical set $X$ is a combinatorial model of a space \cite{kan:abstract-homotopy-i,cisinski:presheaves,jardine:a-beginning,carranza-kapulkin:homotopy-cubical} consisting of a family of sets $X_0$, $X_1$, $X_2$, \ldots of formal cubes, connected by face and degeneracy maps indicating which cubes are faces or degeneracies of others.
Given a cubical set $X$, one obtains a space by geometrically realizing $X$, i.e., first taking a disjoint union of cubes in all dimensions and then gluing them together according to the face and degeneracy maps.
The resulting space is usually denoted $\reali{X}$.
Thus the now-proven conjecture gives a recipe on how to build the space from a graph in a way that their classical and discrete homotopy/homology groups agree: informally speaking, glue in an $n$-cube for every map $Q^n \to G$.

The key observation in the proof of the conjecture was that the functor $N_1$ is only the first in a sequence of functors:
\[ \gnerve[1] \longrightarrow \gnerve[2] \longrightarrow \gnerve[3] \longrightarrow \cdots \longrightarrow \gnerve[\infty]\text{,} \]
where $\gnerve[m]$ probes a graph with hypercube graphs of larger edge length.
While $Q^n$ is the $n$-fold box product of a single edge with itself, the $n$-cubes of $\gnerve[m] G$ are obtained by taking the $n$-fold box product of $I_m$, a graph with $m+1$ vertices and $m$ edges arranged in line, with itself.
The functor $\gnerve[\infty]$ is then the colimit of this sequence in the category of functors from graphs to cubical sets.
The upshot is that $\gnerve[\infty]$ is a Kan complex, a particularly nice kind of a cubical set, making it easier to analyze. 
Moreover, all functors in the sequence are weakly equivalent, meaning that their homotopy and homology groups are isomorphic.
Thus instead of working with $\gnerve[1]$, one may equivalently work with $\gnerve[\infty]$, in which case the conjecture is much easier to show.

Unfortunately, even for very small graphs, the space $\reali{\gnerve[1] G}$ is an infinite-dimensional cell complex, or, put differently, the cubical set $\gnerve[1] G$ has non-degenerate cubes in arbitrarily high dimensions, and thus there is virtually no hope of computing discrete homotopy and homology groups via traditional topological methods.
However, the aforementioned result made it possible to import some of the common computational tools from (classical) homotopy theory to discrete homotopy theory; these include the Hurewicz theorem and the long exact sequence of a fibration.

It has since been conjectured by several experts in the field that discrete homotopy theory might be equivalent to the classical one.
This statement can be made precise in several ways, including using the theory of $\infty$-categories and presentations thereof \cite{joyal:qcat-kan,joyal:theory-of-qcats,lurie:htt,cisinski:higher-categories}.
Specifically, it is expected that the graph nerve functor $\gnerve[\infty] \colon \Graph \to \cSet$ from graphs to cubical sets induces an equivalence of homotopy theories/$\infty$-categories.
Here, $\Graph$ denotes the category of graphs with weak equivalences given by the maps inducing isomorphsims on all discrete homotopy (and consequently homology) groups; and $\cSet$ denotes the category of cubical sets.
A proof of this conjecture, typically termed the \emph{discrete homotopy hypothesis}, has remained elusive, but the consequences thereof would have a profound impact on the field.
Indeed, with the equivalence in place, one could formally transfer numerous results from classical to discrete homotopy theory, including: the Blakers--Massey theorem, the Mayer--Vietoris sequence, and Brown's representability (of cohomology) theorem, to name just a few.

A possible approach towards the proof of this conjecture was sketched in a recent preprint \cite{kapulkin-mavinkurve:n-types}---it proposes restricting attention to proving the equivalence of homotopy theories of graphs and spaces localized at $n$-equivalences first.
An $n$-equivalence is a map inducing an isomorphism on homotopy groups from $0$ to $n$.
These maps are easier to control in the discrete context, and therefore the proof seems more plausible.
Loosely speaking, localizing a category at a class of maps means that we treat these maps as isomorphism, although we do so in a suitable homotopy theoretic/higher categorical sense.
With an equivalence established between all finite localizations, one could hope to prove the general case via some type of Postnikov tower convergence argument.
As a proof of concept, the authors of the preprint prove that discrete and classical homotopy theories agree when localized at $1$-equivalences, in which case both classical and discrete homotopy theories recover the homotopy theory of groupoids.

In the present paper, we take the natural next step in this research program, namely, we show that the functor $\gnerve[\infty] \colon \Graph \to \cSet$ is an equivalence when both sides are localized at $n$-equivalences.
Since the category of cubical sets is known to be equivalent to spaces, we may work with it, instead of the category of topological spaces.
Formally, our main theorem can be stated as follows:
\begin{theorem*}[cf.~\cref{nerve-equiv-main-thm}]
  For any non-negative integer $n$, the graph nerve functor $\gnerve[\infty] \colon \Graph \to \cSet$ is an equivalence after localizing both sides at $n$-equivaelnces.
\end{theorem*}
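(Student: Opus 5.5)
The plan is to exhibit an explicit functor going back, from $n$-truncated cubical sets to graphs, that is inverse to $\gnerve[\infty]$ up to $n$-equivalence. Then I apply the standard fact that a functor between homotopical categories with a homotopy inverse induces an equivalence of localizations. Write $\cSet_{\le n}$ for cubical sets with the $n$-equivalences as weak equivalences and $\Graph_{\le n}$ likewise.

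First, $\gnerve[\infty]$ is homotopical for $n$-equivalences. By definition, a map of graphs is an $n$-equivalence exactly when it induces isomorphisms on discrete homotopy groups in degrees $\le n$. By the proven Babson--Barcelo--de Longueville--Laubenbacher conjecture, these groups agree with $\pi_k \reali{\gnerve[\infty] G}$, so the nerve preserves and reflects $n$-equivalences. It therefore suffices to show that the induced functor on homotopy categories (indeed on $\infty$-localizations) is essentially surjective and fully faithful. The cleanest way is to construct a functor $\reali[]{\uvar}_{\le n} \colon \cSet \to \Graph$ with natural zig-zags of $n$-equivalences $\gnerve[\infty]\reali{X}_{\le n} \simeq X$ and $\reali{\gnerve[\infty] G}_{\le n} \simeq G$.

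For the construction, I reduce to $(n{+}1)$-skeleta, since the inclusion $\Sk_{n+1} X \to X$ is an $n$-equivalence. I then define a graph realization by sending the standard cube $\cube{k}$ to the grid graph $\gcube{m}{k}$ for a suitable length $m$ (a ``length-$m$ realization'') and extending by colimits. On the cubical side, a Kan replacement of $X$ can be taken $n$-fibrant, meaning horn fillers exist only up to dimension $n+1$. The comparison map $X \to \gnerve[\infty]\reali{X}$ comes from the unit of the adjunction between realization and the nerve $\gnerve[m]$, composed into the colimit $\gnerve[\infty]$. For the other direction, a graph $G$ maps to $\reali{\gnerve[1] G}$ via the counit, which collapses cubes onto their images.

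The verifications then split into two parts. (a) The counit $\reali{\gnerve[\infty] G} \to G$ is an $n$-equivalence, or at least admits a section that is one. I would check this on discrete homotopy groups by lifting maps $\gcube{M}{k} \to G$ with $k \le n$ along the counit, using that the relevant cubes are determined by finitely many vertices. (b) The unit $X \to \gnerve[\infty]\reali{X}$ is an $n$-equivalence. I would prove (b) by induction over cells: show it for $\cube{k}$ and $\bd\cube{k}$ with $k\le n+1$, then show both sides preserve the relevant pushouts up to $n$-equivalence. For $\gnerve[\infty]$ this needs a gluing or excision property for graph colimits along cube boundaries.

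I expect the main obstacle to be this gluing step. Discrete homotopy is notoriously badly behaved under pushouts of graphs; this is exactly why the spheres $\bd\cube{k}$ have unknown discrete homotopy groups. My first attempt would be to show that $\gnerve[\infty]$ of a pushout along the inclusion of the boundary graph of $\gcube{m}{k}$ is $n$-equivalent to the homotopy pushout. I would do this by using long subdivisions, i.e.\ large $m$, so that any $\gcube{M}{j}$-probe with $j\le n$ can be deformed off a neighbourhood of a cube's center. This uses $j \le n < k$ for the cells being attached at top dimension, with the $(n{+}1)$-cells handled separately by only needing surjectivity on $\pi_n$. If this works, the remaining steps are formal, and the equivalence of localizations follows.
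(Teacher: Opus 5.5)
Your reduction to the claim that the unit $X \to \gnerve[m]\reali[m]{X}$ is an $n$-equivalence for (skeletal) cubical sets $X$ is where the proposal breaks down. The paper never proves this for general $X$; it is precisely the question the paper sidesteps, and your gluing sketch does not supply it.

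\textbf{The gluing step.} When you attach $\gcube{m}{k}$ to $\reali[m]{\Sk^{k-1}X}$ along $\bd\gcube{m}{k}$, the attaching map is in general not injective, since degenerate faces and connections collapse parts of the boundary. The square is then neither a square of monomorphisms nor a pullback. The collapse can also make far-apart vertices of the cube adjacent to a common vertex, so there is no distance estimate forcing a probe $\gcube{1}{j} \to \reali[m]{X}$ into one of two pieces.

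Your general-position idea, deforming probes off a neighbourhood of the centre, can only work when the probe dimension $j$ is below the cell dimension $k$. You must also attach cells with $k \le n$, and probes of dimension $j \in \{k, \dots, n+1\}$ can sweep over such a cell entirely. As written, ``$j\le n<k$ for top-dimensional cells, with the $(n{+}1)$-cells handled separately'' covers only one case and is inconsistent, since the top cells are the $(n+1)$-cells. There is also no discrete cellular approximation theorem available to perform the deformation.

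\textbf{The missing idea.} The paper replaces $X$ by a semicubical (fondind/nonsingular) model and applies $F_!$, which turns every cell attachment into a cone attachment. After $m$-realization, each step is a pushout along $\ell_0(\id,\bang)$ into a double mapping cylinder. There the collar $G \gtimes I_m$, with $m \ge n+2$, splits the graph into two induced subgraphs whose complements are at distance at least $n+2$. Every $\gcube{1}{n+1}$-probe therefore lies in one of them, so the square is an $(n+1)$-skeletal pushout. It is hence a homotopy pushout of $n$-types after $\gnerve[m]$, even when the attaching map is not injective (\cref{cyl-factorization-hopo}, \cref{Gn-weq}).

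\textbf{Step (a).} The counit you use does not exist. A $1$-cube of $\gnerve[\infty] G$ can be a path of length $M > m$ whose endpoints are at distance greater than $m$. Since graph maps do not increase distance, its realization $I_m$ cannot be mapped onto it with the correct endpoints, so there is no counit $\reali[m]{\gnerve[\infty] G} \to G$. Also, the map $\reali[1]{\gnerve[1] G} \to G$ goes in the opposite direction to the one you wrote. Comparing through $\gnerve[m] G \to \gnerve[\infty] G$ would require $\reali[m]{\uvar}$ to preserve $n$-equivalences, which is again the unproven unit statement.

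\textbf{How the paper avoids this.} It builds no global homotopy inverse and instead verifies Cisinski's approximation properties. AP1 is the reflection of $n$-equivalences that you mention. For AP2, the paper takes a map $X \to \gnerve[\infty] G$ and replaces $X$ by $F_! X'$. It then filters $X'$ by the pieces $X'_m$ that land in $\gnerve[m] G$ and transposes these to maps $g_m \from \Gn[m]{X'_m} \to G$. These are glued along $\ell_!, r_!$ into a graph $H_\infty \to G$. \Cref{ell-r-n-equiv} and \cref{po-weq-along-cubical-mono-is-weq} show that $F_! X' \to \gnerve[\infty] H_\infty$ is an $n$-equivalence.
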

This shows in particular that discrete homotopy theory is rich enough to present all homotopy $n$-types, and provides strong evidence in support of the discrete homotopy hypothesis.

A natural line of attack for this problem is to construct a homotopy inverse of $\gnerve[\infty]$, i.e., an operation turning a cubical set $X$ into a graph, say $G_X$, such that $X$ is $n$-equivalent to $\gnerve[\infty] G_X$.
In constructing such an inverse, one is naturally tempted to try reproducing the cellular nature shared by cubical sets and CW-complexes in the world of graphs.
Every cubical set (or CW-complex) can be built by gluing in one cube (or cell) at a time, that is, taking a pushout of $X$ and the combinatorial $n$-cube $\cube{n}$ along its boundary $\bd\cube{n}$ to obtain $X'$.
For graphs, the analogue would be the pushout of a graph $G$ and the graph-version of the $n$-cube, i.e., $\gcube{m}{n}$, along its natural boundary.
As often in life, there is good news and bad news.
The good news is that one can easily reproduce this construction functorially, using the appropriate $m$-realization functor $\reali[m]{-} \colon \cSet \to \Graph$.
The $m$-realization functor is a left adjoint to the $m$-nerve $\gnerve[m]$, and thus it preserves pushouts, allowing us to build a large class of graphs in this ``one-cell-at-a-time'' manner.

The bad news is that, while one can reasonably expect this construction to work, proving it appears highly non-trivial.
The key difficulty is that pushouts of graphs admit a large number of non-trivial maps from $Q^n$ into the newly-glued graph.
% The key difficulty is that when analyzing pushouts, one has to ensure that the pieces are kept sufficiently ``far apart.''
If the components of a gluing are sufficiently ``far apart'', however, one can exert more control over the set of maps from $Q^n$ into the new graph.
One example that makes this intuition precise is as follows: if $X$ is a union of two induced subgraphs $A$ and $B$ where the shortest path from a vertex in the complement $X \setminus A$ to a vertex in the complement $X \setminus B$ has length at least $n+2$, then every map $Q^n \to X$ factors through either $A$ or $B$.
% One way of making this precise is to say that all maps from the hypercube graph $Q^k$ into the pushout $X \cup_{\bd \gcube{m}{n}} \gcube{m}{n}$ must factor through one of the components of the pushout.
% Thus taking the pushout does not allow for enough ``wiggle room'' to accommodate this requirement.

Thus, instead of taking pushouts, we use a different construction familiar from algebraic topology; namely, the double mapping cylinder.
The double mapping cylinder of a span 
\[ B \xleftarrow{f} A \xrightarrow{g} C \]
is given by the quotient of $B \sqcup A \times [0, 1] \sqcup C$ identifying $(a, 0)$ with $f(a)$ and $(a, 1)$ with $g(a)$.
This construction is a model for the homotopy pushout of the given span, and thus it is not surprising that it would behave better than the rigid pushout.
In the world of graphs, we, of course, need to replace the topological cylinder $X \times [0,1]$ by the box product of graphs $X \gtimes I_m$, but with this change, the construction presented above essentially works.
It adds just enough ``wiggle room'' to ensure that newly glued cells (i.e.\ maps $Q^n \to X$) can be analyzed in this way.

To show that the double mapping cylinder behaves in the expected way upon application of $\gnerve[\infty]$, we introduce the notion of $n$-skeletal pushout.
A pushout of graphs is $n$-skeletal if it is a pushout after applying the $1$-nerve  $\gnerve[1]$ followed by the $n$-skeleton $\Sk^n \colon \cSet \to \cSet$.
Here, the $n$-skeleton removes all nondegenerate $k$-cubes from a cubical set for $k > n$.
In essence, $n$-skeletal pushouts (along monomorphisms) are those squares in the category of graphs that become homotopy pushouts in the model structure for homotopy $(n-1)$-types on cubical sets, introduced in \cite{kapulkin-mavinkurve:n-types}.
A different way of saying it would be that we are ultimately interested in $n$-skeletal pushouts, and the construction of double mapping cylinder is just a convenient way to implement it.

The idea of $n$-skeletal pushouts can be traced back to the work of Barcelo, Greene, Jarrah, and Welker \cite{barcelo-greene-jarrah-welker:vanishing}.
Specifically, a version thereof can be seen in the proof of \cite[Thm.~5.2]{barcelo-greene-jarrah-welker:vanishing}.
They also identified the key example of an $n$-skeletal pushout discussed above, namely writing a graph $G$ as a union of two subgraphs such that the image of every map $Q^n \to G$ lies entirely in one of the two subgraphs.
This example was the key step in their proof of the partial Mayer--Vietoris sequence.
By analyzing their proof from the point of view of cubical sets, we are able to explicitly identify the requisite condition and apply methods from abstract homotopy theory, notably the model structure for homotopy $n$-types, in our analysis, thus deriving stronger conclusions from it.

An astute reader might now recognize a key difficulty: by switching from pushouts to double mapping cylinders, we have lost the necessary functoriality, which we previously had by applying the $m$-realization functor.
The remedy here is to not apply the $m$-realization to $X$ directly, but instead to devise an operation $F_!$ that enlarges $X$ first in such a way that gluing in an $n$-cube in $X$ translates to gluing in a cone to $F_! X$.
As the glued in cells are contractible, the double mapping cylinders obtained via gluing in cells are in fact cones up to homotopy.
Put together, the desired graph $G_X$ is constructed as the $m$-realization $\reali[m]{F_! X}$ of $F_! X$ for $m \geq n+2$.

There is one final caveat we need to address in the construction of $G_X$, namely that $F_!$ cannot be applied to arbitrary cubical sets, only to those arising from semi-cubical sets (i.e., cubical sets with only face maps, without degeneracies) by freely adding degeneracies.
The latter can be characterized exactly as those cubical sets having the \emph{fondind} property, that is, having the property that every \textbf{f}ace \textbf{o}f a \textbf{n}on-\textbf{d}egenerate cube \textbf{i}s \textbf{n}on-\textbf{d}egenerate.
This notion was introduced recently by Maehara, although published sources are difficult to find.
It is true that every cubical set is weakly equivalent to a fondind one, which we show in \cref{sec:fondind-cubification}, following an approach suggested to us by Arakawa and Tsutaya.
We are confident that the results of the appendix are well-known to experts and claim no originality for any of them.

So far, we have only addressed one part of the equivalence, namely homotopical essential surjectivity of $\gnerve[\infty]$.
Indeed, what we have described so far only ensures that every homotopy $n$-type can be modelled by a graph.
To complete the proof of our main theorem, we therefore need to address the up-to-homotopy fullness and faithfulness of $\gnerve[\infty]$.
For this, we once again go back to \cite{kapulkin-mavinkurve:n-types}, recalling that $\gnerve[\infty]$ is an exact functor of fibration category structures for homotopy $n$-types.
Because of that, it suffices to check that it satisfies the approximation properties of \cite{cisinski:categories-derivables}, which provide a relatively lightweight set of conditions to prove that an exact functor is an equivalence of homotopy theories.
This is the essential content of the proof of \cref{nerve-equiv-main-thm}.

There are two additional applications of these results, outside of the aforementioned \cref{nerve-equiv-main-thm}.
First, the proof gives an explicit construction of a graph whose discrete homotopy and homology groups up to a prescribed degree $n$ agree with those of a space, presented as, say, a simplicial complex.
We discuss this construction in the particular case of a simplicial complex presenting $\mathbb{R}P^2$.
Second, several naturally occurring graphs are in fact examples of double mapping cylinders, which allows us to compute a number of new homotopy groups of these graphs.
These include iterated suspensions of cycles and boundaries of $n$-cubes.
Overall, the results presented here greatly enrich the current knowledge of discrete homotopy theory from both theoretical and computational points of view.

\paragraph{Organization of the paper.}
We begin in \cref{sec:preliminaries} by reviewing the necessary background on discrete homotopy theory, cubical homotopy theory, and their interactions.
Our approach and terminology largely follow those of \cite{carranza-kapulkin:cubical-graphs,kapulkin-mavinkurve:n-types}.
In \cref{sec:mapping-cyl}, we discuss $n$-skeletal pushouts of graphs and their implementation via the double mapping cylinder.
In \cref{sec:inverse-construction}, we introduce the functor $F_!$.
In particular, it includes an explicit description of vertices and edges of our homotopy inverse in terms of $F$-\emph{sequences}.
In \cref{sec:main-thm} we prove our main theorem about the equivalence of discrete and classical homotopy theories after localizing at $n$-equivalences.
We conclude in \cref{sec:applications} by discussing computational applications of the previous results, including to explicitly constructing graphs of a prescribed discrete homotopy $n$-type presented by a simplicial complex, and computing previously unknown discrete homotopy groups of boundaries of $n$-cubes and suspensions of cycles.
As indicated above, \cref{sec:fondind-cubification} contains material on fondind and non-singular cubical sets, ensuring that our construction can be applied to an arbitrary homotopy $n$-type.

\paragraph{Acknowledgements.}
This project started with a discussion the second author had with Mark Behrens and Samira Jamil on whether the unit map $X \to \gnerve[n]\reali[n]X$ is an isomorphism on homology groups up to degree $n$.
We are deeply indebted for their insightful ideas that sparked the results in their present form.

We are also grateful to Kensuke Arakawa, Yuki Maehara, and Mitsunobu Tsutaya for their contributions related to \cref{sec:fondind-cubification}; in particular, for introducing the notion of a fondind presheaf and suggesting a proof technique allowing us to show that every cubical set is weakly equivalent to a fondind one.

We dedicate this paper to H{\'e}l{\`e}ne Barcelo, the founder of the field of discrete homotopy theory, whose trailblazing ideas continue to resonate through our work and our community.

 \section{Preliminaries} \label{sec:preliminaries}

In this section, we collected the necessary background material on discrete homotopy theory, cubical homotopy theory, and their interactions.

\subsection{Graphs and discrete homotopy theory}

We begin with a brief overview of the specific category of graphs under consideration.

A \emph{graph} is a set with a reflexive symmetric relation.
We write graphs using letters $G, H, \dots$, omitting the relation from our notation.
We refer to elements of the set as \emph{vertices} of the graph.
For a graph $G$, the notation $v \in G$ means $v$ is a vertex of $G$.
When a pair of elements $(v, w) \in G \times G$ is in the relation, we write $v \sim w$ and say $v$ is \emph{connected} to $w$, or that there is \emph{an edge} from $v$ to $w$.
We refer to the underlying set of a graph $G$ as the set of \emph{vertices} of $G$, denoted $V(G)$.

For a graph $G$, a \emph{subgraph} $H$ of $G$ consists of a subset $V(H) \subseteq V(G)$ together with a reflexive symmetric relation on $V(H)$ such that if $v \sim w$ in $H$ then $v \sim w$ in $G$.
The notation $H \subseteq G$ means $H$ is a subgraph of $G$.
An \emph{induced subgraph} is a subgraph $H \subseteq G$ with the property that $v \sim w$ in $H$ if and only if $v \sim w$ in $G$.

Given two graphs $G$ and $H$, a \emph{graph map} from $G$ to $H$ is a set function $f \from G \to H$ that preserves the relation.
We denote the category of graphs by $\Graph$, and remark that this category is a reflective subcategory of a presheaf category \cite[Def.~1.1]{carranza-kapulkin:cubical-graphs}.

Some basic graphs we consider are the \emph{interval graphs} and the \emph{cycle graphs}.
\begin{definition}
    For $m \geq 0$,
    \begin{enumerate}
        \item the \emph{$m$-interval} $I_m$ is the graph which has:
        \begin{itemize}
            \item as vertices, integers $0 \leq i \leq m$;
            \item an edge between $i$ and $i+1$ for all $i \in \{ 0, \dots, m-1 \}$.
        \end{itemize}
        \item the \emph{$m$-cycle} $C_m$ is the graph which has:
        \begin{itemize}
            \item as vertices, integers $0 \leq i \leq m-1$;
            \item an edge between $i$ and $i + 1$ for $i \in \{ 0, \dots, m-2 \}$, and an edge between $m-1$ and $0$.
        \end{itemize}
        \item the \emph{infinite interval} $I_\infty$ is the graph which has:
        \begin{itemize}
            \item as vertices, integers $i \in \ZZ$;
            \item an edge between $i$ and $i+1$ for all $i \in \ZZ$.
        \end{itemize}
    \end{enumerate}
\end{definition}

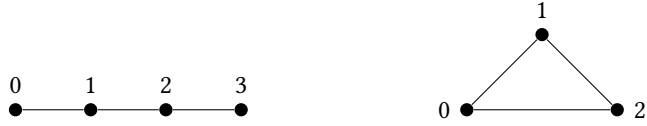
\begin{figure}[H]
    \centering
    \begin{tikzpicture}
        % vertices
        \node[vertex, label={0}] (0) {};
        \path (0) -- +(1, 0) node[vertex, label={1}] (1) {};
        \path (1) -- +(1, 0) node[vertex, label={2}] (2) {};
        \path (2) -- +(1, 0) node[vertex, label={3}] (3) {};

        \path (3) -- +(3, 0) node[vertex, label={west:0}] (1c) {};
        \path (1c) -- +(1, 1) node[vertex, label={1}] (2c) {};
        \path (2c) -- +(1, -1) node[vertex, label={east:2}] (3c) {};
        % edges
        \draw (0) to (1)
              (1) to (2)
              (2) to (3);
        \draw (1c) to (2c) to (3c) to (1c);
    \end{tikzpicture}
    \caption{The graphs $I_3$ and $C_3$, respectively.}
\end{figure}

To obtain a notion of homotopy in the setting of graphs, we fix a choice of product of graphs: the \emph{box product}.
\begin{definition}
    Given two graphs $G$ and $H$, their \emph{box product} is the graph $G \gtimes H$ whose set of vertices is given by $V(G \gtimes H) := V(G) \times V(H)$.
    Two vertices $(v, w) \sim (v', w')$ are connected if either:
    \begin{itemize}
        \item $v = v'$ and $w \sim w'$; or
        \item $v \sim v'$ and $w = w'$.
    \end{itemize}
\end{definition}
The product projection functions yield graph maps $\pi_G \from G \gtimes H \to G$ and $\pi_H \from G \gtimes H \to H$.
The notation $G^{\gtimes n}$ denotes the $n$-fold box product of a graph $G$ with itself.
Our convention is that if $n = 0$ then $G^{\gtimes 0} = I_0$.
The box product defines a closed symmetric monoidal product on the category of graphs \cite[Prop.~2.5]{kapulkin-kershaw:monoidal}.

Equipped with a notion of interval and a notion of product, we now define \emph{homotopies} between graph maps.
\begin{definition} \leavevmode 
    \begin{enumerate}
        \item Given graph maps $f, g \from G \to H$, a \emph{homotopy} from $f$ to $g$ is a graph map $\alpha \from G \gtimes I_m \to H$ for some $m \geq 0$ such that $\alpha(v, 0) = f(v)$ and $\alpha(v, m) = g(v)$ for all $v \in G$.
        \item A graph map $f \from G \to H$ is a \emph{homotopy equivalence} if there exists a graph map $g \from H \to G$ together with homotopies $gf \sim \id[G]$ and $fg \sim \id[H]$.
    \end{enumerate}
\end{definition}
We remark that homotopies form an equivalence relation on the set of graph maps $\{ G \to H \}$.

The notion of homotopy equivalences of graphs is often poorly behaved when compared to the analogous notion in the topological setting.
For instance, the cycle graphs $C_3$ and $C_4$ are homotopy equivalent to the graph $I_0$ with a single vertex, while \emph{none} of the cycles $C_5, C_6, C_7, \dots$ are homotopy equivalent.
It turns out that it is much more convenient to study graphs up to \emph{weak homotopy equivalence}.
These are graph maps which induce isomorphisms on the discrete homotopy groups, which we now define.

% A \emph{based graph} is a graph $G$ together with a distinguished vertex $x \in X$.
\begin{definition}
    Let $G$ be a graph and $v \in G$ be a vertex.
    For $n \geq 0$, the \emph{$n$-th discrete homotopy group} of $(G, v)$ is the quotient
    \[ A_n(G, v) := \{ f \from \gcube{\infty}{n} \to G \mid f(\vec{i}) = v \text{ for all but finitely many } \vec{i} \in \gcube{\infty}{n} \} \Big/ \sim , \]
    where we identify $f \sim g$ if and only if there exists a homotopy $\alpha \from \gcube{\infty}{n} \gtimes I_m \to G$ from $f$ to $g$ such that $\alpha(\vec{i}, t) = v$ for all but finitely many vertices $(\vec{i}, t) \in \gcube{\infty}{n} \gtimes I_m$.
\end{definition}
For $n \geq 1$, the set $A_n(G, v)$ has a group operation given by \emph{concatenation}, see \cite[Def.~1.28 \& 1.29]{kapulkin-mavinkurve:fundamental-group}.
When $n = 0$, the set $A_0(G, v)$ recovers the usual notion of connected components of a graph.
If $G$ is connected then $A_n(G, v)$ is independent of the choice of vertex $v \in G$.
In this case, we use the notation $A_n G$ to mean any of the isomorphic groups $A_n(G, v)$ for $v \in G$.

We now define the key notions of ``equivalence'' between graphs that we will study throughout the paper.
\begin{definition}
    Let $f \from G \to H$ be a graph map.
    \begin{enumerate}
        \item Given $n \geq 0$, we say $f$ is an $n$-equivalence if, for all $v \in G$ and $k \in \{ 0, \dots, n \}$, the map $A_k f \from A_k(G, v) \to A_k(H, f(v))$ is an isomorphism.
        \item We say $f$ is a \emph{weak homotopy equivalence} if it is an $n$-equivalence for all $n \in \NN$.
    \end{enumerate}
\end{definition}
Just as in algebraic topology, a major theme of discrete homotopy theory is to classify graphs up to weak homotopy equivalence.

Another invariant of discrete homotopy is the \emph{discrete homology groups} of a graph.
To any graph $G$, we may construct a chain complex $C_\bullet G$ over $\ZZ$ whose group of $n$-chains is free on the set of graph maps $\gcube{1}{n} \to G$.
\[ C_n G := \ZZ \langle \{ f \from \gcube{1}{n} \to G \} \rangle. \]
The chain map $\bd_n \from C_n G \to C_{n-1} G$ is defined on a basis element $f$ to be the alternating sum of its faces.
\[ \bd_n(f) := \sum\limits_{i=1}^{n} \big( (-1)^{i}(f\face{}{i,1} - f\face{}{1,0}) \big). \]  
One verifies that $\im \bd_{n+1} \subseteq \ker \bd_n$.
The group $C_n G$ admits a subgroup $D_n G$ generated by maps $f \from \gcube{1}{n} \to G$ which factor through some non-identity product projection $\sigma \from \gcube{1}{n} \to \gcube{1}{m}$.
That is,
\[ D_n G := \ZZ \langle \{ f \from \gcube{1}{n} \to G \mid f \text{ factors through a product projection } \sigma \from \gcube{1}{n} \to \gcube{1}{m} \text{ with } m < n \} \rangle. \]
One verifies that the chain map $\bd_n$ sends $D_n G$ to $D_{n-1} G$, hence the quotient $C_\bullet G / D_\bullet G$ is again a chain complex. 
The discrete homology of $G$ is defined to be the homology groups of this quotient complex.
\begin{definition}
    Let $G$ be a graph.
    For $n \geq 0$, the $n$-th \emph{discrete homology group} $H_n G$ of $G$ (with coefficients in $\ZZ$) is the $n$-th homology of the chain complex $(C_\bullet G / D_\bullet G, \bd)$.
    That is,
    \[ H_n(G) := \frac{\ker \big( \bd_n \from C_n G / D_n G \to C_{n-1} G / D_{n-1} G \big) }{\im \big( \bd_{n+1} \from C_{n+1} G / D_{n+1} G \to C_n G / D_n G \big)}. \]
\end{definition}
One may define discrete homology of a graph with coefficients in an arbitrary ring or abelian group by appropriately tensoring the chain complex $C_\bullet G / D_\bullet G$.
Similarly, one may define the discrete cohomology groups of a graph by dualizing the chain complex $C_\bullet G / D_\bullet G$.
That said, we will content ourselves to work entirely with homology groups using $\ZZ$ coefficients.
% The discrete cohomology groups of a graph are defined by dualizing the chain complex $C_\bullet G$.

\subsection{Cubical sets}

The primary way through which we relate the homotopy theory of graphs to the homotopy theory of spaces is via \emph{cubical sets}, which we now define.

Recall the box category $\Box$ is the category whose objects are posets of the form $[1]^n = \{0 \leq 1\}^n$, and whose morphisms are generated (inside the category of posets) under composition by the following four special classes:
\begin{itemize}
  \item \emph{faces} $\partial^n_{i,\varepsilon} \colon [1]^{n-1} \to [1]^n$ for $i = 1, \ldots , n$ and $\varepsilon = 0, 1$ given by:
  \[ \partial^n_{i,\varepsilon} (x_1, x_2, \ldots, x_{n-1}) = (x_1, x_2, \ldots, x_{i-1}, \varepsilon, x_i, \ldots, x_{n-1})\text{;}  \]
  \item \emph{degeneracies} $\sigma^n_i \colon [1]^n \to [1]^{n-1}$ for $i = 1, 2, \ldots, n$ given by:
  \[ \sigma^n_i ( x_1, x_2, \ldots, x_n) = (x_1, x_2, \ldots, x_{i-1}, x_{i+1}, \ldots, x_n)\text{;}  \]
  \item \emph{negative connections} $\gamma^n_{i,0} \colon [1]^n \to [1]^{n-1}$ for $i = 1, 2, \ldots, n-1$ given by:
  \[ \gamma^n_{i,0} (x_1, x_2, \ldots, x_n) = (x_1, x_2, \ldots, x_{i-1}, \max\{ x_i , x_{i+1}\}, x_{i+2}, \ldots, x_n) \text{.} \]
  \item \emph{positive connections} $\gamma^n_{i,1} \colon [1]^n \to [1]^{n-1}$ for $i = 1, 2, \ldots, n-1$ given by:
  \[ \gamma^n_{i,1} (x_1, x_2, \ldots, x_n) = (x_1, x_2, \ldots, x_{i-1}, \min\{ x_i , x_{i+1}\}, x_{i+2}, \ldots, x_n) \text{.} \]
\end{itemize}

These maps obey the following \emph{cubical identities}:

\[ \begin{array}{l l}
    \partial_{j, \varepsilon'} \partial_{i, \varepsilon} = \partial_{i+1, \varepsilon} \partial_{j, \varepsilon'} \quad \text{for } j \leq i; & 
    \sigma_j \partial_{i, \varepsilon} = \begin{cases}
        \partial_{i-1, \varepsilon} \sigma_j & \text{for } j < i; \\
        \id                                                       & \text{for } j = i; \\
        \partial_{i, \varepsilon} \sigma_{j-1} & \text{for } j > i;
    \end{cases} \\
    \sigma_i \sigma_j = \sigma_j \sigma_{i+1} \quad \text{for } j \leq i; &
    \gamma_{j,\varepsilon'} \gamma_{i,\varepsilon} = \begin{cases}
    \gamma_{i,\varepsilon} \gamma_{j+1,\varepsilon'} & \text{for } j > i; \\
    \gamma_{i,\varepsilon}\gamma_{i+1,\varepsilon} & \text{for } j = i, \varepsilon' = \varepsilon;
    \end{cases} \\
    \gamma_{j,\varepsilon'} \partial_{i, \varepsilon} = \begin{cases} 
        \partial_{i-1, \varepsilon} \gamma_{j,\varepsilon'}   & \text{for } j < i-1 \text{;} \\
        \id                                                         & \text{for } j = i-1, \, i, \, \varepsilon = \varepsilon' \text{;} \\
        \partial_{j, \varepsilon} \sigma_j         & \text{for } j = i-1, \, i, \, \varepsilon = 1-\varepsilon' \text{;} \\
        \partial_{i, \varepsilon} \gamma_{j-1,\varepsilon'} & \text{for } j > i;
    \end{cases} &
    \sigma_j \gamma_{i,\varepsilon} = \begin{cases}
        \gamma_{i-1,\varepsilon} \sigma_j  & \text{for } j < i \text{;} \\
        \sigma_i \sigma_i           & \text{for } j = i \text{;} \\
        \gamma_{i,\varepsilon} \sigma_{j+1} & \text{for } j > i \text{.} 
    \end{cases}
\end{array} \]

A \emph{cubical set} is a functor $\boxcat^\op \to \Set$.
The category of cubical sets is denoted by $\cSet$, and we use the term \emph{cubical map} to mean a morphism of cubical sets.
% We refer to morphisms of $\cSet$ as \emph{cubical maps}.
We adopt the standard convention of writing $X_n$ for the value of a cubical set $X$ at an object $[1]^n$.
Moreover, we write cubical operators on the right e.g.~given an $n$-cube $x \in X_n$ of a cubical set $X$, we write $x\face{}{1,0}$ for the $\face{}{1,0}$-face of $x$. 
We typically denote cubical sets using capital letters $X, Y, \dots$.
% We will write $\cSet$ for the category of cubical sets.
\begin{definition} Let $n \geq 0$.
    \begin{enumerate}
        \item The \emph{combinatorial $n$-cube} $\cube{n}$ is the representable functor $\boxcat(-, [1]^n) \from \boxcat^\op \to \Set$.
        \item The \emph{boundary of the $n$-cube} $\bd \cube{n}$ is the subobject of $\cube{n}$ defined by
        \[ \bd \cube{n} := \bigcup\limits_{\substack{j=1,\dots,n \\ \eta = 0, 1}} \operatorname{Im} \face{}{j,\eta}. \]
        \item Given $i = 1, \dots, n$ and $\eps = 0, 1$, the $(i, \eps)$-open box  $\dfobox$ is the subobject of $\bd \cube{n}$ defined by
        \[ \dfobox := \bigcup\limits_{(j,\eta) \neq (i,\varepsilon)} \operatorname{Im} \face{}{j,\eta}. \]
    \end{enumerate}
\end{definition}

The category of cubical sets admits a closed monoidal product, known as the \emph{geometric product} of cubical sets.
This product is constructed as the left Kan extension of the functor $\boxcat \times \boxcat \to \cSet$ given by $([1]^m, [1]^n) \mapsto \cube{m+n}$.
\[ \begin{tikzcd}
    \boxcat \times \boxcat \ar[r] \ar[d] & \cSet \\
    \cSet \times \cSet \ar[ur, "\gprod"']
\end{tikzcd} \]
Note that the geometric product of cubical sets is \emph{not} symmetric; while there is an isomorphism $\cube{m} \gprod \cube{n} \cong \cube{n} \gprod \cube{m}$ given by the identity map, this isomorphism fails to be natural with respect to the cubical face inclusions.

% The geometric product of cubical sets admits the following explicit description.
% \begin{proposition}[{\cite[Prop.~1.24]{doherty-kapulkin-lindsey-sattler}}] \label{thm:gprod_cube}
%     Let $X, Y$ be cubical sets.
%     \begin{enumerate}
%         \item For $k \geq 0$, the $k$-cubes of $X \gprod Y$ consist of all pairs $(x \in X_m, y \in Y_n)$ such that $m + n = k$, subject to the identification $(x\degen{}{m+1}, y) = (x, y\degen{}{1})$.
%         \item For $x \in X_m$ and $y \in Y_n$, the faces, degeneracies, and connections of the $(m+n)$-cube $(x, y)$ are computed by
%         \begin{align*}
%             (x, y)\face{}{i,\varepsilon} &= \begin{cases}
%                 (x\face{}{i,\varepsilon}, y) & 1 \leq i \leq m \\
%                 (x, y\face{}{i-m, \varepsilon}) & m+1 \leq i \leq m+n;
%             \end{cases} \\
%             (x, y)\degen{}{i} &= \begin{cases}
%                 (x\degen{}{i}, y) & 1 \leq i \leq m+1 \\
%                 (x, y\degen{}{i-m}) & m+1 \leq i \leq m+n;
%             \end{cases} \\
%             (x, y)\conn{}{i,\varepsilon} &= \begin{cases}
%                 (x\conn{}{i,\varepsilon}, y) & 1 \leq i \leq m \\
%                 (x, y\conn{}{i-m,\varepsilon}) & m+1 \leq i \leq n. \tag*{\qedsymbol}
%             \end{cases}
%         \end{align*}
%     \end{enumerate}
% \end{proposition}

Every cubical set admits a filtration by its \emph{skeleta}.
Given a cubical set $X$ and $n \geq -1$, let $\Sk^n X \subseteq X$ be the subobject of $X$ containing all non-degenerate cubes of $X$ of dimension $\leq n$ (in particular, $Sk^{-1} X = \varnothing$).
For $m \leq n$, we have an inclusion $\Sk^m X \ito \Sk^n X$ which is natural in the variable $X \in \cSet$.
\begin{proposition} \label{cubical-set-admits-skeletal-filtration}
    Let $X$ be a cubical set.
    For $n \geq 0$, the inclusion $\Sk^{n-1} X \ito \Sk^n X$ forms the right arrow in a pushout square
    \[ \begin{tikzcd}
        \displaystyle \coprod\limits_{x \in (X_n)_{\mathrm{nd}}} \bd \cube{n} \ar[r, "{\bd x}"] \ar[d, hook] & \Sk^{n-1} X \ar[d, hook] \\
        \displaystyle \coprod\limits_{x \in (X_n)_{\mathrm{nd}}} \cube{n} \ar[r, "{x}"] & \Sk^n X
    \end{tikzcd} \]
    where $(X_n)_{\mathrm{nd}}$ denotes the set of non-degenerate $n$-cubes of $X$.
    Moreover, the diagram
    \[ \Sk^0 X \ito \Sk^1 X \ito \dots \ito \Sk^n X \ito \dots \ito X \]
    is a colimit diagram. \qed
\end{proposition}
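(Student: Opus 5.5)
The plan is to verify the pushout property levelwise, since colimits in $\cSet$ are computed pointwise in each dimension. I will use the Eilenberg--Zilber lemma for cubical sets with connections: every cube $z \in X_k$ can be written uniquely as $z = x\varphi$ with $x$ non-degenerate and $\varphi$ an epimorphism of $\boxcat$ (a composite of degeneracies and connections). I take this lemma as standard; its availability for the cube category with connections is the one external input.

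First I check that the square commutes and makes sense. The top map $\bd x$ sends a face $\face{}{j,\eta}$ of $\cube{n}$ to $x\face{}{j,\eta}$. This is a cube of dimension $n-1$, so it lies in $\Sk^{n-1}X$, because every cube of dimension at most $n-1$ is a degeneracy of a non-degenerate cube of dimension at most $n-1$.

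Next I compare $k$-cubes of the pushout $P$ with those of $\Sk^n X$. A $k$-cube of $\Sk^n X$ has Eilenberg--Zilber form $y\varphi$ with $y$ non-degenerate of dimension at most $n$. If $\dim y < n$, the cube lies in $\Sk^{n-1}X$. If $\dim y = n$, it is the image of $\varphi \in \cube{n}_k$ under the summand indexed by $y$, and $\varphi \notin \bd\cube{n}$ when $\varphi$ is epi, so this gives surjectivity.

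For injectivity, the $k$-cubes of $P$ are the disjoint union of $(\Sk^{n-1}X)_k$ and the pairs $(x, \psi)$ with $x$ non-degenerate of dimension $n$ and $\psi \in \cube{n}_k \setminus (\bd\cube{n})_k$. A map $\psi\colon [1]^k \to [1]^n$ not factoring through any face must be an epimorphism. This holds because every map in $\boxcat$ factors as an epi followed by a mono, and a mono that is not the identity factors through a face. Uniqueness in Eilenberg--Zilber then shows:
\begin{itemize}
  \item distinct pairs $(x, \psi)$ give distinct cubes $x\psi$;
  \item no $x\psi$ lies in $\Sk^{n-1}X$, since its non-degenerate root $x$ has dimension $n$.
\end{itemize}
So the comparison map $P \to \Sk^n X$ is bijective in each dimension, and the square is a pushout.

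For the colimit statement, the maps $\Sk^n X \ito X$ are monomorphisms whose union is all of $X$: each cube $x\varphi$ lies in $\Sk^{\dim x}X$. A union of an increasing chain of subobjects is their colimit, computed levelwise as an increasing union of sets.

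The main obstacle is the injectivity step, namely the claim that non-boundary cubes of $\cube{n}$ are exactly the epimorphisms. This relies on the epi–mono factorization in $\boxcat$ with connections and the uniqueness part of Eilenberg--Zilber, which I would cite rather than reprove.
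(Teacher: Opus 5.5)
Your proposal is correct. The paper states this proposition without proof, as a standard fact, and your levelwise Eilenberg--Zilber argument is exactly the standard proof being invoked: the non-boundary cubes of $\cube{n}$ are precisely the maps in $\boxcat_-$, and uniqueness of the Eilenberg--Zilber decomposition gives injectivity.

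The one step you leave implicit is that every cube of $\Sk^{n-1} X$ has non-degenerate root of dimension at most $n-1$. It follows by writing such a cube as $y\chi$ with $\chi = \delta\sigma$ (face after degeneracy), so $y\chi = (y\delta)\sigma$, and then applying the existence half of the lemma to $y\delta$.
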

For each $n$, the functor $\Sk^n \from \cSet \to \cSet$ admits a right adjoint, known as the \emph{$n$-coskeleton} functor and denoted $\cosk^n \from \cSet \to \cSet$.
Explicitly, the set of $k$-cubes of the $n$-coskeleton of a cubical set $X$ is given by
\[ \cosk^n(X)_k := \cSet(\Sk^n \cube{k}, X). \]

Cubical sets are related to topological spaces via \emph{geometric realization}.
% Every cubical set admits a \emph{realization} as a topological space.
More precisely, the geometric realization functor $\reali{\uvar} \from \cSet \to \Top$ is defined as the extension by colimits of the functor $\boxcat \to \Top$ which sends $[1]^n$ to the topological $n$-cube $[0, 1]^n$.
\[ \begin{tikzcd}
    \boxcat \ar[r, "{[1]^n \mapsto [0, 1]^n}"] \ar[d] &[+3ex] \Top \\
    \cSet \ar[ur, dotted, bend right, "{\reali{\uvar}}"']
\end{tikzcd} \]

We use geometric realization to construct the \emph{Grothendieck} model structure on cubical sets, which models the homotopy theory of topological spaces.

\begin{definition}\leavevmode
    \begin{enumerate}
        \item A cubical map $X \to Y$ is a \emph{Kan fibration} if it has the right lifting property with respect to open box inclusions.
        That is, if for any commutative square,
        \[ \begin{tikzcd}
            \dfobox \ar[d, hook] \ar[r] & X \ar[d, "f"] \\
            \cube{n} \ar[r] & Y
        \end{tikzcd} \]
        there exists a map $\cube{n} \to X$ so that the triangles
        \[ \begin{tikzcd}
            \dfobox \ar[d, hook] \ar[r] & X \ar[d, "f"] \\
            \cube{n} \ar[ur, dotted] \ar[r] & Y
        \end{tikzcd} \]
        commutes.
        \item A cubical set $X$ is a \emph{Kan complex} if the unique map $X \to \cube{0}$ is a Kan fibration.
    \end{enumerate}
\end{definition}
We write $\Kan$ for the full subcategory of $\cSet$ consisting of Kan complexes.

\begin{theorem}[{Cisinski, cf.~\cite[Thm.~1.34]{doherty-kapulkin-lindsey-sattler}}]
	The category of cubical sets $\cSet$ admits a model structure, called the \emph{Grothendieck model structure}, whose:
	\begin{itemize}
		\item cofibrations are the monomorphisms;
		\item fibrations are the Kan fibrations; and
		\item weak equivalences are the maps $f \from X \to Y$ which become weak homotopy equivalences $\reali{f} \from \reali{X} \to \reali{Y}$ under geometric realization. \qed
	\end{itemize}
\end{theorem}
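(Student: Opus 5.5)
The statement is Cisinski's theorem: $\cSet$ carries a model structure whose cofibrations are the monomorphisms, whose fibrations are the Kan fibrations, and whose weak equivalences are the maps inverted by geometric realization. I would prove it with Cisinski's theory of model structures on presheaf categories over a test category. The first step is to fix an elementary homotopical datum. For the cylinder I take $X \mapsto X \gprod \cube{1}$ with the two end inclusions $X \gprod \face{}{1,\eps}$. I also need a cellular model for the monomorphisms, namely the boundary inclusions $\bd\cube{n} \ito \cube{n}$. Showing that these generate all monomorphisms is an Eilenberg--Zilber style argument, and \cref{cubical-set-admits-skeletal-filtration} supplies exactly the needed presentation of a cubical set through its skeleta. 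I would also check that the cylinder is exact: it preserves monomorphisms, and for every monomorphism $A \ito X$ the map
\[ (A \gprod \cube{1}) \cup (X \gprod \bd\cube{1}) \ito X \gprod \cube{1} \]
is again a monomorphism. This reduces to a statement about cubes, where it is clear.

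Cisinski's machinery then yields a cofibrantly generated model structure. Its cofibrations are the monomorphisms, and its weak equivalences are the maps inverted by the localizer generated by the cylinder. Its anodyne extensions are generated by the open box inclusions $\dfobox \ito \cube{n}$. To identify the fibrant objects and the fibrations between them with Kan complexes and Kan fibrations, I must prove that the saturated class generated by open boxes is closed under pushout-products with $\bd\cube{1} \ito \cube{1}$. This comes down to the explicit check that $(\dfobox[m] \gprod \cube{n}) \cup (\cube{m} \gprod \bd\cube{n}) \ito \cube{m+n}$ is built from open box fillings. The connections are the tool here: they provide the needed homotopies and make the faces tractable.

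The main obstacle is twofold. First, I would upgrade the statement from fibrations between fibrant objects to all Kan fibrations. For this I plan to use Cisinski's criterion for minimal model structures, with a properness-style argument showing that every Kan fibration is a naive fibration. Second, I must identify the weak equivalences with the realization weak equivalences. For this step I would show that $\boxcat$ with connections is a strict test category: the representables are contractible, because $\cube{n}$ is aspherical via connection homotopies, and $\cube{m} \gprod \cube{n} \cong \cube{m+n}$ handles products. Then the test-category weak equivalences agree with those detected by $\reali{\uvar}$, since realization restricted to cubes lands on contractible spaces. A comparison through the triangulation functor to simplicial sets, together with a Quillen equivalence, then completes the identification.
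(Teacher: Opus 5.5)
The paper gives no argument of its own here. It cites Cisinski (via DKLS, Thm.~1.34), and your outline follows Cisinski's strategy. The formal half is fine, and easier than you suggest. The pushout-products you need are literally open boxes, e.g.\ $(\obox{m}{i,\eps}\gprod\cube{1})\cup(\cube{m}\gprod\bd\cube{1})=\obox{m+1}{i,\eps}$ inside $\cube{m+1}$, so connections play no role at that stage. The trouble is that both steps carrying the real content are misidentified. For fibrations, you propose to show that every Kan fibration is a naive fibration. But with the anodyne class generated by open boxes, the naive fibrations \emph{are} the Kan fibrations, so this is vacuous. Cisinski's machinery only tells you that the fibrant objects are the Kan complexes and that fibrations between them are Kan fibrations. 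What must be shown is that an arbitrary Kan fibration lifts against \emph{every} monomorphism that is a weak equivalence, not merely against anodyne extensions. That needs a genuine extra input, which the proposal does not supply.

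For weak equivalences, asphericity of $\cube{n}$ is automatic, since the category of elements of a representable has a terminal object; so connection homotopies prove nothing there. The products relevant to (strict) test categories are the cartesian products $\cube{m}\times\cube{n}$ in $\cSet$, which the isomorphism $\cube{m}\gprod\cube{n}\cong\cube{m+n}$ does not touch. What you need is that $\boxcat$ is a local test category, for instance that $\cube{1}\times\cube{n}$ is aspherical for all $n$. This is where connections genuinely enter (cf.\ Maltsiniotis); strictness is not needed.

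You also never link the test weak equivalences to the weak equivalences of your model structure, which form the localizer generated by $\uvar\gprod\cube{1}$ and the open boxes. The missing link is Cisinski's theorem that, for a local test category, the test weak equivalences form the smallest localizer. Since open box inclusions and the maps $X\gprod\cube{1}\to X$ are test weak equivalences, the two classes then coincide.

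The same result also closes the fibration gap. Each slice $\boxcat\slice Y$ is again a local test category, and its test weak equivalences are exactly the weak equivalences of $\cSet\slice Y$. So the structure on $\cSet\slice Y$ generated by the induced datum, whose fibrant objects are the Kan fibrations over $Y$, is the slice model structure, whose fibrant objects are the fibrations over $Y$. Only then does the comparison with $\reali{\uvar}$ finish the argument, e.g.\ by identifying $\reali{X}$ with the classifying space of the category of elements via the Eilenberg--Zilber/Reedy argument, or via triangulation.
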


Given $n \geq 0$, we say that a continuous function $f \from S \to T$ between topological spaces is an \emph{$n$-equivalence} if it induces a bijection on connected components and, for all $x \in X$, the map $\pi_k f \from \pi_k (X, x) \to \pi_k(Y, y)$ is an isomorphism for all $k \in \{ 1, \dots, n \}$.
In particular, $f$ is a weak homotopy equivalence if it is an $n$-equivalence for all $n$.

We have an analogous definition for maps between cubical sets.
\begin{definition}
    Given $n \geq 0$, a cubical map $f \from X \to Y$ is an \emph{$n$-equivalence} if $\reali{f} \from \reali{X} \to \reali{Y}$ is an $n$-equivalence of topological spaces.
\end{definition}
The collection of $n$-equivalences participates in two distinct model structures on $\cSet$: the \emph{Cisniski model structure for $n$-types} and the \emph{transferred model structure for $n$-types}.
We first describe the Cisinski model structure, for which we will need the following definition.
\begin{definition}
    Let $n \geq 0$.
    \begin{enumerate}
        \item A cubical map $f \from X \to Y$ is a \emph{naive $n$-fibration} if it has the right lifting property against the set of maps:
        \[ \{ \obox{k}{i, \eps} \ito \cube{k} \mid k \geq 1, \ i = 1, \dots, k, \ \eps = 0, 1 \} \cup \{ \bd \cube{k} \ito \cube{k} \mid k \geq n+2 \}. \]
        \item A cubical set $X$ is \emph{$n$-fibrant} if the unique map $X \to \cube{0}$ is a naive $n$-fibration.    
    \end{enumerate}
\end{definition}
\begin{theorem}[{\cite[Thm.~2.7]{kapulkin-mavinkurve:n-types}}]
    The category of cubical sets $\cSet$ admits a model structure, called the \emph{Cisinski model structure for $n$-types}, whose:
	\begin{itemize}
		\item cofibrations are the monomorphisms;
		\item fibrant objects are the $n$-fibrant cubical sets; and
		\item weak equivalences are the $n$-equivalences. \qed
	\end{itemize}
\end{theorem}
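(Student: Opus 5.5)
The plan is to obtain this model structure as a left Bousfield localization of the Grothendieck model structure, using Cisinski's machinery for model structures on presheaf categories whose cofibrations are the monomorphisms. Take the cylinder $\cube{1} \gprod \uvar$ as the exact cylinder, and localize at the set
\[ S = \{ \bd \cube{k} \ito \cube{k} \mid k \geq n+2 \}. \]
Cisinski's theory then produces a model structure on $\cSet$ in which
\begin{itemize}
  \item the cofibrations are the monomorphisms;
  \item the weak equivalences are the $S$-local equivalences, i.e.\ the maps $f \from X \to Y$ inducing bijections $[Y,Z] \to [X,Z]$ for every fibrant $Z$;
  \item the fibrant objects are the objects with the right lifting property against the class $\Lambda(S)$ of anodyne extensions generated by the open box inclusions together with $S$.
\end{itemize}

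The first step is to identify these fibrant objects with the $n$-fibrant cubical sets. The class $\Lambda(S)$ is the smallest class containing the open box inclusions and $S$, closed under saturation and under pushout-product with the endpoint inclusions $\bd\cube{1} \ito \cube{1}$. For $s \in S$, the pushout-product
\[ (\bd\cube{k}\ito\cube{k}) \,\hat\gprod\, (\bd\cube{1}\ito\cube{1}) \]
is again a boundary inclusion $\bd\cube{k+1} \ito \cube{k+1}$. Since $k+1 \geq n+2$, this map already lies in $S$. Pushout-products of open boxes with monomorphisms are already anodyne in the Grothendieck structure. Hence the generating set $\{\text{open boxes}\} \cup S$ is closed under these pushout-products, so fibrancy is exactly the right lifting property against this set, which is the definition of $n$-fibrant.

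The second step, which I expect to be the main obstacle, is to show that $S$-local equivalences coincide with $n$-equivalences. I would argue in three parts.

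First, a Kan complex $Z$ lifts against $\bd\cube{k}\ito\cube{k}$ for all $k \geq n+2$ if and only if $\pi_{k-1}(\reali{Z},z)=0$ for all basepoints $z$ and all $k \geq n+2$. The reason is that a map $\bd\cube{k}\to Z$ represents an element of $\pi_{k-1}$, up to the usual basepoint manipulations available in Kan complexes with connections. So the fibrant objects are precisely the Kan complexes modelling $n$-types.

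Second, via the Quillen equivalence $\cSet \simeq \Top$, the $S$-local equivalences are the maps $f$ such that $\reali{f}$ induces bijections $[\reali{Y},Z] \to [\reali{X},Z]$ for all $n$-types $Z$. Any weak equivalence of the Grothendieck structure is such a map, so one may pass to realizations freely.

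Third, by the universal property of the Postnikov truncation $P_n$, the maps in the second part are exactly those for which $P_n\reali{f}$ is a weak equivalence. By the Whitehead theorem, together with the computation $\pi_k P_n T = \pi_k T$ for $k\leq n$, this is equivalent to $f$ being an $n$-equivalence.

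The delicate point is the first part: relating lifting against boundaries to vanishing homotopy groups uniformly across basepoints. To handle it I would use the standard description of homotopy groups of cubical Kan complexes with connections as classes of maps $(\cube{k},\bd\cube{k})\to(Z,z)$. I would also use the fact that any map $\bd\cube{k}\to Z$ is homotopic to one sending the entire boundary to a vertex, via the open box fillers.
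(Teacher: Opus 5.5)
Your proposal is correct and follows essentially the route behind this result. The paper does not prove it: it cites \cite[Thm.~2.7]{kapulkin-mavinkurve:n-types} and remarks only that the structure is a left Bousfield localization of the Grothendieck model structure, i.e.\ Cisinski's construction with the cylinder $\cube{1} \gprod \uvar$ localized at $\bd\cube{k} \ito \cube{k}$, $k \geq n+2$. Within that, your computation $(\bd\cube{1} \ito \cube{1}) \widehat{\gprod} (\bd\cube{k} \ito \cube{k}) = (\bd\cube{k+1} \ito \cube{k+1})$ identifies the fibrant objects, and the comparison with $n$-truncated spaces via Postnikov sections identifies the weak equivalences. One slip: a map $\bd\cube{k} \to Z$ cannot in general be homotopic to one sending all of $\bd\cube{k}$ to a vertex, since that would make it null-homotopic; what you want is either to contract its restriction to an open box, or, more cleanly, to use homotopy extension along $\bd\cube{k} \ito \cube{k}$ into the Kan complex $Z$ together with $[\bd\cube{k}, Z] \cong [S^{k-1}, \reali{Z}]$.
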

By construction, the Cisinski model structure for $n$-types is a left Bousfield localization of the Grothendieck model structure.
We also remark that every naive $n$-fibration is a fibration in this model structure, though the converse only holds for maps between $n$-fibrant objects.

\begin{theorem}[{c.f.~\cite[Thm.~2.15]{kapulkin-mavinkurve:n-types}}] \label{transferred-model-structure}
    The category of cubical sets $\cSet$ admits a model structure, called the \emph{transferred model structure for $n$-types}, whose:
	\begin{itemize}
        \item cofibrations are the monomorphisms $f \from X \to Y$ such that the square
        % \[ \begin{tikzcd}
        %     \Sk^{n+1} X \ar[rr, hook] \ar[dd, "\Sk^{n+1} f"', hook] & {} &[-1.5em] X \ar[dd, "f", hook] \\
        %     {} & {} & {} \\[-1em]
        %     \Sk^{n+1} Y \ar[rr, hook] & {} &  Y
        % \end{tikzcd} \]
        \[ \begin{tikzcd}
            \Sk^{n+1} X \ar[r, hook] \ar[d, "\Sk^{n+1} f"'] & X \ar[d, "f"] \\
            \Sk^{n+1} \ar[r, hook] Y & Y
        \end{tikzcd} \]
        is a pushout square;
		\item fibrations are the cubical maps $f \from X \to Y$ such that $\cosk^{n+1} f \from \cosk^{n+1} X \to \cosk^{n+1} Y$ is a Kan fibration; and
		\item weak equivalences are the $n$-equivalences. \qed
	\end{itemize}
\end{theorem}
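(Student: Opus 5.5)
Since the statement is cited from \cite[Thm.~2.15]{kapulkin-mavinkurve:n-types}, the plan is to obtain this model structure by right transfer along the adjunction $\Sk^{n+1} \dashv \cosk^{n+1}$, viewed as an endo-adjunction $\cSet \rightleftarrows \cSet$ with the Grothendieck model structure on the target. The transferred structure declares a map $f$ a fibration (resp.\ weak equivalence) exactly when $\cosk^{n+1} f$ is a Kan fibration (resp.\ weak equivalence). Cofibrations are then forced to be the maps with the left lifting property against the trivial fibrations. The existence of the transfer will come from Kan's transfer theorem (in Hess--K\c{e}dziorek--Riehl--Shipley form). The generating (trivial) cofibrations are the images under $\Sk^{n+1}$ of the Grothendieck generators, namely $\Sk^{n+1}\bd\cube{k} \ito \Sk^{n+1}\cube{k}$ and $\Sk^{n+1}\obox{k}{i,\eps} \ito \Sk^{n+1}\cube{k}$. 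Since $\cSet$ is locally presentable, the small object argument applies, so the only substantive condition is acyclicity: relative cell complexes built from the trivial generators must be sent by $\cosk^{n+1}$ to weak equivalences. I would verify this by producing a fibrant replacement functor and path objects. Every $\cosk^{n+1}X$ embeds into a Kan complex $\cosk^{n+1}\Ex^\infty$-type replacement, and $\cosk^{n+1}$ of a Kan complex is Kan, because fillers for open boxes of dimension $\ge n+3$ are automatic there. Path objects $X^{\cube{1}}$ are preserved appropriately.

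Next I would identify the weak equivalences. The key observation is that for a Kan complex $K$, the unit $K \to \cosk^{n+1}K$ is an $n$-equivalence, in fact the $(n+1)$-st Postnikov truncation up to an index shift. This uses that $\Sk^{n+1}\cube{k} \ito \cube{k}$ is an iso for $k\le n+1$, so homotopy groups in degrees $\le n$ are computed from the same cubes and relations. Combining this with fibrant replacement gives the following: $f$ is an $n$-equivalence iff $\cosk^{n+1}$ of a fibrant replacement of $f$ is a weak equivalence. Strictly speaking, one transfers along $\cosk^{n+1}\circ R$, or checks that the two conditions agree. This is where I expect the most care: $\cosk^{n+1}$ does not preserve weak equivalences between non-fibrant objects. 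So I would either restrict to transferring with a Kan replacement built in, or argue via the Cisinski model structure for $n$-types from the previous theorem, showing that the two structures share weak equivalences.

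Finally I would characterize the cofibrations. A map lifts against all transferred trivial fibrations iff it is a retract of a relative $\Sk^{n+1}\bd\cube{k}\ito\Sk^{n+1}\cube{k}$-cell complex. For $k\le n+1$ these generators are the ordinary boundary inclusions, and for $k\ge n+2$ they are isomorphisms. Hence the cofibrations are the monomorphisms $f$ that attach only cells of dimension $\le n+1$. Equivalently, $Y$ is obtained from $X$ together with $\Sk^{n+1}Y$, which is exactly the pushout condition on the square $\Sk^{n+1}X \to X$, $\Sk^{n+1}Y\to Y$ in the statement. I would prove this equivalence using \cref{cubical-set-admits-skeletal-filtration}, comparing non-degenerate cubes of $Y$ not in $X$ dimension by dimension.
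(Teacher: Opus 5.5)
\paragraph{The cofibrations.} Your treatment of the cofibrations is correct, and it takes a different route from the paper. The paper in fact proves only this part and takes existence from \cite[Thm.~2.15]{kapulkin-mavinkurve:n-types}.

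You analyse retracts of relative $\Sk^{n+1}\bd\cube{k}$-cell complexes. You should add that the pushout condition is closed under retracts. This holds because the comparison map $\Phi_f\colon X\cup_{\Sk^{n+1}X}\Sk^{n+1}Y\to Y$ is natural in $f$, and a retract of an isomorphism is an isomorphism.

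The paper argues by lifting instead. A mono with the pushout property is a pushout of the cofibration $\Sk^{n+1}f$. Conversely, for a cofibration $f$ the map $\Phi_f$ is injective and bijective in dimensions $\le n+1$. So $\cosk^{n+1}\Phi_f$ is an isomorphism and $\Phi_f$ is a trivial fibration. Then $f$ lifts against it, making $\Phi_f$ a split epimorphism and a monomorphism, hence an isomorphism. Both routes involve only cofibrations and trivial fibrations, so neither is affected by what follows.

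\paragraph{The gap: right transfer from the Grothendieck structure fails.} The problem is the existence and weak-equivalence step, exactly where you hesitated, and it cannot be closed as proposed. Right transfer from the Grothendieck structure along $\cosk^{n+1}$ fails acyclicity already for $n=0$.

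Let $X=C_4^{\boxcat}$ be the directed cycle $a\to b\to c\to d\to a$. A $k$-cube of $\cosk^1 X$ is a labelling of the vertices of $[1]^k$ by $\ZZ/4$ that increases by $0$ or $1$ along each edge. So $\cosk^1 X$ is the quotient, by the free action of $4\ZZ$, of the analogous connected cubical set for the directed line $\ZZ$. The loop $abcda$ lifts to a path from $0$ to $4$, so it is nontrivial in $\pi_1$.

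Now push out two generating trivial cofibrations $\obox{2}{1,0}\ito\bd\cube{2}$, along $(ab,\,bc,\,\text{degenerate at }c)$ and along $(cd,\,da,\,\text{degenerate at }a)$. This adds edges $a\to c$ and $c\to a$. In $\cosk^1$ of the result there are squares witnessing $ab\cdot bc\simeq ac$, $cd\cdot da\simeq ca$ and $ac\cdot ca\simeq\mathrm{const}_a$, and together these kill $abcda$. So $\cosk^1$ of a relative $J$-cell complex is not a weak equivalence. In particular a Kan replacement $X\to RX$ is not a transferred weak equivalence, since $\cosk^1 RX$ is contractible.

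\paragraph{The fallback gives different fibrations.} If you instead transfer from the Cisinski model structure for $n$-types, the transfer does exist. Its weak equivalences are the $n$-equivalences, because $X\to\cosk^{n+1}X$ is always an $n$-equivalence, and its cofibrations are the stated ones. But its fibrations are the $f$ with $\cosk^{n+1}f$ a \emph{Cisinski} fibration, which is strictly fewer maps than stated.

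Take the double cover $f\colon C_8^{\boxcat}\to C_4^{\boxcat}$. It is a $0$-equivalence. Also $\cosk^1 f$ is a Kan fibration: by adjunction this means lifting edges and completing three sides of a square to the fourth, and both hold uniquely for this covering. Yet $f$ has no lift against the cofibration $\bd\cube{1}\ito\cube{1}$ at the vertices $0$ and $4$, which lie over the degenerate edge at $0$.

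So for $n=0$ no model structure has exactly the three listed classes. The stated description of fibrations is only right for maps between fibrant objects, which is how the paper uses it later. The paper's proof does not see this because it never argues existence.
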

\begin{proof}
    The existence of the model structure is \cite[Thm.~2.15]{kapulkin-mavinkurve:n-types}.
    It remains only the verify the description of the cofibrations.
    To this end, fix a cubical map $f \from X \to Y$.

    If $f$ is a monomorphism then $\Sk^{n+1} f$ is a cofibration since $\Sk^{n+1}$ is left Quillen.
    This means that if the square
    \[ \begin{tikzcd}
        \Sk^{n+1} X \ar[r, hook] \ar[d, "\Sk^{n+1} f"'] & X \ar[d, "f"] \\
        \Sk^{n+1} \ar[r, hook] Y & Y
    \end{tikzcd} \]
    is a pushout then $f$ is a cofibration as a pushout of a cofibration.

    If $f$ is a cofibration then consider the induced map from the pushout
    \[ \Phi \from X \push_{\Sk^{n+1} X} \Sk^{n+1} Y \to Y . \]
    By construction, this map is a bijection on $k$-cubes for all $k \leq n+1$ (and injective for all $k$).
    It follows that the map $\cosk^{n+1} \Phi$ is an isomorphism, which means $\Phi$ is an acyclic fibration in the transferred model structure.
    With this, we have a lift of the square:
    \[ \begin{tikzcd}
        X \ar[r] \ar[d, "f"'] & X \push_{\Sk^{n+1} X} \Sk^{n+1} Y \ar[d, "\Phi"] \\
        Y \ar[r, equal] \ar[ur, dotted] & Y
    \end{tikzcd} \]
    The map $\Phi$ is a monomorphism and a split epimorphism, hence an isomorphism.
\end{proof}
By construction, this model structure is right transferred from the Grothendieck model structure along the $(n+1)$-coskeleton functor $\cosk^{n+1} \from \cSet \to \cSet$.
As such, it is cofibrantly generated \cite[Thm.~11.3.2]{hirschhorn:model-categories}.
\begin{remark}
    The last argument given for \cref{transferred-model-structure} can be used to give an alternative proof of \cite[Prop.~2.21]{kapulkin-mavinkurve:n-types}.
\end{remark}

These three model structures on $\cSet$ are related by the following Quillen adjunctions.
\begin{theorem}[{c.f.~\cite[Thm.~2.15]{kapulkin-mavinkurve:n-types}}] \label{n-equiv-quillen-pairs}
    Each adjunction in the diagram
    \[ \begin{tikzcd}[column sep = 0em, row sep = 3em]
        {} & \cSet_{\text{Grothendieck}} \ar[dl, "\id"', xshift=-4ex, bend right=10] \ar[dr, "\Sk^{n+1}", xshift=4ex, bend left=10] & {} \\
        \cSet_{n\text{-Cisinski}} \ar[ur, "\id" description] \ar[rr, "\Sk^{n+1}", yshift=1ex] & {} & \cSet_{n\text{-transferred}} \ar[ll, "\cosk^{n+1}", yshift=-1ex] \ar[ul, "\cosk^{n+1}" description]
    \end{tikzcd} \]
    is a Quillen adjunction.
    The bottom adjunction is moroever a Quillen equivalence. \qed
\end{theorem}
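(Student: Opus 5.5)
We have to show that each of the three adjunctions in the diagram is a Quillen adjunction, and that $\Sk^{n+1} \dashv \cosk^{n+1}$ is a Quillen equivalence between the Cisinski and transferred model structures for $n$-types. The plan is to check left Quillen-ness on the left adjoints through cofibrations and trivial cofibrations. Where convenient we check instead that the right adjoint preserves fibrations and acyclic fibrations, using that the transferred structure is right-transferred along $\cosk^{n+1}$.

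\emph{Grothendieck $\to$ $n$-Cisinski via $\id$.} The $n$-Cisinski structure is a left Bousfield localization of the Grothendieck structure. It has the same cofibrations (monomorphisms), and every weak homotopy equivalence is an $n$-equivalence. Hence $\id$ preserves cofibrations and trivial cofibrations, and this adjunction is Quillen.

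\emph{Grothendieck $\to$ $n$-transferred via $\Sk^{n+1}$.} We check the right adjoint. A map $f$ is a fibration (resp.\ acyclic fibration) in the transferred structure iff $\cosk^{n+1} f$ is a Kan fibration (resp.\ Kan fibration and weak equivalence). We need that $\cosk^{n+1}$ sends these to Grothendieck fibrations (resp.\ acyclic fibrations). Fibrations are immediate. For acyclic fibrations, we must show that $\cosk^{n+1} f$ is actually a Grothendieck weak equivalence, not merely an $n$-equivalence. This holds because a transferred acyclic fibration has $\cosk^{n+1} f$ an acyclic Kan fibration by definition of transfer: the transferred weak equivalences are the maps $f$ with $\cosk^{n+1} f$ a Grothendieck weak equivalence, and this class is identified with the $n$-equivalences in \cite[Thm.~2.15]{kapulkin-mavinkurve:n-types}, which we cite.

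\emph{$n$-Cisinski $\to$ $n$-transferred via $\Sk^{n+1}$.} We check the left adjoint directly. For a monomorphism $f \from X \to Y$, the map $\Sk^{n+1} f$ is a monomorphism. The square comparing $\Sk^{n+1}\Sk^{n+1}X = \Sk^{n+1}X$ with $\Sk^{n+1}Y$ is trivially a pushout, since its horizontal maps are identities. By the description in \cref{transferred-model-structure}, $\Sk^{n+1} f$ is therefore a transferred cofibration. For trivial cofibrations, it suffices to see that $\Sk^{n+1}$ preserves $n$-equivalences between arbitrary cubical sets. The inclusion $\Sk^{n+1}X \ito X$ is an $n$-equivalence, because by \cref{cubical-set-admits-skeletal-filtration} $X$ is obtained by attaching cells of dimension $\geq n+2$, and cellular approximation applies after realization. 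By 2-out-of-3, $\Sk^{n+1} f$ is then an $n$-equivalence whenever $f$ is.

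\emph{$n$-transferred $\to$ Grothendieck via $\cosk^{n+1}$.} Here $\cosk^{n+1}$ is the right adjoint, and it preserves fibrations and acyclic fibrations by the very definition of the transferred structure.

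\emph{Quillen equivalence.} We use the criterion that the derived unit and counit are weak equivalences. Every object is cofibrant in the Cisinski structure. For the unit $X \to \cosk^{n+1} R\Sk^{n+1}X$, with $R$ a transferred fibrant replacement, the key claim is that $Z \to \cosk^{n+1}Z$ is an $n$-equivalence for every $Z$. Given that, the unit is $n$-equivalent to $\Sk^{n+1}X \to X$, which is an $n$-equivalence as shown above. The counit is $\Sk^{n+1}\cosk^{n+1}Y \to Y$ for transferred-cofibrant $Y$; it factors through skeletal inclusions and the same claim makes it an $n$-equivalence.

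The main obstacle is the claim that $Z \to \cosk^{n+1}Z$ is an $n$-equivalence. Both sides share their $(n+1)$-skeleton, so $\Sk^{n+1}$ of the map is an isomorphism. Combined with the fact that $\Sk^{n+1}W \ito W$ is an $n$-equivalence for any $W$, applied to both $Z$ and $\cosk^{n+1}Z$, 2-out-of-3 gives the claim.
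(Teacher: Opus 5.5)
The paper gives no proof of this statement; it is stated with a citation to Kapulkin--Mavinkurve. So your argument has to stand on its own, and most of it does.

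\begin{itemize}
\item The identity adjunction is handled correctly.
\item So is $\Sk^{n+1}$ from the Cisinski to the transferred structure. Monos go to monos whose skeletal square is trivially a pushout, and $\Sk^{n+1}$ preserves $n$-equivalences because $\Sk^{n+1}W \ito W$ is one.
\item The Quillen equivalence is also fine. Your key fact that $Z \to \cosk^{n+1}Z$ is an $n$-equivalence implies that $\cosk^{n+1}$ preserves $n$-equivalences. Hence the derived unit $X \to \cosk^{n+1}X \to \cosk^{n+1}R\Sk^{n+1}X$ is an $n$-equivalence, and the counit is isomorphic to $\Sk^{n+1}Y \ito Y$.
\item You wrote ``transferred-cofibrant $Y$'' where fibrant is meant. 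This is harmless, since your argument works for every $Y$.
\end{itemize}

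The step that needs repair is the Grothendieck--transferred adjunction. Note that your second and fourth items are the same adjunction.

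You justify it by saying the transferred weak equivalences are the maps $f$ with $\cosk^{n+1}f$ a weak homotopy equivalence, and that this class coincides with the $n$-equivalences. That identification is false. For a graph $G$, $\gnerve[1]G$ is $1$-coskeletal, since a map $\Sk^1\cube{k} \to \gnerve[1]G$ is exactly a graph map $\gcube{1}{k} \to G$. Also $\cosk^1\Sk^1 \cong \cosk^1$. So for $Y = \Sk^1\gnerve[1]C_5$, the map $Y \to \cube{0}$ is a $0$-equivalence, while $\cosk^1 Y \cong \gnerve[1]C_5$ has $\pi_1 \cong \ZZ$. For general $n$, take $Y = \Sk^{n+1}\gnerve[1](\bd\gcube{m}{n+2})$ with $m \geq n+1$ and use \cref{sphere-model-bd-n-equiv}.

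So the paper's remark that the transferred structure is right-transferred along $\cosk^{n+1}$ cannot be read as ``$\cosk^{n+1}$ creates the weak equivalences'' alongside ``the weak equivalences are the $n$-equivalences''. In particular, preservation of acyclic fibrations is not automatic by definition.

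You do not need that reading. This adjunction is the composite of your first and third ones: the identity from the Grothendieck to the Cisinski structure, followed by $\Sk^{n+1}$. It is therefore Quillen. Equivalently, a transferred acyclic fibration lifts against the cofibrations $\bd\cube{k} \ito \cube{k}$ for $k \leq n+1$. Its $\cosk^{n+1}$ then lifts against all boundary inclusions, because $\Sk^{n+1}(\bd\cube{k} \ito \cube{k})$ is an isomorphism for $k \geq n+2$.
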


Mirroring the theory of simplicial sets, one may define homology groups of any cubical set as the homology of an appropriate chain complex.
More precisely, given a cubical set $X$, we define a chain complex $C_\bullet X$ whose group of $n$-chains is generated by the set of $n$-cubes
\[ C_n X := \ZZ \langle X_n \rangle. \]
Let $DX_n \subseteq X_n$ denote the subset of $n$-cubes $x \in X_n$ which can be written as $x = y\sigma$ for some $y \in X_{m}$ and some composite of non-identity degeneracy maps $\sigma \from [1]^n \to [1]^m$.
Taking free abelian groups, this subset $DX_n$ becomes a subgroup
\[ D_n X := \ZZ \langle DX_n \rangle. \]
Define a map $\bd_n \from C_n X \to C_{n-1} X$ on basis elements by
\[ \bd_n(x) := \sum\limits_{i=1}^n (-1)^n (x\face{}{i, 1} - x\face{}{i, 0}). \] 
One verifies that $\im \bd_{n+1} \subseteq \ker \bd_n$, and that $\bd_n$ sends $D_n X$ to $D_{n-1} X$.
Thus, we obtain a chain complex defined by the quotient $C_\bullet X / D_\bullet X$.
\begin{definition}
    Let $X$ be a cubical set.
    Given $n \geq 0$, the \emph{$n$-th cubical homology group} of $X$ is the $n$-th homology group of the chain complex $C_\bullet X / D_\bullet X$.
    That is,
    \[ H_n(X) := \frac{\ker \big( \bd_n \from C_n X / D_n X \to C_{n-1} X / D_{n-1} X \big) }{\im \big( \bd_{n+1} \from C_{n+1} X / D_{n+1} X \to C_n X / D_n X \big)}. \]
\end{definition}
Homology is a homotopy invariant of cubical sets, meaning it inverts weak homotopy equivalences.
\begin{proposition} \label{cubical-homology-inverts-equivs}
    Let $f \from X \to Y$ be a cubical map.
    \begin{enumerate}
        \item If $f$ is a weak equivalence then $H_n f$ is an isomorphism for all $n \geq 0$.
        \item Given $n \geq 0$, if $f$ is a $n$-equivalence then $H_k f \from H_k X \to H_k Y$ is an isomorphism for all $k \in \{ 0, \dots, n \}$.
    \end{enumerate}
\end{proposition}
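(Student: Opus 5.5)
Part (1) is a comparison with singular homology, and part (2) then follows from the Hurewicz-type consequence that $n$-equivalences of spaces induce isomorphisms on $H_k$ for $k \leq n$.

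For (1), I would first show that cubical homology $H_\bullet X$, computed from the normalized complex $C_\bullet X / D_\bullet X$, is naturally isomorphic to the singular homology of $\reali{X}$. I would prove this by induction along the skeletal filtration of \cref{cubical-set-admits-skeletal-filtration}.
\begin{itemize}
  \item The normalized chain complex turns each pushout square along $\coprod \bd\cube{n} \ito \coprod \cube{n}$ into a short exact sequence of chain complexes, because the non-degenerate cubes of $\Sk^n X$ are those of $\Sk^{n-1} X$ together with the new $n$-cubes.
  \item Geometric realization turns the same pushout into a cell attachment, so singular homology has the corresponding long exact sequence of the pair.
  \item There is a natural chain map from $C_\bullet X / D_\bullet X$ to normalized singular chains of $\reali{X}$: send a cube $x$ to the singular cube $[0,1]^n \to \reali{X}$ and triangulate. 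Degenerate cubes go to degenerate chains.
  \item The two homology theories agree on the representables $\cube{n}$ and on $\bd\cube{n}$. This holds because $\cube{n}$ is contractible in both senses: the cubical chain complex of $\cube{n}$ is the tensor power of that of $\cube{1}$ modulo degeneracies, hence acyclic. For $\bd\cube{n}$, I would induct on $n$ using open boxes.
  \item The five lemma then gives agreement on every $\Sk^n X$, and passing to the colimit gives agreement on $X$, since both theories commute with filtered colimits of monomorphisms.
\end{itemize}

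With this comparison in place, a weak equivalence $f$ realizes to a weak homotopy equivalence $\reali{f}$. Weak homotopy equivalences induce isomorphisms on singular homology, so $H_n f$ is an isomorphism for every $n$.

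An alternative route for (1) avoids the comparison. Factor $f$ as an acyclic cofibration followed by an acyclic fibration. Acyclic fibrations between cubical sets admit sections and homotopies, so they are homotopy equivalences, and cubical homology is homotopy invariant via the cylinder $X \gprod \cube{1}$. Acyclic cofibrations are retracts of transfinite composites of pushouts of open box inclusions, and each open box inclusion $\dfobox \ito \cube{n}$ induces a homology isomorphism by a direct chain computation. However, the comparison with singular homology is the cleaner path, so I would take it.

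For (2), I would reduce to topology. If $f$ is an $n$-equivalence, then $\reali{f}$ is an $n$-equivalence of spaces. Replacing $\reali{f}$ by a relative CW-inclusion (a mapping cylinder), the relative Hurewicz theorem applied componentwise shows that the pair is $n$-connected. Hence $H_k$ of the pair vanishes for $k \leq n$. The long exact sequence then gives that $H_k \reali{f}$ is an isomorphism for $k < n$ and surjective for $k = n$.

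This leaves the real subtlety: surjectivity at $k = n$ is not enough, so I need to upgrade it to an isomorphism. The hypothesis includes isomorphism on $\pi_n$ and not just surjectivity, which is more than $n$-connectedness of $\reali{f}$. The standard trick I would use is to pass to Postnikov sections, or equivalently, in cubical terms, to apply $\cosk^{n+1}$ through the Quillen adjunctions of \cref{n-equiv-quillen-pairs}.

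The map $P_n \reali{f}$ is a weak equivalence, so $H_k P_n \reali{f}$ is an isomorphism by part (1). For any space $S$, the map $S \to P_n S$ is $(n+1)$-connected, so it induces isomorphisms on $H_k$ for $k \leq n$. Naturality then gives that $H_k f$ is an isomorphism for $k \leq n$.

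The main obstacle is the comparison of cubical homology with singular homology in part (1), specifically the acyclicity of the normalized complex of $\cube{n}$. I would handle that using the tensor decomposition of $C_\bullet(\cube{n})$ together with the connection maps, which provide the contracting homotopy.
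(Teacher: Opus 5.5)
Your proposal is correct, but it follows a different route from the paper.

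\textbf{Part (1).} The paper does not construct a comparison map to singular chains. It introduces an auxiliary cubical set $\overline{X}$, whose cubes are pairs $(\varphi,x)$ with $\varphi$ a surjection of $\boxcat$, modulo moving degeneracies into $x$. It cites Doherty for the weak equivalence $\overline{X}\to X$. It then asserts a natural isomorphism between $C_\bullet\overline{X}/D_\bullet\overline{X}$ and the cellular chains of $\reali{X}$, using the CW structure from \cref{cubical-set-admits-skeletal-filtration}, and concludes from the invariance of cellular homology. Your skeletal induction, with a triangulation chain map to normalized singular chains, is longer but self-contained. It also works with $C_\bullet X/D_\bullet X$ itself, so you never need to relate the homology of $\overline{X}$ back to that of $X$.

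\textbf{Part (2).} The paper's argument is exactly the cubical version you mention in passing. It Kan-replaces $f$ and notes that $\cosk^{n+1}\tilde{f}$ is a weak equivalence by \cref{n-equiv-quillen-pairs}. It then observes that $\tilde{X}\to\cosk^{n+1}\tilde{X}$ is a bijection on cubes of dimension $\le n+1$, hence an isomorphism on $H_{\le n}$ for purely chain-level reasons, and finishes with (1) and 2-out-of-3. Your Postnikov argument is the topological counterpart of this: it replaces that combinatorial observation with Hurewicz for the $(n+1)$-connected map $S\to P_nS$. It needs the natural comparison isomorphism from your (1) to get back to cubical homology. It also makes your relative-Hurewicz step redundant.

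\textbf{A correction to the details of (1).} The paper's $\boxcat$ has connections, while $D_\bullet$ kills only degeneracies, so cubes such as $x\conn{}{1,0}$ survive in $C_\bullet X/D_\bullet X$. This has two consequences.
\begin{itemize}
\item $C_\bullet(\Sk^nX)/C_\bullet(\Sk^{n-1}X)$ is not concentrated in degree $n$. It is a direct sum of copies of the normalized $C_\bullet\cube{n}/C_\bullet\bd\cube{n}$.
\item $C_\bullet\cube{1}/D_\bullet\cube{1}$ is not the two-term complex. For example, $\max$ and $\min$ are non-degenerate $2$-cubes, and there are non-degenerate cubes in every higher dimension.
\end{itemize}
Neither point breaks your plan. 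The decomposition $C_\bullet\cube{n}/D_\bullet\cube{n}\cong(C_\bullet\cube{1}/D_\bullet\cube{1})^{\otimes n}$ still holds, because every map of $\boxcat$ splits into consecutive blocks of coordinates. Acyclicity of $\cube{1}$ comes from the connections, as you anticipate. Send $x\colon[1]^k\to[1]$ to the cube $(t_1,\dots,t_{k+1})\mapsto\max(x(t_1,\dots,t_k),t_{k+1})$. After twisting by suitable signs, this gives a chain homotopy from the identity of $C_\bullet\cube{1}/D_\bullet\cube{1}$ to the projection onto the vertex $1$. Hence $\cube{n}$ is acyclic, and the five-lemma reduction to $\cube{n}$ and $\bd\cube{n}$ goes through as you describe.
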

\begin{proof}
    We first observe that the skeletal filtration on any cubical set $X$ endows the geometric realization $\reali{X}$ with the structure of a CW complex.
    The attaching maps in each dimension are specified by the pushout square in \cref{cubical-set-admits-skeletal-filtration}.
    
    For a cubical set $X$, let $\overline{X}$ denote the cubical set whose set of $n$-cubes is given by pairs $(\varphi, x)$ where $\varphi \from [1]^n \to [1]^k$ is any surjection in $\boxcat$ with $\dom \varphi = [1]^n$ and $x \in X_k$ is any $k$-cube in $X$.
    We quotient the set of such pairs by the relation generated by $(\varphi, x\degen{}{i}) = (\degen{}{i} \circ \varphi, x)$ for any product projection $\degen{}{i} \from [1]^k \to [1]^{k-1}$.
    This construction is functorial in $X$, and we have a natural map $\overline{X} \to X$ given by $(\varphi, x) \mapsto x\varphi$.
    % We have a natural map $\overline{X} \to X$ given by 
    By \cite[Prop.~2.28]{doherty:connections}, this map is a weak homotopy equivalence.

    One can construct a natural isomorphism of chain complexes
    \[ C_\bullet \overline{X} / D_\bullet \overline{X} \xrightarrow{\cong} C_\bullet^{\mathrm{CW}}(\reali{X}) \]
    from the cubical chain complex of $\overline{X}$ to the CW chain complex of the geometric realization $\reali{X}$.
    Part (1) now follows from the fact that CW homology inverts weak homotopy equivalences between CW complexes.

    For part (2), suppose $f \from X \to Y$ is an $n$-equivalence.
    By the existence of functorial factorization in the Grothendieck model structure, we may choose a commutative square
    \[ \begin{tikzcd}
        X \ar[r, "\sim"] \ar[d, "f"'] & \tilde{X} \ar[d, "\tilde{f}"] \\
        Y \ar[r, "\sim"] & \tilde{Y}
    \end{tikzcd} \]
    such that the vertical arrows are $n$-equivalences, the horizontal arrows are weak homotopy equivalences, and the cubical sets $\tilde{X}$ and $\tilde{Y}$ are Kan.
    Since every Kan complex is fibrant in the $n$-transferred model structure, the right arrow in the diagram
    \[ \begin{tikzcd}
        X \ar[r, "\sim"] \ar[d, "f"'] & \tilde{X} \ar[r] & \cosk^{n+1} \tilde{X} \ar[d, "{\cosk^{n+1}\tilde{f}}"] \\
        Y \ar[r, "\sim"] & \tilde{Y} \ar[r] & \cosk^{n+1} \tilde{Y}
    \end{tikzcd} \]
    is a weak homotopy equivalence by \cref{n-equiv-quillen-pairs}.
    The maps $\tilde{X} \to \cosk^{n+1} \tilde{X}$ and $\tilde{Y} \to \cosk^{n+1}\tilde{Y}$ induce bijections on sets of cubes up to dimension $\leq n+1$, hence they induce isomorphisms on homology up to dimension $n$.
    The maps $X \to \tilde{X}$ and $Y \to \tilde{Y}$ induce isomorphisms on homology by part (1).
    Likewise, the map $\cosk^{n+1} \tilde{f}$ induces an isomorphism on homology, hence $f$ induces an isomorphism on homology up to dimension $n$ by 2-out-of-3.
    % In the composite square,
    % \[ \begin{tikzcd}
    %     X \ar[r] \ar[d, "f"'] & \tilde{X} \ar[d, "\tilde{f}"] \ar[r] & \cosk^{n+1} \tilde{X} \ar[d, "{\cosk^{n+1} \tilde{f}}"] \\
    %     Y \ar[r] & \tilde{Y} \ar[r] & \cosk^{n+1} \tilde{Y}
    % \end{tikzcd} \]
    % the maps $\tilde{X} \to \cosk^{n+1}\tilde{X}$ and $\tilde{Y} \to \cosk^{n+1}\tilde{Y}$ are $n$-equivalences by \cite[Lem.~2.16]{kapulkin-mavinkurve}.
    % It follows from \cref{n-equiv-quillen-pairs} that the rightmost map is a weak homotopy equivalence between Kan complexes.
    % Now, there is a natural map of chain complexes
\end{proof}

We will see later how cubical homology generalizes the discrete homology of a graph.

\subsection{Semicubical sets}

Let $\boxcati$ denote the subcategory of the box category consisting of the monomorphisms, i.e.\ composites of face maps.
A \emph{semicubical set} is a presheaf over $\boxcati$, i.e.\ a functor $\boxcati^\op \to \Set$.
We denote the category of semicubical sets by $\cSeti$.

The inclusion functor $i \from \boxcati \to \boxcat$ induces an adjoint triple:
\[ \begin{tikzcd}
    \cSeti \ar[r, bend left=50, "{i_!}"{name=T}] \ar[r, bend right=50, "{i_*}"'{name=B}] &[+1.8em] \cSet \ar[l, "{i^*}"{name=M, description}]
    \arrow[from=M, to=T, phantom, "\perp"]
    \arrow[from=B, to=M, phantom, "\perp"]
\end{tikzcd} \]
For our purposes, the only functor of relevance in this diagram is $i_! \from \cSeti \to \cSet$, though we note that the existence of $i^*$ implies $i_!$ preserves colimits (as a left adjoint).

We invoke \cref{fondind-presheaf-cat-equiv} to identify the category $\cSeti$ of semicubical sets with the subcategory of $\cSet$ given by \emph{fondind cubical sets} (see \cref{def:fondind}) and maps that preserve non-degenerate cubes.
\begin{proposition}[c.f.~\cref{fondind-presheaf-cat-equiv}] \label{cseti-cat-equiv}
    The functor $i_! \from \cSeti \to \cSet$ restricts to an equivalence between the category $\cSeti$ of semicubical sets and the subcategory of $\cSet$ whose:
    \begin{itemize}
        \item objects are the cubical sets such that every face of a non-degenerate cube is non-degenerate; and whose
        \item morphisms are the cubical maps that send non-degenerate cubes to non-degenerate cubes. \qed
    \end{itemize}
\end{proposition}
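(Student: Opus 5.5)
The plan is to describe $i_!$ explicitly and then verify the three parts of the claim: that it lands in the stated subcategory, that it is fully faithful onto it, and that it is essentially surjective. Every map in $\boxcat$ factors uniquely as an epimorphism (a composite of degeneracies and connections) followed by a monomorphism (a composite of faces). Using this Eilenberg--Zilber-type factorization, for a semicubical set $S$ the coend formula gives
\[ (i_! S)_k \cong \coprod_{m \leq k} \{ \text{epis } \varphi \from [1]^k \to [1]^m \} \times S_m , \]
with the element $(\varphi, s)$ written $s\varphi$. A map $\theta \from [1]^{k'} \to [1]^k$ acts on $s\varphi$ as follows: factor $\varphi\theta = \delta \varphi'$ with $\varphi'$ epi and $\delta$ mono, and set $(s\varphi)\theta = (s\delta)\varphi'$. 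Under this description the non-degenerate cubes of $i_! S$ are exactly the elements $s\,\id$ with $s \in S$.

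First, $i_! S$ is fondind. A face of $s\,\id$ is $s\,\partial = (s\partial)\,\id$, which is non-degenerate. Likewise, for a semicubical map $f$, the map $i_! f$ sends $s\,\id$ to $f(s)\,\id$, so it preserves non-degenerate cubes. Hence $i_!$ factors through the stated subcategory.

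Second, $i_!$ is fully faithful onto this subcategory. Let $g \from i_! S \to i_! T$ preserve non-degenerate cubes. Its restriction to non-degenerate cubes is a family of maps $S_m \to T_m$, and this family commutes with faces because faces of non-degenerate cubes are computed inside $S$. Since every cube of $i_! S$ has the form $s\varphi$ and $g(s\varphi) = g(s)\varphi$, the map $g$ is determined by its restriction, and the restriction is a semicubical map $f$ with $i_! f = g$. Faithfulness is immediate from the same formula.

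Third, and this is the main obstacle, essential surjectivity. Given a fondind cubical set $X$, let $S$ be the semicubical set of non-degenerate cubes of $X$; it is closed under faces precisely because $X$ is fondind. The inclusion $S \to i^* X$ corresponds by adjunction to a map $\epsilon \from i_! S \to X$ sending $s\varphi \mapsto s\varphi$. Surjectivity of $\epsilon$ requires the Eilenberg--Zilber lemma for cubical sets with connections: every cube is $x\varphi$ with $x$ non-degenerate and $\varphi$ epi. Injectivity requires the uniqueness half of that lemma, namely that if $x\varphi = y\psi$ with $x, y$ non-degenerate then $x = y$ and $\varphi = \psi$. I would cite the Eilenberg--Zilber property of $\boxcat$ (via \cref{fondind-presheaf-cat-equiv}, or the known fact that $\boxcat$ with connections is an EZ-category). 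If I have to argue it directly, I would use sections: epis in $\boxcat$ split by faces, so from $x\varphi = y\psi$, choosing a section $\delta$ of $\varphi$ gives $x = y\psi\delta$. Here $\psi\delta$ factors as a mono followed by an epi, and non-degeneracy of $x$ forces the epi part to be an identity, so $x$ is a face of $y$; symmetrically $y$ is a face of $x$, hence they are equal. Uniqueness of $\varphi$ then uses that the absolute pushouts in $\boxcat$ determine epis. This EZ uniqueness is the delicate point, and it is where fondind-ness is used only mildly; the EZ property itself is intrinsic to $\boxcat$.
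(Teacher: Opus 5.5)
Your proposal is correct and follows essentially the paper's route: the paper proves the general Reedy version (\cref{fondind-presheaf-cat-equiv}) from the same explicit description of $i_!$. There the non-degenerate elements are exactly the pairs $(\id, x)$, the adjunction $i_! \dashv N$ has invertible unit (your full faithfulness), and its counit is invertible exactly on presheaves satisfying the Eilenberg--Zilber lemma (your essential surjectivity via $\epsilon$). As you note, the cubical statement omits the EZ condition only because every cubical set with connections satisfies the Eilenberg--Zilber lemma, and both you and the paper take this from the literature rather than reproving it.
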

We note that every monomorphism sends non-degenerate cubes to non-degenerate cubes.

In light of \cref{cseti-cat-equiv}, we borrow all our notation for semicubical sets from the analogous notation for cubical sets.
In particular,
\begin{itemize}
    \item The notation $\cube{n}$ can refer to either the representable functor in $\cSet$ or the representable functor in $\cSeti$.
    In either case, we mean the functor represented by the object $[1]^n$.
    % This is justified since $i_!$ sends the representable on $[1]^n$ to the representable on $[1]^n$.
    \item The notation $\bd \cube{n}$ can refer to an object in $\cSet$ or $\cSeti$.
    In either case, we mean the subobject of the (respective) representable $\cube{n}$ which excludes the non-degenerate $n$-cube $\id[\cube{n}] \from \cube{n} \to \cube{n}$.
    \item The notation $\obox{n}{i, \eps}$ can refer to an object in $\cSet$ or $\cSeti$.
    In either case, we mean the subobject of $\bd \cube{n}$ (respectively) which further omits the non-degenerate $(n-1)$-cube $\face{}{i, \eps} \from \cube{n-1} \to \cube{n}$.
\end{itemize}
In all three cases, the functor $i_!$ sends the respective object in $\cSeti$ to the corresponding object in $\cSet$.

\begin{remark}
    To clarify a potential point of confusion, the functor $i^*$ is \textbf{not} an inverse to $i_!$, because $i_!$ does not admit an inverse that takes values on the entire category of cubical sets.
    Consequently, we \textbf{never} use this functor to translate a cubical set into a semicubical set; instead, we use the inverse functor described in \cref{fondind-presheaf-cat-equiv}.
\end{remark}

The notion of skeleton of a cubical set also makes sense for semicubical sets.
For $n \geq -1$, the $n$-skeleton of a semicubical set $X$ is the subobject whose set of $k$-cubes is given by
\[ (\Sk^n X)_k := \begin{cases}
    X_k & \text{if } k \leq n \\
    \varnothing & \text{otherwise}.
\end{cases} \]
As before, if $m \leq n$ then there is an evident inclusion $\Sk^m X \subseteq \Sk^n X$.
We also have an analogue of \cref{cubical-set-admits-skeletal-filtration}.
\begin{proposition} \label{semicubical-set-admits-skeletal-filtration}
    Let $X$ be a semicubical set.
    For $n \geq 0$, the inclusion $\Sk^{n-1} X \ito \Sk^n X$ forms the right arrow in a pushout square:
    \[ \begin{tikzcd}
        \displaystyle \coprod\limits_{x \in X_n} \bd \cube{n} \ar[r, "{\bd x}"] \ar[d, hook] & \Sk^{n-1} X \ar[d, hook] \\
        \displaystyle \coprod\limits_{x \in X_n} \cube{n} \ar[r, "{x}"] & \Sk^n X
    \end{tikzcd} \]
    % where $(K_n)_{\mathrm{nd}}$ denotes the set of non-degenerate $n$-cubes of $K$.
    Moreover, the diagram
    \[ \Sk^0 X \ito \Sk^1 X \ito \dots \ito \Sk^n X \ito \dots \ito X \]
    is a colimit diagram. \qed
\end{proposition}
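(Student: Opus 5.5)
The plan is to check the pushout square and the colimit statement one dimension at a time. Colimits in $\cSeti$ are computed pointwise, so it suffices to show that for each $k \geq 0$ the square of sets of $k$-cubes is a pushout, and that $X_k = \colim_n (\Sk^n X)_k$.

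First I will describe the $k$-cubes of the objects in the square. For the representable, $(\cube{n})_k = \boxcati([1]^k,[1]^n)$ is the set of injective cube maps, that is, composites of faces. Since every map in $\boxcati$ is a monomorphism and $\boxcati$ is a subcategory of $\boxcat$, there are no degeneracies. Hence the only element of $(\cube{n})_k$ not lying in the image of some face is $\id[[1]^n]$ when $k = n$, and it does not exist when $k \neq n$. Consequently $(\bd\cube{n})_k = (\cube{n})_k$ for $k \neq n$, while $(\bd\cube{n})_n = \varnothing$ and $(\cube{n})_n = \{\id\}$. Both objects are empty for $k > n$.

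Next I will compute degreewise. For $k < n$, the left vertical map is a bijection on $k$-cubes, and so is the right one, since $(\Sk^{n-1}X)_k = X_k = (\Sk^n X)_k$. A square of sets whose two vertical maps are bijections is a pushout. For $k > n$, all four corners are empty. For $k = n$, the left column is $\varnothing \to \coprod_{x \in X_n} \{\ast\}$ and the right column is $\varnothing \to X_n$. The bottom map sends the copy of $\id$ indexed by $x$ to $x$ by Yoneda, so it is a bijection $X_n \to X_n$, and the square is again a pushout. I also need to confirm that the horizontal maps are well defined and land in the stated skeleta. The map $\bd x$ sends a face $\varphi \from [1]^k \to [1]^n$ with $k < n$ to $x\varphi \in X_k$, which lies in $\Sk^{n-1}X$. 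The map $x$ lands in $\Sk^n X$ because all of its cubes have dimension at most $n$. Commutativity holds by naturality.

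Finally, for the colimit statement, in each degree $k$ the sequence $(\Sk^n X)_k$ is empty for $n < k$ and then constantly equal to $X_k$, with identity transition maps. Its colimit is therefore $X_k$, and the colimit cocone is the family of inclusions into $X$.

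I expect no real obstacle here. The only point that needs care is the identification of $\bd\cube{n}$ in $\cSeti$ with the representable $\cube{n}$ minus its top cube, which uses the absence of degeneracies. This is exactly why, unlike \cref{cubical-set-admits-skeletal-filtration}, no restriction to non-degenerate cubes is needed: all of $X_n$ appears in the coproducts.
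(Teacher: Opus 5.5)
Your proof is correct. Colimits in $\cSeti$ are computed degreewise, and every non-identity map in $\boxcati$ into $[1]^n$ factors through some face $\face{}{j,\eta}$; with these two facts, your case analysis $k<n$, $k=n$, $k>n$ establishes both the pushout square and the colimit statement. The paper states this proposition without proof, treating it as standard in parallel with the cubical skeletal filtration, and your degreewise verification is the routine check it leaves to the reader.
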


\subsection{Nerves of graphs}

Graphs and cubical sets are related by a collection of ``nerve and realization'' adjunctions.
More precisely, for each $m \in \NN$, we define a cocubical object $\boxcat \to \Graph$ by sending $[1]^n$ to $\gcube{m}{n}$, the $n$-dimensional cube of length $m$.
Via extension by colimits, we obtain an adjunction
\[ \begin{tikzcd}
    \cSet \ar[r, bend left=20, "{\reali[m]{\uvar}}"{name=T}] \ar[r, bend right=20, "{\gnerve[m]}"'{name=B}, leftarrow] &[+1.8em] \Graph
    \arrow[from=B, to=T, phantom, "\perp"]
\end{tikzcd} \]
whose left adjoint is the \emph{$m$-realization} functor $\reali[m]{\uvar} \from \cSet \to \Graph$ and whose right adjoint is the \emph{$m$-nerve} functor $\gnerve[m] \from \Graph \to \cSet$.
An explicit formula for the $m$-nerve is given by
\[ (\gnerve[m] G)_n := \Graph(\gcube{m}{n}, G). \]

Each of the $m$-realization functors is monoidal.
\begin{proposition} \label{realization-is-monoidal}
    For $m \in \mathbb{N}$, the functor $\reali[m]{\uvar} \from \cSet \to \Graph$ is strong monoidal with respect to the geometric product of cubical sets and the box product of graphs.
\end{proposition}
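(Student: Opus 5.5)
We construct the comparison map on representables and then extend it by colimits using a Day-convolution argument. The geometric product $\gprod$ is the left Kan extension of $([1]^k,[1]^l)\mapsto\cube{k+l}$ along the Yoneda embedding. It is therefore cocontinuous in each variable separately, and for arbitrary $X,Y$ we have
\[ X \gprod Y \cong \colim_{(\cube{k}\to X,\ \cube{l}\to Y)} \cube{k+l}. \]
The functor $\reali[m]{\uvar}$ is a left adjoint, so it preserves colimits. The box product of graphs is closed monoidal, so $G \gtimes \uvar$ and $\uvar \gtimes H$ also preserve colimits. Consequently both $\reali[m]{X \gprod Y}$ and $\reali[m]{X}\gtimes\reali[m]{Y}$ are computed as the same colimit over the category of pairs of cubes in $X$ and $Y$. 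The first has terms $\reali[m]{\cube{k+l}} = \gcube{m}{k+l}$, and the second has terms $\gcube{m}{k}\gtimes\gcube{m}{l}$; the second claim uses that the double colimit of separately cocontinuous functors can be taken jointly.

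It then suffices to give isomorphisms $\gcube{m}{k}\gtimes\gcube{m}{l}\cong\gcube{m}{k+l}$ natural in $([1]^k,[1]^l)\in\boxcat\times\boxcat$. The obvious candidate is the associativity isomorphism of $\gtimes$, namely $((a_1,\dots,a_k),(b_1,\dots,b_l))\mapsto(a_1,\dots,a_k,b_1,\dots,b_l)$. Before checking naturality I need an explicit description of the cocubical graph $[1]^n\mapsto\gcube{m}{n}$ on the generating maps of $\boxcat$:
\begin{itemize}
\item the face $\face{}{i,\eps}$ inserts the coordinate $0$ or $m$ in position $i$;
\item the degeneracy $\degen{}{i}$ drops coordinate $i$;
\item the connections $\conn{}{i,\eps}$ send the coordinates $(x_i,x_{i+1})$ to $\max$ or $\min$, which are graph maps $I_m\gtimes I_m\to I_m$.
\end{itemize}
Each of these acts on a block of consecutive coordinates, as does the corresponding map in $\boxcat$. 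A map $([1]^k\to[1]^{k'},[1]^l\to[1]^{l'})$ acts on the first $k$ coordinates and the last $l$ coordinates independently, which matches exactly how $\boxcat\times\boxcat\to\boxcat$, $([1]^k,[1]^l)\mapsto[1]^{k+l}$, acts. Naturality therefore holds on generators, and hence holds in general.

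Next I take the unit. The unit of $\gprod$ is $\cube{0}$, and $\reali[m]{\cube{0}}=\gcube{m}{0}=I_0$, which is the unit for $\gtimes$. The associativity and unit coherence diagrams reduce, through the same colimit presentation, to the corresponding statements for concatenating coordinate tuples, and those hold on the nose.

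The main obstacle is justifying the colimit interchange. We need that $\reali[m]{X}\gtimes\reali[m]{Y}$ is the colimit over pairs of cubes, which requires $\gtimes$ to preserve colimits in each variable separately. This follows from the closedness of the box product \cite[Prop.~2.5]{kapulkin-kershaw:monoidal}. With that in place, the rest is bookkeeping.
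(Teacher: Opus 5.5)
The paper states this proposition without proof, treating it as standard. Your argument is the usual Day-convolution argument that justifies it, and it is correct: you check that the cocubical graph $[1]^n \mapsto \gcube{m}{n}$ is strong monoidal on $\boxcat$, and you use that $\reali[m]{\uvar}$ is cocontinuous while both $\gprod$ and $\gtimes$ are cocontinuous in each variable separately.

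The separate cocontinuity of $\gprod$ deserves one more line than your ``therefore''. It follows from writing the Kan extension formula as an iterated colimit, $X \gprod Y \cong \colim_{\int X} \colim_{\int Y} \cube{k+l}$. That same formula is what reduces the associativity and unit coherence checks to representables.
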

This implies that their right adjoints $\gnerve[m]$ are lax monoidal, though we do not use this fact.

For $m \geq 1$, we have two maps between interval graphs $\ell, r \from I_{m+1} \to I_m$ which contract the first and last edge, respectively.
That is,
\[ \ell(i) := \begin{cases}
    0 & \text{if } i = 0 \\
    i-1 & \text{otherwise.}
\end{cases} \quad r(i) := \begin{cases}
    m & \text{if } i = m+1 \\
    i & \text{otherwise}.
\end{cases} \]
These maps induce maps $\ell^{\gtimes n}, r^{\gtimes n} \from \gcube{m+1}{n} \to \gcube{m}{n}$, which in turn induce natural transformations $\ell_!, r_! \from \reali[m+1]{\uvar} \Rightarrow \reali[m]{\uvar}$ and $\ell^*, r^* \from \gnerve[m] \Rightarrow \gnerve[m+1]$.
The \emph{nerve functor}, or \emph{$\infty$-nerve functor}, is defined to be the colimit
\[ \gnerve[\infty] := \colim \left( \gnerve[1] \xrightarrow{\ell^*} \gnerve[2] \xrightarrow{r^*} \gnerve[3] \xrightarrow{\ell^*} \dots \right) \]
in the category of functors $\Fun(\Graph, \cSet)$.

The fundamental property of graph nerves is the following:
\begin{theorem}[{\cite[Thm.~4.1]{carranza-kapulkin:cubical-graphs}}] \label{nerve-main-thm}
    For any graph $G$,
    \begin{enumerate}
        \item the nerve $\gnerve[\infty] G$ is a Kan complex; and
        \item for all $m \in \NN$, the maps $\ell^*, r^* \from \gnerve[m] G \to \gnerve[m+1] G$ are acyclic cofibrations in the Grothendieck model structure on $\cSet$.
    \end{enumerate}
\end{theorem}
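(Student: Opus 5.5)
The plan is to prove the three assertions in order of difficulty: first that $\ell^*$ and $r^*$ are monomorphisms, then that $\gnerve[\infty] G$ is Kan, and last that $\ell^*, r^*$ are weak equivalences. By the symmetry $i \mapsto m-i$ of $I_m$, which interchanges $\ell$ and $r$, it suffices to treat $\ell^*$ throughout.

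\textbf{Monomorphisms.} Both $\ell, r \from I_{m+1} \to I_m$ are surjective on vertices. Hence $\ell^{\gtimes n} \from \gcube{m+1}{n} \to \gcube{m}{n}$ is surjective, so precomposition with it is injective on $\Graph(\gcube{m}{n}, G)$. Thus $\ell^*$ is levelwise injective, i.e.\ a cofibration.

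\textbf{Kan property.} The open box $\dfobox$ has finitely many non-degenerate cubes. A map $\dfobox \to \gnerve[\infty] G$ is therefore a filtered colimit datum, and it factors through some $\gnerve[m] G$. Concretely, it is a compatible family of graph maps from the $2n-1$ faces $\gcube{m}{n-1}$ into $G$, i.e.\ a graph map $B_m \to G$, where $B_m \subseteq \gcube{m}{n}$ is the union of the corresponding faces. I would construct, for $M$ large (say $M = 2m$ or $3m$), a graph map $\rho \from \gcube{M}{n} \to B_m$ whose restriction to the open box part $B_M$ equals the composite of reindexing maps $\gcube{M}{n} \to \gcube{m}{n}$ (built from $\ell$'s and $r$'s) restricted to $B_M$. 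This is a discrete ``radial projection'' onto the open box: one pushes the missing face $(i,\varepsilon)$ inwards, using the extra length to absorb the collapse one step at a time so that adjacent vertices go to adjacent or equal vertices. Precomposing the given map with $\rho$ yields an $n$-cube of $\gnerve[M] G$ whose image in $\gnerve[\infty] G$ fills the box. The remaining work is checking that $\rho$ is a graph map, which is a coordinatewise $\ell^\infty$-distance estimate.

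\textbf{Weak equivalence.} Let $j \from I_m \to I_{m+1}$ be $j(i) = i+1$. Then $\ell j = \id$, so $j^* \ell^* = \id$ on $\gnerve[m] G$, and $\ell^*$ is a retract-section. It remains to show that $\ell^* j^*$, which is induced by $j\ell$, is homotopic to the identity on $\gnerve[m+1] G$. The homotopy must be a map $\gnerve[m+1] G \gprod \cube{1} \to \gnerve[m+1] G$ natural in faces, degeneracies, and connections.

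This is the main obstacle. Moving all $n$ coordinates simultaneously by $j\ell$ is not a graph homotopy for the box product, since it requires diagonal edges. My first attempt would avoid a global homotopy and instead check directly that $\ell^*$ induces isomorphisms on homotopy groups. One uses the Kan-ness of $\gnerve[\infty] G$ from the previous step, or compares with a Kan replacement. Any sphere $\bd \cube{k} \to \gnerve[m+1] G$, or relative class, involves finitely many cubes. For such a class one can perform the coordinate-by-coordinate homotopy from $\id$ to $j\ell$ on a single representative, concatenating $k$ length-one steps and then using $\ell$ and $r$ to lengthen the time direction. This shows the class is in the image of $\ell^*$ and that $\ell^*$ is injective on classes. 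If this dimension-dependent argument proves awkward, the fallback is to exhibit $\ell^*$ as a transfinite composite of pushouts of open-box inclusions. One would filter the cubes of $\gnerve[m+1] G$ not in the image by how many coordinates fail to factor through $\ell$, and fill these cubes in order of that filtration.
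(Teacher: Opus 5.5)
The paper does not prove this statement; it is quoted from \cite{carranza-kapulkin:cubical-graphs}. Judged on its own, your injectivity argument and your reduction for the Kan property are fine. The weak-equivalence half of (2), however, is not established, and that is the real content.

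First, $j(i)=i+1$ sends the endpoint $0$ to $1$. So precomposition with $j^{\otimes n}$ does not commute with the faces $\face{}{k,0}$, and $j^*$ is not a cubical map. In fact no graph map $I_m\to I_{m+1}$ preserves both endpoints, so $\ell^*$ has no natural retraction.

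The global homotopy cannot be rescued either. A homotopy $\gnerve[m+1]G\gprod\cube{1}\to\gnerve[M]G$ natural in $G$ is induced by graph maps $h_n\colon\gcube{M}{n+1}\to\gcube{m+1}{n}$ compatible with the cubical operators. Compatibility with degeneracies forces $h_n(x,t)=(h_1(x_1,t),\dots,h_1(x_n,t))$. The edge $(x,t)\sim(x,t+1)$ with $x_1=x_2$ then forces $h_1$ to be independent of $t$, so every such homotopy is constant.

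Your substitute via homotopy groups does not work either. $\gnerve[m]G$ and $\gnerve[m+1]G$ are not Kan, so elements of the homotopy groups of $\lvert\gnerve[m+1]G\rvert$ are not simply represented by maps $\bd\cube{k}\to\gnerve[m+1]G$ modulo cubical homotopy. Moreover, $(j\ell)^{\otimes k}$ does not map $\bd\gcube{m+1}{k}$ into itself (it moves the face $x_1=0$ to $x_1=1$). So neither it nor your coordinatewise homotopy can be applied to a sphere. Kan-ness of $\gnerve[\infty]G$ lets you compute $\pi_k\lvert\gnerve[\infty]G\rvert$ combinatorially. Comparing that with the non-fibrant $\gnerve[m]G$ is precisely the hard step; for $m=1$ it is essentially the conjecture of Babson, Barcelo, de Longueville and Laubenbacher.

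The fallback is only a slogan. To show $\ell^*$ is anodyne you would have to pair the new cubes off, each with a cube of one dimension higher of which it is the missing face of an open box. You would then have to order the pairs so that all other faces appear earlier. Filtering by the number of coordinates in which a cube fails to factor through $\ell$ does not do this: a face taken in a direction where the cube already factors need not have a smaller count.

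For (1), the reduction to a map $\rho$ is sound. Indeed, applied to the unit $\dfobox\to\gnerve[m]\reali[m]{\dfobox}$, Kan-ness of $\gnerve[\infty]G$ for all $G$ is equivalent to the existence of such $\rho$. But constructing $\rho$ is the entire content, and you do not do it.

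Your proposed check is also not the right one. An edge of $\gcube{M}{n}$ must go to a pair of vertices differing in a single coordinate by at most one. A radial projection changes the height and the tangential coordinates at the same time, so a coordinatewise $\ell^\infty$ estimate is not enough.

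The suggested size is also too small. Take $n=2$ and the open box missing the top face. The corners $(0,M)$ and $(M,M)$ lie in that box and are at distance $M$ in $\gcube{M}{2}$. Their images $(0,m)$ and $(m,m)$ are at distance $3m$ in the U-shaped open box of length $m$. So $M\geq 3m$ is forced, and $M=2m$ cannot work.
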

\begin{corollary}[{c.f.\ \cite[Prop.~5.12]{carranza-kapulkin:cubical-graphs}}] \label{nerve-reflects-equivs}
    Let $f \from G \to H$ be a graph map.
    \begin{enumerate}
        \item The following are equivalent:
        \begin{itemize}
            \item $f$ is a weak homotopy equivalence;
            \item for all $m \in \NN \cup \{ \infty \}$, the map $\gnerve[m] f$ is a weak homotopy equivalence;
            \item there exists $m \in \NN \cup \{ \infty \}$ such that the map $\gnerve[m] f$ is a weak homotopy equivalence.
        \end{itemize}
        \item For $n \geq 0$, the following are equivalent:
        \begin{itemize}
            \item $f$ is a $n$-equivalence;
            \item for all $m \in \NN \cup \{ \infty \}$, the map $\gnerve[m] f$ is an $n$-equivalence;
            \item there exists $m \in \NN \cup \{ \infty \}$ such that the map $\gnerve[m] f$ is an $n$-equivalence.
        \end{itemize}
    \end{enumerate}
\end{corollary}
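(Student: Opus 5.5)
The plan is to reduce everything to \cref{nerve-main-thm} together with the known comparison between discrete homotopy groups of $G$ and the homotopy groups of the Kan complex $\gnerve[\infty] G$. The key input, from \cite{carranza-kapulkin:cubical-graphs}, is a natural isomorphism $A_k(G, v) \cong \pi_k(\gnerve[\infty] G, v)$ for all $k \geq 0$ and all vertices $v$, where the right-hand side is computed combinatorially in the Kan complex and hence agrees with $\pi_k(\reali{\gnerve[\infty] G}, v)$. Granting this naturality in graph maps $f \from G \to H$, the map $A_k f$ is identified with $\pi_k \reali{\gnerve[\infty] f}$. Hence $f$ is an $n$-equivalence (resp.\ weak homotopy equivalence) if and only if $\gnerve[\infty] f$ is an $n$-equivalence (resp.\ weak equivalence) of cubical sets.

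Next, to pass between different $m$, I would use part (2) of \cref{nerve-main-thm}. For each $G$, the maps $\ell^*, r^* \from \gnerve[m] G \to \gnerve[m+1] G$ are acyclic cofibrations, and they are natural in $G$. So in the commutative square formed by $\gnerve[m] f$ and $\gnerve[m+1] f$, both horizontal maps are weak equivalences. The canonical map $\gnerve[m] G \to \gnerve[\infty] G$ into the sequential colimit is a transfinite composite of acyclic cofibrations, hence itself an acyclic cofibration. By naturality, we get a commutative square
\[ \begin{tikzcd}
    \gnerve[m] G \ar[r, "\sim"] \ar[d, "\gnerve[m] f"'] & \gnerve[\infty] G \ar[d, "\gnerve[\infty] f"] \\
    \gnerve[m] H \ar[r, "\sim"] & \gnerve[\infty] H
\end{tikzcd} \]
with horizontal weak homotopy equivalences. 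Weak homotopy equivalences are in particular $n$-equivalences, and both classes satisfy 2-out-of-3 since they are defined by isomorphisms on $\pi_0$ and $\pi_k$ of realizations. Therefore $\gnerve[m] f$ is an $n$-equivalence (resp.\ weak equivalence) if and only if $\gnerve[\infty] f$ is.

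The equivalences in (1) and (2) then follow formally. If $f$ is a ($n$-)equivalence, then $\gnerve[\infty] f$ is one by the first paragraph, and so every $\gnerve[m] f$ is one by the square; this gives the first implication. The ``for all'' condition trivially implies ``there exists''. Finally, if $\gnerve[m] f$ is one for some $m$, the square transfers this to $\gnerve[\infty] f$, and the first paragraph gives it back for $f$. Part (1) is the special case ``for all $n$'' of part (2), or can be run verbatim.

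The main obstacle is the first step: ensuring that the isomorphism $A_k(G,v) \cong \pi_k(\gnerve[\infty] G, v)$ is natural in $G$ and holds for all basepoints including $k=0$. If I have to reprove it, I would represent an element of $A_k(G,v)$ by a map $\gcube{M}{k} \to G$ that is constant at $v$ on the boundary, for large enough $M$. This is an $M$-cube of $\gnerve[M] G$, and hence a $k$-cube of $\gnerve[\infty] G$ with boundary at $v$. I would then check that discrete homotopies correspond to cubical homotopies relative to the boundary, using the Kan property of $\gnerve[\infty] G$ to compare with the standard combinatorial description of $\pi_k$.
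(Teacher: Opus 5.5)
Your proposal is correct and follows essentially the paper's argument. The paper likewise cites \cite[Thm.~4.6]{carranza-kapulkin:cubical-graphs} to identify discrete homotopy groups with those of the nerve, then uses \cref{nerve-main-thm} plus 2-out-of-3 to move between the different $m$; this implicitly needs $m \geq 1$, where $\ell^*, r^*$ are defined, since $\gnerve[0] G$ is just the discrete set of vertices. One small slip in your fallback sketch: a map $\gcube{M}{k} \to G$ is a $k$-cube of $\gnerve[M] G$, not an $M$-cube.
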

\begin{proof}
    By \cite[Thm.~4.6]{carranza-kapulkin:cubical-graphs}, the discrete homotopy groups of a graph coincide with the cubical homotopy groups of its nerve.
    This then follows from \cref{nerve-main-thm}.
\end{proof}
% Note that the cubical homology of a graph is known to be invariant under weak equivalence of cubical sets \cite{}.

An immediate observation one can make is that the homology of the cubical set $\gnerve[1] G$ recovers the discrete homology of the graph $G$.
That is, $H_n(G) \cong H_n(\gnerve[1] G)$.
With this, \cref{nerve-reflects-equivs} shows that discrete homology inverts weak homotopy equivalences of graphs.
\begin{corollary} \label{discrete-homology-inverts-equivs}
    Let $f \from G \to H$ be a graph map.
    \begin{enumerate}
        \item If $f$ is a weak equivalence then $H_n f \from H_k G \to H_k H$ is an isomorphism for all $n \geq 0$.
        \item Given $n \geq 0$, if $f$ is a $n$-equivalence then $H_k f \from H_k G \to H_k H$ is an isomorphism for all $k \in \{ 0, \dots, n \}$.
    \end{enumerate}
\end{corollary}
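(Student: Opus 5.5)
The plan is to reduce both parts of \cref{discrete-homology-inverts-equivs} to the corresponding statements about cubical sets, using the identification $H_k(G) \cong H_k(\gnerve[1] G)$. This isomorphism is natural in $G$. The chain complex $C_\bullet G / D_\bullet G$ is, by definition, the cubical chain complex $C_\bullet(\gnerve[1] G)/D_\bullet(\gnerve[1] G)$. Indeed, $n$-cubes of $\gnerve[1] G$ are exactly graph maps $\gcube{1}{n} \to G$. The faces of such a cube are computed by precomposition with the graph face inclusions. Moreover, a cube is degenerate in the cubical sense precisely when it factors through a product projection $\gcube{1}{n} \to \gcube{1}{m}$ with $m<n$, because the degeneracies $\degen{}{i}$ realize to coordinate projections. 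A graph map $f$ induces the cubical map $\gnerve[1] f$ by postcomposition, and this commutes with both the boundary maps and the degenerate subcomplexes. Hence we get a natural isomorphism of chain complexes, and so $H_k f$ is identified with $H_k(\gnerve[1] f)$ for every $k$. One small point to check is that the sign conventions for $\bd_n$ on graphs and on cubical sets agree. If they differ by a global sign per degree, this does not affect kernels and images, so the homology is unchanged.

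For part (1), suppose $f$ is a weak homotopy equivalence. \Cref{nerve-reflects-equivs}(1), applied with $m = 1$, shows that $\gnerve[1] f$ is a weak homotopy equivalence of cubical sets. Then \cref{cubical-homology-inverts-equivs}(1) gives that $H_k(\gnerve[1] f)$ is an isomorphism for all $k$. Transporting this along the natural isomorphism above, $H_k f$ is an isomorphism.

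For part (2), the argument is identical but uses the second halves of both results. If $f$ is an $n$-equivalence, \cref{nerve-reflects-equivs}(2) shows that $\gnerve[1] f$ is an $n$-equivalence. Then \cref{cubical-homology-inverts-equivs}(2) shows that $H_k(\gnerve[1] f)$ is an isomorphism for $0 \le k \le n$.

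The only step requiring care, and the one I expect to be the main obstacle, is the naturality claim identifying the two chain complexes. Specifically, one must verify that the degenerate $n$-cubes of $\gnerve[1] G$ are exactly the maps factoring through a non-identity product projection. This needs the observation that every composite of degeneracies $[1]^n \to [1]^m$ in $\boxcat$ realizes under $\reali[1]{\uvar}$ to a coordinate projection $\gcube{1}{n} \to \gcube{1}{m}$, and conversely that every such projection arises this way. The check reduces to looking at generators, since $\reali[1]{\degen{}{i}}$ simply deletes the $i$-th coordinate.
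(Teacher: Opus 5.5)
Your proof is correct and follows the paper's own argument. The paper notes the identification $H_k(G) \cong H_k(\gnerve[1] G)$ as an immediate observation just before the statement, and then simply combines \cref{nerve-reflects-equivs} (at $m = 1$) with \cref{cubical-homology-inverts-equivs}; your check that the degenerate cubes of $\gnerve[1] G$ are exactly the maps factoring through a coordinate projection (up to a harmless permutation of coordinates) just makes explicit what the paper leaves implicit.
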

\begin{proof}
    Combine \cref{cubical-homology-inverts-equivs} with \cref{nerve-reflects-equivs}.
\end{proof}

Currently, it is an open problem to determine whether the category of graphs admits a model structure whose weak equivalences are the weak equivalences of graphs.
However, the $\infty$-nerve functor can be used to create a \emph{fibration category} structure on the category of graphs.

% \begin{definition}
%     Let $f \from X \to Y$ be a graph map.
%     \begin{enumerate}
%         \item We say $f$ is a \emph{weak equivalence} if, for all $m \in \NN \cup \{ \infty \}$, the map $\gnerve[m] f$ is a weak equivalence of cubical sets.
%         \item For $n \geq 0$, we say $f$ is an \emph{$n$-equivalence} if, for all $m \in \NN \cup \{ \infty \}$, the map $\gnerve[m] f$ is an $n$-equivalence of cubical sets.
%     \end{enumerate}
% \end{definition}
% By \cref{nerve-main-thm}, the map $\gnerve[m] f$ is a weak equivalence (or $n$-equivalence) for all $m \in \NN \cup \{ \infty \}$ if and only if it is a weak equivlaence (respectively, $n$-equivalence) for any single $m \in \NN \cup \{ \infty \}$.

\begin{definition}
    A \emph{fibration category} is a category $\cat{C}$ with two subcategories of \emph{fibrations} and \emph{weak equivalences} such that (in what follows, an \emph{acyclic fibration} is a map that is both a fibration and a weak equivalence):
    \begin{enumerate}
     \item weak equivalences satisfy the two-out-of-three property; that is, given two composable morphisms:
     \[ X \overset{f}\longrightarrow Y \overset{g}\longrightarrow Z \]
     if two of $f, g, gf$ are weak equivalences then all three are;
     \item all isomorphisms are acyclic fibrations;
     \item pullbacks along fibrations exist; fibrations and acyclic fibrations are stable under pullback;
     \item $\cat{C}$ has a terminal object 1; the canonical map $X \to 1$ is a fibration for any object $X \in \cat{C}$ (that is, all objects are \emph{fibrant});
     \item every map can be factored as a weak equivalence followed by a fibration.
    \end{enumerate}
\end{definition}
\begin{example}
    The category $\Kan$ of cubical (or simplicial) Kan complexes admits a fibration category stucture whose fibrations are the Kan fibrations and whose weak equivalences are the weak equivalences of cubical (or simplicial) sets.
\end{example}
\begin{definition}
    A functor $F \from \cat{C} \to \cat{D}$ between fibration categories is \emph{exact} if it preserves fibrations, acyclic fibrations, pullbacks along fibrations, and the terminal object.
\end{definition}
\begin{theorem} \label{graph-fibration-cat}
    The category $\Graph$ of graphs admits a fibration category structure where:
    \begin{itemize}
        \item fibrations are the maps $f \from G \to H$ which are sent to fibrations $\gnerve[\infty] f \from \gnerve[\infty] G \to \gnerve[\infty] H$ under the nerve functor; and
        \item weak equivalences are the weak homotopy equivalences of graphs.
    \end{itemize}
    Moreover, the nerve functor $\gnerve[\infty] \from \Graph \to \cSet$ is exact.
\end{theorem}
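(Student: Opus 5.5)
We transfer the structure along $\gnerve[\infty]$: we declare a graph map a fibration (resp.\ weak equivalence) exactly when its nerve is a Kan fibration (resp.\ weak homotopy equivalence). Exactness of $\gnerve[\infty]$ then holds by definition once we show it preserves the terminal object and pullbacks, which we prove along the way. By \cref{nerve-reflects-equivs}, weak equivalences defined via the nerve coincide with weak homotopy equivalences of graphs, so the stated description is consistent.

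We verify the axioms in order.
\begin{enumerate}
  \item Two-out-of-three holds because weak equivalences are created by a functor from a class satisfying it.
  \item Isomorphisms go to isomorphisms, hence to acyclic fibrations.
  \item $\Graph$ has all limits. Each $\gnerve[m] = \Graph(\gcube{m}{\bullet},-)$ is a right adjoint and so preserves limits. $\gnerve[\infty]$ is a sequential colimit of these, and filtered colimits commute with finite limits in $\cSet$. Hence $\gnerve[\infty]$ preserves pullbacks and the terminal object: $I_0$ is sent to $\cube{0}$, since each $\gnerve[m] I_0 = \cube{0}$. Stability of fibrations and acyclic fibrations under pullback then follows from the corresponding facts in $\Kan$, where acyclic fibrations are pullback-stable by right properness or by their lifting characterization.
  \item Every object is fibrant because $\gnerve[\infty] G$ is Kan by \cref{nerve-main-thm}.
\end{enumerate}

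The main work is the factorization axiom, since we cannot simply factor in $\cSet$ and pull back to graphs. We use the path object construction. For $f\colon G\to H$, set
\[ P_f := G \times_H H^{I_\infty}_{\text{fin}}, \]
where the second factor is the graph of paths in $H$ that are eventually constant in both directions. Concretely, it is the colimit over $m$ of the exponentials $H^{I_m}$ in the closed monoidal structure of the box product, along the maps induced by $\ell, r$. The factorization is $G \to P_f \to H$: the first map is constant paths, the second evaluates at the far end.

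We must show:
\begin{enumerate}
  \item $G\to P_f$ is a weak equivalence. It admits a retraction $P_f\to G$, and contracting paths gives a homotopy of graphs; graph homotopies become cubical homotopies after $\gnerve[\infty]$.
  \item $P_f\to H$ is sent to a Kan fibration.
\end{enumerate}
For (2) we identify $\gnerve[\infty](H^{I_\infty}_{\text{fin}})$ with a path object for $\gnerve[\infty]H$ and reduce to the fact that the endpoint-evaluation map from a path object of a Kan complex is a fibration. Alternatively, we could lift open boxes directly by concatenating cubes of length $m$ into cubes of length $m'$, as in the proof of \cref{nerve-main-thm}. This is the step we expect to be the obstacle, and we would try the direct lifting argument first, reusing the combinatorics of \cite{carranza-kapulkin:cubical-graphs}. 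Since this is essentially the content of \cite{kapulkin-mavinkurve:n-types} for $\gnerve[\infty]$, we may ultimately cite that construction rather than redo it.
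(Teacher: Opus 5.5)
Your handling of the formal axioms is correct. Each $\gnerve[m]$ is a right adjoint, and sequential colimits commute with finite limits in $\cSet$, so $\gnerve[\infty]$ preserves pullbacks and the terminal object. Pullback stability is then inherited from $\Kan$, and fibrancy is \cref{nerve-main-thm}.

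The gap is the factorization axiom, which is the nontrivial content of the theorem. The paper states it without proof, relying on the earlier work \cite{carranza-kapulkin:cubical-graphs}. The decisive claim is that $\gnerve[\infty]$ of the evaluation map $H^{I_\infty}_{\mathrm{fin}} \to H \times H$, and hence of $P_f \to H$, is a Kan fibration. Your proposal never proves this.
\begin{itemize}
\item ``Identifying $\gnerve[\infty](H^{I_\infty}_{\mathrm{fin}})$ with a path object'' is circular: being a path object means exactly that this map is a fibration.
\item You also cannot simply import the claim from the cubical path object $\underline{\mathrm{Hom}}(\cube{1},\gnerve[\infty]H)$. The functor $\gnerve[\infty]$ is only lax monoidal, and an $n$-cube of $\gnerve[\infty](H^{I_\infty}_{\mathrm{fin}})$ is represented by a graph map $\gcube{k}{n}\gtimes I_M\to H$ with $k$ and $M$ independent, not by a map $\gcube{k}{n+1}\to H$.
\end{itemize}
So the lifting problems against $\obox{n}{i,\eps}\ito\cube{n}$ must be solved directly on such graph maps. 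That takes a combinatorial argument of the same kind as the proof that $\gnerve[\infty]G$ is Kan. Your proposal names this as the obstacle and then leaves it open or defers to a citation, so as it stands only the easy axioms are established.

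You should also spell out the routine reduction from the two-sided map to the one-sided one. The map $P_f\to G\times H$ is the pullback of $H^{I_\infty}_{\mathrm{fin}}\to H\times H$ along $f\times\mathrm{id}$. The projection $G\times H\to H$ is a fibration because $\gnerve[\infty]$ preserves products and $\gnerve[\infty]G$ is Kan.

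There is also an error in the step showing $G\to P_f$ is a weak equivalence ``because contracting paths gives a homotopy of graphs.'' A graph homotopy is a map $P_f\gtimes I_m\to P_f$ for a single finite $m$. Paths in $H^{I_\infty}_{\mathrm{fin}}$ have unbounded length, and each value $\gamma(s)$ must reach $\gamma(-\infty)$ in at most $m$ steps. For $f\colon I_0\to I_\infty$, no finite $m$ works.

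The fix is to filter $P_f$ by the induced subgraphs $P_f^{(T)}$ of pairs $(v,\gamma)$ with $\gamma$ constant outside $[-T,T]$. Set $\gamma_{\le t}(s):=\gamma(\min(s,t))$. On $P_f^{(T)}$, the assignment $\big((v,\gamma),t\big)\mapsto(v,\gamma_{\le t})$ for $t=-T,\dots,T$ is an honest homotopy of length $2T$. It runs from the composite $P_f^{(T)}\to G\to P_f^{(T)}$ to the identity, so $G\to P_f^{(T)}$ is a homotopy equivalence. Since cubes are finite graphs and each $P_f^{(T)}$ is induced, $\gnerve[\infty]P_f$ is the union of the $\gnerve[\infty]P_f^{(T)}$. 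A sequential colimit of weak equivalences along monomorphisms is a weak equivalence, which finishes this step.
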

% Given a graph $G$ and $m \in \NN$, the counit map $G \to \reali[m]{\gnerve[m]{G}}$ is an example of an acyclic fibration.
An example of an acyclic fibration of graphs is the counit map $\reali[m]{\gnerve[m]{G}} \to G$.
\begin{proposition} \label{counit-acyclic-fib}
    For any graph $G$ and $m \in \NN$, the counit map $\eps_{G} \from \reali[m]{\gnerve[m]{G}} \to G$ is an acyclic fibration.
\end{proposition}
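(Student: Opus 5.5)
The plan is to verify directly that $\gnerve[\infty]\eps_G$ has the right lifting property against all boundary inclusions $\bd\cube{k} \ito \cube{k}$. In the Grothendieck model structure these maps generate the cofibrations, so their right lifting class consists of the acyclic Kan fibrations. By \cref{graph-fibration-cat} and \cref{nerve-reflects-equivs}, this shows at once that $\eps_G$ is a fibration and a weak homotopy equivalence of graphs.

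\textbf{Step 1: understand the realization.} First I will describe $\reali[m]{\gnerve[m] G}$ explicitly. It is the colimit of copies of $\gcube{m}{n}$, one for each graph map $x \from \gcube{m}{n} \to G$, glued along faces and degeneracies. Each vertex is therefore represented by a pair $(x, \vec{p})$ with $\vec{p} \in \gcube{m}{n}$, and the counit sends it to $x(\vec{p})$. Two observations will be needed repeatedly.
\begin{itemize}
  \item Each vertex $v \in G$ gives a $0$-cube of $\gnerve[m] G$, hence a canonical vertex $\hat v$ of the realization with $\eps_G(\hat v) = v$.
  \item Every vertex $(x,\vec{p})$ is joined to $\hat{x(\vec{p})}$ by a path lying inside the image of the cube $x$, namely along the cube $x$ restricted to a degenerate sub-cube ending at $\vec p$, collapsed so that all its vertices map to $x(\vec p)$.
\end{itemize}
Similarly, an edge $v \sim w$ of $G$ gives a $1$-cube, namely the map $I_m \to G$ obtained by composing the edge with the collapse map $I_m \to I_1$. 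This produces a length-$m$ path $\hat v \rightsquigarrow \hat w$ lying over the edge $v\sim w$ up to degeneracy.

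\textbf{Step 2: the lifting problem.} A lifting problem for $\gnerve[\infty]\eps_G$ against $\bd\cube{k} \ito \cube{k}$ is represented at some finite stage by the following data:
\begin{itemize}
  \item a graph map $g \from \gcube{M}{k} \to G$;
  \item a map $b$ from the boundary of $\gcube{M}{k}$ into $\reali[m]{\gnerve[m] G}$ lying over $g$.
\end{itemize}
Because $\gnerve[\infty]$ is a colimit along $\ell^*, r^*$, I am free to subdivide, that is, to replace $M$ by a larger length. I will construct the lift in three stages:
\begin{enumerate}
  \item Enlarge $\gcube{M}{k}$ by a collar of width $m$ around its boundary.
  \item On the collar, use the paths of Step 1 to deform each boundary vertex $b(\vec i)$ to the canonical vertex $\hat{g(\vec i)}$.
  \item On the interior, define the lift by sending each unit sub-cube $\gcube{1}{k} \to G$ of $g$ to the corresponding $k$-cube of $\gnerve[m] G$, precomposed with $\gcube{m}{k} \to \gcube{1}{k}$. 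This gives a map from an $m$-fold subdivision of the interior into the realization.
\end{enumerate}
Adjacent unit sub-cubes share faces, so the interior pieces glue by the face relations in the colimit. By construction everything lies over $g$, composed with the reparametrizations $\ell, r$.

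\textbf{Main obstacle.} The hardest step is the collar in stage 2. The deformations of the boundary vertices must be chosen coherently, so that adjacent boundary vertices remain adjacent throughout the deformation. They must also stay over $g$ after reparametrizing by the maps $\ell^{\gtimes k}, r^{\gtimes k}$.

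I would first try to do this uniformly, by a natural graph homotopy $\reali[m]{\gnerve[m] G} \gtimes I_m \to \reali[m]{\gnerve[m] G}$ over $G$ from the identity to a map factoring through the canonical vertices. Such a homotopy would be defined cube by cube, using the connection maps in $\boxcat$ to contract each $\gcube{m}{n}$ to its vertex $\vec p$ while staying inside the image of $x$.

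If that naturality fails, the fallback is an induction over the faces of the boundary, of increasing dimension. At each stage one uses that $\reali[m]{\uvar}$ preserves pushouts to extend the collar one face at a time.
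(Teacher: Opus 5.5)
Reducing the claim to lifting $\gnerve[\infty]\eps_G$ against every $\bd\cube{k}\ito\cube{k}$ is right, and the stretching maps $\ell^*,r^*$ do supply collars. The problem is that the filler you build does not lie over the given cube. The transition maps $\ell^*,r^*$ only duplicate the outermost slab in each direction; they never subdivide. Your stage~3 sends each unit sub-cube of $g$ to a $k$-cube of $\gnerve[m]G$ precomposed with a collapse $\gcube{m}{k}\to\gcube{1}{k}$. Its composite with $\eps_G$ is therefore $g$ precomposed with a map $\gcube{mM}{k}\to\gcube{M}{k}$ that repeats interior vertices, which is a different $k$-cube of $\gnerve[\infty]G$. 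The same mismatch appears on the inner boundary: you would need a subdivision of $b$, which a collar that is a product in the normal direction cannot produce. The interior itself is easy to repair. Stretch until the side length is a multiple of $m$, then tile $g$ by its windows of side $m$. These windows are $k$-cubes of $\gnerve[m]G$, and their realizations glue along common faces to a section over $g$ on the nose.

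The real problem is the collar. Write $[x,\vec p]$ for the vertex given by a point $\vec p$ of a cube $x\colon\gcube{m}{n}\to G$, and let $S[x,\vec p]$ be the set of values of $x$ on the smallest face of $\gcube{m}{n}$ containing $\vec p$. This set has four properties. It is well defined on the colimit. It contains $\eps_G[x,\vec p]$. For $m\ge 2$, adjacent vertices come from nested faces and so have nested value sets. Canonical vertices have singleton value sets. Hence for $m\ge2$ distinct canonical vertices are never adjacent. Two consequences follow.
\begin{itemize}
\item Stage~2 cannot end with adjacent boundary points at $\hat{g(\vec i)}$ whenever $g$ is nonconstant on a boundary edge.
\item Your uniform homotopy cannot exist: being over $G$, its end map would be $a\mapsto\hat{\eps_G(a)}$, which is not a graph map.
\end{itemize}
Step~1 also fails as stated. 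Take $m=2$, $G=I_1$ and $y=(0,1,0)$. The vertex $[y,1]$ lies over $1$, but the image of $\reali[2]{y}$ is $\{\hat 0,[y,1]\}$, which does not contain $\hat 1$. A fiberwise path must therefore use new cubes. One choice is the $(n+1)$-cube whose bottom slice is $x$ and whose other slices are $x\circ\kappa$, where $\kappa$ moves one coordinate one step towards $\vec p$; iterating this reaches the constant cube at $x(\vec p)$. What you actually need is a fiberwise homotopy, with $g|_{\bd}$ held fixed, from $b$ to the restriction of the tiled section. It must be compatible across adjacent boundary vertices whose representing cubes may be unrelated. That is the entire content of the statement, and your proposal does not supply it; the ``induction over faces'' fallback would have to be made precise exactly here.

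For comparison, the paper works at a fixed level $m$. It reduces to $\gnerve[m](\eps_G)$ being a trivial fibration and takes $\reali[m]{x}$ as the filler via the universal property of the counit. Do not borrow that shortcut. Uniqueness in the universal property only controls boundary maps of the form $\reali[m]{y}$, and at finite level the lifting genuinely fails. Take $m=2$, $G=I_4$, $x=(1,2,3)$, and boundary vertices $a_1=[(0,1,0),1]$ and $a_2=[(4,3,4),1]$. A filler needs a vertex $c$ over $2$ adjacent to both.
\begin{itemize}
\item Since $2\in S(c)$ but $2\notin S(a_1)$, the face representing $c$ has as a facet a face carrying the values $\{0,1\}$. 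Every point of that face is within distance $2$ of the facet, so $S(c)\subseteq\{0,1,2,3\}$.
\item Adjacency to $a_2$, again using $2\notin S(a_2)$, forces $\{3,4\}\subseteq S(c)$.
\end{itemize}
These are incompatible, so no filler exists. Working at $\gnerve[\infty]$ with stretching collars is therefore necessary, and the coherent fiberwise deformation is the crux. Note also that the real content is $m\ge 2$: for $m=1$ the counit is an isomorphism, and for $m=0$ the statement is false because $\reali[0]{\gnerve[0]G}$ is discrete.
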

\begin{proof}
    It suffices to show $\gnerve[m](\eps_G) \from \gnerve[m]{ \big( \reali[m]{\gnerve[m]{G}} \big)} \to \gnerve[m]{G}$ is an acyclic fibration (cf.\ \cite[Thm.~5.25]{carranza-kapulkin:cubical-graphs}).
    To this end, fix a commutative square
    \[ \begin{tikzcd}
        \bd \cube{n} \ar[r] \ar[d, hook] & \gnerve[m]{\big( \reali[m]{\gnerve[m]{G}} \big)} \ar[d, "{\gnerve[m]{\eps_G}}"] \\
        \cube{n} \ar[r] & \gnerve[m]{G}
    \end{tikzcd} \]
    It suffices to find a lift of the transposed square
    \[ \begin{tikzcd}
        \reali[m]{\bd \cube{n}} \ar[r] \ar[d, hook] & {\reali[m]{\gnerve[m]{G}}} \ar[d, "{\eps_G}"] \\
        \reali[m]{\cube{n}} \ar[r] & G
    \end{tikzcd} \]
    % Note that $\gcube{m}{n} \cong \reali[m]{\cube{n}}$.
    The universal property of the counit $\eps_G$ yields a unique map $\cube{n} \to \gnerve[m]{G}$ whose $m$-realization makes the bottom triangle in
    \[ \begin{tikzcd}
        \reali[m]{\bd \cube{n}} \ar[r] \ar[d, hook] & {\reali[m]{\gnerve[m]{G}}} \ar[d, "{\eps_G}"] \\
        \reali[m]{\cube{n}} \ar[r] \ar[ur, dotted] & G
    \end{tikzcd} \]
    commute.
    The uniqueness can be used to deduce that the upper triangle commutes as well.
\end{proof}
\begin{corollary} \label{unit-graph-weq}
    For any graph $G$ and $m \in \NN$, the maps 
    \[ \eta_{\gnerve[m] G} \from \gnerve[m]{G} \to \gnerve[m]{\big( \reali[m]{\gnerve[m]{G}} \big)} \quad\text{and}\quad \reali[m]{\eta_{G}} \from \reali[m]{G} \to \reali[m]{\gnerve[m]{\reali[m]{G}}} \] 
    are weak equivalences.
\end{corollary}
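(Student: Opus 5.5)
The plan is to derive both statements from the triangle identities of the adjunction $\reali[m]{\uvar} \adj \gnerve[m]$, combined with \cref{counit-acyclic-fib} and the two-out-of-three property. In the second map the object $G$ should be read as a cubical set, say $X$, since $\reali[m]{\uvar}$ is applied to it. So the second claim is about $\reali[m]{\eta_X} \from \reali[m]{X} \to \reali[m]{\gnerve[m]{\reali[m]{X}}}$, and it asserts that this is a weak homotopy equivalence of graphs.

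For the first map, I use the triangle identity
\[ \gnerve[m](\eps_G) \circ \eta_{\gnerve[m] G} = \id[\gnerve[m] G]. \]
By \cref{counit-acyclic-fib}, $\eps_G$ is an acyclic fibration of graphs, so in particular it is a weak homotopy equivalence. By \cref{nerve-reflects-equivs}(1), the map $\gnerve[m](\eps_G)$ is then a weak homotopy equivalence of cubical sets. The identity is also a weak equivalence. Since weak equivalences in the Grothendieck model structure satisfy two-out-of-three, $\eta_{\gnerve[m] G}$ is a weak equivalence.

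For the second map, I use the other triangle identity
\[ \eps_{\reali[m]{X}} \circ \reali[m]{\eta_X} = \id[\reali[m]{X}]. \]
Applying \cref{counit-acyclic-fib} to the graph $\reali[m]{X}$ shows that $\eps_{\reali[m]{X}}$ is a weak homotopy equivalence of graphs. Weak homotopy equivalences of graphs satisfy two-out-of-three: this is part of \cref{graph-fibration-cat}, and it can also be seen directly, because they are defined by isomorphisms on all $A_k$ at all basepoints and $A_k$ is functorial. Hence $\reali[m]{\eta_X}$ is a weak homotopy equivalence.

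I do not expect a genuine obstacle here. The only points needing care are, first, reading the second statement with a cubical set in place of $G$, and second, passing from ``acyclic fibration of graphs'' to ``$\gnerve[m]$ of it is a weak equivalence''. The latter holds because acyclic fibrations of graphs are in particular weak homotopy equivalences, to which \cref{nerve-reflects-equivs} applies for every $m$.
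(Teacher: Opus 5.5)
Your proof is correct and follows the paper's argument: by the two triangle identities, each map is a section of a counit or of the nerve of a counit, these are weak equivalences by \cref{counit-acyclic-fib}, and two-out-of-three finishes the proof. You were also right to read the second map with a cubical set $X$ in place of $G$, since $\reali[m]{\uvar}$ is only defined on cubical sets.
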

\begin{proof}
    The counit of the realization-nerve adjunction is a weak equivalence by \cref{counit-acyclic-fib}.
    This now follows from 2-out-of-3 and the triangle identities. 
\end{proof}

A fundamental open problem in the field of discrete homotopy theory is the \emph{discrete homotopy hypothesis}, which says that the homotopy theories of graphs and topological spaces are equivalent, and that this equivalence is given by the nerve functor.
More precisely, recall that the \emph{homotopy category} of any category $\cat{C}$ with weak equivalences $W \subseteq \operatorname{Mor} \cat{C}$ is the universal category $\cat{C}[W^{-1}]$ equipped with a functor $\gamma \from \cat{C} \to \cat{C}[W^{-1}]$ that inverts weak equivalences.
In particular, the \emph{homotopy category} of a fibration category $\cat{C}$ (\cite[{\S}1.2]{brown}) refers to its localization at weak equivalences, which we denote by $\Ho \cat{C}$.
\begin{definition} \label{def:weq-fib-cat}
    An exact functor $F \from \cat{C} \to \cat{D}$ between fibration categories is a \emph{weak equivalence} of fibration categories if it induces an equivalence of homotopy categories $\Ho \cat{C} \to \Ho \cat{D}$.
\end{definition}
\begin{conjecture}[Discrete Homotopy Hypothesis] \label{conj:nerve}
    The nerve functor $\gnerve[\infty] \from \Graph \to \cSet$ is a weak equivalence of fibration categories.
\end{conjecture}
\begin{remark}
    For readers familiar with the theory of $(\infty, 1)$-categories, we highlight that if an exact functor $F \from \cat{C} \to \cat{D}$ between fibration categories is a weak equivalence in the sense of \cref{def:weq-fib-cat} then it is a Dwyer--Kan equivalence between categories with weak equivalences \cite[Cor.~7.6.11]{cisinski:higher-categories}.
    
    That is to say, if the nerve conjecture holds, then the functor $\gnerve[\infty] \from \Graph \to \cSet$ ascends to an equivalence between the $(\infty, 1)$-category of graphs localized at weak equivalences and the $(\infty, 1)$-category of spaces/$\infty$-groupoids.
    As a result, any theorem that holds in the $\infty$-category of spaces (e.g.\ the Blakers--Massey theorem, the Mayer--Vietoris long exact sequence, representability of cohomology) would have an analogue for graphs.
\end{remark}
This paper represents a step towards proving this conjecture.

Note that weak equivalences between fibrations categories can be characterized as those functors which satisfy the \emph{right approximation properties}.
\begin{definition}[{\cite[{\S}3.6]{cisinski:categories-derivables}}]
    An exact functor $F \from \cat{C} \to \cat{D}$ between fibration categories satisfies the \emph{right approximation properties} if:
    \begin{enumerate}
        \item[AP1.] $F$ reflects weak equivalences; and
        \item[AP2.] For any object $X \in \cat{C}$ and morphism $Y \to FX$ in $\cat{D}$, there exists a morphism $X' \to X$ in $\cat{C}$ which fits into a commutative diagram
        \[ \begin{tikzcd}
            Y \ar[r] & FX \\
            Y' \ar[u, "\sim"] \ar[r, "\sim"] & FX' \ar[u]
        \end{tikzcd} \]
        where both maps out of $Y'$ are weak equivalences.
    \end{enumerate}
\end{definition}
\begin{proposition}{\cite[Thm.~3.19]{cisinski:categories-derivables}} \label{AP-properties-suffices}
    An exact functor between fibration categories is a weak equivalence if and only if it satisfies the right approximation properties. \qed
\end{proposition}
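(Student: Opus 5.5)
The plan is to work with Brown's description of the homotopy category of a fibration category. Every morphism $X \to Z$ in $\Ho \cat{C}$ is represented by a span $X \xleftarrow{\sim} X' \to Z$ whose left leg is an acyclic fibration. Two such spans represent the same morphism exactly when, after refining along a common acyclic fibration, the two legs into $Z$ become right homotopic through a path object $PZ \to Z \times Z$. I will also use two facts about an exact functor $F$. First, by Ken Brown's lemma it preserves all weak equivalences between (necessarily fibrant) objects. Second, it sends path object factorizations $Z \weto PZ \to Z \times Z$ to path object factorizations, since it preserves weak equivalences, fibrations and finite products (products being pullbacks over the terminal object). Hence $\Ho F$ is well defined and compatible with these span representatives.

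For the implication from the approximation properties to weak equivalence, I will check that $\Ho F$ is essentially surjective, full and faithful, each time applying AP2 to a suitably chosen target.
\begin{enumerate}
  \item \emph{Essential surjectivity.} Given $Y \in \cat{D}$, apply AP2 to $Y \to 1 = F(1)$. This produces $Y \weot Y' \weto FX'$, so $Y \cong FX'$ in $\Ho \cat{D}$.
  \item \emph{Fullness.} A morphism $FX \to FZ$ in $\Ho\cat{D}$ is represented by a span $FX \xleftarrow{\sim} Y \to FZ$. Apply AP2 to the induced map $Y \to FX \times FZ \cong F(X \times Z)$. This gives $X' \to X \times Z$ together with weak equivalences $Y \weot Y' \weto FX'$ over $F(X \times Z)$. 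By 2-out-of-3 the composite $FX' \to FX$ is a weak equivalence, so AP1 shows that $X' \to X$ is one. The span $X \weot X' \to Z$ is therefore a morphism of $\Ho\cat{C}$, and its image under $\Ho F$ is the given morphism.
  \item \emph{Faithfulness.} After passing to a common refinement, it suffices to treat two maps $f, g \from X \to Z$ with $Ff = Fg$ in $\Ho\cat{D}$. Then there are a weak equivalence $Y \weto FX$ and a right homotopy $Y \to F(PZ)$ between the restrictions of $Ff$ and $Fg$. Form the pullback $Q = X \times_{Z \times Z} PZ$ along the fibration $PZ \to Z\times Z$; $F$ preserves it, so the homotopy is a map $Y \to FQ$. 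Applying AP2 to this map and then AP1 yields $X'' \to Q$ whose composite $X'' \to X$ is a weak equivalence. Along this map $f$ and $g$ become right homotopic, so $f = g$ in $\Ho\cat{C}$.
\end{enumerate}

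For the converse, assume $\Ho F$ is an equivalence. AP1 follows once weak equivalences in a fibration category are known to be saturated, i.e.\ exactly the maps that become isomorphisms in $\Ho$. I will prove saturation by the standard argument: factor the map as a weak equivalence followed by a fibration, then show that a fibration which becomes invertible in $\Ho$ is acyclic, using a homotopy section and the homotopy lifting available in a fibration category. For AP2, start from $Y \to FX$. Essential surjectivity gives an isomorphism $Y \cong FX_0$ in $\Ho\cat{D}$, and fullness gives a morphism $X_0 \to X$ in $\Ho\cat{C}$ inducing the composite $FX_0 \cong Y \to FX$. Represent it by a span $X_0 \weot X' \to X$. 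It then remains to realize the resulting equality $FX' \cong Y$ in $\Ho\cat{D}$, which lives over $FX$, as an honest zigzag $Y \weot Y' \weto FX'$ commuting strictly over $FX$. To do this I will replace $FX' \to FX$ by a fibration, using the factorization in $\cat{C}$ and exactness of $F$. A homotopy-commutative triangle over $FX$ can then be straightened by the homotopy lifting property of fibrations, with $Y'$ obtained as a pullback of path objects.

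The main obstacle is the converse direction, specifically two points. One is the saturation statement needed for AP1. The other is the rectification in AP2, where a triangle that commutes only in $\Ho\cat{D}$ must be turned into a strictly commuting one using only the lifting properties available in a fibration category. The first direction is a careful but routine use of the span calculus.
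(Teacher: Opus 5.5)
Your ``if'' direction is sound, and it is the only direction the paper uses (the paper gives no proof of its own; it just cites Cisinski). One step there needs an extra sentence. Brown's description only produces a right homotopy through \emph{some} path object $P'$ for $FZ$, not through $F(PZ)$. You fix this by factoring $FZ \to P' \times_{FZ\times FZ} F(PZ)$ as a weak equivalence followed by a fibration. This gives a path object with acyclic fibrations to both $P'$ and $F(PZ)$, and you then refine $Y$ once more.

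The genuine gap is in the converse, and no better argument can close it. The paper's axioms give only 2-out-of-3, and with that alone weak equivalences need not be saturated. In particular, your lemma that a fibration invertible in $\Ho$ is acyclic is false. Here is a counterexample.
\begin{itemize}
\item Let $\cat{D}$ be the category of finite-dimensional $\mathbb{Q}$-vector spaces. Take the surjections as fibrations, and as weak equivalences the linear maps $f\colon A\to B$ with $\dim A\equiv\dim B \pmod 2$.
\item All the axioms hold. The acyclic fibrations are the surjections with even-dimensional kernel, which is a pullback-stable condition. Any $f$ factors as $A\to A\oplus B\oplus B\to B$, via $a\mapsto(a,0,0)$ and $(a,b,b')\mapsto f(a)+b$.
\item The path object $Y\to Y^{3}$, $y\mapsto(y,y,0)$, with $d_0,d_1$ the first two projections, shows that $0=\mathrm{id}_Y$ in $\Ho\cat{D}$. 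Hence $\Ho\cat{D}$ is equivalent to the terminal category.
\item So $p\colon\mathbb{Q}\to 0$ is a fibration with a strict section $\sigma$, and $\sigma p$ is a weak equivalence. Thus $p$ is invertible in $\Ho\cat{D}$, yet $p$ is not a weak equivalence. This is exactly where your argument stalls: you would need to deduce that $p$ is a weak equivalence from the fact that $\sigma p$ is.
\end{itemize}

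Consequently the ``only if'' direction fails as literally stated with the paper's definition:
\begin{itemize}
\item The inclusion of the one-object fibration category $\{0\}$ into $\cat{D}$ is exact and induces an equivalence of homotopy categories. It fails AP2, because weak equivalences preserve the parity of dimension, so no span of weak equivalences joins $\mathbb{Q}$ to $0$.
\item The identity functor from $\cat{D}$ to the same category with all maps as weak equivalences is also exact and induces an equivalence. It fails AP1.
\end{itemize}

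So the converse needs an extra axiom in both source and target. One option is weak saturation: if $ri=\mathrm{id}$ and $ir$ is a weak equivalence, then $r$ is a weak equivalence. Another is 2-out-of-6. Your AP2 rectification also depends on this, since you need the span representing $Y\cong FX'$ in $\Ho\cat{D}$ to consist of weak equivalences. With that hypothesis added, your outline goes through.
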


For any fixed $n$, the $n$-equivalences in $\Graph$ and $\cSet$ both participate in fibration category structures, and the nerve functor $\gnerve[\infty]$ is once again an exact functor.

Given $n \geq 0$, let $\cSet_n$ denote the full subcategory of $\cSet$ consisting of those cubical sets which are fibrant in the transferred model structure for $n$-types.
Note that every Kan complex is contained in $\cSet_n$.
\begin{theorem}
    For $n\geq 0$, the category $\cSet_n$ of cubical Kan complexes admits a fibration category structure where:
    \begin{itemize}
        \item fibrations are the cubical maps $f \from X \to Y$ such that $\cosk^{n+1} f \from \cosk^{n+1} X \to \cosk^{n+1} Y$ is a Kan fibration; and
        \item weak equivalences are the $n$-equivalences.
    \end{itemize}
\end{theorem}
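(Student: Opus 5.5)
Although the statement calls $\cSet_n$ the category of cubical Kan complexes, I will prove it for the category as defined just before it: the full subcategory of $\cSet$ of objects fibrant in the transferred model structure for $n$-types of \cref{transferred-model-structure}, which contains every Kan complex. I do not prove the version literally restricted to Kan complexes. The reason is that a pullback of Kan complexes along a map $f$ for which only $\cosk^{n+1} f$ is a Kan fibration is fibrant in the transferred structure but need not itself be Kan. Within this setting, the plan is Brown's classical observation that the fibrant objects of a model category, with its fibrations and weak equivalences, form a fibration category. I will verify the five axioms one by one, using \cref{transferred-model-structure}, \cref{n-equiv-quillen-pairs}, and the fact that $\cosk^{n+1}$ is a right adjoint.

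Two-out-of-three is immediate, since $n$-equivalences are defined by bijections on $\pi_0$ and isomorphisms on $\pi_k$ for $k \le n$ after realization, and these are functorial. Isomorphisms are acyclic fibrations because $\cosk^{n+1}$ sends an isomorphism to an isomorphism, which is a Kan fibration and an $n$-equivalence. The terminal object $\cube{0}$ is Kan and lies in $\cSet_n$. For any $X$ in $\cSet_n$, the map $X \to \cube{0}$ is a fibration precisely because $\cosk^{n+1} X \to \cosk^{n+1} \cube{0} \cong \cube{0}$ is a Kan fibration, which is the definition of fibrancy in the transferred structure.

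For pullbacks, take a fibration $f \from X \to Y$ and any map $Z \to Y$ in $\cSet_n$, and form $P = X \times_Y Z$ in $\cSet$. Since $\cosk^{n+1}$ is a right adjoint, it preserves this pullback. Hence $\cosk^{n+1}(P \to Z)$ is a pullback of the Kan fibration $\cosk^{n+1} f$, so it is a Kan fibration; the same argument applied to $P \to \cube{0}$ shows $P \in \cSet_n$. If $f$ is moreover an $n$-equivalence, then $f$ is an acyclic fibration of the transferred model structure, and these are stable under pullback in any model category, so $P \to Z$ is again an acyclic fibration.

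For factorization, I apply the functorial factorization of the transferred model structure: any $X \to Y$ factors as an acyclic cofibration $X \to W$ followed by a fibration $W \to Y$. Since $Y$ is fibrant, $W$ is fibrant, so $W \in \cSet_n$, and $X \to W$ is an $n$-equivalence. This gives the required factorization into a weak equivalence followed by a fibration. The step needing most care is the identification used in the pullback argument: that fibrations and acyclic fibrations between objects of $\cSet_n$ are exactly those of the transferred model structure. This holds by the description in \cref{transferred-model-structure}, which takes $n$-equivalences as its weak equivalences, so no further argument is required there.
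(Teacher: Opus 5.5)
Your proposal is correct and is essentially the paper's argument: the paper just cites Brown's observation that the fibrant objects of a model category (here the transferred model structure for $n$-types) form a fibration category, and you have unpacked that citation axiom by axiom. Reading $\cSet_n$ as the transferred-fibrant objects, rather than literally as Kan complexes, matches both the definition the paper gives just before the statement and the model category its proof invokes.
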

\begin{proof}
    This is the full subcategory of a model category consisting of the fibrant objects, hence forms a fibration category \cite[Ex.~1.1]{brown}. 
\end{proof}
\begin{theorem}[{\cite[Thm.~3.18]{kapulkin-mavinkurve:n-types}}]
    For $n \geq 0$, the category $\Graph$ of graphs admits a fibration category structure where:
    \begin{itemize}
        \item fibrations are the graph maps $f \from G \to H$ such that 
        \[ \cosk^{n+1} \gnerve[\infty] f \from \cosk^{n+1} \gnerve[\infty] G \to \cosk^{n+1} \gnerve[\infty] H \]
        is a Kan fibration; and
        \item weak equivalences are the $n$-equivalences of graphs.
    \end{itemize}
    Moreover, the nerve functor restricts to an exact functor $\gnerve[\infty] \from \Graph \to \cSet_n$. \qed
\end{theorem}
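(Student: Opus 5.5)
The statement is a cited result (\cite[Thm.~3.18]{kapulkin-mavinkurve:n-types}), but it can be derived from what precedes it by pulling the fibration category structure on $\cSet_n$ back along the functor $\Phi := \gnerve[\infty] \from \Graph \to \cSet_n$. First, $\Phi$ lands in $\cSet_n$: by \cref{nerve-main-thm}, $\gnerve[\infty] G$ is Kan, and every Kan complex is fibrant in the transferred model structure for $n$-types. By construction, a graph map $f$ is a fibration (respectively a weak equivalence) exactly when $\Phi f$ is a fibration (respectively, by \cref{nerve-reflects-equivs}(2), an $n$-equivalence) in $\cSet_n$. The plan is therefore to check each axiom for $\Graph$ by transporting the corresponding axiom from $\cSet_n$. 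This requires two facts about $\Phi$: it preserves the terminal object and all pullbacks, and it admits a factorization construction.

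Most axioms are formal. Two-out-of-three and "isomorphisms are acyclic fibrations" are inherited directly through $\Phi$. The terminal graph $I_0$ is sent to $\cube{0}$, since there is exactly one map $\gcube{m}{k} \to I_0$. Hence every graph is fibrant, because $\Phi G$ is fibrant in $\cSet_n$. Next, $\Graph$ has all pullbacks, computed on vertex sets with the induced relation. Each $\gnerve[m]$ is a right adjoint and so preserves limits. Filtered colimits commute with finite limits in $\cSet$, so $\gnerve[\infty]$ preserves pullbacks as well. It follows that pullbacks of fibrations and of acyclic fibrations are again such, because these classes are pullback-stable in the fibration category $\cSet_n$.

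The substantive axiom is the factorization of a map $f \from G \to H$ as a weak equivalence followed by a fibration. I would use the path-space construction. Set
\[ P_m f := G \times_H H^{I_m}, \]
where $H^{I_m}$ is the internal hom of the closed monoidal box product. Then $f$ factors as $G \to P_m f \to H$: the first map is the inclusion of constant paths, and the second evaluates at the endpoint $m$. The first map has a retraction, evaluation at $0$, and the composite $P_m f \to G \to P_m f$ is homotopic to the identity by contracting paths. This contraction needs a graph map $I_m \gtimes I_k \to I_m$, for instance one built from max or min. It should show that $G \to P_m f$ is a homotopy equivalence, hence a weak equivalence. The naive choice of a single $m$ will not give a fibration, however. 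The fix I would try is the colimit over $m$ of $H^{I_m}$ along $\ell$ and $r$, i.e.\ paths of unbounded length, mirroring how $\gnerve[\infty]$ is built. One then identifies $\gnerve[\infty]$ of this path graph with the cubical path object of $\gnerve[\infty] H$, up to the acyclic cofibrations of \cref{nerve-main-thm}(2), so that its endpoint evaluation becomes a Kan fibration. Applying $\cosk^{n+1}$, which is right Quillen by \cref{n-equiv-quillen-pairs}, then gives a fibration in the $n$-structure.

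I expect this last step to be the main obstacle: the interaction of $\gnerve[\infty]$ with exponentials is not strict, so proving that endpoint evaluation of the colimit path graph lifts against open boxes needs a careful subdivision argument of the kind used in \cite{carranza-kapulkin:cubical-graphs}. Alternatively, one can deduce the $n$-type statement directly from the weak-equivalence case \cref{graph-fibration-cat}. Factor $f$ there as $G \weto P \to H$ with $\gnerve[\infty] P \to \gnerve[\infty] H$ a Kan fibration. A weak homotopy equivalence is in particular an $n$-equivalence, and $\cosk^{n+1}$ of a Kan fibration between Kan complexes is a Kan fibration by \cref{n-equiv-quillen-pairs}. This route avoids redoing the construction, so it is the one I would pursue first.
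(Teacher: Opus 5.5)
Your checks of two-out-of-three, isomorphisms, the terminal object, pullbacks (via $\gnerve[\infty]$ preserving them as a filtered colimit of right adjoints) and exactness are fine. The paper itself gives no argument for this statement; it only cites \cite[Thm.~3.18]{kapulkin-mavinkurve:n-types}. The gap is in the factorization axiom.

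Both of your routes end with a map $p \from P \to H$ for which $\gnerve[\infty] p$ is a Kan fibration. You then assert that $\cosk^{n+1}\gnerve[\infty] p$ is a Kan fibration because $\cosk^{n+1}$ is right Quillen by \cref{n-equiv-quillen-pairs}. That theorem only makes $\cosk^{n+1}$ right Quillen \emph{out of} the $n$-transferred structure, which is tautological for $n$-transferred fibrations and says nothing about Kan fibrations. In fact $\cosk^{n+1}$ does not preserve Kan fibrations. If it did, $\Sk^{n+1}$ would preserve Grothendieck acyclic cofibrations, but it sends $\obox{n+2}{i,\eps} \hookrightarrow \cube{n+2}$ to $\obox{n+2}{i,\eps} \hookrightarrow \bd\cube{n+2}$, a disk into an $(n+1)$-sphere.

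Restricting to Kan complexes, or to nerves of graphs, does not rescue the claim. For $n = 0$, factor $I_0 \to C_5$ as $I_0 \weto P \xrightarrow{p} C_5$ by either of your routes, or by \cref{graph-fibration-cat}. Then $p$ is a $0$-equivalence, since both graphs are connected. By the long exact sequence, however, the fibre of the Kan fibration $\gnerve[\infty] p$ has $\pi_0$ in bijection with $A_1(C_5) \cong \ZZ$. Since $\gnerve[\infty]$ preserves pullbacks, this fibre is the nerve of the pullback $p^{-1}(v) \to I_0$ of $p$ along a vertex $I_0 \to C_5$, which is therefore not a $0$-equivalence. So $p$ cannot be a fibration of the $0$-type structure, because acyclic fibrations must be stable under pullback.

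What is missing is a genuinely $n$-type-specific input. By adjunction, $\cosk^{n+1} p$ is a Kan fibration if and only if $p$ lifts against $\Sk^{n+1}$ of every open box inclusion. For boxes of dimension at most $n+1$ this is the box itself, and for dimension at least $n+3$ it is an isomorphism. So the only new condition is lifting against $\obox{n+2}{i,\eps} \hookrightarrow \bd\cube{n+2}$. For a Kan fibration between Kan complexes this amounts to surjectivity on $\pi_{n+1}$ at every vertex.

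A correct factorization must therefore produce a middle object whose nerve is $\pi_{n+1}$-surjective over the target, while keeping the first map an $n$-equivalence. This requires adding $(n+1)$-dimensional homotopy to the middle object and controlling the effect on $A_k$ for $k \leq n$ at the level of graphs. Neither of your routes supplies such a construction. It cannot be inherited from \cref{graph-fibration-cat}, nor from any path-space factorization whose first map is a weak homotopy equivalence; this is exactly what the cited result has to provide.
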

% Our results show that the nerve functor is essentially surjective after localizing at $n$-equivalences, for any $n$.
Our results make partial progress towards the discrete homotopy hypothesis by showing that the functor
\[ \Ho (\gnerve[\infty]) \from \Ho(\Graph) \to \Ho (\cSet_n) \]
between the two localizations at $n$-equivalences is an equivalence (\cref{nerve-equiv-main-thm}).

\section{Double mapping cylinders} \label{sec:mapping-cyl}
In this section, we discuss double mapping cylinders of graphs and cubical sets.
We begin by first identifying and ``abstracting away'' the key input needed for our construction.
This input is what we call an \emph{$n$-skeletal pushout} of graphs.
In brief, an $n$-skeletal pushout of graphs is a square that becomes a pushout after applying the $n$-skeleton functor to each 1-nerve.
The double mapping cylinder construction will provide our most prominent examples of $n$-skeletal pushouts of graphs.

% The ``combinatorial input'' to our proofs is centered around finding sufficient conditions for when a commutative square of graphs induces a homotopy pushout of $n$-types after applying the nerve functor.
% To this end, we introduce the notion of \emph{$n$-skeletal pushouts} of graphs.
% The key to proving our main theorem is a result that guarantees a pushout of graphs becomes a pushout of cubical sets after applying $\gnerve[1]$.
\subsection{$n$-Skeletal pushouts}

\begin{definition}
    Given $n \geq 0$, we say a commutative square
    \[ \begin{tikzcd}
        G \ar[r] \ar[d] & H \ar[d] \\
        K \ar[r] & L
    \end{tikzcd} \]
    in $\Graph$ is an \emph{$n$-skeletal pushout} if, after applying $\gnerve[1]$ and $\Sk^n$, the resulting square
    \[ \begin{tikzcd}
        \Sk^n \gnerve[1] G \ar[r] \ar[d] \ar[rd, phantom, "\potick" very near end] & \Sk^n \gnerve[1] H \ar[d] \\
        \Sk^n \gnerve[1] K \ar[r] & \Sk^n \gnerve[1] L
    \end{tikzcd} \]
    in $\cSet$ is a pushout.
\end{definition}
Let us record some basic facts about $n$-skeletal pushouts of graphs.
\begin{proposition}
    If
    \[ \begin{tikzcd}
        G \ar[r] \ar[d] & H \ar[d] \\
        K \ar[r] & L
    \end{tikzcd} \]
    is an $n$-skeletal pushout of graphs then it is an $m$-skeletal pushout for all $m \leq n$.
\end{proposition}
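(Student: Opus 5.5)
The plan is to use that $\Sk^m$ preserves colimits and that $\Sk^m \circ \Sk^n = \Sk^m$ for $m \leq n$. First, $\Sk^m \from \cSet \to \cSet$ preserves all colimits because it is a left adjoint, with right adjoint $\cosk^m$. Second, for $m \leq n$ and any cubical set $X$, the subobject $\Sk^m(\Sk^n X)$ of $\Sk^n X \subseteq X$ equals $\Sk^m X$, naturally in $X$.

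The identity $\Sk^m \Sk^n X = \Sk^m X$ holds because the non-degenerate cubes of $\Sk^n X$ in dimensions $\leq n$ are exactly the non-degenerate cubes of $X$ in those dimensions. A cube of the subobject $\Sk^n X$ is degenerate there iff it is degenerate in $X$: if $x = y\sigma$ in $X$ with $x \in \Sk^n X$, then $y = x\delta$ for a section $\delta$ of $\sigma$, so $y$ also lies in the subobject. Hence both $\Sk^m \Sk^n X$ and $\Sk^m X$ are the subobject of $X$ generated by the non-degenerate cubes of dimension $\leq m$. These identifications are compatible with maps $X \to Y$, since both sides are subobjects and maps restrict.

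Now take the given $n$-skeletal pushout and apply $\Sk^m$ to the pushout square of $\Sk^n \gnerve[1]$'s. The result is again a pushout, since $\Sk^m$ preserves colimits. By the natural identification above, this square is isomorphic to the square of $\Sk^m \gnerve[1]$'s, so the latter is a pushout, i.e.\ the original square is $m$-skeletal. There is no real obstacle here; the only point needing care is the verification that degeneracy in the subobject $\Sk^n X$ agrees with degeneracy in $X$.
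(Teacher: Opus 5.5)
Your proposal is correct and follows the same argument as the paper, whose proof is simply that $\Sk^m \cong \Sk^m \circ \Sk^n$ for $m \leq n$ and that $\Sk^m$ preserves pushouts. You additionally justify the identity $\Sk^m \Sk^n X = \Sk^m X$: you check that degeneracy in the subobject $\Sk^n X$ agrees with degeneracy in $X$, using sections of degeneracy maps. The paper leaves that verification implicit.
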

\begin{proof}
    Follows since $\Sk^m \cong \Sk^m \circ \Sk^n$, and $\Sk^m$ preserves pushouts.
\end{proof}
\begin{proposition} \label{0-skeletal-po-iff}
    A commutative square in $\Graph$
    \[ \begin{tikzcd}
        G \ar[r] \ar[d] & H \ar[d] \\
        K \ar[r] & L
    \end{tikzcd} \]
    is a 0-skeletal pushout if and only if it is a pushout after applying the vertex set functor $V \from \Graph \to \Set$.
\end{proposition}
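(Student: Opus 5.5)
The plan is to identify $\Sk^0 \gnerve[1]$ with a functor from sets. For a cubical set $X$, the $0$-skeleton $\Sk^0 X$ is the subobject generated by the $0$-cubes. Every $0$-cube is non-degenerate, and every cube of $\Sk^0 X$ is a degeneracy of a vertex. Hence $\Sk^0 X$ is the coproduct $\coprod_{x \in X_0} \cube{0}$, i.e.\ the discrete cubical set on $X_0$. In other words, writing $D \from \Set \to \cSet$ for the functor $S \mapsto \coprod_{S} \cube{0}$ and $(\uvar)_0 \from \cSet \to \Set$ for evaluation at $[1]^0$, there is a natural isomorphism $\Sk^0 \cong D \circ (\uvar)_0$.

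For a graph $G$ we have $(\gnerve[1] G)_0 = \Graph(\gcube{1}{0}, G) = \Graph(I_0, G) = V(G)$, naturally in $G$. Combining this with the previous step gives a natural isomorphism
\[ \Sk^0 \gnerve[1] \cong D \circ V . \]
So the square in question is a $0$-skeletal pushout if and only if $D$ applied to the square $V(G), V(H), V(K), V(L)$ is a pushout in $\cSet$.

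It remains to see that $D$ preserves and reflects pushouts. It preserves them because $D$ is a left adjoint to $(\uvar)_0$: a map $\coprod_S \cube{0} \to X$ is the same as a function $S \to X_0$ by Yoneda. It reflects them because $(\uvar)_0 \circ D \cong \id$ and evaluation at $[1]^0$ preserves colimits, since colimits in a presheaf category are computed pointwise. Concretely:
\begin{itemize}
\item if the square of vertex sets is a pushout, applying $D$ gives a pushout;
\item conversely, if the $D$-image is a pushout in $\cSet$, evaluating at $[1]^0$ recovers the original square of sets, which is therefore a pushout.
\end{itemize}

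There is no real obstacle here. The only point needing care is the claim that $\Sk^0 X$ is discrete, i.e.\ that the cubes of $\Sk^0 X$ are exactly the degeneracies of vertices and that distinct vertices give disjoint copies of $\cube{0}$. This holds because any map $[1]^k \to [1]^0$ is the unique one. Alternatively, it is the case $n = 0$ of \cref{cubical-set-admits-skeletal-filtration} with $\Sk^{-1} X = \varnothing$ and $\bd\cube{0} = \varnothing$.
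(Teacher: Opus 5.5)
Your proposal is correct and follows the paper's proof. Both identify $\Sk^0 \gnerve[1]$ with $D \circ V$ for the discrete-cubical-set functor $D$, and then use that $D$ preserves and reflects pushouts. The only difference is minor: you get reflection from the retraction $(\uvar)_0 \circ D \cong \id$ together with cocontinuity of evaluation at $[1]^0$, whereas the paper cites conservativity plus cocontinuity of $D$; both arguments are valid.
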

\begin{proof}
    Follows since $\Sk^0 \from \cSet \to \cSet$ is naturally isomorphic to the composite of $V$ with the functor $D \from \Set \to \cSet$ that regards a set as a discrete cubical set.
    The functor $D$ is both conservative and cocontinuous, hence it preserves and reflects pushouts. 
\end{proof}
\begin{proposition} \label{1-skeletal-po-implies-po}
    If a commutative square in $\Graph$ is a 1-skeletal pushout then it is a pushout in $\Graph$.
\end{proposition}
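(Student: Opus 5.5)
We verify the universal property of a pushout in $\Graph$ directly, using the 1-skeletal pushout of nerves as a description of both the vertices and the edges of $L$. Write the square as $G \to H$, $G \to K$, $H \to L$, $K \to L$, and assume $\Sk^1 \gnerve[1]$ of it is a pushout in $\cSet$.

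\textbf{Step 1: the underlying sets.} By the previous proposition the square is also a $0$-skeletal pushout. By \cref{0-skeletal-po-iff}, applying $V$ therefore gives a pushout of sets: $V(L) = V(H) \push_{V(G)} V(K)$. So for any graph $M$ with maps $h \from H \to M$ and $k \from K \to M$ agreeing on $G$, there is a unique function $u \from V(L) \to V(M)$ compatible with $h$ and $k$. Uniqueness of a factorization in $\Graph$ follows at once, since graph maps are determined by their vertex functions. It remains to show that $u$ is a graph map.

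\textbf{Step 2: the edges.} The $1$-cubes of $\gnerve[1] L$ are the graph maps $I_1 \to L$, i.e.\ ordered pairs $(v,w)$ with $v \sim w$. Colimits in $\cSet$ are computed levelwise, and $\Sk^1$ does not change cubes of dimension at most $1$. Hence the pushout hypothesis, evaluated at level $1$, says that
\[ (\gnerve[1] L)_1 \cong (\gnerve[1] H)_1 \push_{(\gnerve[1] G)_1} (\gnerve[1] K)_1 . \]
In particular every edge $(v,w)$ of $L$ is the image of an edge of $H$ or of $K$. Suppose it is the image of an edge $(v',w')$ of $H$. Then $(h v', h w')$ is an edge of $M$, and it equals $(u v, u w)$ by compatibility of $u$ with $h$; the case of $K$ is symmetric. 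Thus $u$ preserves the relation, and $L$ has the universal property of the pushout.

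\textbf{Expected obstacle.} The main point needing care is the level-$1$ identification in Step 2. One must check that $\Sk^1$ leaves the $1$-cubes unchanged, including degenerate ones. This holds because the $1$-skeleton is the subobject generated by all nondegenerate cubes of dimension at most $1$. Every $1$-cube is either nondegenerate or a degeneracy of a $0$-cube, so it lies in that subobject. Beyond this identification, the argument only uses the jointly surjective part of the level-$1$ pushout, not its full strength.
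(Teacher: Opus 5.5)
Your proof is correct and takes the same route as the paper. The paper also reads off from the $\Sk^1 \gnerve[1]$ pushout that the squares $\Graph(I_0, \uvar)$ and $\Graph(I_1, \uvar)$ are pushouts of sets, and concludes directly that the square is a pushout in $\Graph$. You spell out that last step: the vertex-level pushout gives the unique function, and joint surjectivity on $1$-cubes shows that this function preserves edges.
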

\begin{proof}
    If the square
    \[ \begin{tikzcd}
        \Sk^1 \gnerve[1] G \ar[r] \ar[d] \ar[rd, phantom, "\potick" very near end] & \Sk^1 \gnerve[1] H \ar[d] \\
        \Sk^1 \gnerve[1] K \ar[r] & \Sk^1 \gnerve[1] L
    \end{tikzcd} \]
    is a pushout then the squares
    \[ \begin{tikzcd}
        \Graph(I_0, G) \ar[r] \ar[d] \ar[rd, phantom, "\potick" very near end] & \Graph(I_0, H) \ar[d] \\
        \Graph(I_0, K) \ar[r] & \Graph(I_0, L)
    \end{tikzcd} \qquad \begin{tikzcd}
        \Graph(I_1, G) \ar[r] \ar[d] \ar[rd, phantom, "\potick" very near end] & \Graph(I_1, H) \ar[d] \\
        \Graph(I_1, K) \ar[r] & \Graph(I_1, L)
    \end{tikzcd} \]
    are pushouts in $\Set$.
    It follows that the square 
    \[ \begin{tikzcd}
        G \ar[r] \ar[d] & H \ar[d] \\
        K \ar[r] & L
    \end{tikzcd} \]
    is a pushout in $\Graph$.
    % The 1-nerve admits a left adjoint $L \from \cSet \to \Graph$ which factors through the functor $\Sk^{1} \from \cSet \to \cSet$.
    % The counit of this adjunction is an isomorphism, from which it follows that if a square becomes a pushout after applying $\Sk^1 \gnerve[1]$ then the original square must be a pushout.
    % % Therefore, if a square becomes a pushout after applying $\Sk^1 \gnerve[1]$ then it 
\end{proof}
\begin{remark}
    The converse to \cref{1-skeletal-po-implies-po} is false in general.
    Below, we depict two pushouts of graphs
    \[ \begin{tikzpicture}[commutative diagrams/.cd, every diagram]
        % Pushout 1
        % Top left graph
        \node[coordinate] (TLcenter) {};
        \path (TLcenter) -- +(-0.5, 0) node[vertex, label={west:$a$}] (TLa) {};
        \path (TLcenter) -- +(0.5, 0) node[vertex, label={east:$b$}] (TLb) {};
        \draw[thick] 
            (TLa) -- (TLb);
        
        % Top right graph
        \path (TLcenter) -- +(4.5, 0) node[coordinate] (TRcenter) {};
        \path (TRcenter) -- +(90:0.5) node[vertex, label={north:$c$}] (TRc) {};
        \path (TRcenter) -- +(210:0.6) node[vertex, label={west:$a$}] (TRa) {};
        \path (TRcenter) -- +(-30:0.6) node[vertex, label={east:$b$}] (TRb) {};
        \draw[thick]
            (TRa) -- (TRb)
            (TRa) -- (TRc)
            (TRb) -- (TRc);

        % Bottom left graph
        \path (TLcenter) -- +(0, -3) node[coordinate] (BLcenter) {};
        \node[vertex, label={south:$ab$}] (BLab) at (BLcenter) {};

        % Bottom right graph
        \path (TRcenter) -- +(0, -3) node[coordinate] (BRcenter) {};
        \path (BRcenter) -- +(0, -0.5) node[vertex, label={south:$ab$}] (BRab) {};
        \path (BRcenter) -- +(0, 0.2) node[vertex, label={north:$c$}] (BRc) {};
        \draw[thick]
            (BRc) -- (BRab);
        
        % Arrows for pushout 1
        \path (TLcenter) -- node[pos=0.4, draw=none] (TLTRstart) {} node[pos=0.65, draw=none] (TLTRend) {} (TRcenter);
        \path (TLcenter) -- node[pos=0.35, draw=none] (TLBLstart) {} node[pos=0.65, draw=none] (TLBLend) {} (BLcenter);
        \path (TRcenter) -- node[pos=0.35, draw=none] (TRBRstart) {} node[pos=0.65, draw=none] (TRBRend) {} (BRcenter);
        \path (BLcenter) -- node[pos=0.4, draw=none, yshift=-0.6ex] (BLBRstart) {} node[pos=0.65, draw=none, yshift=-0.6ex] (BLBRend) {} (BRcenter);
        \draw[commutative diagrams/.cd, every arrow, every label]
            (TLTRstart) edge [commutative diagrams/hook] (TLTRend)
            (TLBLstart) edge (TLBLend)
            (TRBRstart) edge (TRBRend)
            (BLBRstart) edge [commutative diagrams/hook] (BLBRend);
        % Pushout tick
        \path (BRcenter) -- +(-1.2, 0.95) node[coordinate] (corner) {};
        \path[draw, line width=0.65pt] 
            (corner) -- +(0.2, 0)
            (corner) -- +(0, -0.2);

        % Pushout 2
        % Top left
        \path (TRcenter) -- +(4.2, 0) node[coordinate] (TLcenterT) {};
        \path (TLcenterT) -- +(0, 0.4) node[vertex] (TLaT) {};
        \path (TLcenterT) -- +(0, -0.4) node[vertex] (TLbT) {};
        % \draw[thick]
        %     (TLaT) -- (TLbT);

        % Top right
        \path (TLcenterT) -- +(4, 0) node[coordinate] (TRcenterT) {};
        \path (TRcenterT) -- +(0, 0.4) node[vertex] (TRaT) {};
        \path (TRcenterT) -- +(0, -0.4) node[vertex] (TRbT) {};
        \draw[thick]
            (TRaT) -- (TRbT);

        % Bottom left
        \path (TLcenterT) -- +(0, -3) node[coordinate] (BLcenterT) {};
        \path (BLcenterT) -- +(0, 0.4) node[vertex] (BLaT) {};
        \path (BLcenterT) -- +(0, -0.4) node[vertex] (BLbT) {};
        \draw[thick]
            (BLaT) -- (BLbT);
        
        % Bottom right
        \path (TRcenterT) -- +(0, -3) node[coordinate] (BRcenterT) {};
        \path (BRcenterT) -- +(0, 0.4) node[vertex] (BRaT) {};
        \path (BRcenterT) -- +(0, -0.4) node[vertex] (BRbT) {};
        \draw[thick]
            (BRaT) -- (BRbT);
        
        % Arrows for pushout 2
        \path (TLcenterT) -- node[pos=0.3, draw=none] (TLTRstartT) {} node[pos=0.7, draw=none] (TLTRendT) {} (TRcenterT);
        \path (TLcenterT) -- node[pos=0.35, draw=none] (TLBLstartT) {} node[pos=0.65, draw=none] (TLBLendT) {} (BLcenterT);
        \path (TRcenterT) -- node[pos=0.35, draw=none] (TRBRstartT) {} node[pos=0.65, draw=none] (TRBRendT) {} (BRcenterT);
        \path (BLcenterT) -- node[pos=0.3, draw=none] (BLBRstartT) {} node[pos=0.7, draw=none] (BLBRendT) {} (BRcenterT);
        \draw[commutative diagrams/.cd, every arrow, every label]
            (TLTRstartT) edge [commutative diagrams/hook] (TLTRendT)
            (TLBLstartT) edge [commutative diagrams/hook] (TLBLendT)
            (TRBRstartT) edge [commutative diagrams/hook] (TRBRendT)
            (BLBRstartT) edge [commutative diagrams/hook] (BLBRendT);
        % Pushout tick
        \path (BRcenterT) -- +(-0.9, 0.9) node[coordinate] (cornerT) {};
        \path[draw, line width=0.65pt] 
            (cornerT) -- +(0.2, 0)
            (cornerT) -- +(0, -0.2);
    \end{tikzpicture} \]
    which are not 1-skeletal pushouts.
    Conceptually, this is explained by the fact that, after applying $\gnerve[1]$, the corresponding pushouts of cubical sets contain ``parallel edges'', i.e.\ distinct 1-cubes with the same source and target.
    One can show that if this does not occur (i.e. if the pushout $\Sk^1 H \push_{\Sk^1 G} \Sk^1 K$ contains no distinct 1-cubes with the same source and target) then the pushout of graphs $H \push_{G} K$ is a 1-skeletal pushout.
\end{remark}

The main benefit of $n$-skeletal pushouts is the following result.
\begin{theorem} \label{n-skeletal-po-is-hopo}
    Given $n \geq 0$, if
    \[ \begin{tikzcd}
        G \ar[r] \ar[d] & H \ar[d] \\
        K \ar[r] & L
    \end{tikzcd} \]
    is an $(n+1)$-skeletal pushout and either $G \to H$ or $G \to K$ is a monomorphism then, for any $m \in \NN \cup \{ \infty \}$, applying the functor $\gnerve[m]$ yields a homotopy pushout
    \[ \begin{tikzcd}
        \gnerve[m] G \ar[r] \ar[d] & \gnerve[m] H \ar[d] \\
        \gnerve[m] K \ar[r] & \gnerve[m] L
    \end{tikzcd} \]
    of $n$-types.
\end{theorem}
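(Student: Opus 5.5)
First I reduce to $m = 1$. By \cref{nerve-main-thm}, each map $\gnerve[1] \to \gnerve[m]$ (composites of $\ell^*, r^*$, and the colimit map to $\gnerve[\infty]$) is a natural weak equivalence, hence a natural $n$-equivalence. Being a homotopy pushout of $n$-types is invariant under objectwise weak equivalence of squares. So it suffices to treat $\gnerve[1]$.

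Next I pass to skeleta. I work in the transferred model structure for $n$-types, where cofibrations are monomorphisms $f$ whose $(n+1)$-skeleton square is a pushout, as in \cref{transferred-model-structure}. The key observation is that for any cubical set $X$, the inclusion $\Sk^{n+1} X \ito X$ is an $n$-equivalence: it is a bijection on cubes of dimension $\le n+1$. Hence it induces an isomorphism after $\cosk^{n+1}$, and so it is a weak equivalence in the transferred structure, whose weak equivalences are created by $\cosk^{n+1}$ into the Grothendieck structure. Applying this objectwise, the square $\Sk^{n+1}\gnerve[1](-)$ maps by an objectwise $n$-equivalence to the square $\gnerve[1](-)$. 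It therefore suffices to show that the square
\[ \begin{tikzcd}
\Sk^{n+1} \gnerve[1] G \ar[r] \ar[d] & \Sk^{n+1} \gnerve[1] H \ar[d] \\
\Sk^{n+1} \gnerve[1] K \ar[r] & \Sk^{n+1} \gnerve[1] L
\end{tikzcd} \]
is a homotopy pushout of $n$-types.

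By the hypothesis that the original square is $(n+1)$-skeletal, this square is a strict pushout. Suppose without loss of generality that $G \to H$ is a monomorphism. Then $\gnerve[1] G \to \gnerve[1] H$ is a monomorphism, since $\gnerve[1]$ is a right adjoint. Hence $\Sk^{n+1}\gnerve[1] G \to \Sk^{n+1} \gnerve[1] H$ is a monomorphism, and thus a cofibration in the Cisinski model structure for $n$-types, where cofibrations are all monomorphisms. That model structure is left proper, because every object is cofibrant. Therefore a strict pushout along a cofibration is a homotopy pushout, and its weak equivalences are exactly the $n$-equivalences.

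The main obstacle I expect is the invariance claim in the first two steps: that objectwise $n$-equivalences of squares preserve the property of being a homotopy pushout of $n$-types. This is standard, since homotopy pushouts are invariant under objectwise weak equivalence in any model category, here the $n$-Cisinski one. I also need to check that the naturality of $\Sk^{n+1} X \ito X$ gives a genuine map of squares, which holds because $\Sk^{n+1}$ is a subfunctor of the identity.
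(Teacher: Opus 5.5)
Your proposal is correct and follows essentially the same route as the paper: reduce to $\gnerve[1]$, note that the strict pushout of $(n+1)$-skeleta along a monomorphism is a homotopy pushout, and then transfer along the natural $n$-equivalence $\Sk^{n+1} \Rightarrow \id$. The differences are minor. You prove directly, via $\cosk^{n+1}$, that $\Sk^{n+1} X \hookrightarrow X$ is an $n$-equivalence (the same argument the paper uses for the transferred cofibrations), whereas the paper cites Lemma~2.22 of Kapulkin--Mavinkurve. You also use left properness of the Cisinski $n$-type structure directly, whereas the paper goes through the Grothendieck structure.
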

\begin{proof}
    For $m \leq m'$ in $\NN \cup \{ \infty \}$, there are natural weak equivalences $\gnerve[m] \ito \gnerve[m']$, so it suffices to consider the case $m = 1$.
    By assumption, the square 
    \[ \begin{tikzcd}
        \Sk^{n+1} \gnerve[1] G \ar[r] \ar[d] & \Sk^{n+1} \gnerve[1] H \ar[d] \\
        \Sk^{n+1} \gnerve[1] K \ar[r] & \Sk^{n+1} \gnerve[1] L
    \end{tikzcd} \]
    is a pushout along a monomorphism, hence a homotopy pushout in the Grothendieck model structure.
    This implies it is a homotopy pushout in the Cisinski model structure for $n$-types.
    Since the natural inclusion $\Sk^{n+1} \Rightarrow \id[\cSet]$ is an $n$-equivalence \cite[Lem.~2.22]{kapulkin-mavinkurve:n-types}, the square
    \[ \begin{tikzcd}
        \gnerve[1] G \ar[r] \ar[d] & \gnerve[1] H \ar[d] \\
        \gnerve[1] K \ar[r] & \gnerve[1] L
    \end{tikzcd} \]
    is a homotopy pushout of $n$-types.
\end{proof}

In practice, finding conditions for when a square of graphs is an $n$-skeletal pushout is rather difficult.
To the best knowledge of the authors, the following result encapsulates all known examples of $n$-skeletal pushouts.
\begin{lemma} \label{n-skeletal-po-condition-general}
    Let
    \[ \begin{tikzcd}
        G \ar[r] \ar[d] & H \ar[d] \\
        K \ar[r] & L
    \end{tikzcd} \]
    be a commutative square of graphs and $n \geq 0$ be a non-negative integer.
    If
    \begin{enumerate}
        \item every morphism in the square is a monomorphism;
        \item the square is a pullback in $\Graph$; and
        \item every morphism $\gcube{1}{n} \to L$ from an $n$-cube to $L$ factors through either $H \to L$ or $K \to L$,
    \end{enumerate}
    then this square is an $n$-skeletal pushout.
\end{lemma}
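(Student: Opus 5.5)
Since colimits in $\cSet$ are computed levelwise, the plan is to check that the square $\Sk^n \gnerve[1] G \to \Sk^n \gnerve[1] H, \Sk^n \gnerve[1] K \to \Sk^n \gnerve[1] L$ is a pushout of sets in each dimension $k$. The cubes of $\Sk^n X$ in dimension $k$ are exactly the $k$-cubes of $X$ of the form $y\varphi$ with $y$ a cube of dimension $\leq n$ and $\varphi$ a map of $\boxcat$. So in every dimension we will show that the map $(\Sk^n \gnerve[1] H)_k \sqcup_{(\Sk^n \gnerve[1] G)_k} (\Sk^n \gnerve[1] K)_k \to (\Sk^n \gnerve[1] L)_k$ is a bijection.

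Hypothesis (1) makes $\gnerve[1] G, \gnerve[1] H, \gnerve[1] K$ subobjects of $\gnerve[1] L$, because $\gnerve[1]$ is a right adjoint and preserves monomorphisms. The same holds for their $n$-skeleta. Hypothesis (2) together with the fact that $\gnerve[1]$ preserves pullbacks gives $\gnerve[1] G = \gnerve[1] H \cap \gnerve[1] K$ inside $\gnerve[1] L$. For subobjects $A, B$ of a presheaf, the pushout $A \sqcup_{A \cap B} B$ is exactly the union $A \cup B$. Hence it suffices to prove two things: $\Sk^n \gnerve[1] H \cap \Sk^n \gnerve[1] K = \Sk^n \gnerve[1] G$ and $\Sk^n \gnerve[1] H \cup \Sk^n \gnerve[1] K = \Sk^n \gnerve[1] L$.

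For the intersection, the inclusion $\supseteq$ is clear. For $\subseteq$, take a cube $x$ in both skeleta. It lies in $\gnerve[1] G$, so we must show it is in the $n$-skeleton of $\gnerve[1] G$. I will use the Eilenberg--Zilber-type fact that $x = y\varphi$ with $y$ non-degenerate (in the sense relevant to the box category with connections) and $\varphi$ an epimorphism. Since $\varphi$ is split epi, $y$ is a face of $x$ and hence lies in $\gnerve[1] G$. The non-degenerate part $y$ computed in $\gnerve[1] G$ is the same as in $\gnerve[1] L$, because the inclusion is a monomorphism and degeneracy is detected by factorization through a split epi. Then $x \in \Sk^n \gnerve[1] L$ forces $\dim y \leq n$, so $x \in \Sk^n \gnerve[1] G$. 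The same argument shows that for any subobject $A \subseteq \gnerve[1] L$, we have $\Sk^n A = A \cap \Sk^n \gnerve[1] L$.

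For the union, let $x \in (\Sk^n \gnerve[1] L)_k$ and write $x = y\varphi$ with $y \colon \gcube{1}{j} \to L$ and $j \leq n$. By hypothesis (3), applied after precomposing with a degeneracy to reach dimension exactly $n$ if $j < n$ (so the resulting map $\gcube{1}{n} \to L$ has the same image), $y$ factors through $H$ or through $K$. So $y$ lies in $\gnerve[1] H$ or $\gnerve[1] K$, and therefore $x = y\varphi$ lies in the corresponding $n$-skeleton.

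The main obstacle is the bookkeeping around degeneracies in the box category with connections. One has to justify that every cube of $\Sk^n X$ factors through a cube of dimension $\leq n$, and that non-degeneracy is intrinsic, i.e.\ preserved and reflected by monomorphisms of cubical sets. I would handle this via the Eilenberg--Zilber property of $\boxcat$: every epimorphism splits, giving unique epi-nondegenerate factorizations. I would cite it as standard for cubes with connections.
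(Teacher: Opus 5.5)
Your argument is correct, but it is organized differently from the paper's. The paper first upgrades (3) to all $k \leq n$, using a section of the projection $\gcube{1}{n} \to \gcube{1}{k}$; this is the same trick you use in your union step. It then asserts that it suffices to show the squares of hom-sets $\Graph(\gcube{1}{k}, -)$ are pushouts in $\Set$ for $k \leq n$, i.e.\ that the $n$-truncated nerves form a pushout. It proves this directly: the comparison map out of the set-theoretic pushout is surjective by (3') and injective by a case analysis using the monomorphisms and the pullback hypothesis.

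You instead work with the $n$-skeleta themselves, in every dimension, as subobjects of $\gnerve[1] L$. The paper's injectivity case analysis becomes the lattice identity $A \sqcup_{A \cap B} B \cong A \cup B$ together with $\gnerve[1] G = \gnerve[1] H \cap \gnerve[1] K$. The paper leaves the passage from truncations to skeleta implicit; it relies on $\Sk^n$ depending only on, and preserving colimits of, the $n$-truncation. You replace that step with the explicit lemma $\Sk^n A = A \cap \Sk^n X$ for subobjects $A \subseteq X$. The paper's route is shorter and stays at the level of graph maps. Yours is self-contained about how skeleta behave, at the cost of invoking the Reedy factorization in $\boxcat$.

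Two small simplifications are available. First, you do not need the uniqueness half of Eilenberg--Zilber. Suppose $x \in A$ and $x = y\psi$ with $\dim y \leq n$. Factor $\psi = \psi_+\psi_-$ and choose a section $\delta$ of the split epimorphism $\psi_-$. Then $x = (x\delta)\psi_-$, where $x\delta = y\psi_+$ lies in $A$ and has dimension at most $n$, so $x \in \Sk^n A$ directly.

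Second, in the union step, ``same image'' would have to include edges, because $H \to L$ is only assumed to be a monomorphism, not an induced subgraph inclusion. It is cleaner to write $y = (y\pi)s$ for a section $s$ of the projection $\pi$. Then $y$ factors through whichever of $H$ or $K$ the map $y\pi$ factors through.
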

\begin{proof}
    To begin, we first prove the following strengthening of condition (3)
    \begin{itemize}
        \item[3'.] For all $k \in \{ 0, \dots, n \}$, every morphism $\gcube{1}{k} \to L$ factors through either $H \to L$ or $K \to L$.
    \end{itemize}
    To prove this, fix $k$ and consider such a morphism $u \from \gcube{1}{k} \to L$.
    Let $\pi \from \gcube{1}{n} \to \gcube{1}{k}$ denote the projection to the first $k$ coordinates
    \[ \pi(x_1, \dots, x_n) := (x_1, \dots, x_k). \]
    By condition (3), the composite $u \pi \from \gcube{1}{n} \to L$ factors through either $H$ or $K$.
    Without loss of generality, assume it factors through $H$.
    We now obtain a commutative square
    \[ \begin{tikzcd}
        \gcube{1}{n} \ar[r] \ar[d, "\pi"'] & H \ar[d, hook] \\
        \gcube{1}{k} \ar[r, "u"] & L
    \end{tikzcd} \]
    The product projection $\pi$ is a split epimorphism and the right map is a monomorphism, hence this square admits a lift, as desired.

    Returning to the theorem statement, it suffices to show that the commutative square
    \[ \begin{tikzcd}
        {\Graph(\gcube{1}{k}, G)} \ar[r, hook] \ar[d, hook] & {\Graph(\gcube{1}{k}, H)} \ar[d, hook] \\
        {\Graph(\gcube{1}{k}, K)} \ar[r, hook] & {\Graph(\gcube{1}{k}, L)}
    \end{tikzcd} \]
    induced by post-composition is a pushout in $\Set$ for all $k \in \{ 0, \dots, n \}$.
    That is, we will show the function
    \[ \varphi \from \Graph(\gcube{1}{k}, H) \push_{\Graph(\gcube{1}{k}, G)} \Graph(\gcube{1}{k}, K) \to \Graph(\gcube{1}{k}, L) \]
    is a bijection.

    By condition (3'), $\varphi$ is surjective.
    To verify injectivity, fix two elements $f, g$ in the left set such that $\varphi(f) = \varphi(g)$.
    Recall that the functions
    \[ \begin{tikzcd}[column sep=0em]
        \Graph(\gcube{1}{k}, H) \ar[rd, "\lambda_H"] & {} &  \Graph(\gcube{1}{k}, K) \ar[ld, "\lambda_K"'] \\
        {} & \Graph(\gcube{1}{k}, H) \push_{\Graph(\gcube{1}{k}, G)} \Graph(\gcube{1}{k}, K) & {}
    \end{tikzcd} \]
    % \[ \Graph(\gcube{1}{k}, Y) \to \Graph(\gcube{1}{k}, X) \push_{\Graph(\gcube{1}{k}, X)} \Graph(\gcube{1}{k}, Z) \text{ and } \Graph(\gcube{1}{k}, Z) \to \Graph(\gcube{1}{k}, X) \push_{\Graph(\gcube{1}{k}, X)} \Graph(\gcube{1}{k}, Z) \]
    are jointly surjective.
    We proceed by case analysis:
    \begin{itemize}
        \item Suppose $f$ and $g$ are both in the image of $\lambda_H$.
        This function fits into a commutative triangle 
        \[ \begin{tikzcd}
            \Graph(\gcube{1}{k}, H) \ar[r, "\lambda_H"] \ar[rd, hook] & {\Graph(\gcube{1}{k}, H) \push_{\Graph(\gcube{1}{k}, G)} \Graph(\gcube{1}{k}, K)} \ar[d, "\varphi"] \\
            {} & {\Graph(\gcube{1}{k}, L)}
        \end{tikzcd} \]
        The composite map is injective by condition (1), from which it follows that $f = g$.
        \item Suppose $f$ and $g$ are both in the image of $\lambda_K$.
        This proceeds analogously to the previous case.
        \item Suppose $f$ is in the image of $\lambda_H$ but not $\lambda_K$, and $g$ is in the image of $\lambda_K$ but not $\lambda_H$.
        We will prove this case leads to a contradiction.

        Let $\overline{f} \from \gcube{1}{k} \to H$ and $\overline{g} \from \gcube{1}{k} \to K$ be maps such that $\lambda_H(\overline{f}) = f$ and $\lambda_K (\overline{g}) = g$.
        By assumption, the square
        \[ \begin{tikzcd}
            \gcube{1}{k} \ar[r, "\overline{f}"] \ar[d, "\overline{g}"'] & H \ar[d, hook] \\
            K \ar[r, hook] & L
        \end{tikzcd} \]
        commutes.
        Condition (2) asserts that $G$ is a pullback, hence both $\overline{f}$ and $\overline{g}$ must factor through $G$.
        This contradicts our assumptions that $f$ is not in the image of $\lambda_K$ and $g$ is not in the image of $\lambda_H$. \qedhere
    \end{itemize}
\end{proof}

We highlight some example applications of \cref{n-skeletal-po-condition-general}.
Recall that the \emph{distance} between two vertices $v$ and $w$ in a graph is the length of the shortest path from $v$ to $w$.
If there is no path from $v$ to $w$ then we declare their distance to be $\infty$.
% In practice, we will often invoke the following corollary of \cref{n-skeletal-po-condition-general}, which provides a convenient rephrasing of some of the technical conditions while retaining most of the generality.
\begin{corollary} \label{n-skeletal-po-condition-subgraphs}
    Let $n \geq 0$ be a non-negative integer.
    Suppose $A$ and $B$ are induced subgraphs of a graph $G$ satisfying the following conditions:
    \begin{itemize}
        \item $A \cup B = G$; and
        \item for any two vertices $v, w \in X$, if $v \not\in A$ and $w \not\in B$ then the distance from $v$ to $w$ is at least $n+1$.
    \end{itemize}
    Then, the square
    \[ \begin{tikzcd}
        A \cap B \ar[r, hook] \ar[d, hook] & A \ar[d, hook] \\
        B \ar[r, hook] & G
    \end{tikzcd} \]
    is an $n$-skeletal pushout.
\end{corollary}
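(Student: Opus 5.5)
We will deduce the corollary from \cref{n-skeletal-po-condition-general} by verifying its three hypotheses for the square formed by $A \cap B$, $A$, $B$ and $G$. Here $A \cap B$ denotes the induced subgraph on $V(A) \cap V(B)$, which is also the subgraph whose edges are those lying in both $A$ and $B$, since both are induced.

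Conditions (1) and (2) are formal. All four maps are inclusions of subgraphs, hence injective on vertices, hence monomorphisms in $\Graph$. For the pullback condition, take graph maps $f \from T \to A$ and $g \from T \to B$ agreeing in $G$. Then each vertex $f(t) = g(t)$ lies in $V(A) \cap V(B)$. If $t \sim t'$, then $f(t) \sim f(t')$ in $G$, and both endpoints lie in $A \cap B$. Since $A \cap B$ is induced, this is an edge of $A \cap B$, so there is a unique factorization through $A \cap B$.

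The content is condition (3): every map $u \from \gcube{1}{n} \to G$ factors through $A$ or through $B$. Since $A$ and $B$ are induced subgraphs, it suffices to show that the image of $u$ on vertices lies entirely in $V(A)$ or entirely in $V(B)$. Suppose not. Then there are vertices $x, y \in \gcube{1}{n}$ with $u(x) \notin A$ and $u(y) \notin B$. Any two vertices of $\gcube{1}{n}$ are joined by a path of length at most $n$, namely their Hamming distance. Graph maps do not increase distance, because the relation is reflexive and edges may collapse to a point. Hence the distance from $u(x)$ to $u(y)$ is at most $n$, contradicting the hypothesis that it is at least $n+1$.

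Thus all three conditions of \cref{n-skeletal-po-condition-general} hold, and the square is an $n$-skeletal pushout. The only step needing care is the distance estimate in condition (3): the diameter of $\gcube{1}{n}$ is exactly $n$, which matches the hypothesis bound $n+1$. The covering assumption $A \cup B = G$ ensures each vertex lies in $A$ or $B$. It is not strictly needed in the argument above, since we only used vertices outside $A$ and outside $B$, but it guarantees the square is sensible.
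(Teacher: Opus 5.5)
Your proposal is correct and follows the paper's proof: you verify the three hypotheses of \cref{n-skeletal-po-condition-general}, with (1) and (2) formal, and (3) following because any two vertices in the image of $u \colon \gcube{1}{n} \to G$ are at distance at most $n$, so the image lies in $A$ or in $B$ and $u$ factors since both are induced. You spell out the pullback check, the distance-nonincreasing property of graph maps, and the redundancy of $A \cup B = G$ in more detail than the paper does, but the argument is the same.
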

\begin{proof}
    We need only show this square satisfies the assumptions of \cref{n-skeletal-po-condition-general}.
    Conditions (1) and (2) are immediate.
    For (3), let $u \from \gcube{1}{n} \to G$ be a map.
    Observe that any two vertices in the image of $u$ are connected by a path of length at most $n$.
    Thus, the image of $u$ is entirely contained in one of $A$ or $B$.
    As $A$ and $B$ are induced subgraphs, we have the desired factorization of $u$.
\end{proof}

\subsection{Mapping cylinders}

Recall that, in the ``standard'' homotopy theory of spaces, homotopy pushouts are constructed via the double-mapping cylinder, a construction which takes two continuous functions $f \from S \to T$ and $g \from S \to U$ and ouputs the space obtained by taking a cylinder on $S$ and gluing a copy of $T$ and $U$ along the ends of cylinder, with $f$ and $g$ playing the role of the attaching map.
An analogous construction can be carried out in an arbitrary model category to construct homotopy pushouts.

In this subsection, we develop the notion of double-mapping cylinders in the category of graphs.
We will see that double-mapping cylinders of sufficient length yield examples of $n$-skeletal pushouts of graphs, and these examples are crucial to our understanding of the homotopy theory of $n$-types in the category of graphs.

Currently, there is no known model structure (or cofibration category structure) on the category of graphs whose weak equivalences are the $n$-equivalences, nor is there one whose weak equivalences are the weak homotopy equivalences.
As such, many standard properties of double mapping cylinders must be proven ``from scratch'' in our setting.

\begin{definition} \label{def:double-mapping-cylinder}
    Let $f \from G \to H$ and $g \from G \to K$ be graph maps.
    Given a non-negative integer $m \geq 0$, the \emph{double mapping cylinder} of $f$ and $g$ of length $m$ is the colimit:
    \[ \Cyl{m}(f, g) := \colim \left( \begin{tikzcd}
        {} & G \ar[ld, "f"'] \ar[rd, "i_0", hook] & {} & G \ar[ld, "i_m", hook, swap] \ar[rd, "g"] & {} \\
        H & {} & G \gtimes I_m & {} & K
    \end{tikzcd} \right) \]
\end{definition}
Explicitly, $\Cyl{m}(f, g)$ is the quotient of the disjoint union
\[ \Cyl{m}(f, g) = \, H \sqcup (G \gtimes I_m) \sqcup K \, \Big/ \sim \]
given by the identifications $(v, 0) \sim f(v)$ and $(v, n) \sim g(v)$.
% In particular, if $n = 0$ then $\Cyl{0}(f, g)$ is the pushout $Y \push_{X} Z$ of $f$ and $g$.
\begin{example}
    We summarize the following ``edge case'' examples.
    \begin{enumerate}
        \item If $n = 0$ then $\Cyl{0}(f, g)$ is the pushout $H \push_{G} K$ of $f$ and $g$.
        \item If $f = \id[G] = g$ then $\Cyl{m}(\id[G], \id[G])$ is the cylinder $G \gtimes I_m$ on $G$.
        In this case, the structure morphism $G \gtimes I_m \to \Cyl{m}(\id[G], \id[G])$ is an isomorphism.
        \item The double-mapping cylinder is symmetric in its arguments up to isomorphism.
        The isomorphism is given by the formula
        \[ \begin{array}{c@{\qquad}c@{\qquad}c}
            (v_G, t) \mapsto (v_G, n-t) & v_H \mapsto v_H & v_K \mapsto v_K
        \end{array} \]
        defined on the disjoint union $(G \gtimes I_m) \sqcup H \sqcup K$.
        \item If $f = \id[G]$ and $g$ is the unique map $\bang \from G \to I_0$ then $\Cyl{m}(\id[G], \bang)$ is the cone on $G$ of length $n$, i.e.\ the quotient
        \[ \big( G \gtimes I_m \big) \Big/ (v, n) \sim (w, n) \text{ for all } v, w \in G . \]
    \end{enumerate}
\end{example}
% \begin{example}
%     If $n = 0$ then $\Cyl{m}(f, g)$ recovers with the pushout $Y \push_{X} Z$ of $f$ and $g$.
% \end{example}
% \begin{example}
%     If $f$ and $g$ are both the identity then $\Cyl{m}(f, g)$ is isomorphic to the cylinder $G \gtimes I_m$ on $X$.
%     More precisely, this isomorphism is given by the structure morphism $G \gtimes I_m \to \Cyl{m}(\id[G], \id[G])$.
% \end{example}
\begin{example} \label{ex:boundary-as-cyl}
    Recall the \emph{boundary} of the $n$-cube of length $m$ is the subgraph of $\gcube{m}{n}$ defined by
    \[ \bd \gcube{m}{n} := \{ (x_1, \dots, x_n) \in \gcube{m}{n} \mid x_i = 0 \text{ or } m \text{ for some $i$} \}. \]
    An edge $(x_1, \dots, x_n) \sim (y_1, \dots, y_n)$ in $\gcube{m}{n}$ is contained in the subgraph $\bd \gcube{m}{n}$ if either $x_i = 0 = y_i$ or $x_i = m = y_i$ for some $i$ (this makes $\bd \gcube{m}{n}$ an induced subgraph in all cases except $m = n = 1$).
    Writing $i^n_m \from \bd \gcube{m}{n} \to \gcube{m}{n}$ for the inclusion map, we have an isomorphism $\Cyl{m}(i^n_m, i^n_m) \cong \bd \gcube{m}{n+1}$.

    Similarly, the \emph{$n$-dimensional open box} of length $m$ is the induced subgraph $\obox{n}{m} \subseteq \bd \gcube{m}{n}$ defined by
    \[ \obox{n}{m} := \bd \gcube{m}{n} - \big\{ (x_1, \dots, x_n) \mid x_n = m \text{ and } x_i \neq 0, m \text{ for all } i \neq n \big\}. \]
    We now have an isomorphism $\Cyl{m}(i^n_m, \id[\bd \gcube{m}{n}]) \cong \obox{n+1}{m}$.

    \begin{figure}[H]
        \centering
        \subfloat[The graph $\bd \gcube{2}{2}$.]{
            \begin{tikzpicture}[scale=0.6]
                \node[vertex, minimum size=4pt] at (0, 0) {};
                \node[vertex, minimum size=4pt] at (1, 0) {};
                \node[vertex, minimum size=4pt] at (2, 0) {};
                \node[vertex, minimum size=4pt] at (0, 1) {};
                % \node[vertex] at (1, 1) {};
                \node[vertex, minimum size=4pt] at (2, 1) {};
                \node[vertex, minimum size=4pt] at (0, 2) {};
                \node[vertex, minimum size=4pt] at (1, 2) {};
                \node[vertex, minimum size=4pt] at (2, 2) {};
                
                \draw
                    (0, 0) to (1, 0)
                    (1, 0) to (2, 0)
                    (0, 0) to (0, 1)
                    (0, 1) to (0, 2)
                    (0, 2) to (1, 2)
                    (1, 2) to (2, 2)
                    (2, 2) to (2, 1)
                    (2, 1) to (2, 0);
                
                % This artificially increases the size of this picture/float
                \path[draw=none]
                    (-4, -1) rectangle (6, 3);
            \end{tikzpicture}
        }
        \subfloat[The graph ${\reali[2]{\obox{2}{}}}$.]{
            \begin{tikzpicture}[scale=0.6]
                \node[vertex, minimum size=4pt] at (0, 0) {};
                % \node[vertex, minimum size=4pt] at (1, 0) {};
                \node[vertex, minimum size=4pt] at (2, 0) {};
                \node[vertex, minimum size=4pt] at (0, 1) {};
                % \node[vertex] at (1, 1) {};
                \node[vertex, minimum size=4pt] at (2, 1) {};
                \node[vertex, minimum size=4pt] at (0, 2) {};
                \node[vertex, minimum size=4pt] at (1, 2) {};
                \node[vertex, minimum size=4pt] at (2, 2) {};
                \draw
                    (0, 0) to (0, 1)
                    (0, 1) to (0, 2)
                    (0, 2) to (1, 2)
                    (1, 2) to (2, 2)
                    (2, 2) to (2, 1)
                    (2, 1) to (2, 0);
                
                % This artificially increases the size of this picture/float
                \path[draw=none]
                    (-4, -1) rectangle (6, 3);
            \end{tikzpicture}
        }

        \vspace{2em}

        \subfloat[The graph $\bd \gcube{2}{3}$.]{
            \begin{tikzpicture}[scale=1.5]
                % Back layer
                \node[vertex, minimum size=4pt] (ATL) at (0, 0) {};
                \path (ATL) -- +(1, 0) node[vertex, minimum size=4pt] (AT) {};
                \path (AT) -- +(1, 0) node[vertex, minimum size=4pt] (ATR) {};
                \path (ATL) -- +(0, -1) node[vertex, minimum size=4pt] (AL) {};
                \path (AL) -- +(1, 0) node[vertex, minimum size=4pt] (AC) {};
                \path (AC) -- +(1, 0) node[vertex, minimum size=4pt] (AR) {};
                \path (AL) -- +(0, -1) node[vertex, minimum size=4pt] (ABL) {};
                \path (ABL) -- +(1, 0) node[vertex, minimum size=4pt] (AB) {};
                \path (AB) -- +(1, 0) node[vertex, minimum size=4pt] (ABR) {};
                \draw
                    (ATL) to (ATR)
                    (AL) to (AR)
                    (ABL) to (ABR)
                    (ATL) to (ABL)
                    (AT) to (AB)
                    (ATR) to (ABR);

                % Middle layer
                \path (ATL) -- +(0.3, -0.3) node[vertex, minimum size=4pt] (BTL) {};
                \path (BTL) -- +(1, 0) node[vertex, minimum size=4pt] (BT) {};
                \path (BT) -- +(1, 0) node[vertex, minimum size=4pt] (BTR) {};
                \path (BTL) -- +(0, -1) node[vertex, minimum size=4pt] (BL) {};
                \path (BL) -- +(1, 0) node[vertex, fill=none, minimum size=4pt] (BC) {};
                \path (BC) -- +(1, 0) node[vertex, minimum size=4pt] (BR) {};
                \path (BL) -- +(0, -1) node[vertex, minimum size=4pt] (BBL) {};
                \path (BBL) -- +(1, 0) node[vertex, minimum size=4pt] (BB) {};
                \path (BB) -- +(1, 0) node[vertex, minimum size=4pt] (BBR) {};
                \draw
                    (BTL) edge[line width=4pt, white] (BTR) % extra white border around this line
                    (BTL) edge (BTR)
                    (BTR) to (BBR)
                    (BBR) to (BBL)
                    (BBL) edge[line width=4pt, white] (BTL)% extra white border around this line
                    (BBL) edge (BTL);
                % Redraw vertices that are whited out by the extra borders
                \node[vertex, minimum size=4pt] at (BT) {};
                \node[vertex, minimum size=4pt] at (BL) {};

                % Back layer
                \path (BTL) -- +(0.3, -0.3) node[vertex, minimum size=4pt] (CTL) {};
                \path (CTL) -- +(1, 0) node[vertex, minimum size=4pt] (CT) {};
                \path (CT) -- +(1, 0) node[vertex, minimum size=4pt] (CTR) {};
                \path (CTL) -- +(0, -1) node[vertex, minimum size=4pt] (CL) {};
                \path (CL) -- +(1, 0) node[vertex, minimum size=4pt] (CC) {};
                \path (CC) -- +(1, 0) node[vertex, minimum size=4pt] (CR) {};
                \path (CL) -- +(0, -1) node[vertex, minimum size=4pt] (CBL) {};
                \path (CBL) -- +(1, 0) node[vertex, minimum size=4pt] (CB) {};
                \path (CB) -- +(1, 0) node[vertex, minimum size=4pt] (CBR) {};
                \draw
                    (CTL) edge [line width=4pt, white] (CTR) % extra white border around this line
                    (CTL) edge (CTR)
                    (CL) edge [line width=4pt, white] (CR) % extra white border around this line
                    (CL) edge (CR)
                    (CBL) to (CBR)
                    (CTL) edge [line width=4pt, white] (CBL) % extra white border around this line
                    (CTL) edge (CBL)
                    (CT) edge [line width=4pt, white] (CB) % extra white border around this line
                    (CT) edge (CB)
                    (CTR) to (CBR);
                % Redraw vertices that are whited out by the extra borders
                \node[vertex, minimum size=4pt] at (CT) {};
                \node[vertex, minimum size=4pt] at (CL) {};
                \node[vertex, minimum size=4pt] at (CC) {};
                
                % Edges between layers
                \draw 
                    (ATL) to (CTL)
                    (AT) to (CT)
                    (ATR) to (CTR)
                    (AL) to (CL)
                    (AR) to (CR)
                    (ABL) to (CBL)
                    (AB) to (CB)
                    (ABR) to (CBR);
                
                % % This artificially increases the size of this picture/float
                % \path[draw=red]
                %     (-5, -1) rectangle (7, 3);
            \end{tikzpicture}
        }
        \hspace{4.9em}
        \subfloat[The graph ${\obox{3}{2}}$.]{
            \begin{tikzpicture}[scale=1.5]
                % Back layer
                \node[vertex, minimum size=4pt] (ATL) at (0, 0) {};
                \path (ATL) -- +(1, 0) node[vertex, minimum size=4pt] (AT) {};
                \path (AT) -- +(1, 0) node[vertex, minimum size=4pt] (ATR) {};
                \path (ATL) -- +(0, -1) node[vertex, minimum size=4pt] (AL) {};
                \path (AL) -- +(1, 0) node[vertex, minimum size=4pt] (AC) {};
                \path (AC) -- +(1, 0) node[vertex, minimum size=4pt] (AR) {};
                \path (AL) -- +(0, -1) node[vertex, minimum size=4pt] (ABL) {};
                \path (ABL) -- +(1, 0) node[vertex, minimum size=4pt] (AB) {};
                \path (AB) -- +(1, 0) node[vertex, minimum size=4pt] (ABR) {};
                \draw
                    (ATL) to (ATR)
                    (AL) to (AR)
                    (ABL) to (ABR)
                    (ATL) to (ABL)
                    (AT) to (AB)
                    (ATR) to (ABR);

                % Middle layer
                \path (ATL) -- +(0.3, -0.3) node[vertex, minimum size=4pt] (BTL) {};
                \path (BTL) -- +(1, 0) node[vertex, minimum size=4pt] (BT) {};
                \path (BT) -- +(1, 0) node[vertex, minimum size=4pt] (BTR) {};
                \path (BTL) -- +(0, -1) node[vertex, minimum size=4pt] (BL) {};
                \path (BL) -- +(1, 0) node[vertex, fill=none, minimum size=4pt, fill=none] (BC) {};
                \path (BC) -- +(1, 0) node[vertex, minimum size=4pt] (BR) {};
                \path (BL) -- +(0, -1) node[vertex, minimum size=4pt] (BBL) {};
                \path (BBL) -- +(1, 0) node[vertex, minimum size=4pt, fill=none] (BB) {};
                \path (BB) -- +(1, 0) node[vertex, minimum size=4pt] (BBR) {};
                \draw
                    (BTL) edge[line width=4pt, white] (BTR) % extra white border around this line
                    (BTL) edge (BTR)
                    (BTR) to (BBR)
                    % (BBR) to (BBL)
                    % (BBL) edge[line width=4pt, white] (BTL)% extra white border around this line
                    (BBL) edge (BTL);
                % Redraw vertices that are whited out by the extra borders
                \node[vertex, minimum size=4pt] at (BT) {};
                \node[vertex, minimum size=4pt] at (BL) {};

                % Back layer
                \path (BTL) -- +(0.3, -0.3) node[vertex, minimum size=4pt] (CTL) {};
                \path (CTL) -- +(1, 0) node[vertex, minimum size=4pt] (CT) {};
                \path (CT) -- +(1, 0) node[vertex, minimum size=4pt] (CTR) {};
                \path (CTL) -- +(0, -1) node[vertex, minimum size=4pt] (CL) {};
                \path (CL) -- +(1, 0) node[vertex, minimum size=4pt] (CC) {};
                \path (CC) -- +(1, 0) node[vertex, minimum size=4pt] (CR) {};
                \path (CL) -- +(0, -1) node[vertex, minimum size=4pt] (CBL) {};
                \path (CBL) -- +(1, 0) node[vertex, minimum size=4pt] (CB) {};
                \path (CB) -- +(1, 0) node[vertex, minimum size=4pt] (CBR) {};
                \draw
                    (CTL) edge [line width=4pt, white] (CTR) % extra white border around this line
                    (CTL) edge (CTR)
                    (CL) edge [line width=4pt, white] (CR) % extra white border around this line
                    (CL) edge (CR)
                    (CBL) to (CBR)
                    (CTL) edge [line width=4pt, white] (CBL) % extra white border around this line
                    (CTL) edge (CBL)
                    (CT) edge [line width=4pt, white] (CB) % extra white border around this line
                    (CT) edge (CB)
                    (CTR) to (CBR);
                % Redraw vertices that are whited out by the extra borders
                \node[vertex, minimum size=4pt] at (CT) {};
                \node[vertex, minimum size=4pt] at (CL) {};
                \node[vertex, minimum size=4pt] at (CC) {};
                
                % Edges between layers
                \draw 
                    (ATL) to (CTL)
                    (AT) to (CT)
                    (ATR) to (CTR)
                    (AL) to (CL)
                    (AR) to (CR)
                    (ABL) to (CBL)
                    % (AB) to (CB)
                    (ABR) to (CBR);

                % % This artificially increases the size of this picture/float
                % \path[draw=red]
                % (-5, -1) rectangle (7, 3);
            \end{tikzpicture}
        }
        \caption{The double mapping cylinders described in \cref{ex:boundary-as-cyl}.}
    \end{figure}
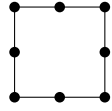
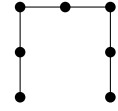
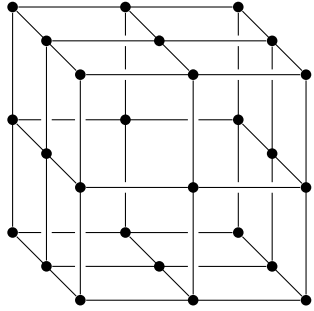
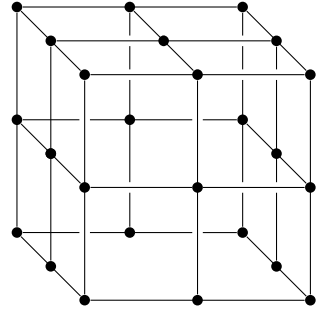
\end{example}
\begin{example}
    Let $\bang \from G \to I_0$ denote the unique map.
    Given $m \geq 0$, the {suspension of length $m$} of a graph $G$, denoted $\susp[m]{G}$, is the double mapping cylinder $\Cyl{m}(\bang , \bang)$.
    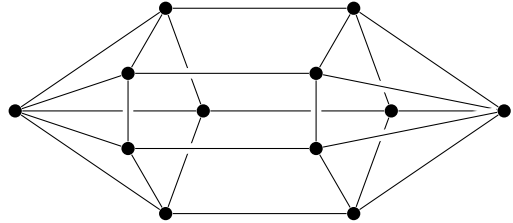
\begin{figure}[H]
        \centering
        \begin{tikzpicture}
            % A layer
            \node[vertex] (A1) {};
            \path (A1) -- +(-90:1) node[vertex] (A2) {};
            \path (A2) -- +(-60:1) node[vertex] (A3) {};
            \path (A1) -- +(60:1) node[vertex] (A5) {};
            \path (A5) -- node[coordinate] (Acenter) {} (A3);
            \path (Acenter) -- +(0:0.5) node[vertex] (A4) {};
            \draw (A2) -- (A3) -- (A4) -- (A5) -- (A1); % We draw (A1) to (A2) later
            % B layer
            \path (A1) -- +(2.5, 0) node[vertex] (B1) {};
            \path (B1) -- +(-90:1) node[vertex] (B2) {};
            \path (B2) -- +(-60:1) node[vertex] (B3) {};
            \path (B1) -- +(60:1) node[vertex] (B5) {};
            \path (B5) -- node[coordinate] (Bcenter) {} (B3);
            \path (Bcenter) -- +(0:0.5) node[vertex] (B4) {};
            \draw (B2) -- (B3) -- (B4) -- (B5) -- (B1); % We draw (B1) -- (B2) later
            % North pole
            \path (Acenter) -- +(-2, 0) node[vertex] (N) {}; 
            % South pole
            \path (Bcenter) -- +(2, 0) node[vertex] (S) {}; 

            % North pole edges
            \draw
                (N) -- (A1)
                (N) -- (A2)
                (N) -- (A3)
                (N) -- (A4)
                (N) -- (A5);
            % South pole edges
            \draw
                (S) edge[line width=4pt, white] (B1)
                (S) edge (B1)
                (S) edge[line width=4pt, white] (B2)
                (S) edge (B2)
                (S) edge (B3)
                (S) -- (B4)
                (S) edge (B5);
            % In between layer edges
            \draw
                (A1) edge[line width=4pt, white] (B1)
                (A1) edge (B1)
                (A2) edge[line width=4pt, white] (B2)
                (A2) edge (B2)
                (A3) -- (B3)
                (A4) -- (B4)
                (A5) -- (B5);
            % Redraw (A1) -- (A2) and (B1) -- (B2)
            \draw
                (A1) edge[line width=4pt, white] (A2)
                (A1) edge (A2)
                (B1) edge[line width=4pt, white] (B2)
                (B1) edge (B2);
        \end{tikzpicture}
        \caption{The length-3 suspension $\susp[3]{C_5}$ of the 5-cycle.}
    \end{figure}
\end{example}

Let $\ell_0(f, g)$ and $r_0(f, g)$ denote the structure morphisms
\[ \ell_0(f, g) \from H \to \Cyl{m}(f, g) \quad r_0(f, g) \from K \to \Cyl{m}(f, g). \]
% We observe the following isomorphisms.
% \begin{proposition}
%     For any $n \geq 0$, the map $$
% \end{proposition}
These can be used to write the double mapping cylinder as a pushout.
\begin{proposition} \label{cyl-as-po}
    The squares
    \[ \begin{tikzcd}[column sep = 6em, row sep = 4.5em]
        G \sqcup G \ar[r, "{[i_0, I_m]}"] \ar[d, "{f \, \sqcup \, g}"'] & G \gtimes I_m \ar[d] \\
        H \sqcup K \ar[r, "{[\ell_0(f, g), r_0(f, g)]}"] & \Cyl{m}(f, g)
    \end{tikzcd} 
    \quad \begin{tikzcd}[column sep = 3.8em, row sep = 3.2em]
        G \ar[d, "f"'] \ar[r, "{\ell_0(\id[G], g)}"] & \Cyl{m}(\id[G], g) \ar[d] \\
        H \ar[r, "{\ell_0(f, g)}"] & {\Cyl{m}(f, g)}
    \end{tikzcd} 
    \quad \begin{tikzcd}[column sep = 3.8em, row sep = 3.2em]
        G \ar[d, "g"'] \ar[r, "{r_0(f, \id[G])}"] & \Cyl{m}(f, \id[G]) \ar[d] \\
        K \ar[r, "{r_0(f, g)}"] & {\Cyl{m}(f, g)}
    \end{tikzcd} 
    % \quad \begin{tikzcd}[column sep = 3.8em, row sep = 3.2em]
    %     X \ar[r, "I_m"] \ar[d, "g"'] & G \gtimes I_m \ar[d] \\
    %     Z \ar[r, "{r_0(\id[G], g)}"] & {\Cyl{m}(\id[G], g)}
    % \end{tikzcd}
    \]
    are pushouts.
\end{proposition}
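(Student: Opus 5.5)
The plan is to deduce all three squares from the definition of $\Cyl{m}(f,g)$ as the colimit of the zigzag $H \xleftarrow{f} G \xrightarrow{i_0} G \gtimes I_m \xleftarrow{i_m} G \xrightarrow{g} K$, using only formal manipulations of colimits (commutation of colimits with colimits and pasting of pushouts). No combinatorics of graphs beyond this is needed.

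For the first square, I would note that a cocone on the zigzag is exactly the data of maps $H \to Z$, $G \gtimes I_m \to Z$, $K \to Z$ with $\restr{(G\gtimes I_m \to Z)}{G \times 0} = (H\to Z)\circ f$ and $\restr{(G\gtimes I_m\to Z)}{G\times m} = (K\to Z)\circ g$. Bundling the two conditions together, this is precisely a cocone on the span $H \sqcup K \xleftarrow{f \sqcup g} G \sqcup G \xrightarrow{[i_0, i_m]} G \gtimes I_m$, since maps out of coproducts are pairs of maps. Hence the two colimits agree, with the same structure maps $\ell_0(f,g)$, $r_0(f,g)$ and the canonical map from $G \gtimes I_m$. (The label $[i_0, I_m]$ in the statement should be read as $[i_0, i_m]$.)

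For the second square, I would first apply the same reasoning to $\Cyl{m}(\id[G], g)$. Its zigzag has left leg $\id[G]$, so $\Cyl{m}(\id[G], g)$ is simply the pushout $(G\gtimes I_m)\push_{G} K$ of $i_m$ and $g$, with $\ell_0(\id[G],g) \from G \to \Cyl{m}(\id[G],g)$ being $i_0$ followed by the structure map. By computing the colimit of the zigzag in stages, first the right half, then gluing on $H$ along $f$, we see that $\Cyl{m}(f,g)$ is the pushout of $H \xleftarrow{f} G \to \Cyl{m}(\id[G],g)$. Concretely, I would check the universal property directly: a cocone from the square to $Z$ consists of $H \to Z$ and $\Cyl{m}(\id[G],g) \to Z$ compatible on $G$. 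The latter is a pair $G\gtimes I_m \to Z$, $K\to Z$ compatible along $i_m,g$, and the compatibility on $G$ is the condition along $i_0,f$. This is exactly a cocone on the full zigzag. The third square is symmetric: I would either repeat the argument with the roles of the ends swapped, or invoke the symmetry isomorphism $(v,t)\mapsto(v,m-t)$ from the example to reduce it to the second square.

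The only mildly delicate step is bookkeeping: I need to make sure the induced maps in each square are the canonical ones, for instance that the right vertical map $\Cyl{m}(\id[G],g) \to \Cyl{m}(f,g)$ is the map induced by the morphism of zigzags $(f, \id, \id, \id, \id)$. That is what makes the comparison of cocones an honest identification rather than merely an abstract isomorphism. Otherwise, the whole proof is the general fact that iterated pushouts compute colimits of zigzags, and it works in any cocomplete category, in particular $\Graph$.
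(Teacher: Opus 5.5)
Your proof is correct, but it takes a different route from the paper. The paper disposes of this in one line by appealing to the explicit description of pushouts in $\Graph$. There, one computes each pushout as a quotient of the disjoint union of vertex sets, with edges the images of edges; no closure is needed, since the image of a reflexive symmetric relation is again reflexive and symmetric. One then compares the result with the concrete model $H \sqcup (G \gtimes I_m) \sqcup K/\!\sim$ of $\Cyl{m}(f,g)$.

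You compare cocones instead. The colimit of the zigzag agrees with the pushout of the span out of $G \sqcup G$ because maps out of a coproduct are pairs. The other two squares come from computing the colimit in stages, using that a leg which is an identity contributes nothing.

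Your version is more general: it holds verbatim in any category with the relevant colimits, so it applies equally to the cubical cylinder $\Cyl{}^{\boxcat}$, and it never uses how pushouts in $\Graph$ are built. The paper's concrete approach, by contrast, feeds directly into the vertex-and-edge description of $\Cyl{m}(f,g)$. That description is used right afterwards to show the maps $\ell_k$, $r_k$ are induced subgraph inclusions and to bound distances, so for the authors it costs nothing extra.

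You are right to insist that the induced maps are the canonical ones, so that the given squares are identified rather than merely their corners being abstractly isomorphic, and you handle this correctly. Your reading of $[i_0, I_m]$ as $[i_0, i_m]$ is also right.
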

\begin{proof}
    Use the explicit description of pushouts in $\Graph$.
\end{proof}

% In the setting of topological spaces, the role of double mapping cylinders is to provide an explicit model for the homotopy pushout of $f$ and $g$.
% We now move towards developing 

% We move towards developing the ``homotopical'' properties of double mapping cylinders.
When $g = \id[G]$, the map $\ell_0(f, \id[G])$ admits a retraction
\[ \sigma(f) \from \Cyl{m}(f, \id[G]) \to H \]
which is constructed as the map out of the pushout
\[ \begin{tikzcd}
    G \ar[r, "i_0"] \ar[d, "f"'] \ar[rd, phantom, "\potick" very near end] & G \gtimes I_m \ar[d] \ar[r, "\pi_X"] & G \ar[d, "f"] \\
    H \ar[r] & {\Cyl{m}(f, \id[G])} \ar[r, dotted, "{\sigma(f)}"] & H
\end{tikzcd} \]
Similarly, if $f = \id[G]$ then $r_0(\id[G], g)$ admits a retraction.
We overload notation and write $\sigma(g) \from \Cyl{m}(\id[G], g) \to Z$ for this morphism as well.
\begin{proposition} \label{cyl-inclusion-is-htpy-equiv}
    The maps $\sigma(f)$ and $\sigma(g)$ are homotopy inverses of $\ell_0(f, \id[G])$ and $r_0(\id[G], g)$, respectively.
\end{proposition}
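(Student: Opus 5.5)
By the symmetry of the double mapping cylinder noted in the examples, it suffices to treat $\sigma(f)\from \Cyl{m}(f,\id[G]) \to H$ and $\ell_0 := \ell_0(f,\id[G])$. One composite is immediate. By construction $\sigma(f)$ is a retraction of $\ell_0$, so $\sigma(f)\circ\ell_0 = \id[H]$, and this identity is a homotopy of length $0$.

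It remains to produce a homotopy $\ell_0 \circ \sigma(f) \sim \id[\Cyl{m}(f,\id[G])]$. I will build a map $\alpha\from \Cyl{m}(f,\id[G]) \gtimes I_m \to \Cyl{m}(f,\id[G])$ that "slides the cylinder coordinate down to $0$". Write vertices of the cylinder as classes $[v,t]$ with $v\in G$, $t\in I_m$, together with the vertices $h\in H$, subject to $[v,0]=f(v)$. On the cylinder part, set
\[ \alpha([v,t],s) := [v, \max(t-s,0)], \]
and on $H$ set $\alpha(h,s) := h$.

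First I check that $\alpha$ is well defined. For this I use that the box product with $I_m$ preserves colimits, since it is a left adjoint by closedness of $\gtimes$. Hence $\Cyl{m}(f,\id[G])\gtimes I_m$ is the pushout of $H\gtimes I_m \leftarrow G\gtimes I_m \to (G\gtimes I_m)\gtimes I_m$, and it suffices to define compatible maps on the two pieces. Compatibility holds at $t=0$: there $[v,\max(-s,0)] = [v,0] = f(v) = \alpha(f(v),s)$.

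Next, $\alpha$ is a graph map. On the $H$ piece this is clear. On the cylinder piece I check the two kinds of edges. An edge in the $s$ direction changes $\max(t-s,0)$ by at most $1$, so it maps to an edge or to a loop. An edge in the $t$ direction behaves the same way. An edge in the $v$ direction with $t,s$ fixed is preserved, since $[\uvar,t']$ is a graph map $G \to \Cyl{m}$ for each fixed $t'$. The main point needing care is that the function $(t,s)\mapsto\max(t-s,0)$ is a graph map $I_m\gtimes I_m \to I_m$; this is a routine check.

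Finally, I check the endpoints. At $s=0$, $\alpha$ is the identity. At $s=m$, $\alpha$ sends $[v,t]$ to $[v,0]=f(v)$ and $h$ to $h$, which is exactly $\ell_0\circ\sigma(f)$, since $\sigma(f)$ sends $[v,t]\mapsto f(v)$. So $\alpha$ is a homotopy from $\id$ to $\ell_0\sigma(f)$, and homotopy is symmetric. The only real obstacle is the well-definedness on the pushout, which the adjunction argument above handles.
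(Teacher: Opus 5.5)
Your proof is correct and follows essentially the same route as the paper's. Both use that $\sigma(f)$ is a retraction for one composite, and both build the other homotopy on the pushout $\Cyl{m}(f,\id[G])\gtimes I_m$ (using that $\uvar\gtimes I_m$ preserves pushouts): it is constant on $H$ and contracts the cylinder coordinate.

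The only difference is the contraction $I_m\gtimes I_m\to I_m$. You use $(t,s)\mapsto\max(t-s,0)$, where the paper uses $(t,t')\mapsto\min(t,t')$. Both are graph maps that fix $t=0$ and interpolate between the identity and the constant map at $0$, so either one works; yours just runs the homotopy in the opposite direction.
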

\begin{proof}
    We prove the statement for $\sigma(f)$; the argument for $\sigma(g)$ is analogous.
    By construction, $\sigma(f) \circ \ell_0(f, \id[G]) = \id[Y]$.
    It remains only to define the homotopy $\ell_0(f, \id[G]) \circ \sigma(f) \sim \id[{\Cyl{m}(f, \id[G])}]$.
    
    The functor $\uvar \gtimes I_m \from \Graph \to \Graph$ preserves pushouts, hence the square
    \[ \begin{tikzcd}[column sep = 5.5em, row sep = 4em]
        G \gtimes I_m \ar[d, "f \gtimes I_m"'] \ar[r, "i_0 \gtimes I_m", hook] \ar[rd, phantom, "\potick" very near end] & (G \gtimes I_m) \gtimes I_m \ar[d] \\
        H \gtimes I_m \ar[r, hook, "{\ell_0(f, \id[G]) \gtimes I_m}"] & \Cyl{m}(f, \id[G]) \gtimes I_m
    \end{tikzcd} \]
    is a pushout.
    Define a map $\alpha \from G \gtimes I_m \gtimes I_m \to G \gtimes I_m$ by
    \[ \alpha(v, t, t') := (v, \min(t, t')). \]
    Observe that the left and top squares of the cube
    \[ \begin{tikzcd}
        G \gtimes I_m \ar[rr] \ar[dd] \ar[rd, "\pi_G"] &[+1ex] {} & (G \gtimes I_m) \gtimes I_m \ar[dd] \ar[rd, "\alpha"] &[-3ex] {} \\
        {} & X \ar[rr, crossing over] & {} & G \gtimes I_m \ar[dd] \\
        H \gtimes I_m \ar[rr] \ar[rd, "\pi_H"] & {} & \Cyl{m}(f, \id[G]) \gtimes I_m \ar[rd, dotted, "\beta"] & {} \\
        {} & Y \ar[rr] \ar[from=uu, crossing over] & {} & \Cyl{m}(f, \id[G]) 
    \end{tikzcd} \]
    commute, from which we define the map $\beta \from \Cyl{m}(f) \gtimes I_m \to \Cyl{m}(f)$ as the induced map between pushouts.
    An explicit formula for $\beta$ is given by
    \begin{align*}
        \beta \big( (v_G, t), t' \big) &:= \big( v_G, \min(t, t') \big) \\
        \beta(v_H, t') &:= v_H.
    \end{align*}
    If $t' = m$ then $\beta(\uvar, m) = \id[\Cyl{m}(f)]$.
    If $t' = 0$ then $\beta(\uvar, 0) = \ell_0(f, \id[G]) \circ \sigma(f)$.
    Thus, $\beta$ is the desired homotopy.
\end{proof}
% The colimit in \cref{def:double-mapping-cylinder} can be computed via iterated pushouts, yielding the following result.
% \begin{proposition}
%     Each of the squares
%     \[ \begin{tikzcd}
%         X & 
%     \end{tikzcd} \]
%     is a pushout.
%     Moreover, the diagram obtained by pasting these squares

%     commutes.
% \end{proposition}

% We highlight the isomorphism $\Cyl{m}(f, g) \xrightarrow{\cong} \Cyl{m}(g, f)$ defined by the expressions:
% \[ \begin{array}{c c c}
%     (x, t) \mapsto (x, n-t) & y \mapsto y & z \mapsto z.
% \end{array} \]

More generally, given a parameter $k \in \{ 0, \dots, m \}$, we define maps 
\[ \ell_k(f, g) \from \Cyl{k}(f, \id[G]) \to \Cyl{m}(f, g) \quad \text{and} \quad r_k(f, g) \from \Cyl{k}(\id[G], g) \to \Cyl{m}(f, g) \]
as the maps out of the pushout depicted in:
\[ \begin{tikzcd}
        G \ar[r, "i_{0}"] \ar[d, "f"'] \ar[rd, phantom, "\potick" very near end] &[-1ex] G \gtimes I_{k} \ar[r, "{(x, t) \mapsto (x, t)}"] \ar[d] &[+1.8ex] G \gtimes I_m \ar[d] \\
        H \ar[r] & {\Cyl{k}(f, \id[G])} \ar[r, dotted, "{\ell_k(f, g)}"] & {\Cyl{m}(f, g)}
\end{tikzcd} \qquad \begin{tikzcd}
        G \ar[r, "i_{k}"] \ar[d, "g"'] \ar[rd, phantom, "\potick" very near end] &[-1ex] G \gtimes I_{k} \ar[r, "{(x, t) \mapsto (x, t+m-k)}"] \ar[d] &[+5.5ex] G \gtimes I_m \ar[d] \\
        K \ar[r] & {\Cyl{k}(\id[G], g)} \ar[r, dotted, "{r_k(f, g)}"] & {\Cyl{m}(f, g)}
\end{tikzcd} \]
% Explicit formulas for $\ell_k(f, g)$ and $r_k(f, g)$ are given by
% \[ \begin{array}{c c}
%     \big( \ell_k(f, g) \big)(x, t) = (x, t) & \big( r_k(f, g) \big)(x, t) = (x, t+n-k) \\
%     \big( \ell_k(f, g) \big)(y) = y & \big( r_k(f, g) \big)(z) = z.
% \end{array} \]
When $k = 0$, these definitions coincide with the previous definitions of $\ell_0(f, g)$ and $r_0(f, g)$.
% By construction, $$
% When $f = \id[X]$ and $k = 0$, the map $r_0(\id[f], g)$ recovers the inclusion $i_g \from Y \ito \Cyl{m}(g)$.

From the explicit description of the double mapping cylinder, we deduce the following results.
\begin{proposition} \label{induced-subgraph-double-mapping-cylinder}
    Let $f \from G \to H$ and $g \from G \to K$ be graph maps.
    For $m \geq 2$ and $k \neq m$, both maps
    \begin{align*} 
        \ell_{k}(f, g) &\from \Cyl{k}(f, \id[G]) \to \Cyl{m}(f, g) \\
        r_{k}(f, g) &\from \Cyl{k}(\id[G], g) \to \Cyl{m}(f, g)
    \end{align*} 
    are induced subgraph inclusion. \qed
\end{proposition}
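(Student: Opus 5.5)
We work directly with the explicit description $\Cyl{m}(f, g) = (H \sqcup (G \gtimes I_m) \sqcup K)/\sim$, where $(v,0) \sim f(v)$ and $(v,m) \sim g(v)$. Two vertices of the quotient are adjacent exactly when some pair of representatives is adjacent in the disjoint union. By the symmetry isomorphism $(v,t) \mapsto (v, m-t)$, which swaps the roles of $f$ and $g$ and carries $r_k(f,g)$ to $\ell_k(g,f)$, it suffices to treat $\ell_k(f,g)$. The argument has two parts: first injectivity on vertices, then that every edge of $\Cyl{m}(f,g)$ between vertices in the image already lies in $\Cyl{k}(f,\id[G])$.

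\emph{Injectivity on vertices.} The vertices of $\Cyl{k}(f, \id[G])$ fall into three kinds.
\begin{itemize}
  \item Vertices of $H$: these map into $H$.
  \item Pairs $(v,t)$ with $0 < t < k$: these map to $(v,t)$ in the interior strip $G \times \{1, \dots, m-1\}$, on which the quotient is injective.
  \item Pairs $(v,k)$, identified with $v \in G$ via $\id[G]$: these map to $(v,k)$.
\end{itemize}
Since $k < m$, no vertex $(v,t)$ with $t \leq k$ is identified with anything in $K$. The only identifications in $\Cyl{m}(f,g)$ among the image points are therefore $(v,0) \sim f(v)$, and these are exactly the identifications already made in $\Cyl{k}(f,\id[G])$. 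So the map is injective on vertices.

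\emph{Inducedness.} Take two image vertices $x, y$ adjacent in $\Cyl{m}(f,g)$, witnessed by adjacent representatives in $H \sqcup (G \gtimes I_m) \sqcup K$. We must show the edge is already present in $\Cyl{k}(f,\id[G])$, and we split by where the witnessing edge lives.
\begin{itemize}
  \item The edge lies in $H$: it is present in the source.
  \item The edge lies in $K$: both endpoints are in the image of $g$ and hence have the form $(w,m)$ or lie in $K$. Such vertices are in the image of $\ell_k$ only if they also have a representative at level $\leq k < m$. This is impossible unless $m = 0$, which is excluded.
  \item The edge lies in $G \gtimes I_m$, between $(v,t)$ and $(v',t')$ with $|t - t'| \leq 1$: if both $t, t' \leq k$, the same edge exists in $G \gtimes I_k$. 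Otherwise one of them, say $t'$, equals $k+1$. Its class is a vertex $(v', k+1)$ with $0 < k+1 \leq m$. If $k+1 < m$, this is an interior vertex not in the image. If $k+1 = m$, it is identified with $g(v') \in K$; for it to lie in the image it must equal some $(u,s)$ with $s \leq k$ or some point of $H$. This would force $m \leq 1$, contradicting $m \geq 2$.
\end{itemize}
In every case the edge comes from $\Cyl{k}(f,\id[G])$.

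The main obstacle is the bookkeeping when $k = m-1$ and level-$m$ vertices are glued into $K$ through possibly non-injective $g$. Here one must check carefully that a class at level $m$ never coincides with a class at level $\leq k$. This is exactly where $m \geq 2$ enters, since it guarantees that levels $0$ and $m$ are never adjacent. I would write this case out in full and treat the others briefly.
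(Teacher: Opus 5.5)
Your proposal is correct and is the same direct check from the explicit quotient description of $\Cyl{m}(f,g)$ that the paper relies on; the paper states the result without further argument. One correction to your commentary: the argument only uses $m \geq 1$, because what matters is that level $m$ is never identified with level $0$ or any level $\le k$ (so classes meeting $K$ never lie in the image), not that levels $0$ and $m$ are non-adjacent; that same observation also disposes of the case where both $t, t' > k$, which your ``one of them equals $k+1$'' step skips.
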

\begin{proposition} \label{induced-subgraph-double-mapping-cylinder-mono}
    Let $f \from G \to H$ and $g \from G \to K$ be graph maps.
    \begin{enumerate}
        \item If $g$ is a monomorphism then the map
        \[ \ell_m(f, g) \from \Cyl{m}(f, \id[G]) \to \Cyl{m}(f, g) \]
        is a monomorphism.
        Moreover, if $g$ is an induced subgraph inclusion then $\ell_m(f, g)$ is an induced subgraph inclusion.
        \item If $f$ is a monomorphism then the map
        \[ r_m(f, g) \from \Cyl{m}(\id[G], g) \to \Cyl{m}(f, g) \]
        is a monomorphism.
        Moreover, if $f$ an induced subgraph inclusion then $r_m(f, g)$ is an induced subgraph inclusion. \qed
    \end{enumerate}
\end{proposition}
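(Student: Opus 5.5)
We verify both claims directly from the explicit description of $\Cyl{m}(f, g)$ as the quotient of $H \sqcup (G \gtimes I_m) \sqcup K$ by the relations $(v, 0) \sim f(v)$ and $(v, m) \sim g(v)$. We treat part (1); part (2) follows either by the symmetric argument or by applying the symmetry isomorphism $\Cyl{m}(f, g) \cong \Cyl{m}(g, f)$, which exchanges $\ell_m$ and $r_m$.

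\textbf{Vertices.} The vertex set of $\Cyl{m}(f, \id[G])$ is $H \sqcup (G \times \{1, \dots, m-1\}) \sqcup G$. Here the last copy of $G$ is the image of the layer $t = m$, since the identification with the right-hand $G$ via $\id[G]$ is a bijection. The vertex set of $\Cyl{m}(f, g)$ is $H \sqcup (G \times \{1, \dots, m-1\}) \sqcup K$. The map $\ell_m(f, g)$ is the identity on $H$, the identity on the middle layers (pairs $(v, t)$ with $0 < t < m$), and $g$ on the last layer. Hence it is injective on vertices exactly when $g$ is. In the degenerate case $m = 0$, both cylinders are pushouts: the source is $H \push_G G \cong H$ and the target is $H \push_G K$. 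The vertex map $H \to H \push_G K$ is injective when $g$ is a monomorphism, because a pushout of sets along an injection is injective. So the case $m = 0$ is covered by the same argument, read at the level of the set pushout.

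\textbf{Edges (induced subgraph).} Suppose $g$ is an induced subgraph inclusion, and suppose two vertices in the image of $\ell_m(f, g)$ are adjacent in $\Cyl{m}(f, g)$. We must show they are already adjacent in $\Cyl{m}(f, \id[G])$. Edges of a quotient graph are images of edges of the disjoint union, so the edge comes from one of three sources.
\begin{itemize}
\item An edge of $H$: it is present in the source as well.
\item An edge of $G \gtimes I_m$, either within a layer or between consecutive layers: this edge is also present in $G \gtimes I_m$ inside the source, because the source is built from the same cylinder.
\item An edge of $K$ between two vertices $g(v)$ and $g(w)$: since $g$ is induced, $v \sim w$ in $G$, so $(v, m) \sim (w, m)$ in the source.
\end{itemize}
One must also check that the quotient on the target introduces no further identifications among image vertices. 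Indeed, the only vertices identified are $(v, 0)$ with $f(v)$ and $(v, m)$ with $g(v)$. These are the same identifications as in the source, with $g$ injective, so the two descriptions of edges match.

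The main point needing care is this last bookkeeping: confirming that an edge in the quotient always lifts to an edge of one of the three pieces. This holds because the colimit in $\Graph$ of this diagram has as edges exactly the images of the edges of its pieces. That in turn follows from the explicit description of pushouts in $\Graph$ used in \cref{cyl-as-po}.
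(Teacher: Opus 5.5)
Your proof is correct and follows the paper's approach. The paper gives no written proof and only says the result follows from the explicit description of $\Cyl{m}(f,g)$ as a quotient of $H \sqcup (G \gtimes I_m) \sqcup K$. Your vertex count (with the separate pushout check at $m=0$), the observation that edges of a colimit in $\Graph$ are images of edges of the pieces, and the reduction of part (2) to part (1) via the symmetry isomorphism carry out exactly that verification.
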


% For $n \geq 2$ and $k \in \{ 1, \dots, n-1 \}$, the map $\ell_k(f, g)$ admits a retraction.

% we define a map $\sigma_k(f) \from \Cyl{m}(f, \id[G]) \to \Cyl{n-1}(f, \id[X])$ as the map out of the pushout:
% \[ \begin{tikzcd}
    
% \end{tikzcd} \]

For small values of $k$, the subgraph inclusions $\ell_k(f, g)$ and $r_k(f, g)$ have disjoint images inside $\Cyl{m}(f, g)$.
When $k \geq \lceil m / 2 \rceil$, the two subgraphs intersect, and their intersection is isomorphic to a cylinder on $G$.
More precisely, 
\begin{proposition}
    Let $f \from G \to H$ and $g \from G \to K$ be graph maps.
    \begin{enumerate}
        \item Let $p \geq 0$ be non-negative and $q_1, q_2 \geq 1$ be strictly positive.
        Then, the square
        \[ \begin{tikzcd}[column sep = 4em, row sep = 3em]
            G \gtimes I_{p} \ar[r, hook, "{r_{p}(f, \id)}"] \ar[d, hook, "{\ell_{p}(\id, g)}"'] \ar[rd, phantom, "\pbtick" very near start, "\potick" very near end] & \Cyl{p + q_1}(f, \id[G]) \ar[d, "{\ell_{p+q_1}(f, g)}", hook] \\
            \Cyl{p + q_2}(\id[G], g) \ar[r, "{r_{p+q_2}(f, g)}", hook] & \Cyl{p + q_1 + q_2}(f, g)
        \end{tikzcd} \]
        is a pullback. 
        \item If $f$ is injective then part (1) is additionally true when $q_1 = 0$.
        \item If $g$ is injective then part (1) is additionally true when $q_2 = 0$. \qed
    \end{enumerate}
\end{proposition}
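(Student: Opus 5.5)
Since pullbacks in $\Graph$ are computed on vertex sets, with an edge present exactly when it is present in both factors, the plan is to work with the explicit quotient description of $\Cyl{p+q_1+q_2}(f,g)$. Write $M = p+q_1+q_2$. Its vertices fall into three disjoint types: vertices of $H$; vertices $(v,t)$ with $v \in G$ and $0 < t < M$, where these are not identified with anything; and vertices of $K$. In these coordinates the two maps are explicit. The map $\ell_{p+q_1}(f,g)$ sends $H$ identically, sends $(v,t)$ for $0<t<p+q_1$ to $(v,t)$, and sends the end $(v,p+q_1)$ to the interior vertex $(v,p+q_1)$; this last vertex is interior because $q_2 \geq 1$. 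The map $r_{p+q_2}(f,g)$ sends $K$ identically and sends $(v,s)$ to $(v,s+q_1)$, and its end $(v,0)$ lands at the interior vertex $(v,q_1)$ because $q_1 \geq 1$.

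First I check the square commutes and compute the vertex-level intersection of the images. The composite $\ell_{p+q_1}\circ r_p(f,\id)$ sends $(v,s)\in G\gtimes I_p$ to $(v,s+q_1)$. The composite $r_{p+q_2}\circ \ell_p(\id,g)$ sends $(v,s)$ to $(v,s)$ in $\Cyl{p+q_2}(\id,g)$ and then to $(v,s+q_1)$. So both agree. Note that $\ell_p(\id,g)$ is $(v,s)\mapsto(v,s)$ into $\Cyl{p+q_2}(\id,g)$, whose $G$-side end is a copy of $G$.

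The image of $\ell_{p+q_1}$ is $H \cup \{(v,t) : t \leq p+q_1\}$, with no vertices of $K$. The image of $r_{p+q_2}$ is $K \cup \{(v,t) : t \geq q_1\}$, with no vertices of $H$. Their intersection is therefore $\{(v,t): q_1 \le t \le p+q_1\}$, which is exactly the image of $G\gtimes I_p$. The hypotheses $q_1, q_2 \ge 1$ are used precisely here. They guarantee that the images of $H$ and of $K$ do not reach the other side, and that the relevant vertices are honest interior vertices $(v,t)$, so no identification by $f$ or $g$ collapses distinct vertices. Consequently both maps are injective on these vertices. By \cref{induced-subgraph-double-mapping-cylinder} and \cref{induced-subgraph-double-mapping-cylinder-mono}, both are monomorphisms, indeed induced subgraph inclusions, in the range needed (they are not the top level $k=M$). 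Hence the pullback of vertex sets is the intersection computed above, and it is in bijection with $V(G\gtimes I_p)$.

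Next I check edges. Because both maps are induced subgraph inclusions, an edge between two vertices of the intersection exists in both factors exactly when it exists in $\Cyl{M}(f,g)$. The edges of $\Cyl{M}(f,g)$ among interior vertices $(v,t)$ are exactly the box-product edges of $G\gtimes I_M$, so the induced structure on $\{q_1\le t\le p+q_1\}$ is $G\gtimes I_p$. This shows the square is a pullback.

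For parts (2) and (3), take $q_1=0$ with $f$ injective. The vertices $(v,0)$ are now identified with $f(v)\in H$. Since $f$ is injective, distinct $v$ give distinct vertices, and the intersection of the images is still $\{(v,t):0\le t\le p\}$, now counted as $f(G)\subseteq H$ at $t=0$. For edges, both $\ell_{p}(f,g)$ and $r_{p+q_2}(f,g)$ are induced subgraph inclusions: the former by \cref{induced-subgraph-double-mapping-cylinder}, the latter by \cref{induced-subgraph-double-mapping-cylinder-mono}. Hence edges in the pullback are the edges of the common image in $\Cyl{M}(f,g)$. The main obstacle is at level $t=0$, where the pullback has an edge $(v,0)\sim(w,0)$ whenever $f(v)\sim f(w)$ in $H$, which may be more than the edges of $G$. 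I expect to need that the relevant edge structure matches; if $f$ is only injective and not induced, I would reexamine whether the pullback is taken with the structure inherited from $\Cyl{p}(\id,g)$, where the end is $G$ itself. In that case the edge condition requires membership in both factors, and the factor $\Cyl{p+q_2}(\id,g)$ only contains edges of $G$ there, so the pullback recovers $G\gtimes I_p$. Part (3) is symmetric.
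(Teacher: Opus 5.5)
Your part (1) is correct, and it is the direct verification from the explicit description of $\Cyl{p+q_1+q_2}(f,g)$ that the paper leaves implicit. Both legs are injective, and indeed induced by \cref{induced-subgraph-double-mapping-cylinder}, since the length is at least $2$ and neither index is the top one. Their images meet exactly in the interior levels $q_1\le t\le p+q_1$, and the edges there are the box-product edges.

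Part (2), however, contains a false step. You assert that $r_{p+q_2}(f,g)\colon\Cyl{p+q_2}(\id[G],g)\to\Cyl{p+q_2}(f,g)$ is an induced subgraph inclusion by \cref{induced-subgraph-double-mapping-cylinder-mono}. That result gives inducedness only when $f$ is an induced inclusion; for merely injective $f$ you only get a monomorphism. Concretely, if $f(v)\sim f(w)$ in $H$ but $v\not\sim w$ in $G$, then $(v,0)$ and $(w,0)$ are not adjacent in $\Cyl{p+q_2}(\id[G],g)$, while their images are adjacent.

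So your sentences ``edges in the pullback are the edges of the common image'' and ``the pullback has an edge $(v,0)\sim(w,0)$ whenever $f(v)\sim f(w)$'' are wrong. They contradict the description of pullbacks in $\Graph$ you started from, and if they held, the pullback would have more edges than $G\gtimes I_p$.

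The repair is the observation you reach at the end. It should replace these lines rather than sit beside them as a hedge:
\begin{itemize}
\item An edge of the pullback requires adjacency in both factors.
\item In $\Cyl{p+q_2}(\id[G],g)$, the vertices $(v,t)$ with $t\le p<p+q_2$ span a copy of $G\gtimes I_p$, because the level-$0$ end is $G$ glued along $\id[G]$. In other words, $\ell_p(\id[G],g)$ reflects edges.
\end{itemize}
Hence the pullback's edges are exactly those of $G\gtimes I_p$, and you never need $r_{p+q_2}(f,g)$ to be induced.

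In fact, since $\ell_p(f,g)$ is injective when $q_2\ge 1$, a pullback vertex is determined by its $\Cyl{p+q_2}(\id[G],g)$-component. So the pullback property does not even use injectivity of $f$. That hypothesis is what makes $r_p(f,\id)$ a monomorphism, as drawn and as required later in \cref{n-skeletal-po-condition-general}.
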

% More precisely, the square
% \[ \begin{tikzcd}[column sep = 5.5em, row sep = 3.5em]
%     G \gtimes I_{n - 2(n-k)} \ar[r, hook, "{r_{n-2(n-k)}(f, \id)}"] \ar[d, hook, "{\ell_{n-2(n-k)}(\id, g)}"'] \ar[rd, phantom, "\pbtick" very near start] & \Cyl{k}(f, \id[G]) \ar[d, "{\ell_{k}(f, g)}"] \\
%     \Cyl{k}(\id[G], g) \ar[r, "{r_{k}(f, g)}"] & \Cyl{m}(f, g)
% \end{tikzcd} \]
% is both a pushout and a pullback for any $k \in \{ \lceil n / 2 \rceil, \dots, n-1 \}$.
% We introduce the change of variables
% \[ p := n - 2(n-k) \quad \text{and} \quad q := n-k. \]
% Now, for any $p \geq 0$ and $q \geq 1$, the square
% \[ \begin{tikzcd}[column sep = 4em, row sep = 3em]
%     G \gtimes I_{p} \ar[r, hook, "{r_{p}(f, \id)}"] \ar[d, hook, "{\ell_{p}(\id, g)}"'] \ar[rd, phantom, "\pbtick" very near start] & \Cyl{p + q}(f, \id[X]) \ar[d, "{\ell_{p+q}(f, g)}", hook] \\
%     \Cyl{p + q}(\id[X], g) \ar[r, "{r_{p+q}(f, g)}", hook] & \Cyl{p + 2q}(f, g)
% \end{tikzcd} \]
% is both a pushout and a pullback.
% (In fact, this square is also a pushout, though we do not make use of this.)
% We introduce a change of variables $j := n - k$ to rewrite this square as:
% \[ \begin{tikzcd}[column sep = 5.5em, row sep = 3.5em]
%     G \gtimes I_{} \ar[r, hook, "{r_{n-2(n-k)}(f, \id)}"] \ar[d, hook, "{\ell_{n-2(n-k)}(\id, g)}"'] \ar[rd, phantom, "\pbtick" very near start] & \Cyl{k}(f) \ar[d, "{\ell_{k}(f, g)}"] \\
%     \Cyl{k}(g) \ar[r, "{r_{k}(f, g)}"] & \Cyl{m}(f, g)
% \end{tikzcd} \]

% Using what we know about $n$-skeletal pushouts of graphs, we may now 
\begin{lemma} \label{double-mapping-cylinder-is-n-skeletal-po}
    Let $f \from G \to H$ and $g \from G \to K$ be graph maps.
    \begin{enumerate}
        \item For non-negative $p \geq 0$ and strictly positive $q_1, q_2 \geq 1$, the square
        \[ \begin{tikzcd}[column sep = 4em, row sep = 3em]
            G \gtimes I_{p} \ar[r, hook, "{r_{p}(f, \id)}"] \ar[d, hook, "{\ell_{p}(\id, g)}"'] \ar[rd, phantom, "\pbtick" very near start] & \Cyl{p+q_1}(f, \id[G]) \ar[d, "{\ell_{p+q_1}(f, g)}", hook] \\
            \Cyl{p+q_2}(\id[G], g) \ar[r, "{r_{p+q_2}(f, g)}", hook] & \Cyl{p + q_1 + q_2}(f, g)
        \end{tikzcd} \]
        is a $(p+1)$-skeletal pushout.
        \item If $f$ is injective then part (1) is additionally true when $q_1 = 0$.
        \item If $g$ is injective then part (1) is additionally true when $q_2 = 0$.
    \end{enumerate}
\end{lemma}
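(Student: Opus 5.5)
The plan is to apply \cref{n-skeletal-po-condition-general} with $n$ replaced by $p+1$. Write $L := \Cyl{p+q_1+q_2}(f, g)$ and identify the two top-right/bottom-left graphs with their images. Conditions (1) and (2) come directly from earlier results. By \cref{induced-subgraph-double-mapping-cylinder}, the two maps into $L$ are induced subgraph inclusions when $q_1, q_2 \geq 1$ (the indices $p+q_1$, $p+q_2$ differ from $p+q_1+q_2$). In the boundary cases $q_1 = 0$ with $f$ injective, or $q_2 = 0$ with $g$ injective, we use \cref{induced-subgraph-double-mapping-cylinder-mono} instead. The maps out of $G \gtimes I_p$ are evidently injective from the explicit description. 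The square is a pullback by the preceding proposition on intersections of $\ell$ and $r$ images, in its three variants. So the substance is condition (3).

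For condition (3), concretely $A := \operatorname{im} \ell_{p+q_1}(f,g)$ consists of $H$ together with the vertices $(v,t)$ with $t \le p+q_1$. Similarly, $B := \operatorname{im} r_{p+q_2}(f,g)$ consists of $K$ together with the vertices $(v,t)$ with $t \ge q_1$. Rather than invoking \cref{n-skeletal-po-condition-subgraphs}, which would require distance $p+2$ between the complements, I argue directly with the shape of a cube. Since $A \cup B = L$ and both are induced, it suffices to show that the image of any $u \from \gcube{1}{p+1} \to L$ lies in $A$ or in $B$.

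Vertices outside $A$ are $K$-vertices or have $t > p+q_1$. Vertices outside $B$ are $H$-vertices or have $t < q_1$. The "height" function $h \from L \to I_{p+q_1+q_2}$ sends $H$ to $0$, $K$ to $p+q_1+q_2$, and $(v,t)$ to $t$. It is a graph map, because edges in the cylinder change $t$ by at most one and the gluing is compatible. Suppose the image of $u$ contains a vertex $x$ not in $A$ and a vertex $y$ not in $B$. Then $h(x) \ge p+q_1+1$ and $h(y) \le q_1 - 1$, so $h u$ has two values differing by at least $p+2$. Yet any two vertices of $\gcube{1}{p+1}$ are at distance at most $p+1$, which is a contradiction. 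Hence the image lies in $A$ or in $B$, and inducedness gives the factorization.

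The main obstacle is the edge cases $q_1 = 0$ or $q_2 = 0$. When $q_2 = 0$, $B$ is just the image of $K$ together with the level $t = p+q_1$. Vertices outside $B$ then include $(v,t)$ with $t < q_1$, while outside $A$ are the $K$-vertices not in the image of $g$, at height $p+q_1$. The height gap is still $\ge p+1$. However, $(v, p+q_1)$ is identified with $g(v) \in K$, so the height function alone does not separate $K \setminus g(G)$ from the top level. I would handle this by checking that $h(x) = p+q_1$ for $x \in K \setminus g(G)$, while $h(y) \le q_1-1 = -1$, which is impossible when $q_1 = 0$. For $q_1 \ge 1$, the gap is $p+q_1 - (q_1-1) = p+1$, which a $(p+1)$-cube can span. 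So I need extra care: a cube reaching both would need a geodesic-like path in which every coordinate step moves height. I expect to resolve this using injectivity of $g$ (ensuring $B$ is induced and the pullback holds), and if necessary by refining the gap count.
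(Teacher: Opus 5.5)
Your argument for part (1) is correct and is essentially the paper's. The height map $h$ repackages the paper's bound $\operatorname{dist}([v,t],[v',t']) \geq |t-t'|$, which puts the complements of $A$ and $B$ at distance at least $p+2$. So your ``direct'' cube argument is just \cref{n-skeletal-po-condition-subgraphs}, whose hypothesis you have verified.

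Parts (2) and (3), however, are not proved. You correctly see that in the boundary case $h$ separates the two complements by only $p+1$, which a $(p+1)$-cube can span. You then defer the problem, and injectivity is not what resolves it. The missing idea is that a vertex $x \in K \setminus g(G)$ is not in the image of the cylinder, so all its neighbours in $L$ lie in $K$. A path from $x$ must therefore take at least one step inside $K$ before reaching $g(G)$ at height $p+q_1$, and then at least $p+1$ more steps to descend to height $\leq q_1-1$. Hence the distance is at least $p+2$; this is exactly the paper's estimate $\operatorname{dist}(v',[v,p+1]) \geq \operatorname{dist}(v_H,f(v)) + p+1 \geq p+2$ in part (2). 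Equivalently, redefine $h$ to send $K \setminus g(G)$ to $p+q_1+1$ (resp.\ $H \setminus f(G)$ to $-1$, after a shift). This is still a graph map, and your Lipschitz argument then goes through verbatim.

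Injectivity is needed only for the monomorphism and pullback hypotheses. It does not make the relevant piece induced, so passing from ``vertex image in $A$'' to a factorization needs one more step. A cube using a $K$-edge not coming from $G$ has every vertex within distance $p$ of that edge, hence at height $\geq q_1$, hence lies in the piece $B$, which is induced (symmetrically in part (2)).

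Your bookkeeping is also off in two places. When $q_2=0$ it is $A$ that becomes all of $L$ except $K\setminus g(G)$, while $B$ is still $K$ together with all levels $t \geq q_1$. And your claim that the bad configuration is impossible when $q_1=0$ is false: then the complement of $B$ is $H \setminus f(G)$, at height $0$. That is precisely part (2), which needs the same refinement.
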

\begin{proof}
    Part (1) follows from \cref{n-skeletal-po-condition-general}, provided that we verify condition (3).
    
    We first observe that, for any $m \geq 1$, if $[v, t]$ and $[v', t']$ are vertices in the image of the structure morphism $G \gtimes I_m \to \Cyl{m}(f, g)$ then the distance from $[v, t]$ to $[v', t']$ in $\Cyl{m}(f, g)$ satisfies
    \[ \operatorname{dist} \big( [v, t], [v', t'] \big) \geq |t - t'|. \]
    % is bounded above by the absolute value $|t - t'|$ of the distance between $t$ and $t'$
    This follows from the description of edges in the box product $G \gtimes I_m$.
    We deduce that if $v \in \Cyl{p+q_1+q_2}(f, g)$ is a vertex which is not in the image of $\ell_{p+q_1}(f, g)$ and $v' \in \Cyl{p+q_1+q_2}(f, g)$ is a vertex which is not in the image of $r_{p+q_2}(f, g)$ then the distance from $v$ to $v'$ is at least $p+2$.
    This now implies condition (3) (c.f.\ \cref{n-skeletal-po-condition-subgraphs}).

    Parts (2) is similar; suppose $f$ is a monomorphism and $q_1 = 0$.
    If $v' \in \Cyl{p+q_2}(f, g)$ is not in the image of $r_{p+q_2}(f, g)$ then it must be of the form $\ell_0(f, g)(v_H)$ for some vertex $v_H \in H$ in the complement of the image of $f$ (here we are using that $f$ is a monomorphism).
    In this case, the distance from $v'$ to a vertex of the form $[v, p+1]$ satisfies
    \begin{align*} 
        \operatorname{dist} \big( v', [v, p + 1] \big) &\geq \operatorname{dist} \big( v_H, f(v) \big) + p + 1 \\
        &\geq 1 + p + 1 \\
        &= p + 2. 
    \end{align*}
    As before, this implies condition (3).

    The proof of part (3) is analogous to the proof of part (2). 
\end{proof}

\begin{corollary} \label{double-mapping-cyl-inclusions-hopo}
    Let $f \from G \to H$ and $g \from G \to K$ be graph maps.
    \begin{enumerate}
        \item Given non-negative $p \geq 0$, strictly positive $q_1, q_2 \geq 1$, and $m \in \NN \cup \{ \infty \}$, the square
        \[ \begin{tikzcd}
            \gnerve[m] \big( G \gtimes I_p \big) \ar[r] \ar[d] & \gnerve[m] \big( \! \Cyl{p+q_1}(f, \id[G]) \big) \ar[d] \\
            \gnerve[m] \big( \! \Cyl{p+q_2}(\id[G], g) \big) \ar[r] & \gnerve[m] \big( \! \Cyl{p+q_1 + q_2}(f, g) \big)
        \end{tikzcd} \]
        is a homotopy pushout of $p$-types.
        \item If $f$ is injective then part (1) is additionally true when $q_1 = 0$.
        \item If $g$ is injective then part (1) is additionally true when $q_2 = 0$.
    \end{enumerate}
\end{corollary}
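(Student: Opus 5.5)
This corollary will follow by combining \cref{double-mapping-cylinder-is-n-skeletal-po} with \cref{n-skeletal-po-is-hopo}, taking $n := p$ in the latter.

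For part (1), fix $p \geq 0$ and $q_1, q_2 \geq 1$. By \cref{double-mapping-cylinder-is-n-skeletal-po}(1), the square
\[ \begin{tikzcd}[column sep = 4em, row sep = 3em]
    G \gtimes I_{p} \ar[r, "{r_{p}(f, \id)}"] \ar[d, "{\ell_{p}(\id, g)}"'] & \Cyl{p+q_1}(f, \id[G]) \ar[d, "{\ell_{p+q_1}(f, g)}"] \\
    \Cyl{p+q_2}(\id[G], g) \ar[r, "{r_{p+q_2}(f, g)}"] & \Cyl{p + q_1 + q_2}(f, g)
\end{tikzcd} \]
is a $(p+1)$-skeletal pushout. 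This is exactly the hypothesis ``$(n+1)$-skeletal pushout'' of \cref{n-skeletal-po-is-hopo} with $n = p$.

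The remaining hypothesis of \cref{n-skeletal-po-is-hopo} is that one of the two maps out of $G \gtimes I_p$ is a monomorphism. I will check that $r_p(f, \id[G]) \from G \gtimes I_p \to \Cyl{p+q_1}(f, \id[G])$ is injective. Its target is a double mapping cylinder whose second map is $\id[G]$, which is in particular a monomorphism.

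\begin{itemize}
    \item When $p + q_1 \neq p$, which always holds since $q_1 \geq 1$, the map $r_p$ is the composite of $G \gtimes I_p \to \Cyl{p}(\id[G], \id[G]) \cong G \gtimes I_p$ with the shift $(x, t) \mapsto (x, t + q_1)$.
    \item The only vertices of the cylinder part $G \gtimes I_{p+q_1}$ that get identified are those in the $0$-end, which are glued along $f$.
    \item Since $t + q_1 \geq 1$, the image of $r_p$ avoids the $0$-end. Hence $r_p$ is injective on vertices, and so it is a monomorphism.
\end{itemize}

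Alternatively, one can argue purely formally: the square is a pullback with monic right and bottom legs, so both maps out of $G \gtimes I_p$ are monic. This is the argument I would cite, since the square is a pullback by the preceding proposition. (If $p+q_1 \geq 2$, \cref{induced-subgraph-double-mapping-cylinder} also applies directly.) Either way, \cref{n-skeletal-po-is-hopo} yields that applying $\gnerve[m]$, for any $m \in \NN \cup \{\infty\}$, produces a homotopy pushout of $p$-types.

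Parts (2) and (3) are identical in form, using \cref{double-mapping-cylinder-is-n-skeletal-po}(2) and (3) in place of part (1). The only point needing care is the monomorphism hypothesis when $q_1 = 0$: then $r_p(f, \id[G])$ hits the $0$-end, where gluing occurs along $f$. Its injectivity therefore uses that $f$ is injective, which is precisely the assumption of part (2). Symmetrically, in part (3) I would use the map $\ell_p(\id[G], g)$, which is injective because $g$ is.

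I expect this monomorphism check to be the only non-routine step. Everything else is a direct citation of the two earlier results.
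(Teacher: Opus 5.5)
Your proposal is correct and is exactly the paper's argument: the paper's proof is the one-line ``combine \cref{double-mapping-cylinder-is-n-skeletal-po} with \cref{n-skeletal-po-is-hopo}'' (with $n = p$). Your explicit check of the monomorphism hypothesis, including the use of injectivity of $f$ (resp.\ $g$) when $q_1 = 0$ (resp.\ $q_2 = 0$), is valid but already implicit there, since that lemma is proved via \cref{n-skeletal-po-condition-general}, which requires every map in the square to be a monomorphism.
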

\begin{proof}
    Combine \cref{double-mapping-cylinder-is-n-skeletal-po} with \cref{n-skeletal-po-is-hopo}.
\end{proof}
% \begin{theorem}
%     Let $f \from G \to H$ and $g \from G \to K$ be graph maps.
% \end{theorem}
% \begin{definition}
%     Given a graph map $f \from X \to Y$, the \emph{mapping cylinder factorization} of $f$ of length $n$ is the pair of maps appearing on the left and right of the commutative triangle:
%     \[ \begin{tikzcd}
%         {} & \Cyl{m}(f) \ar[dr, "r_f"] & {} \\
%         X \ar[rr, "f"] \ar[ur, "{r_n(\id, f)}"] & {} & Y
%     \end{tikzcd} \]
% \end{definition}
\begin{theorem} \label{cyl-factorization-hopo}
    Let $f \from G \to H$ and $g \from G \to K$ be graph maps.
    \begin{enumerate}
        \item Given non-negative $p \geq 0$, an integer $q \geq 2$, and $m \in \NN \cup \{ \infty \}$, the pushout
        \[ \begin{tikzcd}
            G \ar[d, "{\ell_{0}(\id, g)}"'] \ar[r, "f"] \ar[rd, phantom, "\potick" very near end] & H \ar[d, "{\ell_0(f, g)}"] \\
            \Cyl{p+q}({\id[G], g}) \ar[r] & \Cyl{p+q}(f, g)
        \end{tikzcd} \]
        of $f$ along $\ell_{0}(\id, g)$ becomes a homotopy pushout of $p$-types
        \[ \begin{tikzcd}
            \gnerve[m]{G} \ar[d, "{\gnerve[m]{(\ell_{0}(\id, g))}}"'] \ar[r, "{\gnerve[m]{(f)}}"] \ar[rd, phantom, "\potick" very near end] & \gnerve[m]{H} \ar[d, "{\gnerve[m]{(\ell_0(f, g))}}"] \\
            \gnerve[m]{\big(\! \Cyl{p+q}({\id[G], g}) \big)} \ar[r] & \gnerve[m]{\big(\! \Cyl{p+q}(f, g) \big)}
        \end{tikzcd} \] 
        after applying $\gnerve[m] \from \Graph \to \cSet$.
        \item If one of $f$ or $g$ is injective then part (1) is additionally true when $q = 1$.
        \item If both $f$ and $g$ are injective then part (1) is additionally true when $q = 0$.
    \end{enumerate}
\end{theorem}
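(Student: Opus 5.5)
The plan is to reduce the statement to \cref{double-mapping-cyl-inclusions-hopo} by a pasting argument for homotopy pushouts of $p$-types. Fix $p$ and $q$, and split $q = q_1 + q_2$ with $q_1, q_2 \geq 1$ in case (1), with one of $q_1, q_2$ equal to $0$ in case (2), and with $q_1 = q_2 = 0$ in case (3). If $f$ is injective we take $q_1 = 0$, and if $g$ is injective we take $q_2 = 0$.

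Consider the commutative diagram of graphs
\[ \begin{tikzcd}
    G \ar[r, "{r_p(\id, \id)}"] \ar[d, "f"'] & G \gtimes I_p \ar[r] \ar[d, "{r_p(f,\id)}"'] & \Cyl{p+q_2}(\id[G], g) \ar[d, "{r_{p+q_2}(f,g)}"] \\
    H \ar[r] & \Cyl{p+q_1}(f, \id[G]) \ar[r, "{\ell_{p+q_1}(f,g)}"'] & \Cyl{p+q}(f, g)
\end{tikzcd} \]
The top composite is $\ell_0(\id, g)$ up to identifying the end of the cylinder, and the outer rectangle is the square in the statement.

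For the left square, the map $G \to G \gtimes I_p$ is the inclusion at the end $0$. By \cref{cyl-as-po} the square is a pushout, in fact a double mapping cylinder pushout, and after $\gnerve[m]$ it should be a homotopy pushout in the Grothendieck model structure, hence one of $p$-types. The argument is as follows. The inclusion $G \to G \gtimes I_p$ and the inclusion $H \to \Cyl{p+q_1}(f,\id[G])$ are homotopy equivalences of graphs, with inverses given by the retractions of \cref{cyl-inclusion-is-htpy-equiv}. Homotopy equivalences of graphs become weak equivalences under $\gnerve[\infty]$ by \cref{nerve-reflects-equivs} together with homotopy invariance of discrete homotopy groups. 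A square in which two parallel arrows are weak equivalences is a homotopy pushout. The right square is a homotopy pushout of $p$-types by \cref{double-mapping-cyl-inclusions-hopo}, using part (2) or (3) when $q_1$ or $q_2$ is $0$. The pasting lemma for homotopy pushouts in the Cisinski model structure for $p$-types then shows that the outer rectangle is a homotopy pushout of $p$-types.

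Case (3) needs separate treatment, because with $q_1 = q_2 = 0$ both conditions are used simultaneously and \cref{double-mapping-cylinder-is-n-skeletal-po} only grants one degenerate index at a time. Here I would argue the $(p+1)$-skeletal property directly via \cref{n-skeletal-po-condition-general}. The key distance estimate combines the two arguments in the proof of parts (2) and (3): a vertex of $H \setminus f(G)$ and a vertex of $K \setminus g(G)$ are at distance at least $1 + p + 1$.

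The main obstacle is the bookkeeping in the first square. One must check that $r_p(f,\id)$ with $q_1 = 0$, and the identification $\Cyl{p}(f,\id[G])$ versus $G \gtimes I_p$ glued to $H$, line up so that the pasted rectangle really is the stated square with the stated maps. One must also verify that the left square's weak equivalence argument survives $\gnerve[m]$ for finite $m$. This reduces to $m = \infty$ via the natural weak equivalences $\gnerve[m] \ito \gnerve[\infty]$.
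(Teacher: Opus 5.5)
Your pasting diagram only exists when $q_1 = 0$, so the proposal does not prove part (1).

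\textbf{Why the diagram fails for $q_1 \geq 1$.} The map $r_p(f,\id)$ is induced by $(v,t) \mapsto (v, t+q_1)$. So the composite $G \to G \gtimes I_p \to \Cyl{p+q_1}(f,\id[G])$ in your left square sends $v \mapsto (v,0) \mapsto [v,q_1]$. The other composite $G \xrightarrow{f} H \to \Cyl{p+q_1}(f,\id[G])$ sends $v$ to $f(v) = [v,0]$. These agree only if $q_1 = 0$. Choosing other maps cannot fix this. In the square of \cref{double-mapping-cyl-inclusions-hopo}, the copy of $G \gtimes I_p$ sits at levels $q_1, \dots, q_1+p$, at distance $q_1$ from the end where $H$ is glued. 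The gluing of $H$ along $f$ therefore cannot be pasted directly onto it.

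Relatedly, your top composite lands in $\Cyl{p+q_2}(\id[G],g)$, not $\Cyl{p+q}(\id[G],g)$. The pushout of $f$ along $\ell_0(\id,g)\colon G \to \Cyl{p+q_2}(\id[G],g)$ is $\Cyl{p+q_2}(f,g)$, so the outer rectangle is not the stated square.

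\textbf{What your argument does prove.} Forcing $q_1 = 0$ makes the right square an instance of \cref{double-mapping-cyl-inclusions-hopo}(2), which requires $f$ injective. In that case your argument is correct and quite direct: it gives every $q \geq 1$, and also $q = 0$ when $g$ is injective too, which is where your distance estimate is used. It does not give part (1) for arbitrary $f$, nor part (2) when only $g$ is injective.

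\textbf{The missing idea.} You need the cancellation half of the pasting lemma rather than the composition half, applied after a preliminary replacement.

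First factor the stated square vertically through the pushout of $f$ along $i_0\colon G \to G \gtimes I_{p+q-1}$. Both vertical maps there are homotopy equivalences by \cref{cyl-inclusion-is-htpy-equiv}: these are $i_0$ and $\ell_0(f,\id)\colon H \to \Cyl{p+q-1}(f,\id[G])$. So it suffices to treat the lower square, with corners $G \gtimes I_{p+q-1}$, $\Cyl{p+q-1}(f,\id[G])$, $\Cyl{p+q}(\id[G],g)$ and $\Cyl{p+q}(f,g)$.

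That lower square is not itself of the corollary's form. Paste on its left the square with corners $G \gtimes I_p$, $G \gtimes I_{p+q-1}$, $\Cyl{p+1}(\id[G],g)$ and $\Cyl{p+q}(\id[G],g)$. This left square is \cref{double-mapping-cyl-inclusions-hopo} for the pair $(\id[G], g)$. The composite rectangle is the corollary's square for $(f,g)$ with $q_1 = q-1$ and $q_2 = 1$.

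Since the left square and the composite are homotopy pushouts of $p$-types, so is the right square. This is exactly where $q \geq 2$ enters: it guarantees $q_1 = q-1 \geq 1$ when $f$ is not injective. The cases of part (2) with $g$ injective, and part (3), follow with the evident changes of lengths.
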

\begin{proof}
    We first prove part (1).
    To this end, the first pushout square can be factored into two pushout squares:
    \[ \begin{tikzcd}
        G \ar[r, "f"] \ar[d, "{i_0}"', "\sim"] \ar[rd, phantom, "\potick" very near end] & H \ar[d, "{\ell_{0}(f, \id)}", "\sim"'] \\
        G \gtimes I_{p+q-1} \ar[r] \ar[d, "{\ell_{p+q-1}(\id, g)}"'] \ar[rd, phantom, "\potick" very near end] & \Cyl{p+q-1}(f, \id[G]) \ar[d, "{\ell_{p+q-1}(f, g)}"] \\
        \Cyl{p+q}(\id[G], g) \ar[r] & \Cyl{p+q}(f, g)
    \end{tikzcd} \]
    The top left and top right morphisms are homotopy equivalences by \cref{cyl-inclusion-is-htpy-equiv}.
    Thus, the composite square becomes a homotopy pushout if and only if the bottom square becomes a homotopy pushout.

    We paste an additional square to the left of the bottom square, yielding the diagram:
    \[ \begin{tikzcd}[column sep = 4.5em, row sep = 3.5em]
        {} & {} & {} \\[-2em]
        G \gtimes I_{p} \ar[r, "{r_{p}(\id[G], \id[G])}"] \ar[d, "{\ell_{p}(\id[G], g)}"'] & G \gtimes I_{p+q-1} \ar[r] \ar[d, "{\ell_{p+q-1}(\id[G], g)}" description] \ar[rd, phantom, "\potick" very near end] \ar[u, draw=none, "{r_{p}(f, \id[G])}"{name=Top, description, very near end, yshift=1ex}] & \Cyl{p+q-1}(f, \id[G]) \ar[d, "{\ell_{p+q-1}(f, g)}"] \\
        \Cyl{p+1}(\id[G], g) \ar[r, "{r_{p+1}(\id[G], g)}"] & \Cyl{p+q}(\id[G], g) \ar[r] \ar[d, draw=none, "{r_{p+q}(f, g)}"{name=Bott, description, very near end, yshift=-1ex}] & \Cyl{p+q}(f, g) \\[-2em]
        {} & {} & {}
        \arrow[from=2-1, to=2-3, rounded corners, 
			to path={ 
				|- (Top.south)
				-| (\tikztotarget)
			}
	    ]
        \arrow[from=3-1, to=3-3, rounded corners, 
			to path={ 
				|- (Bott.north)
				-| (\tikztotarget)
			}
	    ]
        % \arrow[from=2-1, to=2-3, bend left=15]
        % \arrow[from=3-1, to=3-3, bend right=15]
    \end{tikzcd} \]
    Both the left and composite squares become homotopy pushouts of $p$-types by \cref{double-mapping-cyl-inclusions-hopo}.
    Therefore, the right square becomes a homotopy pushout of $p$-types.

    Concerning part (2), if $f$ is injective then we repeat the proof above verbatim (with $q = 1$).
    If $g$ is injective then we repeat the proof above, substituting $\Cyl{p+1}(\id[G], g)$ with $\Cyl{p}(\id[G], g)$.
    For part (3), we additionally substitute all instances of $p+q-1$ with $p$.
\end{proof}

\subsection{Comparison with the cubical double-mapping cylinder}

We now compare the construction of the double mapping cylinder in $\Graph$ with the analogous construction in $\cSet$.

% \newcommand{\Sp}[1]{\mathrm{Sp}^n}
% To start, let $\Sp{n}$ denote the length-$n$ ``spine'' in cubical sets, i.e.\ the cubical set
% \[ \Sp{n} :=   \]
Given cubical maps $f \from X \to Y$ and $g \from X \to Z$ in $\cSet$, we write $\Cyl{}^{\boxcat}(f, g)$ for the double mapping cylinder in cubical sets, i.e.\ the colimit
\[ \Cyl{}^{\boxcat}(f, g) := \colim \left( \begin{tikzcd}[row sep = 2.2em, column sep = 3em]
    {} & X \ar[dl, "f"'] \ar[rd, "{\id \gprod \face{}{1, 0}}" description] & {} & X \ar[dl, "{\id \gprod \face{}{1,1}}" description] \ar[dr, "g"] & {} \\
    Y & {} & X \gprod \cube{1} & {} & Z
\end{tikzcd} \right) \]
We write $\ell^{\boxcat}(f, g)$ and $r^{\boxcat}(f, g)$ for the structure maps $Y \to \Cyl{}^{\boxcat}(f, g)$ and $Z \to \Cyl{}^{\boxcat}(f, g)$, respectively.
Just as in the case of double-mapping cylinders of graphs, if $g = \id[X]$ then $\ell^{\boxcat}(f, \id[X])$ admits a retraction $\sigma(f)$ which is also a homotopy inverse, and if $f = \id[X]$ then $r^{\boxcat}(\id[X], g)$ admits a retraction $\sigma(g)$ which is also a homotopy inverse.

We immediately observe the following:
\begin{proposition} \label{cyl-reali-iso}
    Let $\Lambda$ denote the category with three objects $0, 1, 2$ and morphisms $0 \to 1$ and $0 \to 2$.
    For $m \in \NN$, we have functors:
    \[ \Cyl{m} \from \Fun(\Lambda, \Graph) \to \Graph \qquad \Cyl{}^{\boxcat} \from \Fun(\Lambda, \cSet) \to \cSet \]
    defined by taking the double-mapping cylinder in $\Graph$ and in $\cSet$, respectively.
    Moreover, the square
    \[ \begin{tikzcd}
        \Fun(\Lambda, \cSet) \ar[r, "{\Cyl{}^{\boxcat}}"] \ar[d, "{\reali[m]{\uvar}}"'] & \cSet \ar[d, "{\reali[m]{\uvar}}"] \\
        \Fun(\Lambda, \Graph) \ar[r, "{\Cyl{m}}"] & \Graph
    \end{tikzcd} \]
    commutes up to natural isomorphism. \qed
\end{proposition}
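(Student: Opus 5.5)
The plan is to compare the two colimits term by term. Since $\reali[m]{\uvar}$ is a left adjoint, it preserves colimits. Applying it to the diagram defining $\Cyl{}^{\boxcat}(f, g)$ therefore gives a natural isomorphism
\[ \reali[m]{\Cyl{}^{\boxcat}(f, g)} \cong \colim \Big( \reali[m]{Y} \leftarrow \reali[m]{X} \to \reali[m]{X \gprod \cube{1}} \leftarrow \reali[m]{X} \to \reali[m]{Z} \Big). \]
Functoriality of $\Cyl{m}$ and $\Cyl{}^{\boxcat}$ on $\Fun(\Lambda, \uvar)$ is immediate, since both are defined as colimits of diagrams built functorially from a span. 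It then suffices to identify the resulting diagram, naturally in the span, with the one defining $\Cyl{m}(\reali[m]{f}, \reali[m]{g})$.

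For this, I will use \cref{realization-is-monoidal}: $\reali[m]{\uvar}$ is strong monoidal. This gives an isomorphism $\reali[m]{X \gprod \cube{1}} \cong \reali[m]{X} \gtimes \reali[m]{\cube{1}}$, natural in $X$. Moreover $\reali[m]{\cube{1}} = \gcube{m}{1} = I_m$, by definition of the cocubical object $[1]^n \mapsto \gcube{m}{n}$. It remains to check that, under this isomorphism, $\reali[m]{\id \gprod \face{}{1,\eps}}$ corresponds to $\id \gtimes \reali[m]{\face{}{1,\eps}}$, where we also use $\reali[m]{X \gprod \cube{0}} \cong \reali[m]{X} \gtimes I_0 \cong \reali[m]{X}$. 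Finally, $\reali[m]{\face{}{1,0}}, \reali[m]{\face{}{1,1}} \from I_0 \to I_m$ pick out the vertices $0$ and $m$, so these maps are exactly $i_0$ and $i_m$.

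The main obstacle I expect is this compatibility of the monoidal structure isomorphism with the maps $\id \gprod \face{}{1,\eps}$ and with the unit isomorphisms, rather than just the existence of the isomorphism on objects. My first approach is to reduce to representables. Both sides are cocontinuous in $X$: $\gprod$ preserves colimits in each variable, and so does $\uvar \gtimes I_m$, since the box product is closed. So it suffices to treat $X = \cube{k}$. In that case the comparison is the canonical identification $\reali[m]{\cube{k+1}} = \gcube{m}{k+1} \cong \gcube{m}{k} \gtimes I_m$, and the face $\face{}{k+1,\eps}$ visibly corresponds to including $\gcube{m}{k}$ at last coordinate $0$ or $m$. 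With these identifications the two colimit diagrams are naturally isomorphic, and hence so are their colimits.
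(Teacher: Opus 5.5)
Your proposal is correct and is essentially the argument the paper intends: the paper records this proposition without proof, as an immediate consequence of $\reali[m]{\uvar}$ being a cocontinuous, strong monoidal functor (\cref{realization-is-monoidal}) with $\reali[m]{\cube{1}} = I_m$, where $\reali[m]{\face{}{1,0}}$ and $\reali[m]{\face{}{1,1}}$ pick out the vertices $0$ and $m$. Your extra reduction to representables $X = \cube{k}$ is a sound way to make explicit the compatibility with the maps $\id \gprod \face{}{1,\eps}$ and with the unit isomorphisms, which the paper leaves implicit in the coherence of the monoidal structure.
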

\begin{remark}
    There is an analogue of \cref{cyl-reali-iso} which relates the geometric realization functor $\reali{\uvar} \from \cSet \to \Top$ and the double-mapping cylinder in topological spaces. 
\end{remark}
We use the isomorphism in \cref{cyl-reali-iso} to construct our first comparison map between the double-mapping cylinders in $\cSet$ and $\Graph$: it is given by the unit map $\eta_{\Cyl{}^{\boxcat}(f, g)}$, which we view as a morphism
\[ \eta \from \Cyl{}^{\boxcat}(f, g) \to \gnerve[m] \big( \Cyl{m}(\reali[m]{f}, \reali[m]{g}) \big) \]
which is moreover natural in $(f, g) \in \Fun(\Lambda, \cSet)$.

% The key property of the comparison map is that it is an $n$-equivalence, provided the realization length $m$ is sufficiently large with respect to $n$.
% When $f$ and $g$ are monomorphisms, this comparison map is an $n$-equivalence provided the realization length $m$ is sufficiently large with respect to $n$.
% Under suitable conditions, the comparison map is an $n$-equivalence.
We investigate what conditions guarantee the comparison map is an $n$-equivalence.
\begin{lemma} \label{cyl-cset-graph-equiv-mono}
    Let $f \from X \to Y$ and $g \from X \to Z$ be cubical monomorphisms.
    Given $n \geq 0$ and $m \geq n$, suppose the unit maps
    \[ \eta_X \from X \to \gnerve[m]{\reali[m]{X}} \quad \eta_Y \from Y \to \gnerve[m]{\reali[m]{Y}} \quad \eta_Z \from Z \to \gnerve[m]{\reali[m]{Z}} \] 
    are $n$-equivalences.
    Then, the comparison map
    \[ \eta \from \Cyl{}^{\boxcat}(f, g) \to \gnerve[m] \big( \Cyl{m}(\reali[m]{f}, \reali[m]{g}) \big) \]
    is an $n$-equivalence.
\end{lemma}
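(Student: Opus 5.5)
Both double mapping cylinders are pushouts, so the plan is to compare them through a homotopy-invariance argument for homotopy pushouts of $n$-types. On the cubical side, $\Cyl{}^{\boxcat}(f,g)$ is the pushout of $Y \sqcup Z \leftarrow X \sqcup X \to X \gprod \cube{1}$. Here $X \sqcup X \to X \gprod \cube{1}$ is a monomorphism, so the square is a homotopy pushout in the Grothendieck model structure, and hence also in the Cisinski model structure for $n$-types. On the graph side, write $f' = \reali[m]{f}$ and $g' = \reali[m]{g}$; these are monomorphisms because $\reali[m]{\uvar}$ preserves monomorphisms. Each side can be built in two steps: the span $Y \leftarrow X \to Z$, with $X \gprod \cube{1}$ standing in for $X$, glued as $Y \cup_X (X\gprod\cube{1}) \cup_X Z$. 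I will compare the square with corners $X$, $Y$, $X\gprod\cube{1}$, $\Cyl{}^{\boxcat}(f,\id[X])$ (and similarly for $g$) against $\gnerve[m]$ of the corresponding graph squares.

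The key input on the graph side is \cref{cyl-factorization-hopo}(3). Since $f'$ and $g'$ are injective, that result applies with $q = 0$ and $p = m \geq n$. It says that the pushout square with corners $\reali[m]{X}$, $\reali[m]{Y}$, $\Cyl{m}(\id, g')$, $\Cyl{m}(f', g')$ becomes a homotopy pushout of $m$-types, and hence of $n$-types, after applying $\gnerve[m]$. (An $m$-type homotopy pushout with $m \geq n$ is also one of $n$-types, because the Cisinski $n$-type structure is a further left Bousfield localization.) The same argument with the roles of the arguments swapped handles $\Cyl{m}(\id, g')$ relative to $\reali[m]{Z}$. The cubical side has the analogous factorization: $\Cyl{}^{\boxcat}(f,g)$ is the pushout of $Y \xleftarrow{f} X \to \Cyl{}^{\boxcat}(\id,g)$, which is a homotopy pushout because $f$ is a monomorphism. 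Naturality of the unit $\eta$ gives a map from the cubical square to $\gnerve[m]$ of the graph square, compatible via the isomorphism of \cref{cyl-reali-iso}.

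By the invariance of homotopy pushouts in the $n$-type model structure, it then suffices to show that the unit maps on the three source corners are $n$-equivalences. These are $\eta_X$, $\eta_Y$, and $X\gprod\cube{1}$ or $\Cyl{}^{\boxcat}(\id[X],g) \to \gnerve[m]\Cyl{m}(\id,g')$. The maps $\eta_X$ and $\eta_Y$ are $n$-equivalences by hypothesis. For the third corner, I use the commutative square formed by the retraction $\sigma(g)$ of \cref{cyl-inclusion-is-htpy-equiv} and its cubical analogue. The map $\Cyl{}^{\boxcat}(\id,g) \to Z$ is a homotopy equivalence. On the graph side, $\Cyl{m}(\id,g') \to \reali[m]{Z}$ is a graph homotopy equivalence, and $\gnerve[m]$ sends it to a weak equivalence, using \cref{nerve-reflects-equivs} together with the fact that graph homotopies become cubical homotopies after passing to $\gnerve[\infty]$. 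The unit is natural with respect to $\sigma$, so by 2-out-of-3 the middle unit map is an $n$-equivalence because $\eta_Z$ is. Applying gluing twice (first for $g$, then for $f$) concludes the argument.

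The main obstacle is the bookkeeping needed to check that the comparison maps really form a map of squares, and that the graph double mapping cylinder with unit length matches the identification via \cref{cyl-reali-iso}. A secondary point is checking that the hypothesis $m \geq n$ is exactly what \cref{cyl-factorization-hopo}(3) needs: there, $p = m$ and the conclusion concerns $m$-types, which I then weaken to $n$-types.
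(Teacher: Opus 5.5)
Your argument is correct, but it is organized differently from the paper's. The paper takes the back face of its cube to be the pushout $\Cyl{}^{\boxcat}(f,\id) \leftarrow X\gprod\cube{1} \to \Cyl{}^{\boxcat}(\id,g)$. Its realization is the square of \cref{double-mapping-cyl-inclusions-hopo} with $p=m$ and $q_1=q_2=0$, and the paper then compares the three source corners through the projection $X\gprod\cube{1}\to X$, through $\sigma(f)$, and through $\sigma(g)$. You instead glue $Y$ onto $\Cyl{}^{\boxcat}(\id,g)$ along $f$ and cite \cref{cyl-factorization-hopo}(3). That theorem is obtained by pasting the paper's square with a square whose vertical maps are the homotopy equivalences $i_0$ and $\ell_0(\reali[m]{f},\id)$, so the graph-theoretic input is the same in both proofs. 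What you gain is that your source corners are $X$, $Y$ and $\Cyl{}^{\boxcat}(\id,g)$: the hypotheses on $\eta_X$ and $\eta_Y$ are used directly, and only the $\sigma(g)$ comparison is needed. The paper's choice, in exchange, keeps the argument symmetric in $f$ and $g$ and relies only on the more basic corollary. Note that once you have the $\sigma(g)$ square, a single gluing suffices and the ``first gluing for $g$'' is redundant. If you kept that first gluing, you would still owe an argument that $X\gprod\cube{1}\to\gnerve[m](\reali[m]{X}\gtimes I_m)$ is an $n$-equivalence (via the projection, as in the paper); at the moment you only list this map.

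One small caveat, which the paper's proof shares: $\reali[m]{\uvar}$ preserves monomorphisms only for $m\geq 1$. For $m=0$ it sends $X$ to the discrete graph on $\pi_0 X$, so for example $\reali[0]{\bd\cube{1}}\to\reali[0]{\cube{1}}$ is not injective, and \cref{cyl-factorization-hopo}(3) cannot be invoked. The case $m=n=0$ should therefore be handled separately. This is immediate: $\gnerve[0]$ of a graph is discrete, and both sides of $\eta$ have $\pi_0$ equal to $\pi_0 Y\cup_{\pi_0 X}\pi_0 Z$.
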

\begin{proof}
    The cubical double-mapping cylinder can be written as a pushout
    \[ \begin{tikzcd}
        X \gprod \cube{1} \ar[r] \ar[d] \ar[rd, phantom, "\potick" very near end] & \Cyl{}^{\boxcat}(f, \id[X]) \ar[d] \\
        \Cyl{}^{\boxcat}(\id[X], g) \ar[r] & \Cyl{}^{\boxcat}(f, g)
    \end{tikzcd} \]
    in $\cSet$.
    Applying the functor $\reali[m]{\uvar} \from \cSet \to \Graph$ to this square yields the diagram
    \[ \begin{tikzcd}
        \reali[m]{X} \gtimes I_m \ar[r, "{r_m(f, \id)}"] \ar[d, "{\ell_m(\id, g)}"'] & \Cyl{m}(\reali[m]{f}, \id) \ar[d, "{\ell_m(f, g)}"] \\
        \Cyl{m}(\id, \reali[m]{g}) \ar[r, "{r_m(f, g)}"] & \Cyl{m}(\reali[m]{f}, \reali[m]{g})
    \end{tikzcd} \]
    in $\Graph$.
    This square is a homotopy pushout of $n$-types by \cref{double-mapping-cyl-inclusions-hopo}.

    The comparison map $\eta$ induces a commutative cube
    \[ \begin{tikzcd}
        X \gprod \cube{1} \ar[rr] \ar[dd] \ar[rd, "\eta"] & {} & \Cyl{}^{\boxcat}(f, \id) \ar[dd] \ar[rd, "\eta"] & {} \\
        {} & \gnerve[m] \big( \reali[m]{X} \gtimes I_m \big) \ar[rr, crossing over] & {} & \gnerve[m] \big( \Cyl{m}(\reali[m]{f}, \id) \big) \ar[dd] \\
        \Cyl{}^{\boxcat}(\id, g) \ar[rr] \ar[rd, "\eta"] & {} & \Cyl{}^{\boxcat}(f, g) \ar[rd, "\eta"] & {} \\
        {} & \gnerve[m] \big( \Cyl{m}(\id, \reali[m]{g}) \big) \ar[rr] \ar[from=uu, crossing over] & {} & \gnerve[m] \big( \Cyl{m}(\reali[m]{f}, \reali[m]{g}) \big)
    \end{tikzcd} \]
    The front and back faces of this cube are homotopy pushouts of $n$-types: for the front square, this follows from \cref{double-mapping-cyl-inclusions-hopo}; for the back square, we observe this is a pushout of monomorphisms, hence a homotopy pushout in the Cisinski model structure for $n$-types.

    Three of the diagonal maps arise as the top arrow in the commutative squares:
    \[ \begin{tikzcd}[cramped, column sep = 1.8em]
        X \gprod \cube{1} \ar[r, "\eta"] \ar[d, "\pi_X"', "\sim"] & \gnerve[m]{\big( \reali[m]{X} \gtimes I_m \big)} \ar[d, "{\gnerve[m](\pi_{\reali[m]{X}})}", "\sim"'] \\
        X \ar[r, "\eta", "\sim"'] & \gnerve[m] \reali[m]{X}
    \end{tikzcd} \quad \begin{tikzcd}[cramped,  column sep = 1.5em]
        \Cyl{}^{\boxcat}(f, \id) \ar[r, "\eta"] \ar[d, "{\sigma(f)}"'] & \gnerve[m]{\big( \Cyl{m}(\reali[m]{f}, \id) \big)} \ar[d, "{\gnerve[m] (\sigma(\reali[m]{f}))}"] \\
        Y \ar[r, "\eta", "\sim"'] & \gnerve[m] \reali[m]{Y}
    \end{tikzcd} \quad \begin{tikzcd}[cramped,  column sep = 1.5em]
        \Cyl{}^{\boxcat}(\id, g) \ar[r, "\eta"] \ar[d, "{\sigma(g)}"', "\sim"] & \gnerve[m]{\big( \Cyl{m}(\id, \reali[m]{g}) \big)} \ar[d, "{\gnerve[m] (\sigma(\reali[m]{g}))}", "\sim"'] \\
        Z \ar[r, "\eta", "\sim"'] & \gnerve[m] \reali[m]{Z}
    \end{tikzcd} \]
    The vertical maps are homotopy equivalences; the bottom maps are $n$-equivalences by assumption, hence the comparison maps are $n$-equivalences.
\end{proof}

The proof of \cref{cyl-cset-graph-equiv-mono} crucially relies on the assumption that $f$ and $g$ are monomorphisms, the argument we provide does not work in the general case.
That said, we can construct a zig-zag of weak equivalences in the general case.

Observe that if $\alpha \from I_m \to I_{m'}$ is a graph map which is endpoint-preserving (i.e.\ $\alpha(0) = 0$ and $\alpha(m) = m'$) then there is an induced graph map $\alpha_* \from \Cyl{m}(f, g) \to \Cyl{m'}(f, g)$ for any $f$ and $g$.
This map arises as the map between colimits induced from the morphism of diagrams:
\[ \begin{tikzcd}
    H \ar[d, equal] & G \ar[l, "f"'] \ar[r, "i_0"] \ar[d, equal] & G \gtimes I_m \ar[d, "\id \gtimes \alpha"] & G \ar[l, "i_{m}"'] \ar[d, equal] \ar[r, "g"] & K \ar[d, equal, ""{name=A}] & \Cyl{m}(f, g) \ar[d, "\alpha_*", ""'{name=B}] \\
    H & G \ar[l, "f"'] \ar[r, "i_0"] & G \gtimes I_{m'} & G \ar[l, "{i_{m'}}"'] \ar[r, "g"] & K & \Cyl{m'}(f, g)
    \ar[from=A, to=B, phantom, "\leadsto"]
\end{tikzcd} \]
The map $\alpha_*$ provides a comparison map for double-mappig cylinders of different lengths.

Fix a non-negative integer $p \in \NN$.
Given two pairs of non-negative integers $q_1, q_1' \geq 0$ and $q_2, q_2' \geq 0$, let $\Pi$ denote the endpoint-preserving map 
\[ \begin{array}{r@{\ }l }
    \Pi & \from I_{p+q_1+q_1'+q_2+q_2'} \to I_{p+q_1+q_2} \\
    {} & {} \\
    \Pi(t) &:= \begin{cases}
        0 & \text{if } t \leq q_1' \\
        t - q_1' & \text{if } q_1' \leq t \leq p + q_1 + q_1' + q_2 \\
        p + q_1 + q_2 & \text{if } t \geq p + q_1 + q_1' + q_2
    \end{cases}
\end{array} \]
That is, $\Pi$ contracts the first $q_1'$ vertices and the last $q_2'$ vertices.
This map induces a map
\[ \Pi_* \from \Cyl{p+q_1+q_1'+q_2+q_2'}(f, g) \to \Cyl{p+q_1+q_2}(f, g) \]
which is natural in $(f, g) \in \Fun(\Lambda, \Graph)$.
\begin{proposition} \label{pi-map-n-equiv}
    Let $f \from G \to H$ and $g \from G \to K$ be graph maps.
    For non-negative $p, q_1', q_2' \geq 0$ and strictly positive $q_1, q_2 \geq 1$,
    \begin{enumerate}
        \item The map $\Pi_*$ is a $p$-equivalence.
        \item If $f$ is injective then part (1) is additionally true when $q_1 = 0$.
        \item If $g$ is injective then part (1) is additionally true when $q_2 = 0$.
    \end{enumerate}
\end{proposition}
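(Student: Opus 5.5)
We will compare two homotopy pushout squares of $p$-types, one for the long cylinder and one for the short cylinder, and show that $\Pi_*$ induces a map between them that is a $p$-equivalence on three corners. Write $M := p+q_1+q_1'+q_2+q_2'$ and $M' := p+q_1+q_2$.

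By \cref{double-mapping-cyl-inclusions-hopo}, after applying $\gnerve[\infty]$, the square with corners
\[ G \gtimes I_p, \quad \Cyl{p+q_1}(f,\id[G]), \quad \Cyl{p+q_2}(\id[G],g), \quad \Cyl{M'}(f,g) \]
is a homotopy pushout of $p$-types. The same corollary, applied with $q_1+q_1'$ and $q_2+q_2'$ in place of $q_1$ and $q_2$, gives a second such square with corners
\[ G \gtimes I_p, \quad \Cyl{p+q_1+q_1'}(f,\id[G]), \quad \Cyl{p+q_2+q_2'}(\id[G],g), \quad \Cyl{M}(f,g). \]
In parts (2) and (3) the hypotheses $q_1\ge 1$ or $q_2\ge1$ are relaxed exactly as the corresponding parts of the corollary allow.

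Next we check that $\Pi$ restricts compatibly to each piece. On the $G \gtimes I_p$ corner we use the identity, which is legitimate because in the middle window $\Pi$ is the shift $t\mapsto t-q_1'$. On the left piece we use the map $\Cyl{p+q_1+q_1'}(f,\id[G]) \to \Cyl{p+q_1}(f,\id[G])$ that collapses the first $q_1'$ levels. On the right piece we use the analogous map collapsing the last $q_2'$ levels. One must check that these commute with the structure maps $\ell$ and $r$, so that $\Pi_*$ becomes a natural transformation between the two squares. This is a direct vertex-level computation from the explicit descriptions of $\ell_k$, $r_k$ and $\Pi$.

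The collapse maps on the legs are homotopy equivalences. Each collapse commutes with the retractions $\sigma(f)$ onto $H$ and $\sigma(g)$ onto $K$, and those retractions are homotopy equivalences by \cref{cyl-inclusion-is-htpy-equiv}. By 2-out-of-3 each collapse is a homotopy equivalence, hence a weak equivalence after applying $\gnerve[\infty]$.

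We conclude with the standard fact that a map between homotopy pushout squares which is a weak equivalence on the three source corners is a weak equivalence on the target corner. We apply this in the Cisinski model structure for $p$-types. It tells us that $\gnerve[\infty]\Pi_*$ is a $p$-equivalence, and then \cref{nerve-reflects-equivs} gives that $\Pi_*$ itself is a $p$-equivalence.

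The main obstacle is the naturality check. The collapse of the first $q_1'$ levels must agree with the shift on the overlap $G\gtimes I_p$, and the indices must be set up so that the second square really is the one produced by \cref{double-mapping-cyl-inclusions-hopo}. That means positioning the $G \gtimes I_p$ window at levels $q_1+q_1'$ through $q_1+q_1'+p$ in $\Cyl{M}(f,g)$ and at levels $q_1$ through $q_1+p$ in $\Cyl{M'}(f,g)$. Once the windows are aligned this way, $\Pi$ maps the first window isomorphically onto the second.
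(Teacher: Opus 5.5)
Your proposal is correct and follows the paper's proof essentially step for step. The paper likewise builds the commutative cube between the two squares given by \cref{double-mapping-cyl-inclusions-hopo}, takes the identity on the $G \gtimes I_p$ corner, and shows the two leg maps are equivalences because they commute with the retractions $\sigma(f)$ and $\sigma(g)$ of \cref{cyl-inclusion-is-htpy-equiv}; it then concludes by homotopy invariance of homotopy pushouts of $p$-types and the fact that the nerve reflects $p$-equivalences. Your explicit checks of the window positions and of compatibility with the maps $\ell$ and $r$ fill in details the paper leaves implicit.
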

\begin{proof}
    The maps $\Pi_*$ for various values of $q_1, q_1', q_2, q_2'$ induce a commutative cube
    \[ \begin{tikzcd}
        G \gtimes I_{p} \ar[rr] \ar[dd] \ar[rd, equal] & {} & \Cyl{p+q_1+q_1'}(f, \id[G]) \ar[dd] \ar[rd, "\Pi_*"] & {} \\
        {} & G \gtimes I_{p} \ar[rr, crossing over] & {} & \Cyl{p+q_1}(f, \id[G]) \ar[dd] \\
        \Cyl{p+q_2+q_2'}(\id[G], g) \ar[rr] \ar[rd, "\Pi_*"] & {} & \Cyl{p+q_1+q_1'+q_2+q_2'}(f, g) \ar[rd, "\Pi_*"] & {} \\
        {} & \Cyl{p+q_2}(\id[G], g) \ar[rr] \ar[from=uu, crossing over] & {} & \Cyl{p+q_1+q_2}(f, g)
    \end{tikzcd} \]
    More precisely, the map between the top right objects arises from setting $q_2 = q_2' = 0$; the map between the bottom left objects arises from setting $q_1 = q_1' = 0$.
    By \cref{double-mapping-cyl-inclusions-hopo}, both the front and back squares become homotopy pushouts of $p$-types after applying $\gnerve[m]$ for any $m \in \NN \cup \{ \infty \}$.
    As the functor $\gnerve[m]$ reflects $p$-equivalences, it suffices to show that each of the maps between the top left, top right, and bottom left maps are $p$-equivalences.

    The map between the top left objects is an isomorphism, hence a $p$-equivalence.
    Both the maps between the top right and bottom right objects arise as the top arrow in the two commutative triangles:
    \[ \begin{tikzcd}
        \Cyl{p+q_1+q_1'}(f, \id[G]) \ar[rr, "\Pi_*"] \ar[rd, "\sigma(f)"', "\sim"] & {} & \Cyl{p+q_1}(f, \id[G]) \ar[ld, "\sigma(f)", "\sim"'] \\
        {} & H & {}
    \end{tikzcd} \quad \begin{tikzcd}
        \Cyl{p+q_2+q_2'}(\id[G], g) \ar[rr, "\Pi_*"] \ar[rd, "\sigma(g)"', "\sim"] & {} & \Cyl{p+q_1}(\id[G], g) \ar[ld, "\sigma(g)", "\sim"'] \\
        {} & K & {}
    \end{tikzcd} \] 
    The vertical maps are homotopy equivalences by \cref{cyl-inclusion-is-htpy-equiv}, thus the top arrows are weak equivalences.
\end{proof}
We use the map $\Pi$ to construct the zig-zag comparing the double-mapping cylinders of a pair $(f, g)$ and its realization $(\reali[m]{f}, \reali[m]{g})$.
\begin{construction} \label{construction:zig-zag}
    Let $f \from X \to Y$ and $g \from X \to Z$ be cubical maps.
    Then, for any $m \in \NN \cup \{ \infty \}$, the cubical double-mapping cylinder $\Cyl{}^{\boxcat}(f, g)$ is connected to the $m$-nerve of the graph double mapping cylinder of length $m+2$ via the zig-zag:
    \[ \begin{tikzcd}
        \Cyl{}^{\boxcat}(f, g) &[+5em] \ar[l, "{\Cyl{}^{\boxcat}(\id[X], \sigma(f), \sigma(g))}"'] \Cyl{}^{\boxcat} \big( r^{\boxcat}(f, \id), \ell^{\boxcat}(\id, g) \big) \ar[r, "\eta"] &[-0.6em] \gnerve[m]{ \Big( \Cyl{m} \big( r_{0}(\reali[m]{f}, \id ), \ell_0(\id , \reali[m]{g}) \big) \Big)} \ar[ld, "\cong" description] \\
        {} & \gnerve[m] \big( \Cyl{3m}(\reali[m]{f}, \reali[m]{g}) \big) \ar[r, "{\gnerve[m]{(\Pi_*)}}"'] & \gnerve[m]{ \big( \Cyl{m+2}(\reali[m]{f}, \reali[m]{g}) \big)}
    \end{tikzcd} \]
    Defining each map individually:
    \begin{enumerate}
        \item the first map is functoriality of $\Cyl{}^{\boxcat}$ applied to the morphism of diagrams:
        \[ \begin{tikzcd}[column sep = 4em, row sep = 3em]
            \Cyl{}^{\boxcat}(f, \id[X]) \ar[d, "{\sigma(f)}"'] & X \ar[l, "{\ell^{\boxcat}(f, \id[X])}"'] \ar[r, "{r^{\boxcat}(\id[X], g)}"] \ar[d, equal] & \Cyl{}^{\boxcat}(\id[X], g) \ar[d, "{\sigma(g)}"] \\
            Y & X \ar[l, "f"'] \ar[r, "g"] & Z
        \end{tikzcd} \]
        in $\cSet$.
        \item The second map is the comparison map $\eta$; note that the $m$-realizations of the maps 
        \[ r^{\boxcat}(f, \id) \from X \to \Cyl{}^{\boxcat}(f, \id) \qquad \text{and} \qquad \ell^{\boxcat}(\id, g) \from X \to \Cyl{}^{\boxcat}(\id, g) \] 
        have been re-written as the graph maps 
        \[ r_0(\reali[m]{f}, \id)\from \reali[m]{X} \to \Cyl{m}(\reali[m]{f}, \id) \qquad \text{and} \qquad \ell_0(\id, \reali[m]{g}) \from \reali[m]{X} \to \Cyl{m}(\id, \reali[m]{g}), \] 
        respectively.
        \item The third map arises by applying $\gnerve[m]$ to an explicit isomorphism of graphs.
        Formally, this isomorphism is defined using the universal property of the colimit applied to the cone diagram:
        \[ \begin{tikzcd}
            {} & \reali[m]{X} \ar[ld, "{r_0(\reali[m]{f}, \id)}"'] \ar[rd, "i_0"] & {} & \reali[m]{X} \ar[rd, "{\ell_0(\reali[m]{f}, \id)}"] \ar[ld, "i_m"'] & {} \\
            \Cyl{m}(\reali[m]{f}, \id) \ar[rrdd, bend right, "{\ell_m(\reali[m]{f}, \reali[m]{g})}", near start] & {} & \reali[m]{X} \gtimes I_m \ar[d, "{(x, t) \mapsto (x, t+m)}"{description, xshift=1ex}] & {} & \Cyl{m}(\id, \reali[m]{g}) \ar[lldd, bend left, "{r_m(\reali[m]{f}, \reali[m]{g})}"', near start] \\[+1ex]
            {} & {} & \reali[m]{X} \gtimes I_{3m} \ar[d] & {} & {} \\[-1ex]
            {} & {} & \Cyl{3m}(\reali[m]{f}, \reali[m]{g}) & {} & {}
        \end{tikzcd} \]
        Intuitively, the graph $\Cyl{m} \big( r_{0}(\reali[m]{f}, \id ), \ell_0(\id , \reali[m]{g}) \big)$ attaches two mapping cylinders of length $m$ to each end of the cylinder $\reali[m]{X} \gtimes I_m$; this construction is isomorphic to the double-mapping cylinder $\Cyl{3m}(\reali[m]{f}, \reali[m]{g})$ of length $3m$.
        \item The fourth and final map is given by applying $\gnerve[m]$ to the map  $\Pi_*$ instantiated at $p = m$, $q_1 = q_2 = 1$, and $q_1' = q_2' = m-1$.
    \end{enumerate}
\end{construction}
\begin{theorem} \label{cyl-cset-graph-equiv-zigzag}
    Let $f \from X \to Y$ and $g \from X \to Z$ be cubical maps.
    Given $n \geq 0$ and $m \geq \min(n, 1)$, if the unit maps
    \[ \eta_X \from X \to \gnerve[m]{\reali[m]{X}} \quad \eta_Y \from Y \to \gnerve[m]{\reali[m]{Y}} \quad \eta_Z \from Z \to \gnerve[m]{\reali[m]{Z}} \] 
    are $n$-equivalences then each morphism in the zig-zag
    \[ \begin{tikzcd}
        \Cyl{}^{\boxcat}(f, g) &[+5em] \ar[l, "{\Cyl{}^{\boxcat}(\id[X], \sigma(f), \sigma(g))}"'] \Cyl{}^{\boxcat} \big( r^{\boxcat}(f, \id), \ell^{\boxcat}(\id, g) \big) \ar[r, "\eta"] &[-0.6em] \gnerve[m]{ \Big( \Cyl{m} \big( r_{0}(\reali[m]{f}, \id ), \ell_0(\id , \reali[m]{g}) \big) \Big)} \ar[ld, "\cong" description] \\
        {} & \gnerve[m] \big( \Cyl{3m}(\reali[m]{f}, \reali[m]{g}) \big) \ar[r, "{\gnerve[m]{(\Pi_*)}}"'] & \gnerve[m]{ \big( \Cyl{m+2}(\reali[m]{f}, \reali[m]{g}) \big)}
    \end{tikzcd} \]
    of \cref{construction:zig-zag} is an $n$-equivalence and natural in $(f, g) \in \Fun(\Lambda, \Graph)$.
\end{theorem}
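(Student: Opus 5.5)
Naturality comes first and needs no work: each of the four maps in \cref{construction:zig-zag} is obtained by applying a functor ($\Cyl{}^{\boxcat}$, the unit $\eta$, $\gnerve[m]$ applied to the explicit colimit isomorphism, or $\gnerve[m](\Pi_*)$) to data that depend functorially on $(f, g)$. The content is that each map is an $n$-equivalence, and I will treat the four maps one at a time. The third map is $\gnerve[m]$ of an isomorphism, so it is an isomorphism and hence an $n$-equivalence.

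For the first map, I will compare two double mapping cylinders in $\cSet$ by a map of spans whose middle component is $\id[X]$ and whose outer components are $\sigma(f)$ and $\sigma(g)$. Both of these are homotopy equivalences, by the cubical analogue of \cref{cyl-inclusion-is-htpy-equiv} recorded before \cref{cyl-reali-iso}. The cubical double mapping cylinder is a homotopy pushout in the Grothendieck model structure: it is the pushout of $Y \sqcup Z \leftarrow X \sqcup X \ito X \gprod \cube{1}$, where the right leg is a monomorphism and every object is cofibrant. The gluing lemma then shows that the first map is a weak equivalence, and in particular an $n$-equivalence.

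For the second map, I will apply \cref{cyl-cset-graph-equiv-mono} to the span $r^{\boxcat}(f, \id) \from X \to \Cyl{}^{\boxcat}(f, \id)$ and $\ell^{\boxcat}(\id, g) \from X \to \Cyl{}^{\boxcat}(\id, g)$. These maps are monomorphisms because they are end inclusions of mapping cylinders. That lemma requires the unit maps at $X$, at $\Cyl{}^{\boxcat}(f, \id)$ and at $\Cyl{}^{\boxcat}(\id, g)$ to be $n$-equivalences. The first holds by hypothesis. For the second, I will use the commutative square appearing in the proof of \cref{cyl-cset-graph-equiv-mono}, which relates $\eta$ on $\Cyl{}^{\boxcat}(f, \id)$ to $\eta_Y$ via $\sigma(f)$ and $\gnerve[m](\sigma(\reali[m]{f}))$. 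Both vertical maps there are homotopy equivalences, and $\eta_Y$ is an $n$-equivalence by hypothesis, so 2-out-of-3 gives the claim. The third unit map is handled symmetrically using $\eta_Z$. One point I must check is the bound on $m$: \cref{cyl-cset-graph-equiv-mono} asks for $m \geq n$, while here only $m \geq \min(n,1)$ is assumed. That lemma in fact only uses \cref{double-mapping-cyl-inclusions-hopo} with $p = 0$ and $q_1 = q_2 = m$, after rewriting the cylinder as a pushout of $\reali[m]{X} \gtimes I_m$. So I will redo its argument with the appropriate choices of $p$ and $q_i$, using injectivity of the legs to allow $q_i = 0$ where needed.

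The fourth map is $\gnerve[m](\Pi_*)$ with $p = m$, $q_1 = q_2 = 1$ and $q_1' = q_2' = m-1$. By \cref{pi-map-n-equiv}, $\Pi_*$ is an $m$-equivalence, and $\gnerve[m]$ preserves $m$-equivalences by \cref{nerve-reflects-equivs}. So $\gnerve[m](\Pi_*)$ is an $n$-equivalence whenever $m \geq n$. I expect the main obstacle to be reconciling this with the weaker hypothesis $m \geq \min(n,1)$ in the statement, which allows $m < n$. If that occurs, I will first try to compensate using the hypothesis that the units $\eta$ are $n$-equivalences, arguing through the square used for the second map. Failing that, I will conclude that the intended hypothesis is $m \geq \max(n,1)$ (which also keeps $q_1' = q_2' = m - 1 \geq 0$) and prove the statement under that hypothesis.
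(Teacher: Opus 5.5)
Your four steps and the lemmas you cite match the paper's proof. The first map follows from homotopy invariance of $\Cyl{}^{\boxcat}$, and your gluing-lemma argument is the justification the paper omits. The second follows from \cref{cyl-cset-graph-equiv-mono}, the third is an isomorphism, and the fourth follows from \cref{pi-map-n-equiv}. Your check that the units at $\Cyl{}^{\boxcat}(f,\id)$ and $\Cyl{}^{\boxcat}(\id,g)$ are $n$-equivalences (via $\sigma(f)$, $\sigma(\reali[m]{f})$ and 2-out-of-3) supplies a hypothesis of \cref{cyl-cset-graph-equiv-mono} that the paper uses silently. You are also right that the argument only works for $m \geq \max(n,1)$, and the paper's proof implicitly assumes exactly this.

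The one thing to discard is your proposed workaround for the second map, which rests on a misreading. The square in the proof of \cref{cyl-cset-graph-equiv-mono} has corner $\reali[m]{X} \gtimes I_m$ and all three cylinders of length $m$. So it is the case $p = m$, $q_1 = q_2 = 0$ of \cref{double-mapping-cyl-inclusions-hopo}, which is why both legs must be injective; it is not $p = 0$, $q_1 = q_2 = m$. That case yields a homotopy pushout of $m$-types only. The parameters cannot be re-chosen, because the graph square is forced to be the $m$-realization of the cubical square $X \gprod \cube{1} \to \Cyl{}^{\boxcat}(f,\id)$, $\Cyl{}^{\boxcat}(\id,g) \to \Cyl{}^{\boxcat}(f,g)$. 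Hence the second map needs $m \geq n$ just as much as the fourth does.

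For $m = 1$ the fourth map is actually the identity, since $3m = m+2$ and $q_1' = q_2' = 0$. So under the literal hypothesis the obstruction lies at least as much in the second map as in the fourth. The unit hypotheses also cannot rescue $\Pi_*$, which does not involve them.

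Note too that $m \geq 1$ is needed so that the realized legs $r_0(\reali[m]{f},\id)$ and $\ell_0(\id,\reali[m]{g})$ are injective, as the proof of \cref{cyl-cset-graph-equiv-mono} requires. It is not needed only to keep $q_i' \geq 0$. Under $m \geq \max(n,1)$, cite the lemma directly and your argument is complete.
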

\begin{proof}
    Naturality is straightforward to verify.
    The first map is a weak equivalence because the functor $\Cyl{}^{\boxcat}$ sends objectwise weak equivalences to weak equivalences.
    The second map is an $n$-equivalence by \cref{cyl-cset-graph-equiv-mono}; here we use the assumption that $m \geq 1$ so that $r^{\boxcat}(f, \id)$ and $\ell^{\boxcat}(\id, g)$ are monomorphisms.
    The third map is an isomorphism, hence an $n$-equivalence.
    The fourth map is an $n$-equivalence by \cref{pi-map-n-equiv}.
\end{proof}
We instantiate \cref{cyl-cset-graph-equiv-zigzag} to some examples of interest in \cref{sec:applications}.

 \section{The inverse construction} \label{sec:inverse-construction}

To show that the nerve functor $\gnerve[\infty] \from \Graph \to \cSet_n$ is an equivalence on localizations at $n$-equivalences, we need to give a construction going in the opposite direction.
We do this by first constructing a functor $F_! \from \cSeti \to \cSet$ from semicubical sets to cubical sets, then using the $m$-realization functor $\reali[m]{\uvar} \from \cSet \to \Graph$ to obtain a graph.
Note that every cubical set is weakly equivalent to a semicubical set (by \cref{nonsingular-approximation-cospan}), so it is no issue that the functor $F_!$ is not defined on the entire category of cubical sets.

The defining property of $F_!$ is that $F_! \cube{n}$ is isomorphic to a cone on $F_! \bd \cube{n} $, and the $n$-cube boundary inclusion $F_!  \bd \cube{n}  \to F_! \cube{n}$ coincides with the cone inclusion.
This allows us to use the comparison results for double-mapping cylinders in $\cSet$ and $\Graph$ to better understand the $m$-realization of $F_!$.

The functor $F_! \from \cSeti \to \cSet$ will be constructed as the extension by colimits of a diagram $F \from \boxcati \to \cSet$.
% Recall the \emph{latching object}
\begin{construction}
    We construct a diagram $F \from \boxcati \to \cSet$ by induction as a sequence of functors
    \[ F^n \from \boxcatin \to \cSet \]
    which commute with the inclusions
    \[ \begin{tikzcd}
        {\boxcatin[0]} \ar[r, hook] \ar[rrd, "F^0"'] & {\boxcatin[1]} \ar[r, hook] \ar[rd, "F^1"] & {\boxcatin[2]} \ar[r, hook] \ar[d, "F^2"] & {\dots} \\
        {} & {} & \cSet
    \end{tikzcd} \]
    Since $\boxcati$ is a direct category, by \cite[Lem.~3.10]{riehl-verity:reedy}, it suffices to produce an object $F^n \cube{n}$ in $\cSet$ together with a map
    \[ L_{\cube{n}} F^{n-1} \to F^n \cube{n} \]
    from the latching object of $\cube{n}$ with respect to $F^{n-1}$.

    We proceed by induction on $n$.
    For $n = 0$, we declare $F^0 \cube{0}$ to be the 0-cube $\cube{0}$.
    For $n \geq 1$, we declare $F^{n} \cube{n}$ to be the cone on the latching object $L_{\cube{n}} F^{n-1}$.
    That is, $F^n \cube{n}$ is defined by the pushout
    \[ \begin{tikzcd}
        L_{\cube{n}} F^{n-1} \ar[r] \ar[d, "i_1"'] \ar[rd, phantom, "\potick" very near end] & \cube{0} \ar[d] \\
        L_{\cube{n}} F^{n-1} \gprod \cube{1} \ar[r] & F^n \cube{n}
    \end{tikzcd} \]
    The latching map $L_{\cube{n}} F^{n-1} \to F^n \cube{n}$ is given by the composite
    \[ L_{\cube{n}} F^{n-1} \xrightarrow{i_0} L_{\cube{n}} F^{n-1} \gprod \cube{1} \to F^n \cube{n}. \]
    \Cref{fig:F-latching-map} depicts the latching map for small values of $n$.
    % Given 
\end{construction}

% \begin{example}
    
% \end{example}

Recall that a functor $D \from \boxcati \to \cSet$ is \emph{Reedy cofibrant} if the latching map $L_{\cube{n}} D \to D \cube{n}$ is a monomorphism.
\begin{proposition} \label{F-is-Reedy-cofib-replacement}
    The diagram $F \from \boxcati \to \cSet$ is a Reedy cofibrant replacement of the terminal diagram $\const{\cube{0}} \from \boxcati \to \cSet$.
\end{proposition}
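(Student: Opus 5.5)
We need to establish two things: that $F$ is Reedy cofibrant, and that the unique map $F \to \const{\cube{0}}$ is a levelwise weak equivalence in the Grothendieck model structure. Since $\boxcati$ is a direct category and the Reedy model structure on $\Fun(\boxcati, \cSet)$ has levelwise weak equivalences, these two facts together say exactly that $F$ is a Reedy cofibrant replacement of $\const{\cube{0}}$.

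Reedy cofibrancy comes first, by induction on $n$. By construction, the latching map $L_{\cube{n}} F^{n-1} \to F^n \cube{n}$ is the composite of $i_0 \from L_{\cube{n}} F^{n-1} \to L_{\cube{n}} F^{n-1} \gprod \cube{1}$ with the structure map of the pushout defining the cone. The map $i_0$ is a monomorphism. The cone pushout is taken along $i_1$, which is also a monomorphism, and $i_1$ has image disjoint from that of $i_0$. Computing cubes levelwise in $\cSet$, the pushout collapses only the image of $i_1$ to the cone point and leaves the image of $i_0$ untouched. Hence the composite is injective on $k$-cubes for every $k$. For $n = 0$ the latching object is empty, so the latching map $\varnothing \to \cube{0}$ is trivially a monomorphism. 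The one point to check is that the geometric product preserves monomorphisms and that $X \gprod \face{}{1,0}$ and $X \gprod \face{}{1,1}$ have disjoint images; I would verify both directly from the explicit description of cubes in $X \gprod \cube{1}$.

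For the weak equivalence, each $F^n \cube{n}$ is a cone. Concretely, the map $\id \gprod \degen{}{1} \from L \gprod \cube{1} \to L$ together with the cone point contracts $F^n \cube{n}$ onto $\cube{0}$. Equivalently, the inclusion of the cone point $\cube{0} \to F^n \cube{n}$ is a pushout of $i_1 \from L \to L \gprod \cube{1}$, which is an acyclic cofibration in the Grothendieck model structure. So $\cube{0} \to F^n \cube{n}$ is a weak equivalence, and by 2-out-of-3 the map $F^n \cube{n} \to \cube{0}$ is one as well. Since the inductive construction only adds the object $F^n \cube{n}$ at stage $n$, this covers every object of $\boxcati$.

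I expect the only genuine obstacle to be the monomorphism check in the second paragraph. It is routine but requires care because $\gprod$ is not symmetric.
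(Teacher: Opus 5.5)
Your proof is correct and follows the paper's argument. Reedy cofibrancy holds because the latching map is $i_0$ followed by the cone-pushout map. The levelwise weak equivalence holds because the cone-point inclusion is a pushout of the acyclic cofibration $i_1$, and 2-out-of-3 finishes. Your check that the images of $i_0$ and $i_1$ are disjoint fills in a step the paper only asserts. The heuristic remark about contracting the cone via the projection is unnecessary, since the pushout argument you give next is the one that does the work.
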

\begin{proof}
    The diagram $F$ is Reedy cofibrant since the composite
    \[ L_{\cube{n}} F^{n-1} \xrightarrow{i_0} L_{\cube{n}} F^{n-1} \gprod \cube{1} \to F^n \cube{n} \]
    is a monomorphism.
    Since $F \cube{n}$ is defined as a pushout
    \[ \begin{tikzcd}
        L_{\cube{n}} F^{n-1} \ar[r] \ar[d, "i_1"'] \ar[rd, phantom, "\potick" very near end] & \cube{0} \ar[d] \\
        L_{\cube{n}} F^{n-1} \gprod \cube{1} \ar[r] & F^n \cube{n}
    \end{tikzcd} \]
    and the left map is an acyclic cofibration, the cone point inclusion $\cube{0} \to F^n \cube{n}$ is an acyclic cofibration, hence a weak equivalence.
\end{proof}

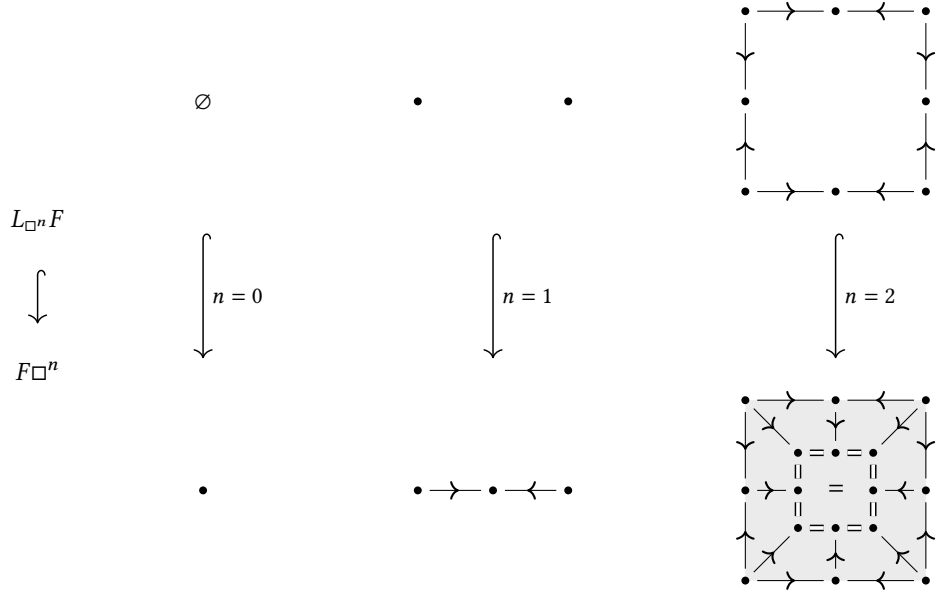
\begin{figure}[H]
    \centering
    \begin{tikzpicture}[
            cell/.style={rectangle, draw=none, outer sep=0pt, inner sep = 0pt, minimum width = 6em, minimum height = 1em},
            commutative diagrams/every diagram,
            decoration={markings,mark=at position 0.57 with {\arrow[thick]{>}}},
            scale=1
        ]
        % Invisible cells control where the arrow starts and ends
        \node[cell] (L0) {};
        \path (L0.east) -- +(2.8, 0) node[cell] (L1) {};
        \path (L1.east) -- +(3.5, 0) node[cell] (L2) {};
        \path (L0.south) -- +(0, -5) node[cell] (F0) {};
        \path (L1.south) -- +(0, -5) node[cell] (F1) {};
        \path (L2.south) -- +(0, -5) node[cell] (F2) {};
        \draw[commutative diagrams/.cd, every arrow, every label, hook, shorten=4.5em]
            (L0.south) edge node[coordinate] (CL) {} (F0.north)
            (L1.south) edge node[coordinate] (C) {} (F1.north)
            (L2.south) to node[coordinate] (CR) {} (F2.north);
        
        % L_0
        \node at (L0.center) {$\varnothing$};
        % F0
        \node[Zcube, label] at (F0.center) {};

        % L_1
        \path (L1.center) -- +(-1, 0) node[Zcube] (L1a) {};
        \path (L1.center) -- +(1, 0) node[Zcube] (L1c) {};
        % F1
        \node[Zcube] (F1b) at (F1.center) {};
        \path (F1.center) -- +(-1, 0) node[Zcube] (F1a) {};
        \path (F1.center) -- +(1, 0) node[Zcube] (F1c) {};
        \draw[postaction={decorate}] (F1a) -- (F1b);
        \draw[postaction={decorate}] (F1c) -- (F1b);

        % L2
        \path (L2.center) -- +(-1.2, 1.2) node[Zcube] (L2TL) {};
        \path (L2.center) -- +(0, 1.2) node[Zcube] (L2T) {};
        \path (L2.center) -- +(1.2, 1.2) node[Zcube] (L2TR) {};
        \path (L2.center) -- +(-1.2, 0) node[Zcube] (L2L) {};
        % \path (L2.center) -- +(0, 0) node[Zcube] (L2C) {};
        \path (L2.center) -- +(1.2, 0) node[Zcube] (L2R) {};
        \path (L2.center) -- +(-1.2, -1.2) node[Zcube] (L2BL) {};
        \path (L2.center) -- +(0, -1.2) node[Zcube] (L2B) {};
        \path (L2.center) -- +(1.2, -1.2) node[Zcube] (L2BR) {};
        \draw[postaction={decorate}] (L2TL) -- (L2T);
        \draw[postaction={decorate}] (L2TR) -- (L2T);
        \draw[postaction={decorate}] (L2TL) -- (L2L);
        \draw[postaction={decorate}] (L2TR) -- (L2R);
        \draw[postaction={decorate}] (L2BL) -- (L2B);
        \draw[postaction={decorate}] (L2BR) -- (L2B);
        \draw[postaction={decorate}] (L2BL) -- (L2L);
        \draw[postaction={decorate}] (L2BR) -- (L2R);

        % F2
        \path (F2.center) -- +(-1.2, 1.2) node[Zcube] (F2TL) {};
        \path (F2.center) -- +(0, 1.2) node[Zcube] (F2T) {};
        \path (F2.center) -- +(1.2, 1.2) node[Zcube] (F2TR) {};
        \path (F2.center) -- +(-1.2, 0) node[Zcube] (F2L) {};
        % \path (L2.center) -- +(0, 0) node[Zcube] (L2C) {};
        \path (F2.center) -- +(1.2, 0) node[Zcube] (F2R) {};
        \path (F2.center) -- +(-1.2, -1.2) node[Zcube] (F2BL) {};
        \path (F2.center) -- +(0, -1.2) node[Zcube] (F2B) {};
        \path (F2.center) -- +(1.2, -1.2) node[Zcube] (F2BR) {};
        \draw[postaction={decorate}] (F2TL) -- (F2T);
        \draw[postaction={decorate}] (F2TR) -- (F2T);
        \draw[postaction={decorate}] (F2TL) -- (F2L);
        \draw[postaction={decorate}] (F2TR) -- (F2R);
        \draw[postaction={decorate}] (F2BL) -- (F2B);
        \draw[postaction={decorate}] (F2BR) -- (F2B);
        \draw[postaction={decorate}] (F2BL) -- (F2L);
        \draw[postaction={decorate}] (F2BR) -- (F2R);
        % Inner layer
        \path (F2.center) -- +(-0.5, 0.5) node[Zcube] (F2TLi) {};
        \path (F2.center) -- +(0, 0.5) node[Zcube] (F2Ti) {};
        \path (F2.center) -- +(0.5, 0.5) node[Zcube] (F2TRi) {};
        \path (F2.center) -- +(-0.5, 0) node[Zcube] (F2Li) {};
        % \path (F2.center) -- +(0, 0) node[Zcube] (F2Ci) {};
        \path (F2.center) -- +(0.5, 0) node[Zcube] (F2Ri) {};
        \path (F2.center) -- +(-0.5, -0.5) node[Zcube] (F2BLi) {};
        \path (F2.center) -- +(0, -0.5) node[Zcube] (F2Bi) {};
        \path (F2.center) -- +(0.5, -0.5) node[Zcube] (F2BRi) {};
        \draw[commutative diagrams/.cd, every arrow, equal]
            (F2TLi) -- (F2Ti)
            (F2TRi) -- (F2Ti)
            (F2TLi) -- (F2Li)
            (F2TRi) -- (F2Ri)
            (F2BLi) -- (F2Bi)
            (F2BRi) -- (F2Bi)
            (F2BLi) -- (F2Li)
            (F2BRi) -- (F2Ri);
        \draw[postaction={decorate}] (F2TL) -- (F2TLi);
        \draw[postaction={decorate}] (F2T) -- (F2Ti);
        \draw[postaction={decorate}] (F2TR) -- (F2TRi);
        \draw[postaction={decorate}] (F2L) -- (F2Li);
        \draw[postaction={decorate}] (F2R) -- (F2Ri);
        \draw[postaction={decorate}] (F2BL) -- (F2BLi);
        \draw[postaction={decorate}] (F2B) -- (F2Bi);
        \draw[postaction={decorate}] (F2BR) -- (F2BRi);
        
        \begin{scope}[on background layer]
            \path[fill=grayfill]
                (F2TL.center) -- (F2T.center) -- (F2Ti.center) -- (F2TLi.center) -- cycle;
            \path[fill=grayfill]
                (F2TR.center) -- (F2T.center) -- (F2Ti.center) -- (F2TRi.center) -- cycle;
            \path[fill=grayfill]
                (F2TL.center) -- (F2L.center) -- (F2Li.center) -- (F2TLi.center) -- cycle;
            \path[fill=grayfill]
                (F2TR.center) -- (F2R.center) -- (F2Ri.center) -- (F2TRi.center) -- cycle;
            \path[fill=grayfill]
                (F2BL.center) -- (F2L.center) -- (F2Li.center) -- (F2BLi.center) -- cycle;
            \path[fill=grayfill]
                (F2BL.center) -- (F2B.center) -- (F2Bi.center) -- (F2BLi.center) -- cycle;
            \path[fill=grayfill]
                (F2BR.center) -- (F2B.center) -- (F2Bi.center) -- (F2BRi.center) -- cycle;
            \path[fill=grayfill]
                (F2BR.center) -- (F2R.center) -- (F2Ri.center) -- (F2BRi.center) -- cycle;
            \path[fill=grayfill]
                (F2TLi.center) rectangle node {$=$} (F2BRi.center);
        \end{scope}
    
        % Headings
        \path (CL) -- +(-2.2, 1) node (HT) {$L_{\cube{n}} F$};
        \path (CL) -- +(-2.2, -1) node (HB) {$F \cube{n}$};
        \draw[commutative diagrams/.cd, every arrow, every label, hook, shorten=1.1em]
            (HT) to (HB);
        \node[anchor=west, font=\small] at (CL.east) {$n = 0$}; 
        \node[anchor=west, font=\small] at (C.east) {$n = 1$}; 
        \node[anchor=west, font=\small] at (CR.east) {$n = 2$}; 
    \end{tikzpicture}
    % \begin{tabular}{r c c c}
    %     {} & $n = 0$ & $n = 1$ & $n = 2$ \\
    %     $L_{\cube{n}} F$ & {} & {} & {} \\
    %     $F \cube{n}$ & {} & {} & {}
    % \end{tabular}
    \caption{Depictions of the latching maps for $F$.} \label{fig:F-latching-map}
\end{figure}

Extending $F$ by colimits yields a cocontinuous functor
\[ F_! \from \cSeti \to \cSet . \]
By construction, the sequence $F_! \bd \cube{n} \to F_! \cube{n} \to \cube{0}$ is a mapping cylinder factorization.
That is,
\begin{proposition} \label{F-is-mapping-cone}
    For all $n \geq 0$, the image of the boundary inclusion $i \from \bd \cube{n} \to \cube{n}$ under $F_!$ factors through the bottom map of a pushout square as in the diagram:
    \[ \begin{tikzcd}
        {} & F_!(\bd \cube{n}) \ar[r] 
        \ar[d, hook, "i_1"'] \ar[rd, phantom, "\potick" very near end] & \cube{0} \ar[d] \\
        F_!(\bd \cube{n}) \ar[r, "i_0"] \ar[rr, bend right=18, "F_! i"'] & F_!(\bd \cube{n}) \gprod \cube{1} \ar[r] & F_!(\cube{n})
    \end{tikzcd} \]
\end{proposition}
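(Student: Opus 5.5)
The plan is to identify $F_!(\bd \cube{n})$ with the latching object $L_{\cube{n}} F^{n-1}$ and $F_!(\cube{n})$ with $F^n\cube{n}$. Once this is done, the displayed diagram is exactly the pushout defining $F^n \cube{n}$ in the construction of $F$. The bottom composite is then, by construction, the latching map.

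First, since $F_!$ is the extension by colimits (left Kan extension along the Yoneda embedding $\boxcati \to \cSeti$) of $F$, we have $F_!(\cube{n}) \cong F\cube{n} = F^n \cube{n}$. This isomorphism is natural in $\boxcati$, so $F_!$ applied to a face map $\face{}{i,\eps}\from \cube{n-1} \to \cube{n}$ is $F(\face{}{i,\eps})$.

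Next, I would compute $F_!(\bd \cube{n})$. In $\cSeti$, the semicubical set $\bd \cube{n}$ is the colimit of the representables $\cube{k}$ over the category of non-identity monomorphisms $[1]^k \to [1]^n$. This is the category of elements of $\bd\cube{n}$, which is the slice $\partial(\boxcati \slice [1]^n)$ used to define latching objects. Because $F_!$ is cocontinuous, this gives
\[ F_!(\bd \cube{n}) \cong \colim_{([1]^k \ito [1]^n) \neq \id} F\cube{k} = L_{\cube{n}} F. \]
The non-identity maps only involve $k < n$, so $L_{\cube{n}}F = L_{\cube{n}} F^{n-1}$. The map $F_!(i)$ is induced by the cocone of maps $F(\delta)\from F\cube{k} \to F\cube{n}$, and this is by definition the latching map. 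The delicate point, and the step I would treat most carefully, is checking that the inductive extension from \cite[Lem.~3.10]{riehl-verity:reedy} really assigns $F(\delta) = (\text{latching map}) \circ (\text{colimit inclusion})$ for each non-identity $\delta$. That is precisely how that lemma extends a diagram from $\boxcatin[n-1]$ to $\boxcatin$, so it should be a direct unwinding.

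Finally, substituting these identifications into the defining pushout of $F^n\cube{n}$ gives the diagram. The right square is that pushout. The bottom composite $i_0$ followed by the structure map is the latching map, hence equals $F_! i$. For $n=0$ we have $\bd\cube{0}=\varnothing$ and $F_!(\varnothing)=\varnothing$. The pushout of $\varnothing \to \cube{0}$ along $\varnothing \to \varnothing\gprod\cube{1}=\varnothing$ is $\cube{0} = F_!\cube{0}$, so the $n=0$ case holds trivially and no separate argument is needed.
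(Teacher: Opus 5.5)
Your proposal is correct and follows the paper's route. The paper's proof is the one-line observation that $F_! i$ coincides with the latching map $L_{\cube{n}} F^{n-1} \to F^n \cube{n}$, after which the diagram is the defining pushout of $F^n \cube{n}$; your identification of $F_!(\bd \cube{n})$ with $L_{\cube{n}} F^{n-1}$ (via the category of elements of $\bd \cube{n}$ and cocontinuity of $F_!$) and your separate check of the case $n = 0$ just spell out what the paper leaves implicit.
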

\begin{proof}
    The boundary inclusion $F_! i \from F_! \bd \cube{n} \to F_! \cube{n}$ coincides with the latching map $L_{\cube{n}} F^{n-1} \to F^n \cube{n}$, from which the result follows.
\end{proof}

% Using the Reedy model structure on $\Fun(\boxcati, \cSet)$, we may choose a diagram $R \from \boxcati \to \cSet$ and an acyclic cofibration $i \weto R$ so that $R$ is both Reedy fibrant and Reedy cofibrant.
% % Since each $n$-cube $\cube{n}$ is weakly contractible, the map $R \to \const{\cube{0}}$ is an acylic fibration.
% Extending by colimits yields functors $i_!, R_! \from \cSeti \to \cSet$; the morphism of diagrams $i \to R$ induces a natural transformation $i_! \to R_!$.
% % We extend by colimits to obtain functors $i_!, R_!, F_! \from \cSeti \to \cSet$.
% \begin{theorem}
%     % The natural transformations
%     % where the map $i_! X \weto X_1$ is an acyclic cofibration.
%     % Moreover, the assignment $X \mapsto X_1$ is functorial in 
%     Let $X$ be a semicubical set.
%     \begin{enumerate}
%         \item There exists a natural transformation $F_! \to R_!$ such that $F_! X \to R_! X$ is a weak equivalence.
%         \item The natural map $i_! X \to R_! X$ is an acyclic cofibration.
%     \end{enumerate}
% \end{theorem}
Let $i$ denote the composite $\boxcati \ito \boxcat \xrightarrow{\yo} \cSet$, and $i_! \from \cSeti \to \cSet$ denote the extension by colimits.
\begin{theorem} \label{F-shriek-approx-cospan}
    There exists a functor $L \from \cSeti \to \cSet$ together with natural weak equivalences
    \[ i_! \weto L \weot F_! \]
    such that, 
    \begin{enumerate}
        \item $L$ admits a right adjoint; and
        \item for any semicubical set $X$, the map $i_! X \to L X$ is an acyclic cofibration.
    \end{enumerate}
\end{theorem}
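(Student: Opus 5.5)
The plan is to work entirely in the Reedy model structure on $\Fun(\boxcati, \cSet)$, where $\cSet$ carries the Grothendieck model structure. Since $\boxcati$ is a direct category, this is the projective-type Reedy structure: weak equivalences and fibrations are levelwise, and cofibrant diagrams are those with monic latching maps. Recall that any functor $D \from \boxcati \to \cSet$ extends by colimits to $D_! \from \cSeti \to \cSet$. This $D_!$ is cocontinuous and has right adjoint $Y \mapsto \big([1]^k \mapsto \cSet(D\cube{k}, Y)\big)$, so item (1) will hold automatically for any $L$ of this form. The diagram $i$ (the Yoneda embedding restricted to $\boxcati$) is Reedy cofibrant, because its latching maps are the boundary inclusions $\bd\cube{n} \ito \cube{n}$. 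By \cref{F-is-Reedy-cofib-replacement}, $F$ is Reedy cofibrant as well. Both diagrams are levelwise weakly contractible, so the maps $i \to \const{\cube{0}}$ and $F \to \const{\cube{0}}$ are levelwise weak equivalences.

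Next I construct $R$ by factoring $i \to \const{\cube{0}}$ in the Reedy model structure as an acyclic cofibration $i \weto R$ followed by a fibration $R \to \const{\cube{0}}$. The fibration is automatically acyclic by 2-out-of-3, and $R$ is Reedy cofibrant since $i$ is. Set $L := R_!$. Because $F$ is Reedy cofibrant and $R \to \const{\cube{0}}$ is a Reedy acyclic fibration, the lifting property gives a map $F \to R$ over $\const{\cube{0}}$, and this map is a levelwise weak equivalence by 2-out-of-3. Applying extension by colimits yields the natural transformations $i_! \to L \leftarrow F_!$.

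It remains to show these components are weak equivalences and that $i_! X \to LX$ is an acyclic cofibration. The standard tool is that the functor
\[ \Fun(\boxcati, \cSet) \to \cSet, \qquad D \mapsto D_! X = X \otimes_{\boxcati} D, \]
is left Quillen for the Reedy structure whenever $X$ is a semicubical set; this is the Reedy analogue of the statement that weighted colimits are left Quillen when the weight is cofibrant. To verify it, use \cref{semicubical-set-admits-skeletal-filtration}. For a Reedy (acyclic) cofibration $D \to D'$, the map $D_! X \to D'_! X$ is obtained by transfinite composition of pushouts of coproducts of the relative latching maps
\[ D\cube{n} \push_{L_{\cube{n}} D} L_{\cube{n}} D' \to D'\cube{n}, \]
one copy for each $n$-cube of $X$. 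This works because $D_!(\bd\cube{n}) = L_{\cube{n}}D$ and $D_!(\cube{n}) = D\cube{n}$, and because $D_!$ preserves the pushouts of the skeletal filtration. Relative latching maps of Reedy (acyclic) cofibrations are (acyclic) cofibrations, so $D_!X \to D'_!X$ is an (acyclic) cofibration. This proves (2) directly, since $i \to R$ is a Reedy acyclic cofibration.

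For $F_! X \to LX$, I apply Ken Brown's lemma: a left Quillen functor preserves weak equivalences between cofibrant objects, and $F$ and $R$ are both Reedy cofibrant. The main obstacle is making the left Quillen claim rigorous, namely identifying $D_!$ of the skeletal pushouts with the relative latching maps and justifying passage to the sequential colimit. I expect this to reduce to a careful but standard induction over the skeletal filtration.
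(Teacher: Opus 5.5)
Your proposal is correct and follows the paper's proof: you use the same Reedy factorization $i \weto R \to \const{\cube{0}}$ with $L := R_!$, the same lift $F \to R$ obtained from Reedy cofibrancy of $F$, and the same skeletal-filtration argument to pass to extensions by colimits. Packaging that induction as the left Quillen property of $D \mapsto D_! X$ (via relative latching maps) plus Ken Brown's lemma is only a cleaner formalization. It also justifies the acyclic-cofibration claim for $i_! X \to LX$ more carefully than the paper's remark that $i \to \ell$ is a monomorphism of diagrams.
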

\begin{proof}
    % Let $i$ denote the composite of inclusions $\boxcati \ito \boxcat \xrightarrow{\yo} \cSet$, which is a Reedy cofibrant object of $\cSet^{\boxcati}$.
    We fix a fibrant replacement $i_! \to \ell \to \const{\cube{0}}$ of $i_!$ in the Reedy model structure on $\Fun(\boxcati, \cSet)$.
    In particular, $i_! \to \ell$ is an acyclic cofibration, and $\ell$ is both fibrant and cofibrant.
    We define the functor $L$ to be the extension by colimits $L := \ell_!$.
    % Fix an acyclic cofibration $i \weto \ell$ in the Reedy model structure where $\ell$ is both fibrant and cofibrant, and define $L$ to be the extension by colimits 
    
    To construct the natural transformation $F_! \to \ell_!$, observe that the morphism of diagrams $\ell \to \const{\cube{0}}$ is an acylic fibration (since $i \to \const{\cube{0}}$ is a weak equivalence).
    As $F$ is a Reedy cofibrant diagram (\cref{F-is-Reedy-cofib-replacement}), the square
    \[ \begin{tikzcd}
        \const{\varnothing} \ar[r] \ar[d] & \ell \ar[d] \\
        F \ar[r] & \const{\cube{0}}
    \end{tikzcd} \]
    admits a lift $F \to \ell$, which is moreover a weak equivalence by 2-out-of-3.

    After taking extension by colimits, each of the functors $i_!$, $\ell_!$, and $F_!$ sends the boundary inclusion $\bd \cube{n} \to \cube{n}$ to a cofibration, since each starting diagram $i, \ell, F$ is Reedy cofibrant.
    Given a semicubical set $X$ and a natural number $n \geq 0$, we have a pushout square
    \[ \begin{tikzcd}
        \coprod\limits_{X_n} \bd \cube{n} \ar[r] \ar[d, hook] \ar[rd, phantom, "\potick" very near end] & \Sk^{n-1} X \ar[d, hook] \\ 
        \coprod\limits_{X_n} \cube{n} \ar[r] & \Sk^n  X
    \end{tikzcd} \]
    in $\cSeti$, which is preserved by each of $i_!$, $\ell_!$, and $F_!$.
    The maps $i \to \ell$ and $F \to \ell$ induce natural transformations between pushout diagrams.
    A skeletal induction argument therefore shows that the induced natural transformations $i_! \to \ell_! \leftarrow F_!$ are natural weak equivalences.
    We conclude by observing that the map $i_! \to \ell_!$ is a monomorphism because the starting morphism of diagrams $i \to \ell$ was a monomorphism by assumption.
\end{proof}

\begin{construction}
    Let $X$ be a semicubical set.
    Given a non-negative integer $m \geq 0$, we define a graph $\Gn{X}$ to be the realization of $F_! X$ of length $m$.
    That is,
    \[ \Gn{X} := \reali[m]{F_! X}. \]
    \Cref{ex:Gamma-ncube-012} showcases the graphs $\Gn{\cube{n}}$ for small values of $m$ and $n$.
    % We define the subgraph $U_{X, n} \subseteq G_{X, n}$ to be the image of the 
\end{construction}
The unit of the realization-nerve adjunction, when instantiated at $F_! X$, yields a map
\[ \eta^m_{F_! X} \from F_! X \to \gnerve[m] \Gn{X} \]
which is natural in the variable $X \in \cSeti$.

The following result holds by definition of $F_!$.
\begin{proposition} \label{Gn-cube-n-is-cyl}
    Let $m, n$ be non-negative.
    The graph $\Gn{\cube{n}}$ is isomorphic to the length-$m$ double mapping cylinder
    \[ \Gn{\cube{n}} \cong \Cyl{m} \big( \id[\Gn{\bd \cube{n}}], \, \bang_{\Gn{\bd \cube{n}}} \big) \]
    of $\id[\Gn{\bd \cube{n}}] \from \Gn{\bd \cube{n}} \to \Gn{\bd \cube{n}}$ and $\bang_{\Gn{\bd \cube{n}}} \from \Gn{\bd \cube{n}} \to I_0$.
    Under this identification, the map $\Gn{\bd \cube{n}} \ito \Gn{\cube{n}}$ induced by the cubical boundary inclusion coincides with the map $\ell_0(\id, \bang) \from \Gn{\bd \cube{n}} \to \Cyl{m}(\id, \bang)$.
\end{proposition}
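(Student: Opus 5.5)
The plan is to combine three facts stated earlier: $F_!\cube{n}$ is a cubical cone on $F_!\bd\cube{n}$ (\cref{F-is-mapping-cone}), $\reali[m]{\uvar}$ preserves colimits and is strong monoidal (\cref{realization-is-monoidal}), and $m$-realization carries cubical double mapping cylinders to graph double mapping cylinders of length $m$ (\cref{cyl-reali-iso}). The case $n = 0$ is degenerate. There $\bd\cube{0} = \varnothing$, so $\Gn{\bd\cube{0}} = \varnothing$, and $\Cyl{m}(\id[\varnothing], \bang)$ is the pushout $\varnothing \push_{\varnothing} I_0 \cong I_0 = \Gn{\cube{0}}$. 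So we may assume $n \geq 1$.

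First, I would rewrite the pushout square of \cref{F-is-mapping-cone} as a cubical double mapping cylinder. That square exhibits $F_!\cube{n}$ as the colimit of the span
\[ F_!\bd\cube{n} \xleftarrow{\id} F_!\bd\cube{n} \xrightarrow{\id\gprod\face{}{1,0}} F_!\bd\cube{n}\gprod\cube{1} \xleftarrow{\id\gprod\face{}{1,1}} F_!\bd\cube{n} \xrightarrow{\bang} \cube{0}, \]
because the left leg is an identity and contributes nothing new. Hence $F_!\cube{n} \cong \Cyl{}^{\boxcat}(\id, \bang)$. Under this isomorphism, $F_! i$ corresponds to the composite $F_!\bd\cube{n} \xrightarrow{i_0} F_!\bd\cube{n}\gprod\cube{1} \to F_!\cube{n}$, which is exactly the structure map $\ell^{\boxcat}(\id, \bang)$.

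Next, I would apply $\reali[m]{\uvar}$. By \cref{cyl-reali-iso}, $\reali[m]{\Cyl{}^{\boxcat}(\id,\bang)} \cong \Cyl{m}(\reali[m]{\id}, \reali[m]{\bang}) = \Cyl{m}(\id[\Gn{\bd\cube{n}}], \bang)$, using $\reali[m]{\cube{0}} \cong I_0$. The naturality of this isomorphism in the span is what carries $\reali[m]{\ell^{\boxcat}(\id,\bang)}$ to $\ell_0(\id,\bang)$. Concretely, $\reali[m]{\uvar}$ preserves colimits and sends $\uvar\gprod\cube{1}$ to $\uvar\gtimes I_m$ with $\face{}{1,0}, \face{}{1,1}$ going to $i_0, i_m$. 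So the structure map of the leg indexed by $F_!\bd\cube{n}$ (the ``$H$'' leg) is sent to the corresponding structure map of the graph colimit, and that map is $\ell_0$.

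The only delicate point is a bookkeeping check. The isomorphism of \cref{cyl-reali-iso} is described up to natural isomorphism, so I must make sure it identifies the cubical structure maps with the graph structure maps, rather than with the version reflected by the symmetry $t \mapsto m-t$. It suffices to verify that the strong monoidal isomorphism $\reali[m]{A\gprod\cube{1}} \cong \reali[m]{A}\gtimes I_m$ sends $\id\gprod\face{}{1,\eps}$ to $i_{\eps m}$. This holds on representables, since $\cube{k}\gprod\cube{1} = \cube{k+1}$ and $\face{}{k+1,\eps}$ realizes to the inclusion of $\gcube{m}{k}$ at last coordinate $\eps m$. It then extends to all $A$ by cocontinuity.
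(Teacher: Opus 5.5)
Your proposal is correct and takes the same route as the paper, whose proof simply combines \cref{F-is-mapping-cone} with \cref{cyl-reali-iso}. Your extra check that $\reali[m]{\uvar}$ sends $\id\gprod\face{}{1,\eps}$ to $i_{\eps m}$, so that the boundary inclusion lands on $\ell_0(\id,\bang)$ rather than its reflection under $t \mapsto m-t$, makes explicit what the paper leaves implicit; the separate $n=0$ case is harmless but not needed, since the cone formula already gives $\cube{0}$ there.
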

\begin{proof}
    % The $m$-realization functor $\reali[m]{\uvar} \from \cSet \to \Graph$ preserves colimits and is (strong) monoidal with respect to the geometric product on $\cSet$ and the box product on $\Graph$.
    Follows from \cref{F-is-mapping-cone} and \cref{cyl-reali-iso}.
\end{proof}
\begin{corollary} \label{G-cube-contractible}
    For $m, n \geq 0$, the graph $\Gn{\cube{n}}$ is contractible.
\end{corollary}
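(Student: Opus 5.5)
By \cref{Gn-cube-n-is-cyl}, $\Gn{\cube{n}}$ is isomorphic to the cone $\Cyl{m}(\id[\Gn{\bd\cube{n}}], \bang)$, so it suffices to show that for any graph $G$ and any $m \geq 0$ the cone $\Cyl{m}(\id[G], \bang_G)$ is contractible, i.e.\ homotopy equivalent to $I_0$. We would apply \cref{cyl-inclusion-is-htpy-equiv} with $f = \id[G]$ and $g = \bang_G \from G \to I_0$. That result says the retraction $\sigma(\bang_G) \from \Cyl{m}(\id[G], \bang_G) \to I_0$ is a homotopy inverse of the structure map $r_0(\id[G], \bang_G) \from I_0 \to \Cyl{m}(\id[G], \bang_G)$. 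Hence the cone is homotopy equivalent to $I_0$, which is exactly contractibility.

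For completeness, the homotopy can be written down explicitly. Take $\beta \from \Cyl{m}(\id, \bang) \gtimes I_m \to \Cyl{m}(\id,\bang)$ given by $\beta((v,t),t') = (v,\max(t,t'))$, with the cone point sent to itself. This is well defined because $(v,m)$ is identified with the cone point for every $v$. Applying the reflected version of the $\min$ formula shows that $\beta$ is a graph map. It goes from the identity at $t' = 0$ to the constant map at the cone point at $t' = m$.

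The only case needing care is $m = 0$. There the cone $\Cyl{0}(\id, \bang)$ is the pushout $G \push_G I_0 = I_0$, which is trivially contractible, and the formula above degenerates consistently.

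One small obstacle is that the notion of contractibility should match the paper's usage. If "contractible" is meant as weakly contractible (all $A_k$ trivial), it follows from homotopy equivalence, since homotopy equivalences of graphs induce isomorphisms on $A_k$ via \cref{nerve-reflects-equivs}. We therefore expect no real difficulty.
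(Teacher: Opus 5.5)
Your proposal is correct and follows essentially the paper's argument: identify $\Gn{\cube{n}}$ with the cone $\Cyl{m}(\id,\bang)$ via \cref{Gn-cube-n-is-cyl}, then apply \cref{cyl-inclusion-is-htpy-equiv} to conclude that $r_0(\id,\bang)\from I_0 \to \Gn{\cube{n}}$ is a homotopy equivalence. Your explicit homotopy $((v,t),t') \mapsto (v,\max(t,t'))$ spells out the ``analogous'' case of that proposition, which the paper leaves unwritten, and it is correct, as is your handling of $m = 0$.
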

\begin{proof}
    By \cref{cyl-inclusion-is-htpy-equiv}, the map $r_0(\id, \bang) \from I_0 \to \Gn{\cube{n}}$ is a homotopy equivalence.
\end{proof}
\begin{corollary} \label{Gn-bd-inclusion-hopo}
    Let $m$ and $n$ be a non-negative integers with $m \geq n+2$.
    Suppose $u \from \bd \cube{k} \to X$ is a map of semicubical sets.
    Then, for any $m \in \NN \cup \{ \infty \}$, the pushout 
    \[ \begin{tikzcd}
        \bd \cube{k} \ar[r, "u"] \ar[d, hook] \ar[rd, phantom, "\potick" very near end] & X \ar[d, hook] \\
        \cube{k} \ar[r] & X_u
    \end{tikzcd} \]
    in $\cSeti$ is sent to a homotopy pushout of $n$-types
    \[ \begin{tikzcd}[column sep = 3.8em]
        \gnerve[m] \Gn {(\bd \cube{n})} \ar[r, "{\gnerve[m] \Gn{(u)}}"] \ar[d, hook] & \gnerve[m] \Gn {(X)} \ar[d, hook] \\ 
        \gnerve[m] \Gn {(\cube{n})} \ar[r] & \gnerve[m] \Gn {(X_u)}
    \end{tikzcd} \]
    after applying the functor $\gnerve[m] \circ \ \Gn \from \cSeti \to \cSet$.
    Additionally, if $u$ is a monomorphism then the result holds when $m = n+1$.
\end{corollary}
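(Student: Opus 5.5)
The plan is to realize the semicubical pushout as a graph pushout of the type treated in \cref{cyl-factorization-hopo}. (The statement overloads $m$ and mixes $k$ with $n$. Below, the realization length in $\Gn$ is $m \geq n+2$, the cube dimension is $k$ throughout, and the nerve index is an arbitrary $m' \in \NN \cup \{\infty\}$.) First I will apply $F_!$ and then $\reali[m]{\uvar}$. Both functors are cocontinuous, so the square is sent to a pushout in $\Graph$:
\[ \Gn{X_u} \cong \Gn{X} \push_{\Gn{\bd \cube{k}}} \Gn{\cube{k}}. \]
By \cref{Gn-cube-n-is-cyl}, the left vertical map $\Gn{\bd\cube{k}} \to \Gn{\cube{k}}$ is identified with $\ell_0(\id, \bang) \from \Gn{\bd\cube{k}} \to \Cyl{m}(\id, \bang)$.

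Next I will put $G := \Gn{\bd\cube{k}}$, $f := \Gn{(u)} \from G \to \Gn{X}$ and $g := \bang \from G \to I_0$. The square above is then exactly the pushout of $f$ along $\ell_0(\id[G], g)$ appearing in \cref{cyl-factorization-hopo}, with total length $p + q = m$. Using \cref{cyl-as-po}, its pushout corner is $\Cyl{m}(f, \bang)$, which agrees with $\Gn{X_u}$. In the general case I take $p = n$ and $q = m - n \geq 2$, which is exactly where the hypothesis $m \geq n+2$ is used. \Cref{cyl-factorization-hopo}(1) then says that applying $\gnerve[m']$ gives a homotopy pushout of $n$-types, for every $m'$, as claimed.

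For the monomorphism case I will argue that $F_!$ and $\reali[m]{\uvar}$ send a monomorphism $u$ of semicubical sets to a monomorphism of graphs. For $F_!$ this follows from Reedy cofibrancy of $F$ together with a skeletal induction as in the proof of \cref{F-shriek-approx-cospan}. For $\reali[m]{\uvar}$, the functor is built by gluing cones and cylinders. With $f$ injective, \cref{cyl-factorization-hopo}(2) allows $q = 1$, so $m = n+1$ suffices.

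I expect the injectivity of $\Gn{(u)}$ to be the main obstacle. The realization $\reali[m]{\uvar}$ is only a left adjoint and need not preserve monomorphisms in general. I would first try to argue via the explicit skeletal cell structure: each cube is glued in as a double mapping cylinder, and \cref{induced-subgraph-double-mapping-cylinder-mono} makes the relevant structure maps inclusions, so induction on skeleta should give injectivity on vertices and edges. A minor additional check is that the identification of the realized square with the \cref{cyl-factorization-hopo} square is compatible with the maps, including $\ell_0(f,g)$; this follows from naturality in \cref{cyl-reali-iso}.
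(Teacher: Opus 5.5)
Your proposal is correct and matches the paper's proof. Both use cocontinuity of $\Gn$, identify $\Gn{\bd\cube{k}} \to \Gn{\cube{k}}$ with $\ell_0(\id, \bang)$ via \cref{Gn-cube-n-is-cyl}, and apply \cref{cyl-factorization-hopo} with $f = \Gn{(u)}$, $g = \bang$, $p = n$, $q = m-n$. The paper's proof does not justify the injectivity of $\Gn{(u)}$ needed for the $m = n+1$ clause (it later simply asserts that $\Gn$ preserves monomorphisms), and your cell-attachment idea supplies it. Stated cleanly: every monomorphism of semicubical sets is a relative cell complex of boundary inclusions, and $\Gn$ sends each of these to the cone inclusion $\ell_0(\id, \bang)$, which is injective on vertices for $m \geq 1$. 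Monomorphisms in $\Graph$ are stable under coproducts, pushouts and sequential colimits, because the vertex-set functor preserves colimits. Note that the map you need here is $\ell_0(\id,\bang)$, not the $\ell_m$, $r_m$ of \cref{induced-subgraph-double-mapping-cylinder-mono}.
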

\begin{proof}
    The functor $\Gn \from \cSet \to \Graph$ preserves pushouts, hence
    \[ \begin{tikzcd}[column sep = 2.8em]
        \Gn {(\bd \cube{k})} \ar[r, "{\Gn{(u)}}"] \ar[d, hook, "{\ell_0(\id, !)}"'] & \Gn {(X)} \ar[d, hook] \\ 
        \Gn {(\cube{k})} \ar[r] & \Gn {(X_u)}
    \end{tikzcd} \]
    is a pushout, and the left map is identified with $\ell_0(\id, \bang)$ by \cref{Gn-cube-n-is-cyl}.
    Apply \cref{cyl-factorization-hopo} with $f = \Gn{(u)}$ and $g = \bang$.
\end{proof}

Our main theorem in this section is the following:
\begin{theorem} \label{Gn-weq}
    Let $m, n$ be non-negative integers with $m \geq n+2$.
    For any semicubical set $X$, the map
    \[ \eta^m_{F_! X} \from F_! X \to \gnerve[m] \Gn{X} \]
    is an $n$-equivalence.
    Additionally, if $X$ is nonsingular then the result holds when $m = n+1$.
\end{theorem}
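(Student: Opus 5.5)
We prove the statement by skeletal induction on $X$, combined with a colimit argument to pass from the skeleta to $X$ itself.

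\emph{Finite-dimensional case.} We first show that $\eta^m_{F_! \Sk^k X}$ is an $n$-equivalence for every $k \geq -1$, by induction on $k$. The case $k=-1$ is trivial, since $\Sk^{-1} X = \varnothing$.

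For the inductive step, we use the pushout of \cref{semicubical-set-admits-skeletal-filtration}, which exhibits $\Sk^k X$ as obtained from $\Sk^{k-1}X$ by attaching $\coprod_{X_k}\cube{k}$ along $\coprod_{X_k}\bd\cube{k}$. A coproduct of cells is harmless: $F_!$, $\reali[m]{\uvar}$ and $\gnerve[m]$ all commute with coproducts. The last holds because $\gcube{m}{j}$ is connected. So we may treat the whole family as a single attaching map $u \from \coprod \bd\cube{k} \to \Sk^{k-1}X$, and the double mapping cylinder arguments go through verbatim.

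Applying $F_!$, which is cocontinuous and sends $\bd\cube{k}\to\cube{k}$ to a monomorphism, gives a pushout along a monomorphism in $\cSet$. This is a homotopy pushout, and hence also a homotopy pushout of $n$-types in the Cisinski model structure. Applying $\gnerve[m]\Gn$ gives a square that is a homotopy pushout of $n$-types by \cref{Gn-bd-inclusion-hopo}, which requires $m\geq n+2$. The unit $\eta^m$ maps the first square to the second, so by the cube lemma for homotopy pushouts of $n$-types it suffices to check that $\eta^m$ is an $n$-equivalence on the three other corners. On $F_!\Sk^{k-1}X$ this is the inductive hypothesis. On $F_!\bd\cube{k}$ it is the inductive hypothesis applied to the semicubical set $\bd\cube{k}$, which has dimension $k-1$; we therefore run the induction over all semicubical sets of dimension $<k$ simultaneously. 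On $F_!\cube{k}$ the source is weakly contractible by \cref{F-is-Reedy-cofib-replacement}. The target is contractible because $\Gn{\cube{k}}$ is contractible by \cref{G-cube-contractible} and $\gnerve[m]$ preserves homotopy equivalences, since homotopies of graphs are sent to homotopies via the maps $\gnerve[m](G\gtimes I_{m'})$, or simply by \cref{nerve-reflects-equivs}. So this corner is a weak equivalence as well.

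\emph{Passage to the colimit.} For general $X$, we write $F_!X=\colim_k F_!\Sk^kX$, a sequential colimit of monomorphisms. On the graph side, $\Gn{X}=\colim_k\Gn{\Sk^kX}$. The functor $\gnerve[m]$ preserves this filtered colimit of monomorphisms, because $\gcube{m}{j}$ is a finite graph, and every map from a finite graph into a filtered union of subgraphs factors through a stage. Both colimits are homotopy colimits, and $n$-equivalences are stable under sequential homotopy colimits since homotopy groups commute with such colimits. Hence $\eta^m_{F_!X}$ is an $n$-equivalence. One subtlety needs care here: the graph maps $\Gn{\Sk^{k-1}X}\to\Gn{\Sk^kX}$ must be monomorphisms. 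This holds because the cone on a monomorphic image is a monomorphism in the double mapping cylinder description; for general attaching maps one can instead argue with the compact cubes $\cube{j}$ directly.

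\emph{The nonsingular case.} When $X$ is nonsingular, the attaching maps $u$ are monomorphisms, so the ``additionally'' clause of \cref{Gn-bd-inclusion-hopo} applies with $m=n+1$. The rest of the argument is unchanged. For the boundary corner we note that $\bd\cube{k}$ is itself nonsingular.

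\emph{Main obstacle.} I expect the main obstacle to be making the cube-lemma step legitimate. The homotopy pushout statements live in the model structure for $n$-types, so we need to know that a natural map between two homotopy pushouts of $n$-types which is an $n$-equivalence on three corners is one on the fourth. The natural tool is the left Bousfield localization viewpoint, under which this holds in any left proper model category. A secondary concern is the colimit compatibility of $\gnerve[m]$ noted above.
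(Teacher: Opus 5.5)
For the main clause ($m\geq n+2$) your argument is the paper's: skeletal induction, the cube comparing $F_!$ of a cell attachment with $\gnerve[m]\Gn$ of it, the front face from \cref{Gn-bd-inclusion-hopo}, the boundary corner by induction on dimension, and the cell corner by contractibility. Batching all $k$-cells into one coproduct is harmless there; the paper batches the same way in \cref{po-cubical-mono-is-hopo}. Your monomorphism worry in the colimit step also dissolves: $\Gn{(\Sk^{k-1}X)}\to\Gn{(\Sk^kX)}$ is a pushout of the cone inclusion $\ell_0(\id,\bang)$, and graph monomorphisms are stable under pushout because the vertex functor preserves colimits.

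The gap is in the nonsingular clause. You claim that for nonsingular $X$ the attaching map $u\from\coprod_{x\in X_k}\bd\cube{k}\to\Sk^{k-1}X$ is a monomorphism. Nonsingularity only makes each individual $\bd x$ a monomorphism; the coproduct map identifies any face shared by two $k$-cubes. Already for the nonsingular $X=\bd\cube{2}$ and $k=1$, $u$ sends the eight vertices of $\coprod_4\bd\cube{1}$ onto four.

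This matters. The $m=n+1$ case of \cref{Gn-bd-inclusion-hopo} comes from \cref{cyl-factorization-hopo}(2) with $q=1$, which needs $f=\Gn(u)$ or $g=\coprod\bang$ to be injective. For $k\geq 1$ neither is, so the cited lemma no longer justifies that the front face of your cube is a homotopy pushout of $n$-types. Without injectivity of $f$, the relevant square in \cref{double-mapping-cylinder-is-n-skeletal-po} is not even a square of monomorphisms.

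The fix is the paper's route. First reduce to finite $X$ by writing it as a filtered colimit of finite subobjects, which remain nonsingular. Then pass from $\Sk^{k-1}X$ to $\Sk^kX$ by attaching one $k$-cube at a time, $\Sk^{k-1}X=X_0\subseteq\cdots\subseteq X_d=\Sk^kX$, with an inner induction on $i$. Each attaching map is then a single $\bd x$, the restriction of the monomorphism $x\from\cube{k}\to X$, so the ``additionally'' clause of \cref{Gn-bd-inclusion-hopo} applies at $m=n+1$. Each $X_i$ is a subobject of $X$, hence still nonsingular, so the inductive hypotheses stay within the nonsingular class.
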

% Before proving \cref{Gn-weq}, we prove the following lemmas.
% \begin{lemma}
%     Let $n$ and $k$ be non-negative integers.
%     The graph $\Gn{\cube{n}}$ is contractible.
% \end{lemma}
% \begin{proof}
%     The $k$-realization functor $\reali[k]{\uvar} \from \cSet \to \Graph$ preserves colimits, so by \cref{F-is-mapping-cone}, the square
%     % We rewrite the square
%     % \[ \begin{tikzcd}
%     %     \Gn{\bd \cube{n}} \ar[r] \ar[d] \ar[rd, phantom, "\potick" very near end] & \Gn{\cube{0}} \ar[d] \\
%     %     \Gn{(\bd \cube{n} \gprod \cube{1})} \ar[r] & \Gn{\cube{n}}
%     % \end{tikzcd} \]
%     % as
%     \[ \begin{tikzcd}
%         \reali[k]{F_! \bd \cube{n}} \ar[r] \ar[d] \ar[rd, phantom, "\potick" very near end] & \reali[k]{\cube{0}} \ar[d] \\
%         \reali[k]{F_!(\bd \cube{n}) \gprod \cube{1}} \ar[r] & \reali[k]{F_! \cube{n}}
%     \end{tikzcd} \]
%     is a pushout.
%     We rewrite this square as
%     \[ \begin{tikzcd}
%         \Gn{(\bd \cube{n})} \ar[r] \ar[d] \ar[rd, phantom, "\potick" very near end] & I_0 \ar[d] \\
%         \Gn{(\bd \cube{n})} \gtimes I_k \ar[r] & \Gn{(\cube{n})}
%     \end{tikzcd} \]
%     and observe that this is the pushout that defines the cone on $\Gn{(\bd \cube{n})}$ of length $k$.
%     By \cref{cyl-inclusion-is-htpy-equiv}, the map $I_0 \to \Gn{\cube{n}}$ is a homotopy equivalence.
% \end{proof}
\begin{proof}%[Proof of \cref{Gn-weq}]
    It suffices to consider the case when $X \in \cSeti$ is finite.
    This is because an arbitrary semicubical set can be written as a filtered colimit of its finite subobjects.
    The functors $F_!$, $\reali[m]{\uvar}$ and $\gnerve[m]$ all preserve filtered colimits, and $n$-equivalences are stable under filtered colimits; thus, if $\eta_{F_! Y}$ is an $n$-equivalence for each finite subobject $Y \subseteq X$ then $\eta_{F_! X}$ is an $n$-equivalence.

    % We proceed by induction on $n$ and observe that, in the base case $n = 0$, the map $\eta^m_{F_! X}$ is a 0-equivalence for any $X$.
    % For the inductive step, fix $n \geq 1$ and suppose $\eta^m_{F_! X}$ is an $(n-1)$-equivalence for all $k \geq n+1$ and $X \in \cSeti$ finite.

    % Fix $k \geq n+2$ and $X \in \cSeti$ finite.
    % We wish to show $\eta^m_{F_! X}$ is an $n$-equivalence for all $k \geq n+2$ and $X \in \cSeti$ finite.
    % For this, 
    It suffices to show the map
    \[ \eta^m_{F_! (\Sk^k X)} \from F_! (\Sk^k X) \to \gnerve[m] \Gn{(\Sk^k X)} \]
    is an $n$-equivalence for all $k \geq 0$ (again, since $n$-equivalences are stable under filtered colimits).
    We proceed by induction on $k$, and observe that if $k = 0$ then $\eta^m_{F_! (\Sk^0 X)}$ is an isomorphism between discrete cubical sets.
    For the inductive step, fix $k \geq 0$ and suppose $\eta^m_{F_! \Sk^k X}$ is an $n$-equivalence for all $m \geq n+2$ and $X \in \cSeti$ finite.

    % Fix $k \geq n+2$ and $X$
    % Our goal is to show that $\eta^m_{F_! \Sk^{j+1} X}$ is an $n$-equivalence for all $k \geq n+2$ and $X \in \cSeti$ finite.
    % To this end, fix $k$ and $X$.
    Now, fix a finite semicubical set $X$.
    Since $X$ is finite, we may decompose $\Sk^{k+1} X$ into a sequence of inclusions
    \[ \Sk^{k} X = X_0 \subseteq X_1 \subseteq \dots \subseteq X_d = \Sk^{k+1} X \]
    where each inclusion map $X_i \ito X_{i+1}$ appears in a pushout
    \[ \begin{tikzcd}
        \bd \cube{j+1} \ar[r] \ar[d, hook] \ar[rd, phantom, "\potick" very near end] & X_i \ar[d, hook] \\
        \cube{j+1} \ar[r] & X_{i+1}
    \end{tikzcd} \]
    of semicubical sets.
    % for some $j \geq 0$.
    We perform induction on $i \in \{ 0, \dots, d \}$ and observe that, in the base case $i = 0$, the map $\eta^m_{X_0}$ is an isomorphism by our inductive hypothesis on $k$.

    Fix $i \in \{ 0, \dots, d-1 \}$ and assume $\eta^m_{X_i}$ is an $n$-equivalence.
    Both functors $F_! \from \cSeti \to \cSet$ and $\Gn{} \from \cSeti \to \Graph$ preserve colimits, hence we obtain two pushout squares
    \[ \begin{tikzcd}
        F_! \bd \cube{k+1} \ar[r] \ar[d, hook] \ar[rd, phantom, "\potick" very near end] & F_! X_i \ar[d, hook] \\
        F_! \cube{k+1} \ar[r] & F_! X_{i+1}
    \end{tikzcd} \qquad \begin{tikzcd}
        \Gn{\bd \cube{k+1}} \ar[r] \ar[d, hook] \ar[rd, phantom, "\potick" very near end] & \Gn{X_i} \ar[d, hook] \\
        \Gn{\cube{k+1}} \ar[r] & \Gn{X_{i+1}}
    \end{tikzcd} \]
    The natural transformation $\eta^m$ induces a commutative cube
    \[ \begin{tikzcd}
        F_! \bd \cube{k+1} \ar[rr] \ar[rd] \ar[dd] & {} & F_! X_i \ar[dd] \ar[rd] & {} \\
        {} & \gnerve[m]{(\Gn{(\bd \cube{k+1})})} \ar[rr] & {} & \gnerve[m]{(\Gn{(X_i)})} \ar[dd, crossing over] \\
        F_! \cube{k+1} \ar[rr] \ar[rd] & {} & F_! X_{i+1} \ar[rd] & {} \\
        {} & \gnerve[m]{(\Gn{(\cube{k+1})})} \ar[rr] \ar[from=uu, crossing over] & {} & \gnerve[m]{(\Gn{(X_{i+1})})}
    \end{tikzcd} \]
    Note the back square is a pushout along a monomorphism, hence it is a homotopy pushout of $n$-types.
    The front square is a homotopy pushout of $n$-types by \cref{Gn-bd-inclusion-hopo}.
    The map between the top right objects is an $n$-equivalence by the inductive hypothesis on $i$.
    The map between the top left objects is a $n$-equivalence by the inductive hypothesis on $k$.
    The map between the bottom left objects is a weak equivalence by \cref{G-cube-contractible}.
    Thus, the bottom right map is an $n$-equivalence as desired.
    % The functor $\Gn{\uvar} \from \cSeti \to \Graph$ preserves colimits, so we obtain a pushout
    % \[ \begin{tikzcd} 
    %     \Gn {\bd \cube{n+1}} \ar[r, hook] \ar[d, hook] & \Gn{X'} \ar[d, hook] \\
    %     \Gn{ \cube{n+1}} \ar[r, hook] & \Gn{X}
    % \end{tikzcd} \]
    % of graphs.
    % Since $n$-equivalences are stable under filtered colimits, it suffices to show
\end{proof}

\subsection{Explicit description in terms of $F$-sequences.}

We conclude this section with an element-wise description of the graphs $\Gn{\cube{n}}$.
This will be a closed-form description, as opposed to the inductive definition in terms of taking cones of the boundary $\Gn{\bd \cube{n}}$.

The vertex set of $\Gn{\cube{n}}$ can be described as the set of \emph{$F$-sequences} modulo \emph{$F$-reduction}, two concepts which we now define.
\begin{definition}
    For $m, n \geq 0$, an \emph{$F$-sequence} is a tuple $(\varphi, k, P, w)$ where:
    \begin{itemize}
        \item $\varphi$ is a function $\{ 1, \dots, n \} \to \{ -, + \}$, referred to as the \emph{sign} function
        \item $k$ is a non-negative integer, referred to as the \emph{length};
        \item $P$ is a surjective function $\{ 1, \dots, n \} \to \{ 1, \dots, k \}$, referred to as the \emph{partition}; and
        \item $w$ is a function $\{ 1, \dots, k \} \to \{ 0, \dots, m \}$, referred to as the \emph{weight function}.
    \end{itemize}
\end{definition}
Our convention is that if $n = 0$ then there is a unique $F$-sequence $(\bang, 0, \id[\varnothing], \bang)$, where $\bang$ always denotes the unique function from the empty set.

Given an $F$-sequence $(\varphi, k, P, w)$ and an element $i \in \{ 1, \dots, k \}$, we write $P_i$ for the pre-image of $i$ under the map $P$.
In this way, the surjection $P$ determines a partition of the set $\{ 1, \dots, n \}$ into $k$ classes; the \emph{weight} of an equivalence class $P_i$ is the value $w(i)$.
% We write $t_i$ for the value $t(i)$.
If we write the elements of $P_i$ as $P_i = \{ x_{i, 1}, \dots, x_{i, \ell(i)} \}$, then an $F$-sequence can be written as an expression of the form:
\[ \big( (w(1); x_{1, 1}^\pm, \dots, x_{1, \ell(1)}^\pm), (w(2); x_{2, 1}^\pm, \dots, x_{2, \ell(2)^\pm}), \dots, (w(k); x_{k, 1}^\pm, \dots, x_{k, \ell(k)}^\pm) \big) \]
where each $x_{i, j}$ is an element of the equivalence class $P_i$ (and thus an element of $\{ 1, \dots, n \}$) and the superscript denotes the value of the sign function at $x_{i, j}$.
For example, if $m = 8$ and $n = 3$, the expression
\[ \big( (7; 1^+, 3^-), (0; 2^-) \big) \]
corresponds to the $F$-sequence where:
\begin{itemize}
    \item the sign function is given by $1 \mapsto +$, $2 \mapsto -$, and $3 \mapsto -$;
    \item the partition is given by $P_1 = \{ 1, 3 \}$ and $P_2 = \{ 2 \}$; and
    \item the weight function is given by $1 \mapsto 7$ and $2 \mapsto 0$.
\end{itemize}
% Note the order of the elements $p_{i, 1}, \dots, p_{}$
Similarly, the expression
\[ \big( (4; 3^+), (1; 1^+), (4; 2^-) \big) \]
corresponds to the $F$-sequence where:
\begin{itemize}
    \item the sign function is given by $1 \mapsto +$, $2 \mapsto -$, and $3 \mapsto +$;
    \item the partition is given by $P_1 = \{ 3 \}$, $P_{2} = \{ 1 \}$, and $P_3 = \{ 2 \}$; and
    \item the weight function is given by $1 \mapsto 4$, $2 \mapsto 1$, and $3 \mapsto 4$.
\end{itemize}
Note the order of the elements $p_{i, 1} \dots, p_{i, \ell(i)}$ within a pair of parantheses does not matter, so $\big( (7; 1^+, 3^-), (0; 2^-) \big)$ and $\big( (7; 3^-, 1^+), (0; 2^-) \big)$ denote the same $F$-sequence.
However, swaps between parantheses denote distinct $F$-sequences, so $\big( (7; 1^+, 3^-), (0; 2^-) \big)$ and $\big( (0; 2^-), (7; 1^+, 3^-) \big)$ are not equal.
This is because re-arranging the order of parantheses changes both the surjection $P$ and the weight function $w$.
% Equivalently, an $F$-sequence is given by a partition of the set $\{ 1, \dots, n \}$ together with a total order on the set of equivalence classes of the partition and a ``weight function'' that assigns a value between 0 and $m$ to each class of the partition.
We introduce the notation $(w; P_i^\pm)_{i=1}^k$ as an alternative way to denote an arbitrary $F$-sequence $(\varphi, k, P, w)$.

We now construct an equivalence relation on the set of $F$-sequences, which we define to be the symmetric transitive closure of the relation given by the \emph{$F$-reduction rules}:
% To define the $F$-reductions, we first introduce some notation.
\begin{enumerate}
    \item[FR1.] Let $(\varphi, k, P, w)$ be an $F$-sequence with $k \geq 2$.
    If $w(i) = 0$ for some $i \in \{ 2, \dots, k \}$ then we define functions 
    \[ P_{\mathrm{FR1}} \from \{ 1, \dots, n \} \to \{ 1, \dots, k-1 \}, \quad w_{\mathrm{FR1}} \from \{ 1, \dots, k-1 \} \to \{ 0, \dots, m \} \] 
    by
    \[ \begin{array}{l@{\hspace{5em}}l}
        P_{\mathrm{FR1}}(x) := \begin{cases}
            P(x) & \text{if } x < i \\
            P(x-1) & \text{if } x \geq i.
        \end{cases} 
        &
        w_{\mathrm{FR1}}(j) := \begin{cases}
            w(j) & \text{if } j < i \\
            w(j+1) & \text{if } j \geq i.
        \end{cases}
    \end{array} \]
    The FR1-reduction rule asserts a relation $(\varphi, k, P, w) \sim_{\mathrm{FR1}} (\varphi, k-1, P_{\mathrm{FR1}}, w_{\mathrm{FR1}})$.

    In words, if an equivalence class $P_i \neq P_1$ has weight $0$ then we may merge the equivalence class $P_{i}$ into the equivalence class $P_{i-1}$.
    The weight of the newly-formed class $P_{i-1} \sqcup P_i$ is equal to the weight of $P_{i-1}$.
    \item[FR2.] Let $(\varphi, k, P, w)$ be an $F$-sequence.
    If $w(i) = m$ for some $i \in \{ 1, \dots, k \}$ then we define functions 
    \[ \varphi_{\mathrm{FR2}} \from \{ 1, \dots, n \} \to \{ -, + \}, \quad P_{\mathrm{FR2}} \from \{ 1, \dots, n \} \to \{ 1, \dots, i \}, \quad w_{\mathrm{FR2}} \from \{ 1, \dots, i \} \to \{ 0, \dots, m \} \] 
    by
    \[ \begin{array}{l@{\hspace{4em}}l@{\hspace{4em}}l}
        \varphi_{\mathrm{FR2}}(j) := \begin{cases}
            \varphi(j) & \text{if } P(j) < i \\
            + & \text{if } P(j) \geq i.
        \end{cases}
        &
        P_{\mathrm{FR2}}(j) := \begin{cases}
            P(j) & \text{if } P(j) < i \\
            i & \text{if } P(j) \geq i.
        \end{cases} 
        &
        w_{\mathrm{FR2}}(i) := w(i).
    \end{array} \]
    The FR2-reduction rule asserts a relation $(\varphi, k, P, w) \sim_{\mathrm{FR2}} (\varphi_{\mathrm{FR2}}, i, P_{\mathrm{FR2}}, w_{\mathrm{FR2}})$.
    
    In words, if some equivalence class $P_i$ has weight $m$ then we may merge the equivalence classes $P_i, P_{i+1}, \dots, P_k$ into a single equivalence class with weight $m$, such that every element in the newly-formed class $P_i \sqcup \dots \sqcup P_{k}$ has positive sign.
\end{enumerate}
We say two $F$-sequences are \emph{related via $F$-reductions} if there is a zig-zag of $F$-reduction rules connecting them.
\begin{definition}
    Let $(w; P_i^\pm)_{i=1}^k$ be an $F$-sequence.
    \begin{enumerate}
        \item We say $(w; P_i^\pm)_{i=1}^k$ is in \emph{reduced form} if:
        \begin{itemize}
            \item $w(i) \neq 0$ whenever $i \neq 0$,
            \item $w(i) \neq m$ whenever $i \neq k$, and
            \item if $w(k) = m$ then every element of $P_k$ has positive sign.
        \end{itemize}
        \item We say $(w; P_i^\pm)_{i=1}^k$ is in \emph{expanded form} if:
        \begin{itemize}
            \item each $P_i$ has exactly one element, and
            \item if $w(i) = m$ for some $i \in \{ 1, \dots, k \}$ then $w(j) = m$ for all $j \geq i$.
        \end{itemize}
    \end{enumerate}
\end{definition}
% For any $F$-sequence $(w; P_i^\pm)_{i=1}^k$, there is a sequence of $F$-reductions relating $(w; P_i^\pm)_{i=1}^k$ to an $F$-sequence in reduced form, and to an $F$-sequence in expanded form.
\begin{proposition} \label{unique-reduced-forms}
    Let $m, n \geq 0$ be non-negative.
    \begin{enumerate}
        \item Every $F$-sequence is related via $F$-reductions to a unique $F$-sequence in reduced form.
        \item Every $F$-sequence is related via $F$-reductions to an $F$-sequence in expanded form.
    \end{enumerate}
\end{proposition}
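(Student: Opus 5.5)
The plan is to orient the $F$-reduction rules as a rewriting system and to define an explicit normal form map $\mathrm{nf}$ from $F$-sequences to $F$-sequences in reduced form. Throughout, I read the first clause of ``reduced form'' as ``$w(i) \neq 0$ whenever $i \neq 1$'', since FR1 only applies to indices $i \geq 2$. Part (1) then follows from showing that $\mathrm{nf}$ is constant on $F$-reduction classes and restricts to the identity on reduced forms. Part (2) follows from running the rules backwards starting from a reduced form.

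For part (1), given $(\varphi, k, P, w)$, define $\mathrm{nf}$ in two steps.
\begin{enumerate}
\item If some weight equals $m$, let $i_0$ be the \emph{smallest} index with $w(i_0) = m$. Apply FR2 at $i_0$: merge $P_{i_0}, \dots, P_k$ into one class of weight $m$ and make all its signs positive.
\item In the result, merge every class $P_i$ with $i \geq 2$ and weight $0$ into its predecessor, iterating (this is repeated FR1).
\end{enumerate}
By construction $s$ is related to $\mathrm{nf}(s)$ via $F$-reductions. Also, $\mathrm{nf}(s)$ is in reduced form, and $\mathrm{nf}(s) = s$ when $s$ is already reduced. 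The main step is invariance: if $s \sim_{\mathrm{FR1}} s'$ or $s \sim_{\mathrm{FR2}} s'$, then $\mathrm{nf}(s) = \mathrm{nf}(s')$.

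I would check invariance by cases on the position of the reduction relative to $i_0$.
\begin{itemize}
\item \emph{FR1 at index $i$.} If $i > i_0$, the merged class is absorbed by the FR2 collapse anyway. If $i < i_0$, merging a weight-$0$ class into its predecessor commutes with both steps: it does not change which class is the first of weight $m$, and it is one of the merges done in step 2. The case $i = i_0$ forces $0 = m$, so I treat $m = 0$ separately: there every weight is $0 = m$, $i_0 = 1$, and both sides normalize to the single positive class $(0; 1^+, \dots, n^+)$.
\item \emph{FR2 at index $j$.} Necessarily $j \geq i_0$. The result has the same first weight-$m$ index $i_0$, and the same classes and signs before $i_0$. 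Collapsing at $i_0$ therefore yields the same sequence.
\end{itemize}
Since the relation is the symmetric transitive closure of these steps, $\mathrm{nf}$ is an invariant of the equivalence class. Hence two reduced forms related via $F$-reductions are equal, which proves uniqueness. The main obstacle I expect is bookkeeping in this case analysis, in particular the reindexing of $P$ and $w$ after FR1 and the degenerate edge case $m=0$. I would organize it by tracking the ordered list of (class, weight, signs) rather than the surjection $P$ directly.

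For part (2), by part (1) it suffices to start from a reduced form $(w; P_i^\pm)_{i=1}^k$. I split each class into singletons as follows.
\begin{itemize}
\item For $i < k$, or for $i = k$ with $w(k) \neq m$: list $P_i = \{x_1, \dots, x_r\}$ and replace it by consecutive singleton classes $(w(i); x_1^\pm), (0; x_2^\pm), \dots, (0; x_r^\pm)$. Applying FR1 repeatedly recovers the original class, so the two are related.
\item If $w(k) = m$: all signs in $P_k$ are positive, and I replace $P_k$ by singletons $(m; x_1^+), \dots, (m; x_r^+)$. A single application of FR2 at the first of these indices recovers $(m; P_k^+)$.
\end{itemize}
In the result every class is a singleton. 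Weight $m$ can only occur in the final block, because in reduced form $w(i) \neq m$ for $i < k$. So the result is in expanded form. The case $n = 0$ is trivial, since the unique $F$-sequence is vacuously both reduced and expanded.
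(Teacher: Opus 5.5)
Your proof is correct. Your $\mathrm{nf}$ is the paper's first algorithm: FR2 at the first weight-$m$ class, then FR1 merges. Your splitting for part (2) is the paper's second algorithm. The two arguments differ only in how uniqueness is proved.

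The paper defines a signature function $\Sigma\colon\{1,\dots,n\}\to\NN$. Here $\Sigma(j)$ is the first weight-$m$ index $\leq P(j)$ if one exists, otherwise the last nonzero-weight index $\leq P(j)$, otherwise $1$. The paper asserts that $\Sigma$, $w\circ\Sigma$, and the signs of those $j$ with $w(\Sigma(j))\neq m$ are invariant under $F$-reduction. It then notes that a reduced form has $P=\Sigma$, so these data determine it. You instead show that the normal-form map itself is unchanged by each elementary step.

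The paper's route gives a closed-form description of an $F$-reduction class: the reduced partition and weights are read off from $\Sigma$ and $w\circ\Sigma$. However, as literally stated, $\Sigma$ is not invariant under FR1, which shifts later indices down. For $m=5$, the sequence $\big((3;1^+),(0;2^+),(3;3^+)\big)$ has $\Sigma(3)=3$, while its FR1-reduct $\big((3;1^+,2^+),(3;3^+)\big)$ has $\Sigma(3)=2$. So the invariant must be read as the ordered partition into fibres of $\Sigma$. The paper also leaves the sign bookkeeping along zig-zags informal.

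Your route avoids both issues and treats partitions, weights and signs uniformly, at the cost of the case analysis. You should state explicitly that step 2 of $\mathrm{nf}$ does not depend on the order of the merges: each maximal run of weight-$0$ classes in positions $\geq 2$ is absorbed into the class just before it. Your case $i<i_0$ relies on this, and that case should also be read as covering sequences with no weight-$m$ class (take $i_0=\infty$).
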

\begin{proof}
    Fix an $F$-sequence $(w; P_i^\pm)_{i=1}^k$.
    We present an algorithm for finding a reduced form and an expanded form, respectively.
    % Pasted from the survey paper
    % \begin{minipage}{0.5\textwidth}
    %     \begin{algorithm}[H]
    %         \caption*{\textbf{Algorithm}: }
    %         \begin{algorithmic}[1]
    %             \Statex \textbf{Inputs:} The adjacency matrix of a graph $X$
    %             \Statex \phantom{\textbf{Inputs:}} A singular $(n-1)$-cube $u$
    %             \Statex \phantom{\textbf{Inputs:}} A singular $(n-1)$-cube $u'$
    %             \Statex \textbf{Output:} True if $u$ and $u'$ are length-1 homotopic, 
    %             \Statex \phantom{\textbf{Output:}} False otherwise
    %             % \Statex \hspace*{-\leftmargin}\hrulefill
    
    %             \vspace{0.3em}
    
    %             \For{each vertex $i$ in $\gcube{1}{n}$}
    %                 \If{$u(i)$ is not connected to $u'(i)$ in $X$}
    %                     \State \Return False 
    %                 \EndIf
    %             \EndFor 
    %             \State \Return True
    %         \end{algorithmic}
    %     \end{algorithm}
    % \end{minipage}
    \emph{Algorithm for finding a reduced form $F$-sequence}.
    \begin{enumerate}
        \item[] Input: An $F$-sequence $(w; P_i^\pm)_{i=1}^k$.
        \item[] Output: An $F$-sequence $(w'; P_i'^\pm)_{i=1}^{k'}$ in reduced form that is related to $(w; P_i^\pm)_{i=1}^k$ via $F$-reductions.
        \item Define $(w'; P_i'^\pm)_{i=1}^{k'} := (w; P_i^\pm)_{i=1}^k$.
        \item If there exists $i \in \{ 1, \dots, k \}$ such that $w'(i) = m$:
        \begin{enumerate}
            \item Let $i_0$ be the minimal such $i$.
            \item Use FR2 to merge $P'_{i_0}, P'_{i_0 + 1}, \dots, P'_k$ into a single equivalence class with weight $m$, such that every element has positive sign.
            \item Set $k' := i_0$.
        \end{enumerate}
        \item While there exists $i \in \{ 2, \dots, k \}$ such that $w'(i) = 0$:
        \begin{enumerate}
            \item Use FR1 to merge $P'_i$ and $P'_{i-1}$ with weight $w(i-1)$.
            \item Decrement $k'$ by 1.
        \end{enumerate}
        \item Return $(w'; P_i'^\pm)_{i=1}^{k'}$.
    \end{enumerate}
    This algorithm terminates because each iteration of the while loop decreases the number of elements $i \in \{ 2, \dots, k \}$ satisfying $w(i) = 0$.
    By construction, the resulting $F$-sequence is related to $(w; P_i^\pm)_{i=1}^k$ via $F$-reductions, and moreover, the resulting $F$-sequence is in reduced form.

    It remains only to prove that the output $F$-sequence is the unique $F$-sequence in reduced form which is related to $(w; P_i^\pm)_{i=1}^k$ via $F$-reductions.
    Observe that, to the $F$-sequence $(w; P_i^\pm)_{i=1}^k$, we may associate a function $\Sigma \from \{ 1, \dots, n \} \to \NN$ defined by
    \[ \Sigma(j) := \begin{cases}
        \min \{ i \in \NN \mid i \leq P(j) \text{ and } w(i) = m \} & \text{if } \{ i \mid i \leq P(j) \text{ and } w(i) = m \} \text{ is nonempty} \\
        \max \{ i \in \NN \mid i \leq P(j) \text{ and } w(i) \neq 0 \} & \text{if } \{ i \mid i \leq P(j) \text{ and } w(i) = m \} \text{ is empty} \\
        {} & \quad \text{and } \{ i \mid i \leq P(j) \text{ and } w(i) \neq 0 \} \text{ is nonempty} \\
        1 & \text{otherwise.}
    \end{cases} \]
    We refer to $\Sigma$ as the \emph{signature function} of the $F$-sequence $(w; P_i^\pm)_{i=1}^k$.
    The signature function is invariant under $F$-reduction; i.e.\ if two $F$-sequences are related by a sequence of $F$-reductions then their siganture functions are equal.
    We further observe that the signature function takes values in the set $\{ 1, \dots, k \}$, hence the composite $w \circ \Sigma \from \{ 1, \dots, n \} \to \{ 0, \dots, m \}$ is well-defined and invariant under $F$-reduction.
    Lastly, if an $F$-sequence is in reduced form then $\Sigma(j) = P(j)$ for all $j \in \{ 1, \dots, n \}$.

    Now, suppose $(w'; P_i'^\pm)_{i=1}^{k'}$ and $(w''; P_i''^\pm)_{i=1}^{k''}$ are two $F$-sequences in reduced form that are related to $(w; P_i^\pm)_{i=1}^k$ via $F$-reductions.
    The length of any $F$-sequence in reduced form is given by the maximum value of its signature function $\max_j \Sigma(j)$, hence $k' = k''$.
    Moreover, we have that $P'(j) = \Sigma(j) = P''(j)$, hence we have an equality of partition functions $P' = P''$.
    To show that the weight functions are equal, fix $i \in \{ 1, \dots, k' \}$ and let $j \in \{ 1, \dots, n \}$ be such that $P'(j) = P''(j) = i$ (such a $j$ exists since $P' = P''$ is surjective).
    We use the fact that $P' = P'' = \Sigma$ and that $w \circ \Sigma$ is invariant under $F$-reduction to calculate
    \[ w'(i) = w'(P'(j)) = w'(\Sigma(j)) = w''(\Sigma(j)) = w''(P''(j)) = w''(i), \]
    hence $w' = w''$.
    Concerning the equality of sign functions, let $\varphi$, $\varphi'$ and $\varphi''$ denote the sign functions $\{ 1, \dots, n \} \to \{ -, + \}$ of $(w; P_i^\pm)_{i=1}^k$, $(w'; P_i'^\pm)_{i=1}^{k'}$ and $(w''; P_i''^\pm)_{i=1}^{k''}$, respectively.
    If $\varphi'(j) = \varphi(j)$ and $\varphi''(j) = \varphi(j)$ then clearly $\varphi'(j) = \varphi''(j)$.
    If $\varphi(j) \neq \varphi'(j)$ then it must be that $w(\Sigma(j)) = m$, since the sign of an element $j$ can only change by an FR2-reduction, which means the element $j$ must have been merged into an equivalence class of weight $m$, and $w(\Sigma(j)) = w(P'(j)) = m$.
    Similarly, if $\varphi(j) \neq \varphi''(j)$ then $w(\Sigma(j)) = m$.
    In either case, if $w(\Sigma(j)) = m$ then $\varphi'(j) = \varphi''(j) = +$ since both $\varphi'$ and $\varphi''$ are the sign functions of an $F$-sequence in reduced form.
    With this, we may deduce that $\varphi' = \varphi''$, therefore $(w'; P_i'^\pm)_{i=1}^{k'} = (w''; P_i''^\pm)_{i=1}^{k''}$.

    % To find the expanded form of an $F$-sequence, by part (1), we may assume the $F$-sequence is in reduced form.
    
    \emph{Algorithm for finding an expanded form $F$-sequence}.
    \begin{enumerate}
        \item[] Input: An $F$-sequence $(w; P_i^\pm)_{i=1}^k$.
        \item[] Output: An $F$-sequence $(w'; P_i'^\pm)_{i=1}^{k'}$ in expanded form that is related to $(w; P_i^\pm)_{i=1}^k$ by $F$-reductions.
        \item Define $(w'; P_i'^\pm)_{i=1}^{k'}$ to be the reduced form $F$-sequence outputted the previous algorithm on $(w; P_i^\pm)_{i=1}^k$.
        \item If $w(k') = m$:
        \begin{enumerate}
            \item Enumerate the elements of $P'_{k'}$ as $p'_{k', 1}, \dots, p'_{k', \ell(k')}$.
            \item Use FR2 in reverse to split $P'_{k'}$ into classes
            \[ P'_{k'} := \{ p'_{k', 1} \}, \quad P'_{k' + 1} := \{ p'_{k', 2} \}, \quad\dots, \quad P'_{k' + \ell(k') - 1} := \{ p'_{k', \ell(k')} \} \]
            all with weight $m$.
            \item Set $k' := k' + \ell(k') - 1$.
        \end{enumerate}
        \item While there exists $i \in \{ 1, \dots, k' \}$ such that $P'_i$ has more than one element:
        \begin{enumerate}
            \item Choose an element $j \in P'_i$.
            \item Use FR1 in reverse to split $P'_{i}$ into classes
            \[ (P'_{i})_{\mathrm{new}} := (P'_{i})_{\mathrm{old}} - \{ j \} \quad\text{and}\quad (P'_{i+1})_{\mathrm{new}} := \{ j \}, \]
            where $(P'_{i+1})_{\mathrm{new}}$ has weight 0.
            Note this shifts the partition and weight function so that 
            \[ \begin{array}{c c c}
                (P'_{i+2})_{\mathrm{new}} = (P'_{i+1})_{\mathrm{old}} &\text{and}& w'_{\mathrm{new}}(i+2) = w'_{\mathrm{old}}(i+1) \\
                (P'_{i+3})_{\mathrm{new}} = (P'_{i+2})_{\mathrm{old}} &\text{and}& w'_{\mathrm{new}}(i+3) = w'_{\mathrm{old}}(i+2) \\
                \vdots & {} & \vdots \\
                (P'_{k' + 1})_{\mathrm{new}} = (P'_{k'})_{\mathrm{old}} &\text{and}& w'_{\mathrm{new}}(k'+1) = w'_{\mathrm{old}}(k')
            \end{array} \]
            \item Increment $k'$ by 1.
        \end{enumerate}
        \item Return $(w'; P_i'^\pm)_{i=1}^{k'}$
    \end{enumerate} 
    This algorithm terminates because each iteration of the while loop decreases the number of elements in the equivalence class $P'_i$.
    By construction, the output $F$-sequence has exactly one element in each equivalence class $P'_i$, and is related to $(w; P_i^\pm)_{i=1}^k$ via $F$-reductions.
    If some $P'_i$ has weight $m$ then the class $P'_i$ was created in Step 2 of the algorithm, hence $P'_{i+1}, \dots, P'_{k'}$ all have weight $m$.
    Thus, the output $F$-sequence is in expanded form as desired.
\end{proof}
Given an $F$-sequence $(w; P_i^\pm)_{i=1}^k$, the \emph{reduced form of $(w; P_i^\pm)_{i=1}^k$} is the unique $F$-sequence in reduced form that is related to $(w; P_i^\pm)_{i=1}^k$ via $F$-reductions.
An \emph{expanded form of $(w; P_i^\pm)_{i=1}^k$} is an $F$-sequence in expanded form that is related to $(w; P_i^\pm)_{i=1}^k$ via $F$-reductions.
\begin{example}
    Consider the case of $m = 8$ and $n = 3$.
    The reduced form of the $F$-sequence $\big( (7; 1^+, 3^-), (0; 2^-) \big)$ is $\big( (7; 1^+, 2^-, 3^-) \big)$.
    Each of the 6 $F$-sequences
    \[ \begin{array}{c c}
        \big( (7; 1^+), (0; 2^-), (0; 3^-) \big) & \big( (7; 1^+), (0; 3^-), (0; 2^-) \big) \\
        \big( (7; 2^-), (0; 3^-), (0; 1^+) \big) & \big( (7; 2^-), (0; 1^-), (0; 3^-) \big) \\
        \big( (7; 3^-), (0; 1^+), (0; 2^-) \big) & \big( (7; 3^-), (0; 2^-), (0; 1^+) \big)
    \end{array} \] 
    is an expanded form of $\big( (7; 1^+, 3^-), (0; 2^-) \big)$.
    
    If we instead consider the $F$-sequence $\big( (8; 1^+, 3^-), (0; 2^-) \big)$ then any choice of sign function $\big( (8; 1^\pm, 3^\pm), (0; 2^\pm) \big)$ is related via $F$-reductions.
    There are 8 possible sign functions on the set $\{ 1, 2, 3 \}$, hence there are $6 \times 8 = 48$ distinct expanded forms of $\big( (8; 1^+, 3^-), (0; 2^-) \big)$.

    The $F$-sequence $\big( (4; 3^+), (1; 1^+), (4; 2^-) \big)$ is in both reduced form and expanded form.
    Moreover, the only $F$-sequence related to $\big( (4; 3^+), (1; 1^+), (4; 2^-) \big)$ via $F$-reductions is itself.
\end{example}

\begin{definition}
    Let $m, n \geq 0$ be non-negative.
    \begin{enumerate}
        \item Define a graph $F(m, n)$ as follows:
        \begin{itemize}
            \item the vertex set of $F(m, n)$ is the set of $F$-sequences modulo $F$-reduction.
            \item two vertices $(w_0; (P_0)_i^\pm)_{i=1}^{k_0}$ and $(w_1; (P_1)_i^\pm)_{i=1}^{k_1}$ are connected by an edge if there are expanded forms $(w'_0; (P'_0)_i^\pm)_{i=1}^{n}$ and $(w'_1; (P'_1)_i^\pm)_{i=1}^{n}$ such that:
            \begin{itemize}
                \item the sign functions of $(w'_0; (P'_0)_i^\pm)_{i=1}^{n}$ and $(w'_1; (P'_1)_i^\pm)_{i=1}^{n}$ are equal;
                \item the partition functions $P'_0 = P'_1$ are equal; and
                \item there exists $i \in \{ 1, \dots, n \}$ such that $|w'_0(i) - w'_1(i)| \leq 1$ and $w'_0(j) = w'_1(j)$ for all $j \neq i$ in $\{ 1, \dots, n \}$.
            \end{itemize}
        \end{itemize}
        \item The \emph{boundary of $F(m, n)$} is the induced subgraph $\bd F(m, n) \subseteq F(m, n)$ consisting of those vertices $(w; P_i^\pm)_{i=1}^{k}$ whose reduced form satisfies $w(1) = 0$.
        Our convention is that if $n = 0$ then $\bd F(m, 0) = \varnothing$.
        \item For $j = 1, \dots, n$ and $\eps = 0, 1$, we define a graph map $\face{}{j, \eps} \from F(m, n-1) \to F(m, n)$ by the assignment:
        \[ \face{}{j, \eps}\left( (w; P_i^\pm)_{i=1}^k \right) := (0; j^\eps) \cdot (w; P_i^\pm)_{i=1}^k
        % \begin{cases}
        %     (0; j^-) \cdot (w; P_i^\pm)_{i=1}^k & \text{if } \eps = 0 \\
        %     (0; j^+) \cdot (w; P_i^\pm)_{i=1}^k & \text{if } \eps = 1.
        % \end{cases} 
        \]
        Here, the notation $(0; j^\eps) \cdot (w; P_i^\pm)_{i=1}^k$ denotes the $F$-sequence obtained by inserting an equivalence class $P_1 := \{ j \}$ with weight 0; the sign of $j$ is $-$ if $\eps = 0$ and $+$ if $\eps = 1$.
        Note this alters the sign function $\varphi$, the partition function $P$, and the weight function $w$ so that:
        \begin{align*}
            \varphi_{\mathrm{new}}(k) &:= \begin{cases}
                \varphi_{\mathrm{old}}(k) & \text{if } k < j \\
                - & \text{if } k = j \text{ and } \eps = 0 \\
                + & \text{if } k = j \text{ and } \eps = 1 \\
                \varphi_{\mathrm{old}}(k-1) & \text{if } k > j.
            \end{cases} \\
            P_{\mathrm{new}}(k) &:= \begin{cases}
                1 & \text{if } k = j \\
                P_{\mathrm{old}}(k) + 1 & \text{otherwise.}
            \end{cases} \\
            w_{\mathrm{new}}(i) &:= \begin{cases}
                0 & \text{if } i = 0 \\
                w_{\mathrm{old}}(i-1) & \text{otherwise.}
            \end{cases}
        \end{align*}
        In particular, the length of the resulting $F$-sequence is $k + 1$.
    \end{enumerate}
\end{definition}
\begin{proposition}
    For $m \geq 0$ and $n \geq 1$, the function $\face{}{j, \eps} \from F(m, n-1) \to F(m, n)$ is a well-defined graph map.
\end{proposition}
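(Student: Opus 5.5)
To show that $\face{}{j,\eps}$ is a well-defined graph map, I would prove two things: that prepending $(0; j^\eps)$ respects $F$-reduction, and that it sends adjacent vertices to adjacent vertices.

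\emph{Well-definedness on vertices.} Since the equivalence relation is generated by single FR1 and FR2 steps, it suffices to check the following. If $(w;P_i^\pm)_{i=1}^k \sim (w';P_i'^\pm)_{i=1}^{k'}$ by one FR1 or FR2 step, then $(0;j^\eps)\cdot(w;P_i^\pm)$ and $(0;j^\eps)\cdot(w';P_i'^\pm)$ are related by the corresponding step with all indices shifted by one.

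For FR1, the hypothesis is $w(i)=0$ with $i\ge 2$. In the new sequence the same class sits at index $i+1\ge 3\ge 2$ with weight $0$, so FR1 applies there. Merging it into the class at index $i$ gives exactly $(0;j^\eps)\cdot(\text{FR1-reduct})$.

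For FR2, the hypothesis is $w(i)=m$. In the new sequence this class sits at index $i+1$, and FR2 applies there. It merges the classes at indices $i+1,\dots,k+1$ and sets their signs to $+$, while leaving the new first class $\{j\}$ and its sign $\eps$ untouched.

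Both checks use only the bookkeeping formulas for $\varphi_{\mathrm{new}}, P_{\mathrm{new}}, w_{\mathrm{new}}$, together with the reindexing of $\{1,\dots,n-1\}$ into $\{1,\dots,n\}\setminus\{j\}$. This reindexing is order-preserving and commutes with the reduction formulas, since the reductions never look at element labels, only at classes and weights.

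\emph{Preservation of edges.} Suppose two vertices of $F(m,n-1)$ are adjacent. By definition there are expanded forms $(w'_0;(P')_i^\pm)_{i=1}^{n-1}$ and $(w'_1;(P')_i^\pm)_{i=1}^{n-1}$ with the same sign function and the same partition. Their weights differ by at most $1$ in exactly one coordinate $i$.

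Prepend $(0;j^\eps)$ to both. The results have singleton classes, and they share the same sign function and the same partition. Their weights are $0$ in position $1$ and agree with the old weights shifted by one elsewhere, so they still differ by at most $1$ in the single position $i+1$.

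The remaining point is that these prepended sequences are in expanded form. Each class is a singleton. The weight-$m$ condition holds in positions $\ge 2$ because it held before. Position $1$ has weight $0$, which is different from $m$ when $m\ge 1$.

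By the vertex step, the prepended sequences represent $\face{}{j,\eps}$ of the two original vertices. So they witness an edge in $F(m,n)$, and $\face{}{j,\eps}$ preserves adjacency.

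\emph{Degenerate case $m=0$.} I expect the main care to be needed here. When $m=0$, the new first class has weight $0=m$, so the prepended sequence may violate the expanded-form condition. I would handle this separately. With $m=0$, every weight equals $m$, so by FR2 every $F$-sequence reduces to $((0;1^+,\dots,n^+))$. Hence $F(0,n)$ is a single vertex, and $\face{}{j,\eps}$ trivially is a graph map.

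Apart from this degenerate case, I expect no genuine obstacle: the vertex step is the only part requiring attention, and that attention is to index bookkeeping (the shift $i\mapsto i+1$ and the fact that FR1 needs the index $\ge 2$, which is automatic after prepending).
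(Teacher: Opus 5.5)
Your proof is correct and follows the paper's route. The paper argues that prepending $(0;j^\varepsilon)$ commutes with merging classes, with indices shifted by one, and deduces both invariance under $F$-reduction and preservation of edges; you spell out the FR1/FR2 bookkeeping (including the sign change in FR2) and the edge check via expanded forms more carefully. One small remark: the separate $m=0$ case is unnecessary. When $m=0$ every weight equals $m$, so the expanded-form condition holds automatically for the prepended sequences, and your claim that it ``may be violated'' is inaccurate. Your alternative argument that $F(0,n)$ is a single vertex is nonetheless also correct.
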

\begin{proof}
    Let $(w; P_{i}^\pm)_{i=1}^k$ be an $F$-sequence in $F(m, n-1)$.
    For any subset $\{ i_1, \dots, i_\ell \} \in \{ 1, \dots, k \}$, if we merge the classes $P_{i_1}, \dots, P_{i_\ell}$ in $(w; P_i^\pm)_{i=1}^k$ then apply $\face{}{j, \eps}$, this is the same as applying $\face{}{j, \eps}$ first then merging the classes $P_{i_1+1}, \dots, P_{i_\ell + 1}$.
    From this, we deduce that $\face{}{j, \eps}$ is invariant under $F$-reduction, and that it preserves edges.
\end{proof}
\begin{example} \label{ex:Gamma-ncube-012}
    We fix $m = 2$ and depict the graphs $F(2, 0)$, $F(2, 1)$ and $F(2, 2)$.
    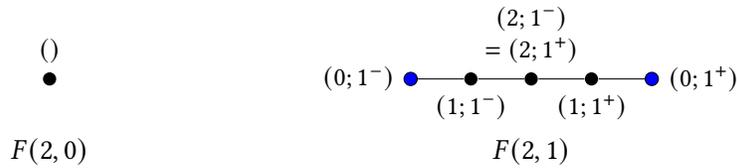
\begin{figure}[H]
        \centering
        \begin{tikzpicture}[scale=1.6]
            \node[vertex, label={$()$}] (F20) {};
            \path (F20) -- +(0, -0.6) node[font=\large] {$F(2, 0)$};
            \path (F20) -- +(3, 0) node[vertex, draw, fill=vertex2, label={west:$(0; 1^-)$}] (F21a) {};
            \path (F21a) -- +(0.5, 0) node[vertex, label={south:$(1; 1^-)$}] (F21b) {};
            \path (F21b) -- +(0.5, 0) node[vertex, label={[align=center]north:$(2; 1^-)$ \\ $= (2; 1^+)$}] (F21c) {};
            \path (F21c) -- +(0.5, 0) node[vertex, label={south:$(1; 1^+)$}] (F21d) {};
            \path (F21d) -- +(0.5, 0) node[vertex, draw, fill=vertex2, label={east:$(0; 1^+)$}] (F21e) {};
            \draw
                (F21a) -- (F21b)
                (F21b) -- (F21c) 
                (F21c) -- (F21d)
                (F21d) -- (F21e);
            \path (F21c) -- +(0, -0.6) node[font=\large] {$F(2, 1)$};
        \end{tikzpicture}
        \caption{The graphs $F(2, 0)$ and $F(2, 1)$.
        The blue vertices denote the boundary $\bd F(2, 1)$.} \label{fig:Gamma-ncube-01}
    \end{figure}
    \begin{figure}[H]
        \centering
        \begin{tikzpicture}
            % Top and right face, layer 0
            \node[vertex, draw, fill=vertex2] (TL0) {};
            \draw (TL0) -- +(1, 0) node[vertex, draw, fill=vertex2] (T0a) {};
            \draw (T0a) -- +(1, 0) node[vertex, draw, fill=vertex2] (T0b) {};
            \draw (T0b) -- +(1, 0) node[vertex, draw, fill=vertex2] (T0c) {};
            \draw (T0c) -- +(1, 0) node[vertex, draw, fill=vertex2, label={east:$(0; 1^+, 2^-)$}] (TR0) {};
            \draw (TR0) -- +(0, -1) node[vertex, draw, fill=vertex2, label={east:$\big( (0; 1^+), (1; 2^-) \big)$}] (R0a) {};
            \draw (R0a) -- +(0, -1) node[vertex, draw, fill=vertex2, label={east:$\big( (0; 1^+), (2; 2^+) \big)$}] (R0b) {};
            \draw (R0b) -- +(0, -1) node[vertex, draw, fill=vertex2 , label={east:$\big( (0; 1^-), (1;2^+) \big)$}] (R0c) {};
            % Left and bottom face, layer 0
            \draw (TL0) -- +(0, -1) node[vertex, draw, fill=vertex2] (L0a) {};
            \draw (L0a) -- +(0, -1) node[vertex, draw, fill=vertex2] (L0b) {};
            \draw (L0b) -- +(0, -1) node[vertex, draw, fill=vertex2] (L0c) {};
            \draw (L0c) -- +(0, -1) node[vertex, draw, fill=vertex2] (BL0) {};
            \draw (BL0) -- +(1, 0) node[vertex, draw, fill=vertex2] (B0a) {};
            \draw (B0a) -- +(1, 0) node[vertex, draw, fill=vertex2] (B0b) {};
            \draw (B0b) -- +(1, 0) node[vertex, draw, fill=vertex2] (B0c) {};
            \draw (B0c) -- +(1, 0) node[vertex, draw, fill=vertex2, label={east:$(0; 1^+, 2^+)$}] (BR0) {};
            % Layer 1, corners and centers
            \draw (TL0) -- +(0.6, -0.6) node[vertex] (TL1) {};
            \draw (TR0) -- +(-0.6, -0.6) node[vertex] (TR1) {};
            \draw (BL0) -- +(0.6, 0.6) node[vertex] (BL1) {};
            \draw (BR0) -- +(-0.6, 0.6) node[vertex] (BR1) {};
            \draw (T0b) -- +(0, -0.6) node[vertex] (T1b) {};
            \draw (L0b) -- +(0.6, 0) node[vertex] (L1b) {};
            \draw (R0b) -- +(-0.6, 0) node[vertex] (R1b) {};
            \draw (B0b) -- +(0, 0.6) node[vertex] (B1b) {};
            % Layer 1, midpoints
            \draw (TL1) -- node[vertex] (T1a) {} (T1b);
            \draw (T1b) -- node[vertex] (T1c) {} (TR1);
            \draw (TL1) -- node[vertex] (L1a) {} (L1b);
            \draw (L1b) -- node[vertex] (L1c) {} (BL1);
            \draw (TR1) -- node[vertex] (R1a) {} (R1b);
            \draw (R1b) -- node[vertex] (R1c) {} (BR1);
            \draw (BL1) -- node[vertex] (B1a) {} (B1b);
            \draw (B1b) -- node[vertex] (B1c) {} (BR1);
            % Center point
            \draw (L1b) -- node[vertex] (center) {} (R1b);

            % Edges
            \draw
                (BR0) -- (R0c) % Missing edge on the border
                (T0a) -- (T1a) % Missing edges between layers
                (T0c) -- (T1c)
                (L0a) -- (L1a)
                (L0c) -- (L1c)
                (R0a) -- (R1a)
                (R0c) -- (R1c)
                (B0a) -- (B1a)
                (B0c) -- (B1c)
                (TL1) -- (BR1) % Edges *through* center
                (T1b) -- (B1b)
                (TR1) -- (BL1)
                (T1a) -- (center)
                (T1c) -- (center)
                (L1a) -- (center)
                (L1c) -- (center)
                (R1a) -- (center)
                (R1c) -- (center)
                (B1a) -- (center)
                (B1c) -- (center);
            % Redraw center
            \draw (L1b) -- node[vertex, label={[fill=white, fill opacity=0.6, yshift=0.2ex]$(2; 1^+, 2^+)$}] {} (R1b);
            \draw (L1b) -- node[vertex, label={[yshift=0.2ex]$(2; 1^+, 2^+)$}] {} (R1b);

            \draw[dashed] (BL1) -- +(-1.8, -0.4) node[fill=white] {$(1; 1^-, 2^+)$};
            \draw[dashed] (B1a) -- +(0, -1.4) node[fill=white] {$\big( (1; 2^+), (1; 1^-) \big)$};
            \draw[dashed] (L1c) -- +(-2.1, 0.4) node[fill=white] {$\big( (1; 1^-), (1; 2^+) \big)$};
        \end{tikzpicture}
        \caption{The graph $F(2, 2)$, with a subset of vertices labelled by their reduced form. 
        The blue vertices denote the boundary $\bd F(2, 2)$.} \label{fig:Gamma-ncube-2}
    \end{figure}
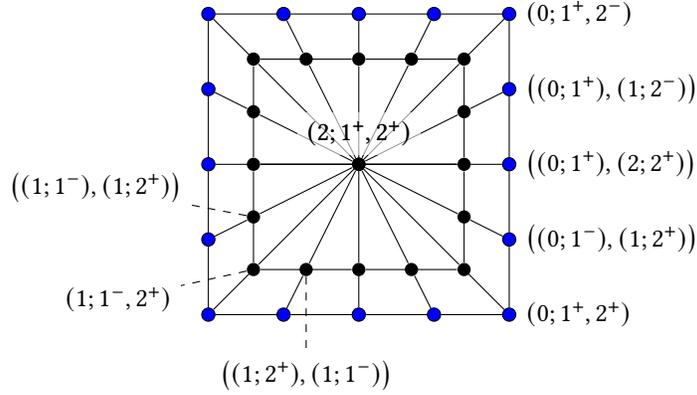
\end{example}
\begin{proposition}
    Let $m \geq 0$ be non-negative.
    The assignment $n \mapsto F(m, n)$ and $(i, \eps) \mapsto \face{}{i, \eps}$ defines a functor
    \[ F(m, \uvar) \from \boxcati \to \Graph. \]
\end{proposition}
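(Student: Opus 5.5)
Functoriality of $F(m, \uvar)$ amounts to one family of relations. The category $\boxcati$ is generated by the face maps $\face{}{j,\eps}$, subject only to the cubical identities
\[ \face{}{j,\eps'}\face{}{i,\eps} = \face{}{i+1,\eps}\face{}{j,\eps'} \qquad (j \leq i). \]
So, granting the preceding proposition that each $\face{}{j,\eps} \from F(m,n-1) \to F(m,n)$ is a well-defined graph map, I need to check that the corresponding composites of graph maps agree. Identities are sent to identities automatically. Functoriality then follows from the presentation of $\boxcati$ by generators and relations.

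To check the relation, I will compute both composites on a representative $F$-sequence $(w; P_i^\pm)_{i=1}^k$ of a vertex of $F(m,n-2)$. This is legitimate because both composites are well defined on reduction classes. The operation $(0; j^\eps)\cdot(\uvar)$ does three things. First, it renumbers the coordinates $\{1,\dots,n-1\}$ into $\{1,\dots,n\}\setminus\{j\}$ by the unique order-preserving injection, which is the map $\face{}{j}$ on indices. Second, it prepends a new class $\{j\}$ of weight $0$ with sign determined by $\eps$. Third, it shifts the old classes up by one.

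I will write the precomposition with the face maps in this form, so that the composite of graph maps is contravariant in the way the paper's right-action convention requires. Applying first $\face{}{i,\eps}$ and then $\face{}{j,\eps'}$ yields an $F$-sequence of length $k+2$ whose first two classes are singletons of weight $0$. In one order these are $\{j\}^{\eps'}$ followed by $\{\face{}{j}(i)\}^{\eps}$. In the other order they are $\{i+1\}^{\eps}$ followed by $\{\face{}{i+1}(j)\}^{\eps'}$. The remaining classes are the images of the $P_l$ under the composite index injections $\face{}{j}\face{}{i} = \face{}{i+1}\face{}{j}$ for $j \leq i$, which agree as set maps. These two injections also account for $\face{}{j}(i) = i+1$ and $\face{}{i+1}(j) = j$. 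So the two composites differ only in the order of the two leading weight-$0$ singleton classes.

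The order of these two classes does not matter up to $F$-reduction. Applying FR1 to the second class (weight $0$, index $2$) merges it into the first class, giving a single leading class $\{j, i+1\}$ of weight $0$ with the same signs. The result is the same for both orders, since the order inside a class is irrelevant. Hence the two composites are related by $F$-reductions and define the same vertex.

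The main care point is index bookkeeping: matching the paper's formulas for $\varphi_{\mathrm{new}}$ and $P_{\mathrm{new}}$ with the cubical identity in the right variance, and confirming the convention in which $\face{}{j,\eps'}$ acts on the already-inserted coordinate. I would fix this by checking a small case, such as $n = 2$ with $j = i = 1$.
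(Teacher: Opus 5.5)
Your proposal is correct and follows the paper's own proof. Both reduce functoriality to the single cubical identity among face maps. Both then observe that the two composites differ only in the order of two leading weight-$0$ singleton classes, which FR1 merges into the same class.

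Your version also writes out the index bookkeeping that the paper leaves implicit: the identity $\partial_j\partial_i=\partial_{i+1}\partial_j$ (for $j\le i$) on coordinate indices, and the renumbering that the paper's formula for $P_{\mathrm{new}}$ omits.
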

\begin{proof}
    It remains only to verify the identity $\partial_{j', \varepsilon'} \partial_{j, \varepsilon} = \partial_{j, \varepsilon} \partial_{j'-1, \varepsilon'}$ for $j \leq j'$.
    % That is, for every $F$-sequence $(w; P_i^\pm)_{i=1}^k$, we have $(0; j^{\eps'}) \cdot (0; i^{\eps}) \cdot (w; P_i^\pm)_{i=1}^k = (0; i+1^{\pm}) \cdot (0; )$
    By the FR1-reduction rule, we have an equality
    $(0; j'^{\eps'}) \cdot (0; j^{\eps}) \cdot (w; P_i^\pm)_{i=1}^k = (0; j^\eps) \cdot (0; (j'-1)^{\eps'}) \cdot (w; P_i^\pm)_{i=1}^k$
    for every $F$-sequence $(w; P_i^\pm)_{i=1}^k$.
\end{proof}
\begin{theorem} \label{F-explicit-isomorphism}
    For $m ,n \geq 0$, there exists an isomorphism
    \[ \Phi^n \from \Gn[m]{\cube{n}} \cong F(m, n) \]
    which is natural in the variable $n \in \boxcati$.
\end{theorem}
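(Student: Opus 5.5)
We will build $\Phi^n$ by induction on $n$, mirroring the inductive (Reedy) construction of $F$, and check naturality against the face maps at each stage. For $n=0$, $\Gn[m]{\cube{0}} = \reali[m]{\cube{0}} = I_0$ and $F(m,0)$ has the single vertex given by the empty $F$-sequence, so $\Phi^0$ is forced. For the inductive step, suppose isomorphisms $\Phi^k$ for $k<n$ have been constructed, natural in $k \in \boxcatin[n-1]$. Since $\reali[m]{\uvar}$ and $F_!$ preserve colimits, $\Gn[m]{\bd\cube{n}}$ is the colimit over the faces of $\cube{n}$ of the graphs $\Gn[m]{\cube{n-1}}$, and the $\Phi^k$ assemble into a map $\Gn[m]{\bd\cube{n}} \to F(m,n)$ whose image lies in $\bd F(m,n)$ (first weight $0$). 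By \cref{Gn-cube-n-is-cyl}, $\Gn[m]{\cube{n}}$ is the length-$m$ cone $\Cyl{m}(\id,\bang)$ on $\Gn[m]{\bd\cube{n}}$. So the key intermediate claims are:
\begin{enumerate}
  \item $\bd F(m,n)$ is the colimit of the face diagram, i.e.\ the induced map $\Gn[m]{\bd\cube{n}} \to \bd F(m,n)$ is an isomorphism;
  \item $F(m,n)$ is itself a length-$m$ cone on $\bd F(m,n)$, with cone coordinate given by the weight of the first class.
\end{enumerate}

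For (2), we define the candidate map $\bd F(m,n) \gtimes I_m \to F(m,n)$ by sending $\big((0;P_1^\pm),(w;P_i^\pm)_{i\ge 2}\big)$ and $t$ to the sequence with first weight replaced by $t$. In reduced form, a boundary vertex has first weight $0$ and the remaining data free, so this map is well defined: FR1 merges are compatible, since the first class is never merged into an earlier one. At $t=m$, FR2 collapses everything to the single vertex $(m;1^+,\dots,n^+)$, which is the cone point. Bijectivity off the cone point then follows from the uniqueness of reduced forms in \cref{unique-reduced-forms}: a reduced sequence with first weight $t\in\{1,\dots,m-1\}$ corresponds to a unique boundary vertex, obtained by setting the first weight to $0$ and applying FR1 to merge classes. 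For edges, we compare the expanded-form edge description with the box-product edges. Changing the first weight by one gives the $I_m$-direction, and changing another weight gives edges of $\bd F(m,n)$ at a fixed level.

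For (1), we would argue that every boundary vertex has an expanded form beginning with $(0;j^\eps)$, so it lies in the image of $\face{}{j,\eps}$. Two preimages are identified precisely when the two leading singleton classes can be swapped by FR1, which corresponds to the cubical identity relating $\face{}{j',\eps'}\face{}{j,\eps}$ and $\face{}{j,\eps}\face{}{j'-1,\eps'}$. This is the usual description of the boundary as a colimit of its faces, glued along codimension-two faces. Naturality in $\boxcati$ holds because $\Phi^n$ restricted to the boundary is, by construction, induced from the $\Phi^{n-1}$ via the face maps.

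The main obstacle is verifying edges. Graph colimits are not computed vertexwise on edges, so we must check that every edge of $F(m,n)$ (respectively of $\bd F(m,n)$) comes from an edge in a single cylinder level or a single face. This requires careful handling of the existential quantifier over expanded forms in the definition of edges, especially near weight $m$, where FR2 changes signs. We would first reduce to pairs of vertices in reduced form and show that an edge changes one weight by one while fixing the partition, after a common refinement. This would also be checked against the figures for $m=2$, $n\le 2$.
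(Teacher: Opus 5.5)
Your route is the paper's own: induct on $n$, identify $\bd F(m,n)$ with the latching object (\cref{Fmn-latching-object-iso-boundary}), show that $F(m,n)$ is the length-$m$ cone on $\bd F(m,n)$ with the first weight as cone coordinate (\cref{Fmn-pushout}), and glue using \cref{Gn-cube-n-is-cyl}.

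The gap is the edge check you postpone in claim (2). It cannot be done with the edge relation as defined, because the witnesses must be \emph{expanded} forms, and in an expanded form a weight $m$ may only be followed by weights $m$.

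Take $m=n=2$ and $b=(0;1^-,2^+)\in\bd F(2,2)$.
\begin{itemize}
\item Your map sends $(b,2)$ to $(2;1^+,2^+)$. The first weight is invariant under $F$-reduction, so every expanded form of this vertex has weights $(2,2)$.
\item It sends $(b,1)$ to $(1;1^-,2^+)$. By uniqueness of reduced forms, its only expanded forms are $\big((1;1^-),(0;2^+)\big)$ and $\big((1;2^+),(0;1^-)\big)$, both with weights $(1,0)$.
\end{itemize}
These weights differ in two entries, so the box-product edge $(b,1)\sim(b,2)$ is not sent to an edge.

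More generally, for $n\geq 2$ the definition makes $(m;1^+,\dots,n^+)$ adjacent only to the $2n$ vertices $\big((m-1;a^{\eps}),(m;\text{rest}^+)\big)$. In the cone $\Gn{\cube{n}}$, by contrast, the cone point is adjacent to every vertex at level $m-1$. So ``changing the first weight by one gives the $I_m$-direction'' fails exactly at the step $m-1\to m$.

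For $m=n=2$ no isomorphism can exist at all with the literal definition. The cone point of $\Gn[2]{\cube{2}}$ has $16$ neighbours, while no vertex of $F(2,2)$ as defined has more than $4$.

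What is missing is a corrected edge relation, not a finer verification. Allow as witnesses any representatives whose classes are all singletons, dropping the requirement that a weight $m$ be followed only by $m$'s. For example, $\big((2;1^-),(0;2^+)\big)$ represents the cone point and differs from $\big((1;1^-),(0;2^+)\big)$ only in the first weight.

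With this relation your outline works:
\begin{itemize}
\item A change in the first weight is a cone edge $(b,t)\sim(b,t\pm 1)$; one endpoint is the cone point when a weight equals $m$.
\item A change in a later weight, with first weight $t<m$, is an edge $(b,t)\sim(b',t)$. Here $b$ and $b'$ are adjacent inside the face indexed by the common leading singleton.
\item The same leading-singleton observation gives the edge part of claim (1).
\end{itemize}

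This corrected graph is the one drawn in the paper's figure of $F(2,2)$, where the centre is joined to all sixteen vertices of the inner ring. The paper's proof of \cref{Fmn-pushout} only asserts that $f$ preserves edges, so it has the same defect as your sketch.
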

Before proving \cref{F-explicit-isomorphism}, we prove the following lemmas.
\begin{lemma} \label{Fmn-pushout}
    For $m, n \geq 0$, there exists a graph map $\bd F(m, n) \gtimes I_m \to F(m, n)$ such that the diagram
    \[ \begin{tikzcd}
        {} & \bd F(m, n) \ar[r] \ar[d, "i_m"'] \ar[rd, phantom, "\potick" very near end] & I_0 \ar[d, "{(m; 1^+, \dots, n^+)}"] \\
        \bd F(m, n) \ar[r, "i_0"] \ar[rr, bend right=18, hook] & \bd F(m, n) \gtimes I_m \ar[r, "f"] & F(m, n)
    \end{tikzcd} \]
    is commutative and the right square is a pushout.
\end{lemma}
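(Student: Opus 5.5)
We write the map down explicitly and then check the pushout directly on vertices and edges. Given a vertex $v$ of $\bd F(m,n)$, take its reduced form $(w; P_i^\pm)_{i=1}^k$; since $v \in \bd F(m,n)$ we have $w(1) = 0$. For $t \in \{0, \dots, m\}$ we set
\[ f(v, t) := (w_t; P_i^\pm)_{i=1}^k, \qquad w_t(1) := t, \quad w_t(i) := w(i) \text{ for } i \geq 2. \]
In words, $f$ raises the weight of the first class from $0$ to $t$.

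The first task is to show $f$ is well defined on $F$-reduction classes, independently of the representative chosen. We check that each reduction FR1 or FR2 applied to $(w; P_i^\pm)$ corresponds to a reduction of $(w_t; P_i^\pm)$.
\begin{itemize}
\item FR1 at an index $i \geq 2$ is unaffected, because it does not touch $w(1)$.
\item FR2 at an index $i \geq 2$ is likewise unaffected.
\item FR2 at $i = 1$ would need $w(1) = m$, which is impossible when $m \geq 1$ since $w(1) = 0$.
\end{itemize}
The degenerate case $m = 0$ we treat by hand. In that case both $I_m$ and the cone are trivial, and $F(0,n)$ should be a single vertex because every sequence FR2-reduces to $(0; 1^+, \dots, n^+)$.

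Commutativity of the two triangles is then immediate. At $t = 0$ we recover $v$, giving the inclusion. At $t = m$ the first class has weight $m$, so FR2 collapses everything to $(m; 1^+, \dots, n^+)$, independently of $v$.

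Next we check that $f$ preserves edges, using expanded forms. Edges of $\bd F(m,n)$ change one expanded weight by at most $1$ with the sign and partition fixed, and $f$ just shifts the first weight. For a \emph{vertical} edge $(v,t) \sim (v,t+1)$, we choose an expanded form of $v$ whose first class is a singleton $\{j\}$, obtained by splitting $P_1$ via reverse FR1, which attaches weight-$0$ classes afterwards. Raising $w(1)$ by one then changes exactly one expanded weight, as required. A subtlety arises when $t+1 = m$ with remaining classes of nonzero weight. Here FR2 merges everything, so we must pick the expanded form of the target compatibly: split the weight-$m$ class into singletons of weight $m$ with positive signs. Matching sign functions then requires that the source already has positive signs; this needs care and may require first passing through the vertex at weight $m-1$ with modified signs. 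I expect this edge check near $t = m$ to be the fiddliest part.

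Finally we prove the square is a pushout, using the explicit pushout in $\Graph$. The pushout collapses $\bd F(m,n)\times\{m\}$ to a point. So we must show three things:
\begin{enumerate}
\item $f$ is surjective on vertices;
\item $f$ is injective on vertices with $t < m$;
\item every edge of $F(m,n)$ is the image of an edge.
\end{enumerate}
For surjectivity, given a reduced form $(w; P_i^\pm)$, set $t := w(1)$ and $v := (0; \dots)$. If $t = m$ the vertex is the cone point. For injectivity we use the uniqueness of reduced forms from \cref{unique-reduced-forms}. When $t \in (0,m)$, the form $(w_t; P_i^\pm)$ is still reduced, so $t$ and $v$ are recovered. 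When $t = 0$, $f(v,0) = v$ directly. For edges, we take expanded forms of the two endpoints differing in one coordinate $i$. If $i$ is the first class, the edge is vertical. Otherwise both endpoints share the first weight $t$, and the edge is the image of a horizontal edge at level $t$: set that first weight to $0$, check the result lies in $\bd F(m,n)$, and check edge adjacency is preserved. This edge-lifting step, together with the behaviour near weight $m$ where FR2 changes signs, is the main obstacle.
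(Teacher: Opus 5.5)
Your construction and strategy match the paper's. You use the same $f$ (reduced form with the first weight reset to $t$), get commutativity from FR2 at $t=m$, and prove the pushout via the explicit description of pushouts in $\Graph$ together with edge-reflection on $\bd F(m,n)\gtimes I_{m-1}$. Horizontal edges, and vertical edges $(v,t)\sim(v,t+1)$ with $t+1<m$, go through as you say.

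\textbf{The gap.} The genuine gap is the step you flag: the vertical edges $(v,m-1)\sim(v,m)$. This is a weight problem, not a sign problem.
\begin{itemize}
\item The cone point $(m;1^+,\dots,n^+)$ has expanded forms with arbitrary signs and orderings. However, every one of them has weight vector $(m,\dots,m)$.
\item The expanded forms of $f(v,m-1)$ are exactly the expanded forms of $v$ with the first weight raised from $0$ to $m-1$.
\item So, with adjacency defined through expanded forms exactly as in the definition of $F(m,n)$, $f(v,m-1)$ is adjacent to the cone point only if $n\le 1$ or $v=((0;a^\pm),(m;b_1^+,\dots,b_{n-1}^+))$.
\end{itemize}
Concretely, take $m=n=2$ and $v=(0;1^-,2^+)$. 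The expanded forms of $f(v,1)=(1;1^-,2^+)$ have weights $(1,0)$, while those of the cone point have weights $(2,2)$. Under that literal definition, the cone point of $F(2,2)$ has only $4$ neighbours. In the cone on the $16$-cycle $\bd F(2,2)$, and in the paper's figure of $F(2,2)$, it has $16$. So the step cannot be completed as the definition stands, and routing through an intermediate vertex does not produce an edge. The paper's own proof passes over the same point (``from this, we deduce that the map $f$ preserves edges'').

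\textbf{How to close it.} Use the adjacency the figure actually depicts. Call two vertices adjacent if they have representatives in which every class is a singleton, with equal signs and orderings, and with weights differing in a single coordinate by at most $1$. Crucially, drop the requirement that all classes after a weight-$m$ class have weight $m$.
\begin{itemize}
\item The top vertical edge is then witnessed by a singleton representative of $v$ with first weight set to $m-1$ and to $m$. The latter FR2-reduces to the cone point.
\item Your edge-lifting step (3) also works with this relation. Since $w(1)$ is invariant under $F$-reduction, every singleton representative of a level-$t$ vertex has first weight $t$.
\item Hence an edge changing coordinate $1$ is vertical. An edge changing a coordinate $j\ge 2$ descends, after resetting the first weight to $0$, to an edge of $\bd F(m,n)$ at the same level. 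No vertex of level $t<m-1$ is adjacent to the cone point.
\end{itemize}
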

\begin{proof}
    Define the graph map $f \from \bd F(m, n) \gtimes I_m \to F(m, n)$ as follows:
    given an $F$-sequence $(w; P_i^\pm)_{i=1}^k$ and a vertex $t \in I_m$, set $f \big( (w; P_i^\pm)_{i=1}^k, t \big)$ to be the $F$-sequence obtained from the reduced form of $(w; P_i^\pm)_{i=1}^k$ by setting $w(1) = t$.
    By the uniqueness of reduced forms (\cref{unique-reduced-forms}), this map is well-defined.
    Observe that the value $w(1)$ is invariant under $F$-reduction.
    From this, we deduce that the map $f$ preserves edges, and moreover reflects edges when restricted to the subgraph $\bd F(m, n) \gtimes I_{m-1}$.
    In particular, $f$ is a graph map.

    The FR2-reduction rule implies that if $t = m$ then $f(\uvar, m)$ is constant at the vertex $(m; 1^+, \dots, n^+)$.
    If $t = 0$ then $f(\uvar, 0)$ coincides with the subgraph inclusion map $\bd F(m, n) \ito F(m, n)$.
    Thus, the above diagram commutes.
    To see that the right square is a pushout, we use the explicit description of pushouts combined with the fact that $f$ reflects edges on the subgraph $\bd F(m, n) \gtimes I_{m-1}$.
\end{proof}

Recall that, for $n \geq 0$, the \emph{latching category} $\bd(\boxcati \slice [1]^n)$ of $\cube{n} \in \cSeti$ is the comma category whose objects are non-identity injections $[1]^{p} \to [1]^n$ in $\boxcati$ and whose morphisms are injections $[1]^{p} \to [1]^{q}$ in $\boxcati$ making the induced triangle commute.
The \emph{latching object} of $\cube{n} \in \cSeti$ with respect to the functor $F(m, \uvar) \from \boxcati \to \Graph$ is the colimit
\[ L_{\cube{n}} F(m, \uvar) := \colim \left(  \bd(\boxcati \slice [1]^n) \xrightarrow{\Pi} \boxcati \xrightarrow{F(m, \uvar)} \Graph \right). \]
The inclusions $\face{}{i, \eps} \from F(m, n-1) \to F(m, n)$ induce a map $L_{\cube{n}}F(m, \uvar) \to F(m, n)$; one verifies that this map factors through the boundary, yielding a map
\[ L_{\cube{n}} F(m, \uvar) \to \bd F(m, n). \]
Explicitly, the vertices in the latching object $L_{\cube{n}} F(m, \uvar)$ are pairs $[f, (w; P_i^\pm)_{i=1}^k]$ where $f$ is a non-identity injection $[1]^p \to [1]^n$ and $(w; P_i^\pm)_{i=1}^k$ is an $F$-sequence in $F(m, p)$.
These pairs are subject to the identification $[f \circ \face{}{i,\eps}, (w; P_i^\pm)_{i=1}^k] = [f; (0; i^\eps) \cdot (w; P_i^\pm)_{i=1}^k]$ for any face map $\face{}{i,\eps} \in \boxcati$.
Edges are given by pairs of the form $[f, (w; P^\pm_i)_{i=1}^{k}] \sim [f, (w'; P'^\pm_{i})]_{i=1}^{k'}$ whose 2nd coordinates are connected by an edge in $F(m, p)$.

By the existence of normal forms for morphisms in $\boxcati$ (\cite[Lem.~4.1, Thm.~5.1]{grandis-mauri}), every injection $[1]^p \to [1]^n$ in $\boxcati$ can be written uniquely as a composite of face maps $\face{}{i_1, \eps_1} \dots \face{}{i_{n-p}, \eps_{n-p}}$ where $i_1 > \dots > i_{n-p}$.
Thus, the map $L_{\cube{n}} F(m, \uvar) \to \bd F(m, n)$ admits an explicit formula given by
\[ [\face{}{i_1, \eps_1} \dots \face{}{i_{n-p}, \eps_{n-p}}, (w; P_i^\pm)_{i=1}^k] \mapsto (0; i_1^{\eps_1}) \cdot \ldots \cdot (0; i_{n-p}^{\eps_{n-p}}) \cdot (w; P_i^\pm)_{i=1}^k. \]
\begin{lemma} \label{Fmn-latching-object-iso-boundary}
    The map $L_{\cube{n}} F(m, \uvar) \to \bd F(m, n)$ is an isomorphism.
\end{lemma}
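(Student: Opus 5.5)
We will show that the explicit map
\[ [\face{}{i_1, \eps_1} \dots \face{}{i_{n-p}, \eps_{n-p}}, (w; P_i^\pm)_{i=1}^k] \mapsto (0; i_1^{\eps_1}) \cdot \ldots \cdot (0; i_{n-p}^{\eps_{n-p}}) \cdot (w; P_i^\pm)_{i=1}^k \]
is bijective on vertices and that it both preserves and reflects edges. The argument works throughout with the canonical representatives supplied by \cref{unique-reduced-forms}.

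\emph{Normal forms in the latching object.} First we put vertices of $L_{\cube{n}} F(m, \uvar)$ into a normal form. Every pair $[f, x]$ is equivalent to a pair whose second coordinate $x \in F(m,p)$ has reduced form with $w(1) \neq 0$. Indeed, if the reduced form of $x$ has $w(1) = 0$, then by FR1 in reverse the class $P_1$ can be split into singletons of weight $0$. So $x = (0; j^\eps)\cdot y$ for some $y$, and we may absorb $\face{}{j,\eps}$ into $f$. Next we show such a representative is unique. The key point is that an element $x\in F(m,p)$ with reduced weight $w(1)\neq 0$ is \emph{not} in the image of any face map, since every face map produces reduced first weight $0$ (weight $0$ is invariant, as in the proof of \cref{Fmn-pushout}). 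Therefore the relation generated by $[f\face{}{i,\eps}, y] = [f, (0;i^\eps)\cdot y]$ cannot identify two distinct interior representatives.

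\emph{Bijectivity on vertices.} Surjectivity is immediate. Given a vertex of $\bd F(m,n)$, its reduced form has $w(1)=0$. We split off the elements of $P_1$ one at a time as leading singleton classes of weight $0$, which yields $(0;i_1^{\eps_1})\cdots(0;i_{r}^{\eps_r})\cdot z$ with $z$ having nonzero reduced first weight. If $w(1)=m$ the situation is degenerate, since the whole class is forced; in that case we simply take $z$ to be the reduced remainder. We reorder the indices using the cubical identity of the functoriality proposition, so that they match the Grandis--Mauri normal form $i_1>\dots>i_r$.

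For injectivity, we recover the data from the reduced form of the image. The set $\{i_1,\dots,i_r\}$ together with its signs is exactly the class $P_1$ of the reduced form of the image. The remaining $F$-sequence, with indices renumbered, is the reduced form of $z$. Both of these are determined by uniqueness of reduced forms. The main obstacle is bookkeeping with signs when a weight-$m$ class appears, since FR2 forces all signs to $+$. In that case the reduced form of the image still has $w(1)=0\neq m$ when $n\ge1$, unless $m=0$. So the case $m=0$ must be treated separately: there $F(0,n)$ collapses, and one checks directly that both sides are a single vertex for $n\ge1$. The case $n=0$ is trivial, since both sides are empty.

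\emph{Edges.} The map is a graph map because each face map is one. For reflection, suppose two boundary vertices are adjacent via expanded forms with equal signs and partitions, differing in one weight by at most $1$. If the differing coordinate is not among the leading weight-$0$ classes, both vertices share the same face prefix, and the remainders are adjacent in $F(m,p)$. This gives an edge $[f,z]\sim[f,z']$ in the latching object. If the differing coordinate is a leading class whose weight changes from $0$ to $1$, then one vertex lies in the image of the face $\face{}{j,\eps}$ and the other has the form $(0;j^\eps)\cdot y'$ already on the face side. More precisely, we choose expanded forms so that the changed coordinate is not the first one, which is possible by permuting the weight-$0$ singletons via FR1, and we represent both vertices through a common face $f$. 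We expect this choice of representatives to be the delicate step in the proof. The argument handling it is the same one that showed $f$ reflects edges in \cref{Fmn-pushout}.
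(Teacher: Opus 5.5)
Your route is the paper's. You put vertices of $L_{\cube{n}}F(m,\uvar)$ into the normal form $[\face{}{x_1,\eps_1}\cdots\face{}{x_r,\eps_r}, z]$ with $z$ reduced of nonzero first weight, and you read off $P_1$ (with its signs) and $z$ from the unique reduced form of the image. The paper packages this as an explicit inverse; you package it as surjectivity plus injectivity. Your normal form must also allow $z=()$, the case $P_1=\{1,\dots,n\}$ with $p=0$, where there is no first weight. Beyond that, three points need fixing, none fatal.

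First, your uniqueness claim for interior representatives does not follow from the fact that interior elements are not faces. The generating relation allows zig-zags $[f,x]\sim[f',F(m,\alpha)x]=[f',F(m,\beta)x']\sim[f'',x']$. Ruling these out needs unique decomposition $y=F(m,\alpha)(x)$ in every $F(m,p')$, which is exactly your later injectivity computation. You do not need the claim at all: existence of normal forms plus injectivity on normal-form pairs already gives injectivity on the colimit.

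Second, the edge case you flag as delicate is vacuous, and \cref{Fmn-pushout} (which concerns a different map) is not the reason. The weight $w(1)$ is invariant under $F$-reduction, so every expanded form of a vertex of $\bd F(m,n)$ has first weight $0$. If expanded forms $u',v'$ witness an edge, they have equal partition and signs, so both begin with the same singleton $(0;j^\eps)$. Hence $u=\face{}{j,\eps}(a)$ and $v=\face{}{j,\eps}(b)$, where the tails $a,b$ are expanded forms in $F(m,n-1)$ that still satisfy the edge condition. This gives $[\face{}{j,\eps},a]\sim[\face{}{j,\eps},b]$ in the latching object. A change $0\to1$ in the first coordinate would put one endpoint outside the boundary, so no permutation of weight-$0$ singletons is needed.

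Third, your treatment of $m=0$ is wrong at $n=1$. The latching category of $[1]^1$ in $\boxcati$ is discrete on $\face{}{1,0}$ and $\face{}{1,1}$, so $L_{\cube{1}}F(0,\uvar)$ has two vertices. In $F(0,1)$, however, FR2 identifies $(0;1^-)$ with $(0;1^+)$, so $\bd F(0,1)$ has one vertex. The lemma is genuinely false at $(m,n)=(0,1)$, and that case should be excluded rather than checked. For $m\ge1$ a boundary vertex never has $w(1)=m$, so your aside about that situation is moot.
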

\begin{proof}
    Before defining the inverse map $\bd F(m, n) \to L_{\cube{n}} F(m, \uvar)$, we make the following observation.
    A vertex $(w; P_i^\pm)_{i=1}^k \in \bd F(m, n)$ in reduced form satisfies $w(1) = 0$ by definition of the boundary.
    % and $w(2) \neq 0$, by definition of the reduced form.
    Enumerating the elements of the set complement $\{ 1, \dots, n \} - P_1$ yields a bijection 
    \[ \sigma \from \{ 1, \dots, p \} \xrightarrow{\cong} \big( \{ 1, \dots, n \} - P_1 \big) \]
    for some $p \geq 0$ (the case of $p=0$ occurs if and only if $P_1 = \{ 1, \dots, n \}$, in which case the domain and codomain of $\sigma$ are empty).
    Writing the reduced form of $(w; P_i^\pm)_{i=1}^k \in \bd F(m, n)$ as $(\varphi, k, P, w)$, we define an $F$-sequence $(\varphi', k-1, P', w')$ in $F(m, p)$ by
    \[ \begin{array}{c c c}
        \varphi'(j) := \varphi(\sigma(j)) &
        P'(j) := P(\sigma(j)) - 1 &
        w'(i) := w(i+1).
    \end{array} \]
    Going forward, we denote this $F$-sequence by $(w'; P'^\pm_i)_{i=1}^{k-1}$.
    We now sort the elements in $P_1$ as
    \[ P_1 = \{ x_{1} > \dots > x_{n - p} \}. \]
    and define $\eps_1, \dots, \eps_{n-p} \in \{0, 1\}$ by
    \[ \eps_{j} := \begin{cases}
        0 & \text{if } \varphi(x_j) = - \\
        1 & \text{if } \varphi(x_j) = +.
    \end{cases} \]
    We now define the inverse map $\bd F(m, n) \to L_{\cube{n}} F(m, \uvar)$ by sending an $F$-sequence $(w; P_i^\pm)_{i=1}^k$ to the pair given by $[\face{}{x_1, \eps_1} \dots \face{}{x_{n-p}, \eps_{n-p}}, (w'; P'^\pm_i)_{i=1}^{k-1}]$.
    The FR1-reduction rule can be used to deduce that this map preserves edges.
    % To see that this map preserves edges, we observe that if $(w_1; (P_1)_i^\pm)_{i=1}^{k_1}$ and $(w_2; (P_2)_i^\pm)_{i=1}^{k_2}$ are connected by an edge then there exist expanded forms of $(w_1; (P_1)_i^\pm)_{i=1}^{k_1}$ and $(w_2; (P_2)_i^\pm)_{i=1}^{k_2}$ such that 

    Composing the inverse map followed by the forward map recovers $\id[\bd F(m, n)]$ by construction.
    To show that the reverse composite equals $\id[L_{\cube{n}}F(m, \uvar)]$, we observe that if $[f, (w; P_i^\pm)_{i=1}^k]$ is a vertex in $L_{\cube{n}} F(m \uvar)$ then, by the identification $[f \circ \face{}{i,\eps}, (w; P_i^\pm)_{i=1}^k] = [f; (0; i^\eps) \cdot (w; P_i^\pm)_{i=1}^k]$, we may uniquely write $[f, (w; P_i^\pm)_{i=1}^k]$ as a pair $[\face{}{i_1, \eps_1} \dots \face{}{i_{n-p}, \eps_{n-p}}, (w'; P_i^\pm)_{i=1}^{k'}]$ so that $(w'; P_i^\pm)_{i=1}^{k'}$ is in reduced form and satisfies either $k' = 0$ or $w(1) \neq 0$.
    From this, the desired equality may be deduced.
\end{proof}

\begin{proof}[Proof of \cref{F-explicit-isomorphism}]
    We fix $m \geq 0$ and construct a family of isomorphisms
    \[ \Phi^n \from \Gn{\cube{n}} \to F(m, n) \]
    by induction on $n$.
    In the base case $n = 0$, the map $\Phi^0$ is an isomorphism between graphs with a single vertex.

    Fix $n$ and suppose we have constructed $\Phi^k$ for all $k \leq n$.
    This family of isomorphisms induces an isomorphism $\Gn{\bd \cube{n}} \to L_{\cube{n}} F(m, \uvar)$
    between latching objects.
    By \cref{Fmn-latching-object-iso-boundary}, we identify this as an isomorphism
    \[ \bd \Phi^{n+1} \from \Gn{\bd \cube{n}} \xrightarrow{\cong} \bd F(m, n). \]
    This isomorphism participates in a diagram
    \[ \begin{tikzcd}
        \Gn{\bd \cube{n}} \ar[rr] \ar[dd] \ar[rd, "\bd \Phi^{n+1}", "\cong"'] & {} & I_0 \ar[rd, equal] \ar[dd] & {} \\
        {} & \bd F(m, n) \ar[rr, crossing over] & {} & I_0 \ar[dd] \\
        \Gn{\bd \cube{n}} \gtimes I_m \ar[rr] \ar[rd, "\bd \Phi^{n+1} \gtimes I_m"', "\cong"] & {} & \Gn{\cube{n}} \ar[rd, dotted, "\cong"]  & {} \\
        {} & \bd F(m, n) \gtimes I_m \ar[from=uu, crossing over] \ar[rr] & {} & F(m, n)
    \end{tikzcd} \]
    The back face is a pushout by \cref{F-is-mapping-cone}.
    The front face is a pushout by \cref{Fmn-pushout}.
    Therefore, there is an induced isomorphism $\Gn{\cube{n}} \to F(m, n)$ as desired.
\end{proof}
\begin{corollary} \label{Gn-explicit-description}
    Let $X$ be a semicubical set.
    For $m \geq 0$, the graph $\Gn[m]{X}$ admits the following explicit description:
    \begin{enumerate}
        \item The vertices of $\Gn{X}$ are pairs $\big( u, (w; P_i^\pm)_{i=1}^k \big)$ where $u$ is a non-degenerate $n$-cube of $X$ and $(w; P_i^\pm)_{i=1}^k$ is an $F$-sequence in $F(m, n)$.
        These pairs are subject to the identification generated by $(u\face{}{i, \eps}, (w; P_i^\pm)_{i=1}^k) = (u, (0; i^\eps) \cdot (w; P_i^\pm)_{i=1}^k)$ for any face map $\face{}{i, \eps} \in \boxcati$.
        \item Edges are given by pairs of the form $\big( u, (w_0; (P_0)_i^\pm)_{i=1}^{k_0} \big) \sim \big( u, (w_1; (P_1)_i^\pm)_{i=1}^{k_1} \big)$ where $(w_0; (P_0)_i^\pm)_{i=1}^{k_0}$ and $(w_1; (P_1)_i^\pm)_{i=1}^{k_1}$ admit expanded forms $(w'_0; (P'_0)_i^\pm)_{i=1}^{n}$ and $(w'_1; (P'_1)_i^\pm)_{i=1}^{n}$ such that: 
        \begin{itemize}
            \item the sign functions of $(w'_0; (P'_0)_i^\pm)_{i=1}^{n}$ and $(w'_1; (P'_1)_i^\pm)_{i=1}^{n}$ are equal;
            \item the partition functions $P'_0 = P'_1$ are equal; and
            \item there exists $i \in \{ 1, \dots, n \}$ such that $|w'_0(i) - w'_1(i)| \leq 1$ and $w'_0(j) = w'_1(j)$ for all $j \neq i$ in $\{ 1, \dots, n \}$.
        \end{itemize}
    \end{enumerate}
\end{corollary}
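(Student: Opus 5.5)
The plan is to write $\Gn{X}$ as a colimit and substitute the explicit model of \cref{F-explicit-isomorphism}. Since $\Gn{} = \reali[m]{\uvar} \circ F_!$ and both functors preserve colimits, $\Gn{}$ is the extension by colimits of $\reali[m]{F(\uvar)} = \Gn{\cube{\uvar}} \from \boxcati \to \Graph$. By \cref{F-explicit-isomorphism}, this diagram is naturally isomorphic to $F(m, \uvar)$. Writing $X$ as the colimit of representables over its category of elements $\boxcati \slice X$, we obtain
\[ \Gn{X} \cong \colim_{(u \in X_n)} F(m, n), \]
a colimit in $\Graph$ of the diagram sending a cube $u$ of $X$ to $F(m,n)$. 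Under \cref{cseti-cat-equiv}, the cubes of the semicubical set $X$ are exactly the non-degenerate cubes of $i_! X$; this is why the statement speaks of non-degenerate cubes.

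Next we compute this colimit in $\Graph$. The underlying vertex functor preserves colimits, since graphs form a reflective subcategory of a presheaf category on a category containing $I_0$ as a representing object. So the vertex set is the quotient of $\coprod_{u} V(F(m,n))$ by the equivalence relation generated by the morphisms of the diagram. Each morphism of $\boxcati \slice X$ is a composite of face maps, so the relation is generated by $(u\face{}{i,\eps}, s) = (u, \face{}{i,\eps} s)$. By the definition of $\face{}{i,\eps}$ on $F(m,\uvar)$, we have $\face{}{i,\eps}s = (0; i^\eps)\cdot s$, which gives part (1). For edges, we use the description of colimits in $\Graph$. A colimit of graphs is obtained by taking the colimit of vertex sets and declaring $[a]\sim[b]$ whenever some representatives are adjacent in one of the pieces. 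No further closure is needed, because edges in $\Graph$ are just a relation, and the relation so obtained is already reflexive and symmetric. This is the content of \cref{1-skeletal-po-implies-po}-style arguments; concretely, the relation just described satisfies the universal property directly. Therefore an edge of $\Gn{X}$ is a pair of classes having representatives $(u, s_0)$, $(u, s_1)$ over a common cube $u$ with $s_0 \sim s_1$ in $F(m,n)$. Plugging in the definition of edges in $F(m,n)$ via expanded forms yields part (2).

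The only point needing care is that a graph colimit does not create new edges beyond images of edges in the pieces. I expect this to be the main obstacle, and it is mild. I would check it by verifying that the graph described in (1)--(2) receives compatible maps from every $F(m,n)$, and that any cocone out of the diagram factors uniquely through it. Uniqueness and well-definedness on vertices follow from the set-level colimit. Edge preservation holds because each described edge lies in the image of some $F(m,n)$, where the cocone map is a graph map.
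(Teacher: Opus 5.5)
Your proposal is correct and follows the paper's argument. The paper's proof is the one-line appeal to writing $X \cong \colim_{(\cube{n}, u) \in \int X} \cube{n}$, pushing this through the cocontinuous functor $\Gn{} = \reali[m]{F_!(\uvar)}$, and substituting $F(m,\uvar)$ via \cref{F-explicit-isomorphism}; it leaves implicit the computation of colimits in $\Graph$, which you spell out. One small remark: $V \from \Graph \to \Set$ preserves colimits because it has a right adjoint, sending a set to the complete graph on it, and not merely because of the reflective-subcategory description you cite. Your direct check of the universal property makes this point moot anyway.
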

\begin{proof}
    This is a standard argument using the formula $\colim\limits_{(\cube{n},\, u) \in \int\! X} \cube{n} \cong X$ and \cref{F-explicit-isomorphism}.
\end{proof}

 \section{Main theorem} \label{sec:main-thm}

At this point, we have established all the requisite notions and the goal of this section, as the name might suggest, is to prove the main theorem of the paper.
This is \cref{nerve-equiv-main-thm}, asserting that the graph nerve functor $\gnerve[\infty] \colon \Graph \to \cSet$ induces an equivalence of homotopy categories obtained by inverting $n$-equivalences.
In preparation for the proof, we need a couple of lemmas that we dispose of first.

Recall that the graph realization functors are related by natural surjections
\[ (\ell_!)_X, (r_!)_X \from \reali[m+1]{X} \to \reali[m]{X}. \]
Instantiating at $F_! X$ for some semicubical set $X$, we obtain natural transformations $\ell_!, r_! \from \Gn[m+1]{X} \to \Gn[m]{X}$.
\begin{proposition} \label{unit-ell-unit-ell-square}
    For any cubical set $X$ and $m \in \NN$, the squares
    \[ \begin{tikzcd}
        X \ar[r, "\eta^{m+1}_{X}"] \ar[d, "\eta^m_X"'] & \gnerve[m+1]{\reali[m+1]{X}} \ar[d, "{\gnerve[m+1]{(\ell_!)}}"] \\
        \gnerve[m]{\reali[m]{X}} \ar[r, "{\ell^*}"] & \gnerve[m+1]{\reali[m]{X}}
    \end{tikzcd} \qquad \begin{tikzcd}
        X \ar[r, "\eta^{m+1}_{X}"] \ar[d, "\eta^{m}_X"'] & \gnerve[m+1]{\reali[m+1]{X}} \ar[d, "{\gnerve[m+1]{(r_!)}}"] \\
        \gnerve[m]{\reali[m]{X}} \ar[r, "{r^*}"] & \gnerve[m+1]{\reali[m]{X}}
    \end{tikzcd} \]
    commute.
\end{proposition}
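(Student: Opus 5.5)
We reduce both squares to an identity of maps out of representables, using the adjunction $\reali[m]{\uvar} \dashv \gnerve[m]$ and the fact that every functor in sight is determined by its values on cubes. The two squares are symmetric under exchanging $\ell$ and $r$, so we treat the $\ell$-square and note that the $r$-square is verbatim the same. Since $X \cong \colim_{(\cube{k}, x) \in \int X} \cube{k}$ and all four corners are natural in $X$ (the unit is natural, and $\ell_!$, $\ell^*$ are natural transformations), it suffices to check commutativity on $k$-cubes. That is, for each $x \from \cube{k} \to X$ we must show that the two composites agree as $k$-cubes of $\gnerve[m+1]{\reali[m]{X}}$, i.e.\ as graph maps $\gcube{m+1}{k} \to \reali[m]{X}$.

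Next we unwind each composite. The unit $\eta^{m+1}_X$ sends $x$ to the graph map $\reali[m+1]{x} \from \reali[m+1]{\cube{k}} = \gcube{m+1}{k} \to \reali[m+1]{X}$. Postcomposing with $\gnerve[m+1](\ell_!)$ yields $(\ell_!)_X \circ \reali[m+1]{x}$. By naturality of $\ell_! \from \reali[m+1]{\uvar} \Rightarrow \reali[m]{\uvar}$ in $X$, this equals $\reali[m]{x} \circ (\ell_!)_{\cube{k}}$, and $(\ell_!)_{\cube{k}}$ is by definition $\ell^{\gtimes k} \from \gcube{m+1}{k} \to \gcube{m}{k}$. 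Along the other path, $\eta^m_X(x) = \reali[m]{x} \from \gcube{m}{k} \to \reali[m]{X}$, and $\ell^*$ acts by precomposition with $\ell^{\gtimes k}$, giving $\reali[m]{x} \circ \ell^{\gtimes k}$. The two agree.

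The only point needing care is the identification of $(\ell_!)_{\cube{k}}$ with $\ell^{\gtimes k}$ and of $\ell^*$ with precomposition by $\ell^{\gtimes k}$. Both hold by construction, since $\ell_!$ is the left Kan extension of the map of cocubical graphs $\ell^{\gtimes \bullet} \from \gcube{m+1}{\bullet} \to \gcube{m}{\bullet}$ and $\ell^*$ is its mate. We also note that the statement is for cubical sets $X$, not $F_!X$, so no semicubical input is needed. I expect no real obstacle.
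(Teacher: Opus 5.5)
Your proposal is correct and is essentially the paper's proof: both evaluate the two composites on a $k$-cube $x \from \cube{k} \to X$, getting $(\ell_!)_X \circ \reali[m+1]{x}$ and $\reali[m]{x} \circ \ell^{\gtimes k}$, which agree by the definition (naturality) of $\ell_!$, and the $r$-square is handled the same way. The appeal to the density colimit is harmless but unnecessary, since two maps of cubical sets are equal exactly when they agree on every cube.
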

\begin{proof}
    Concerning the left square, given a $k$-cube $u \from \cube{k} \to X$, the top right composite sends $u$ to the morphism $\ell_! \circ \reali[m+1]{u}$, whereas the bottom left composite sends $u$ to the morphism $\reali[m]{u} \circ \ell^{\gtimes k}$.
    These two are equal by definition of $\ell_!$.
    The proof of the right square is analogous.
\end{proof}
\begin{corollary} \label{ell-r-n-equiv}
    Let $X$ be a semicubical set.
    For $n \geq 0$ and $m \geq n+2$, the maps
    \[ \ell^m_X, r^m_X \from \Gn[m+1]{X} \to \Gn[m]{X} \]
    are $n$-equivalences.
    Additionally, if $X$ is nonsingular then the result holds when $m = n+1$.
\end{corollary}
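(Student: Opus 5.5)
The plan is a two-out-of-three argument applied to the commutative square of \cref{unit-ell-unit-ell-square}, instantiated at the cubical set $F_! X$, followed by an appeal to \cref{nerve-reflects-equivs}. We treat $\ell^m_X$; the argument for $r^m_X$ is identical, using the right-hand square in place of the left one. Concretely, \cref{unit-ell-unit-ell-square} with $F_! X$ in place of $X$ gives
\[ \gnerve[m+1]{(\ell_!)} \circ \eta^{m+1}_{F_! X} = \ell^* \circ \eta^m_{F_! X} \]
as maps $F_! X \to \gnerve[m+1]{\Gn[m]{X}}$.

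We first identify the legs of this square.
\begin{itemize}
\item Since $m \geq n+2$, we have $m+1 \geq n+2$ as well. By \cref{Gn-weq}, both units $\eta^m_{F_! X}$ and $\eta^{m+1}_{F_! X}$ are therefore $n$-equivalences.
\item In the nonsingular case with $m = n+1$, the same conclusion follows from the "additionally" clause of \cref{Gn-weq} for $\eta^m$. For $\eta^{m+1}$ we have $m+1 = n+2$, so the main clause applies.
\item The bottom map $\ell^* \from \gnerve[m]{G} \to \gnerve[m+1]{G}$, with $G = \Gn[m]{X}$, is an acyclic cofibration in the Grothendieck model structure by \cref{nerve-main-thm}(2). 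In particular it is an $n$-equivalence.
\end{itemize}

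Hence the bottom-left composite is an $n$-equivalence. By commutativity, so is $\gnerve[m+1]{(\ell_!)} \circ \eta^{m+1}_{F_! X}$. Since $\eta^{m+1}_{F_! X}$ is an $n$-equivalence, two-out-of-three (valid for $n$-equivalences, being weak equivalences of a model structure) shows that $\gnerve[m+1]{(\ell_!)} = \gnerve[m+1]{(\ell^m_X)}$ is an $n$-equivalence. Finally, \cref{nerve-reflects-equivs}(2) lets us detect $n$-equivalences of graphs on a single nerve $\gnerve[m+1]$, so $\ell^m_X$ is an $n$-equivalence of graphs.

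The main point to check is bookkeeping rather than an obstacle. We must confirm that the map $\gnerve[m+1](\ell_!)$ in \cref{unit-ell-unit-ell-square}, evaluated at $F_! X$, really is the nerve of the natural transformation $\ell^m_X \from \Gn[m+1]{X} \to \Gn[m]{X}$. This holds because $\Gn[m]{X}$ is defined as $\reali[m]{F_! X}$ and $\ell^m_X$ as $(\ell_!)_{F_! X}$. We must also confirm that the hypotheses of \cref{Gn-weq} hold for both lengths $m$ and $m+1$ simultaneously, as verified in the case analysis above.
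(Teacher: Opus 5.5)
Your proposal is correct and matches the paper's proof: instantiate the square of \cref{unit-ell-unit-ell-square} at $F_! X$, use \cref{nerve-main-thm} for $\ell^*$ (resp.\ $r^*$) and \cref{Gn-weq} for the two units, then apply two-out-of-three. You are slightly more explicit than the paper in two places: you check that both lengths $m$ and $m+1$ meet the hypotheses of \cref{Gn-weq} in the nonsingular case, and you invoke \cref{nerve-reflects-equivs} to pass from $\gnerve[m+1](\ell^m_X)$ back to the graph map.
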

\begin{proof}
    For $\ell^m_X$ or $r^m_X$, we instantiate the corresponding square of \cref{unit-ell-unit-ell-square} to $F_! X$.
    The bottom morphism is a weak equivalence by \cref{nerve-main-thm}.
    The left and top maps are $n$-equivalences by \cref{Gn-weq}.
    Therefore, the right map is an $n$-equivalence as desired.
\end{proof}

The following is a generalization of \cref{Gn-bd-inclusion-hopo}.
\begin{lemma} \label{po-cubical-mono-is-hopo}
    Let $n \geq 0$ and $m \geq n+2$.
    Suppose $f \from X \ito Y$ is a monomorphism of semicubical sets and $g \from \Gn[m]{X} \to G$ is a graph map.
    Then, the pushout of $g$ along $\Gn[m]{f}$
    \[ \begin{tikzcd}
        \Gn[m]{X} \ar[r, "{\Gn[m]{f}}"] \ar[d, "g"'] \ar[rd, phantom, "\potick" very near end] & \Gn[m]{Y} \ar[d] \\
        G \ar[r] & \bullet
    \end{tikzcd} \]
    becomes a homotopy pushout of $n$-types after applying $\gnerve[m]$.
    % Additionally, if $g$ is a monomorphism then the result holds when $m = n+1$. -- this is false unless X -> Y is a "cubical complex extension"
\end{lemma}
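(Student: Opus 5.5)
We reduce to the case already handled in \cref{Gn-bd-inclusion-hopo} by building $Y$ from $X$ one cube at a time. As in the proof of \cref{Gn-weq}, first reduce to $Y$ finite relative to $X$. Every graph is the filtered colimit of the pushouts along the finite subobjects $X \subseteq Y' \subseteq Y$, and $\gnerve[m]$ preserves filtered colimits. Homotopy pushouts of $n$-types are stable under filtered colimits of squares, since $n$-equivalences are, and filtered colimits of cubical sets are homotopy colimits. So it suffices to treat finite extensions. The monomorphism $f$ then factors as a finite composite
\[ X = Y_0 \subseteq Y_1 \subseteq \dots \subseteq Y_d = Y, \]
where each $Y_i \ito Y_{i+1}$ is a pushout of a boundary inclusion $\bd \cube{k_i} \ito \cube{k_i}$ in $\cSeti$. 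We attach cubes in order of increasing dimension, using the skeletal filtration of \cref{semicubical-set-admits-skeletal-filtration} relative to $X$; no nondegeneracy issues arise because we work with semicubical sets.

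Since $\Gn[m]{}$ preserves pushouts, the pushout of $g$ along $\Gn[m]{f}$ decomposes as a composite of pushouts:
\[ G = G_0 \to G_1 \to \dots \to G_d = \Gn[m]{Y} \push_{\Gn[m]{X}} G, \qquad G_{i+1} = G_i \push_{\Gn[m]{\bd \cube{k_i}}} \Gn[m]{\cube{k_i}}. \]
Here the attaching map $\Gn[m]{\bd\cube{k_i}} \to \Gn[m]{Y_i} \to G_i$ is an arbitrary graph map, not necessarily injective. This is why we need $m \geq n+2$. Each such square is a pushout of an arbitrary map $u$ along $\ell_0(\id, \bang) \from \Gn[m]{\bd\cube{k}} \to \Cyl{m}(\id,\bang)$, using \cref{Gn-cube-n-is-cyl}. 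By \cref{cyl-factorization-hopo}(1), applied with $p = n$ and $q = m-n \geq 2$ and with $f$ taken to be the attaching map and $g = \bang$, this square becomes a homotopy pushout of $n$-types after applying $\gnerve[m]$. This is exactly the argument of \cref{Gn-bd-inclusion-hopo}, but with the target now an arbitrary graph $G_i$ rather than some $\Gn[m]{X}$. We should check that \cref{cyl-factorization-hopo} genuinely allows arbitrary $f$, and it does.

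Next we paste. Applying $\gnerve[m]$ to the big square, it is the horizontal composite of the squares
\[ \begin{tikzcd} \gnerve[m]\Gn[m]{Y_i} \ar[r] \ar[d] & \gnerve[m]\Gn[m]{Y_{i+1}} \ar[d] \\ \gnerve[m] G_i \ar[r] & \gnerve[m] G_{i+1}. \end{tikzcd} \]
Each of these is a homotopy pushout. To see this, compare it with the square for attaching $\cube{k_i}$ to $\Gn[m]{Y_i}$ itself. That square is a homotopy pushout by \cref{Gn-bd-inclusion-hopo}, and the composite of the two squares is a homotopy pushout by the previous paragraph. The pasting lemma for homotopy pushouts, in the Cisinski model structure for $n$-types, then gives the claim. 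Pasting the $d$ squares horizontally, again by the pasting lemma, shows the whole square is a homotopy pushout of $n$-types.

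The main obstacle is the bookkeeping in the first reduction, in particular the filtered-colimit step. If the filtered-colimit argument for homotopy pushouts feels delicate, an alternative is to note that homotopy pushouts of $n$-types can be detected by the comparison map from the homotopy pushout being an $n$-equivalence. Taking homotopy pushouts commutes with filtered colimits up to weak equivalence, and $n$-equivalences are closed under filtered colimits, so that route would also work.
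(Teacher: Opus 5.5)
Your argument is correct and is essentially the paper's proof. In both, the key step pastes two squares, each a homotopy pushout of $n$-types by \cref{cyl-factorization-hopo} via \cref{Gn-cube-n-is-cyl}: the square attaching cells to $\Gn[m]$ of the previous stage, and the square attaching them to the pushed-out graph; the pasting lemma then gives the claim. The only difference is bookkeeping: you reduce to finite relative extensions by a filtered-colimit argument and attach one cube at a time, whereas the paper attaches all non-degenerate $k$-cubes of $Y$ outside $X$ at once along the relative skeletal filtration $X \cup \Sk^k Y$ (using that $\Cyl{m}$ commutes with coproducts) and then passes to the sequential colimit of monomorphisms.
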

\begin{proof}
    % We proceed using a skeletal induction argument similar to \cref{Gn-weq}, invoking \cref{Gn-bd-inclusion-hopo}.
    The monomorphism $f$ can be decomposed into a sequence of monomorphisms
    \[ X = (X \cup \Sk^{-1} Y) \ito (X \cup \Sk^0 Y) \ito (X \cup \Sk^1 Y) \ito \dots \ito (X \cup \Sk^k Y) \ito \dots \ito Y \]
    where each $X \cup \Sk^k Y$ is the pushout of $X$ and $\Sk^k Y$ along $\Sk^k X$, and $\colim\limits_{k \in \NN} \, (X \cup \Sk^k Y) = Y$.
    For $k \geq 0$, the square
    \[ \begin{tikzcd}
        \displaystyle \coprod\limits_{\substack{y \in (Y_k)_{\mathrm{nd}} \\ y \,\not\in\, \im f}} \bd \cube{k} \ar[r, "{[\bd y]}"] \ar[d, hook] \ar[rd, phantom, "\potick" very near end] & X \cup \Sk^{k-1} Y \ar[d, hook] \\
        \displaystyle \coprod\limits_{\substack{y \in (Y_k)_{\mathrm{nd}} \\ y \,\not\in\, \im f}} \cube{k} \ar[r, "{[y]}"] & X \cup \Sk^{k} Y
    \end{tikzcd} \]
    is a pushout.
    The functor $\Gn[m] \from \cSeti \to \Graph$ preserves both monomorphisms and colimits, hence we obtain a decomposition of $\Gn[m]{f}$:
    \[ \Gn[m] X \ito \dots \ito \Gn[m](X \cup \Sk^{k-1} Y) \ito \Gn[m](X \cup \Sk^k Y) \ito \dots \ito \Gn[m] Y  \]
    where each inclusion $\Gn[m](X \cup \Sk^{k-1} Y) \ito \Gn[m](X \cup \Sk^k Y)$ arises as a pushout
    \[ \begin{tikzcd}
        \displaystyle \coprod\limits_{\substack{y \in (Y_k)_{\mathrm{nd}} \\ y \,\not\in\, \im f}} \Gn[m] \bd \cube{k} \ar[r, "{[\bd y]}"] \ar[d, hook] \ar[rd, phantom, "\potick" very near end] & \Gn[m](X \cup \Sk^{k-1} Y) \ar[d, hook] \\
        \displaystyle\coprod\limits_{\substack{y \in (Y_k)_{\mathrm{nd}} \\ y \,\not\in\, \im f}} \Gn[m] \cube{k} \ar[r, "{[y]}"] & \Gn[m](X \cup \Sk^{k} Y)
    \end{tikzcd} \]
    and $\colim\limits_{k \in \NN} \Gn[m](X \cup \Sk^k Y) = \Gn[m] Y$.
    % By pushing out along the map $g \from \Gn[m]{X} \to G$, we factor 
    Via the above sequence of inclusions, we may factor the pushout of interest
    \[ \begin{tikzcd}
        \Gn[m]{X} \ar[r, "{\Gn[m]{f}}", hook] \ar[d, "g"'] \ar[rd, phantom, "\potick" very near end] & \Gn[m]{Y} \ar[d] \\
        G \ar[r] & P
    \end{tikzcd} \] 
    into a sequence of pushouts:
    \[ \begin{tikzcd}
        \Gn[m]{(X \cup \Sk^{-1} Y)} \ar[r, hook] \ar[d, "g"'] \ar[rd, phantom, "\potick" very near end] & \Gn[m]{(X \cup \Sk^{0} Y)} \ar[r, hook] \ar[d, dotted] \ar[rd, phantom, "\potick" very near end] & \Gn[m]{(X \cup \Sk^{1} Y)} \ar[r, hook] \ar[d, dotted] \ar[rd, phantom, "\potick" very near end] & \dots \ar[r, hook] \ar[d, dotted] \ar[rd, phantom, "\potick" very near end] & \Gn[m]{Y} \ar[d] \\
        G \ar[r, hook, dotted] & P_0 \ar[r, hook, dotted] & P_1 \ar[r, hook, dotted] & \dots \ar[r, hook, dotted] & P
    \end{tikzcd} \]
    After applying $\gnerve[m]$, the top and bottom sequences are both sequential colimit diagrams of monomorphisms (since $\gnerve[m]$ preserves both filtered colimits and monomorphisms), hence homotopy colimit diagrams in the Cisinski model structure for $n$-equivalences.
    Thus, it suffices to show that the pushout along each inclusion $\Gn[m](X \cup \Sk^{k-1} Y) \ito \Gn[m](X \cup \Sk^{k} Y)$ becomes a homotopy pushout of $n$-types after applying $\gnerve[m]$.

    By \cref{Gn-cube-n-is-cyl}, the map $\Gn[m]\bd \cube{k} \ito \Gn[m] \cube{k}$ is identified with the inclusion into the double mapping cylinder $\ell_0(\id, \bang) \from \Gn[m] \bd \cube{k} \ito \Cyl{m}(\id, \bang)$.
    Observe that, given any collection of maps $\{ f_i \from G_i \to H_i \}$ and $\{ g_i \from G_i \to K_i \}$, we have an isomorphism $\coprod_i \Cyl{m}(f_i, g_i) \cong \Cyl{m}(\coprod_i f_i, \coprod_i g)$ for which the cylinder inclusions fit into a commutative diagram
    \[ \begin{tikzcd}[sep = large]
        \coprod_i H_i \ar[r, "{\coprod \ell_0(f_i, g_i)}"] \ar[rd, "{\ell_0(\coprod f_i, \coprod g_i)}"'] & \coprod_i \Cyl{m}(f_i, g_i) \ar[d, "\cong" description] & \coprod_i K_i \ar[l, "{\coprod r_0(f_i, g_i)}"'] \ar[ld, "{r_0(\coprod f_i, \coprod g_i)}"] \\
        {} & \Cyl{m}(\coprod f_i, \coprod g_i) & {}
    \end{tikzcd} \]
    With this, we may rewrite the pushout defining $\Gn[m] (X \cup \Sk^n Y)$ up to isomorphism as
    \[ \begin{tikzcd}
        \displaystyle \coprod\limits_{\substack{y \in (Y_k)_{\mathrm{nd}} \\ y \,\not\in\, \im f}} \Gn[m] \bd \cube{k} \ar[r, "{[\bd y]}"] \ar[d, hook, "{\ell_0(\coprod \id, \coprod \bang)}"'] \ar[rd, phantom, "\potick" very near end] & \Gn[m](X \cup \Sk^{k-1} Y) \ar[d, hook, "{\ell_0([\bd y], \coprod \bang)}"] \\
        \Cyl{m}(\coprod \id, \coprod \bang) \ar[r, "{[y]}"] & \Cyl{m}([\bd y], \coprod \bang)
    \end{tikzcd} \]
    where the isomorphism $X \cup \Sk^n Y \cong \Cyl{m}([\bd y], \coprod \bang)$ follows from \cref{cyl-as-po}.
    We paste the pushout square defining $P_{k}$ to the right to obtain the diagram:
    \[ \begin{tikzcd}
        \displaystyle \coprod\limits_{\substack{y \in (Y_k)_{\mathrm{nd}} \\ y \,\not\in\, \im f}} \Gn[m] \bd \cube{k} \ar[r, "{[\bd y]}"] \ar[d, hook, "{\ell_0(\coprod \id, \coprod \bang)}"'] \ar[rd, phantom, "\potick" very near end] & \Gn[m](X \cup \Sk^{k-1} Y) \ar[d, hook, "{\ell_0([\bd y], \coprod \bang)}" description] \ar[r, dotted] \ar[rd, phantom, "\potick" very near end] & P_{k-1} \ar[d, hook, dotted] \\
        \Cyl{m}(\coprod \id, \coprod \bang) \ar[r, "{[y]}"] & \Cyl{m}([\bd y], \coprod \bang) \ar[r, dotted] & P_k
    \end{tikzcd} \]
    We invoke \cref{cyl-factorization-hopo} to the left and composite squares (with $g = \coprod \bang$, $p = n$ and $q = m-n$) to deduce that these are homotopy pushouts of $n$-types.
    Therefore, the right square is a homotopy pushout of $n$ types, as desired.
\end{proof}
\begin{corollary} \label{po-weq-along-cubical-mono-is-weq}
    Let $n \geq 0$ and $m \geq n+2$.
    Suppose $f \from X \ito Y$ is a monomorphism of cubical sets and $w \from \Gn[m]{X} \to G$ is an $n$-equivalence.
    Then, the pushout of $w$ along $\Gn[m]{f}$
    \[ \begin{tikzcd}
        \Gn[m]{X} \ar[r, "{\Gn[m]{f}}"] \ar[d, "\sim", "w"'] & \Gn[m]{Y} \ar[d, dotted, "\sim"] \\
        G \ar[r, dotted] & \bullet
    \end{tikzcd} \]
    is an $n$-equivalence.
    % Additionally, if $w$ is a monomorphism then the result holds when $m = n+1$. -- this is false unless f is a "cubical complex extension"
\end{corollary}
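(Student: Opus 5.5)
The plan is to deduce this from \cref{po-cubical-mono-is-hopo} by comparing two homotopy pushouts of $n$-types. (Although the statement says ``cubical sets'', $\Gn[m]$ is only defined on semicubical sets, so I read $f$ as a monomorphism in $\cSeti$, as in the preceding lemma.) Write $P := G \push_{\Gn[m]{X}} \Gn[m]{Y}$ and $w' \from \Gn[m]{Y} \to P$ for the induced map.

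First, apply \cref{po-cubical-mono-is-hopo} to the span $(\Gn[m]{f}, w)$. This shows that the square
\[ \begin{tikzcd}
    \gnerve[m]\Gn[m]{X} \ar[r] \ar[d] & \gnerve[m]\Gn[m]{Y} \ar[d] \\
    \gnerve[m] G \ar[r] & \gnerve[m] P
\end{tikzcd} \]
is a homotopy pushout of $n$-types. Second, apply the same lemma to the span $(\Gn[m]{f}, \id[\Gn[m]{X}])$. The resulting pushout is the square with $\Gn[m]{f}$ on both vertical sides and identities horizontally. This degenerate square is therefore also a homotopy pushout of $n$-types after applying $\gnerve[m]$, although this step is trivial anyway.

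The cleanest argument is a pasting decomposition. Factor the original square as the identity square on $\Gn[m]{f}$ pasted with the square obtained from the map of spans
\[ (\Gn[m]{Y} \leftarrow \Gn[m]{X} = \Gn[m]{X}) \longrightarrow (\Gn[m]{Y} \leftarrow \Gn[m]{X} \xrightarrow{w} G). \]
Equivalently, and more directly, consider the homotopy pushout square above in the Cisinski model structure for $n$-types. Its left vertical map $\gnerve[m] w$ is an $n$-equivalence by \cref{nerve-reflects-equivs}. In any left proper model category, the homotopy pushout of a weak equivalence is a weak equivalence; more precisely, the homotopy cobase change of a weak equivalence is a weak equivalence, since the homotopy pushout is invariant under weak equivalence of spans. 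Comparing with the span $(\gnerve[m]\Gn[m]{Y} \leftarrow \gnerve[m]\Gn[m]{X} = \gnerve[m]\Gn[m]{X})$, whose homotopy pushout is $\gnerve[m]\Gn[m]{Y}$ itself, we conclude that $\gnerve[m] w' \from \gnerve[m]\Gn[m]{Y} \to \gnerve[m] P$ is an $n$-equivalence. Finally, \cref{nerve-reflects-equivs} reflects this back to graphs, so $w'$ is an $n$-equivalence.

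The only real content is \cref{po-cubical-mono-is-hopo}. The step requiring care is the formal one: turning ``the square is a homotopy pushout and one leg is a weak equivalence'' into ``the opposite leg is a weak equivalence''. I would justify it using the fact that the Cisinski model structure for $n$-types has all objects cofibrant, hence is left proper. Then the canonical map from the homotopy pushout is compared via the cube of spans, with three edges being $n$-equivalences, namely $\id$, $\id$ and $\gnerve[m] w$.
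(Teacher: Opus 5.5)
Your argument is correct and matches the paper's proof. The paper simply says the corollary follows from \cref{po-cubical-mono-is-hopo} together with left properness: the nerve of the pushout square is a homotopy pushout of $n$-types, so the cobase change of the $n$-equivalence $\gnerve[m] w$ is again an $n$-equivalence, and \cref{nerve-reflects-equivs} transfers this back to graphs. You are also right to read $f$ as a monomorphism of semicubical sets, and the detour through the identity span and the pasting decomposition is unnecessary.
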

\begin{proof}
    Follows from \cref{po-cubical-mono-is-hopo} using left properness.
\end{proof}

\begin{theorem} \label{nerve-equiv-main-thm}
    The nerve functor induces an equivalence 
    \[ \Ho (\Graph) \xrightarrow[\simeq]{\Ho \gnerve[\infty]} \Ho(\cSet_n) \]
    between localizations at $n$-equivalences.
\end{theorem}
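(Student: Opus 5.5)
Since $\gnerve[\infty] \from \Graph \to \cSet_n$ is an exact functor of fibration categories for $n$-types, \cref{AP-properties-suffices} reduces the statement to checking the right approximation properties AP1 and AP2.

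\textbf{AP1.} This is immediate from \cref{nerve-reflects-equivs}(2): a graph map $f$ is an $n$-equivalence if and only if $\gnerve[\infty] f$ is.

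\textbf{AP2.} Fix a graph $H$ and a cubical map $u \from Y \to \gnerve[\infty] H$ with $Y \in \cSet_n$. I would set $m := n+2$ throughout and proceed in four steps.

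\emph{Step 1: replace $Y$ by a semicubical set.} Using \cref{nonsingular-approximation-cospan}, choose a semicubical (fondind) $Y_0$ with a weak equivalence $i_! Y_0 \to Y$. Via \cref{F-shriek-approx-cospan}, obtain a zig-zag $F_! Y_0 \weto L Y_0 \weot i_! Y_0$. Since $\gnerve[\infty] H$ is Kan, the composite $i_! Y_0 \to \gnerve[\infty] H$ extends along the acyclic cofibration $i_! Y_0 \to L Y_0$. Precomposing with $F_! Y_0 \to L Y_0$ then gives a map $v \from F_! Y_0 \to \gnerve[\infty] H$, together with a weak equivalence $F_! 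Y_0 \to Y$ (through the zig-zag) compatible with $u$ up to the evident homotopy. If needed, I would absorb this homotopy using a path object in $\cSet_n$.

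\emph{Step 2: descend to a finite length.} The map $v$ should factor through some $\gnerve[m'] H$ with $m' \ge m$. This is automatic when $Y_0$ is finite, since $\gnerve[\infty]$ is a filtered colimit. In general I would enlarge the length: replace $F_! Y_0$ by its $L$-style replacement, or work skeleton by skeleton using the fact that each $\gnerve[m] H \to \gnerve[\infty] H$ is an acyclic cofibration, and lift $v$ into $\gnerve[m]H$ up to homotopy. The cleanest option is to first compose $v$ with a homotopy inverse obtained from lifting against $\gnerve[m] H \to \gnerve[\infty] H$ after fibrantly replacing. I expect this to be the most delicate bookkeeping point.

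\emph{Step 3: define the graph and the comparison square.} Given $v \from F_! Y_0 \to \gnerve[m] H$, take its adjoint $\bar v \from \Gn{Y_0} = \reali[m]{F_! Y_0} \to H$, and set $X' := \Gn{Y_0}$ and $Y' := F_! Y_0$. The square
\[ F_! Y_0 \xrightarrow{\eta} \gnerve[m] \Gn{Y_0} \to \gnerve[\infty]\Gn{Y_0} \]
over $\gnerve[\infty] \bar v$ commutes by naturality of the unit and the triangle identity. The map $\eta$ is an $n$-equivalence by \cref{Gn-weq}, and $\gnerve[m] \to \gnerve[\infty]$ is a weak equivalence by \cref{nerve-main-thm}. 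So $Y' \to \gnerve[\infty] X'$ is an $n$-equivalence, and $Y' \to Y$ is a weak equivalence, which is exactly the diagram required in AP2.

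\emph{Main obstacle.} The hardest part is Step 2 together with the homotopy-commutativity in Step 1. AP2 demands a strictly commuting square, while the natural constructions only commute up to homotopy or after a finite-length descent. First, I would try making all replacements strictly functorial, using Reedy fibrancy of $\ell$ and the Kan property of $\gnerve[\infty] H$ to obtain strict lifts. Failing that, I would invoke the standard fact that in a fibration category AP2 may be verified up to homotopy, by replacing $u$ with a fibration and using path objects. Throughout, \cref{ell-r-n-equiv} and \cref{po-weq-along-cubical-mono-is-weq} would be used to pass between lengths $m$ and $m+1$ and to preserve $n$-equivalences under the relevant pushouts.
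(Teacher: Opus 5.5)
Your reduction to AP1 and AP2 matches the paper, and Step 3 is correct once $v$ factors through a single $\gnerve[m]H$ with $m\geq n+2$. Step 2, however, is a genuine gap rather than bookkeeping, and none of your proposed remedies closes it. If $Y_0$ is infinite, $v$ need not factor through any single $\gnerve[m]H$, not even up to homotopy. For example, take $H=\coprod_k C_{L_k}$ with $L_k\to\infty$, and let $Y_0$ be a disjoint union of semicubical circles of bounded size, the $k$-th sent once around $C_{L_k}$. Any map into $\gnerve[m]H$ sends the $k$-th circle to a closed walk whose length is bounded independently of $k$, so its winding number is $0$ for $k$ large.

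In particular, $\gnerve[m]H\to\gnerve[\infty]H$ has no homotopy inverse in general, because $\gnerve[m]H$ is not fibrant. If you fibrantly replace $\gnerve[m]H$, the target is no longer a nerve, so you cannot transpose to a graph map out of $\Gn[m]{Y_0}$. As written, you have proved AP2 only for finite $Y_0$, where compactness gives the factorization.

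The missing idea is to let the length vary from cube to cube. The paper proceeds as follows.
\begin{enumerate}
\item It filters by $Y_{0,m}$, the largest subobject whose image under $F_!$ maps into $\gnerve[m]H$. This is defined via the colimit-preserving right adjoint of $F_!$ on slice categories, so $\colim_m Y_{0,m}=Y_0$.
\item It transposes to graph maps $g_m\from\Gn[m]{Y_{0,m}}\to H$.
\item It glues the graphs $\Gn[m]{Y_{0,m}}$ into $H_\infty=\colim_m H_m$ along the zig-zags $\Gn[m]{Y_{0,m}}\leftarrow\Gn[m+1]{Y_{0,m}}\hookrightarrow\Gn[m+1]{Y_{0,m+1}}$. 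The backward legs alternate between $\ell_!$ and $r_!$, matching the transition maps $\ell^*,r^*$ defining $\gnerve[\infty]$, so the $g_m$ are compatible by \cref{unit-ell-unit-ell-square}.
\item \cref{ell-r-n-equiv} and \cref{po-weq-along-cubical-mono-is-weq} show inductively that each $\Gn[m]{Y_{0,m}}\to H_m$ is an $n$-equivalence.
\item The diagonal maps $F_!Y_{0,m}\to\gnerve[m]H_m$ assemble into an $n$-equivalence $F_!Y_0\to\gnerve[\infty]H_\infty$ over $\gnerve[\infty]H$.
\end{enumerate}
This is exactly where the two lemmas you mention in passing are needed.

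Step 1 also needs repair. \cref{nonsingular-approximation-cospan} and \cref{F-shriek-approx-cospan} only provide cospans, so they do not give you a map $i_!Y_0\to Y$. An object $Y\in\cSet_n$ is fibrant only in the transferred model structure and need not be Kan (e.g.\ $\Sk^{n+1}K$ for a Kan complex $K$). Hence there is in general no lift $\overline{\Phi}Y\to Y$ or $LY_0\to Y$: a retraction of $Y\to\overline{\Phi}Y$ would make $Y$ Kan. The paper first applies $\Sk^{n+1}$, which turns $Y\to\overline{\Phi}Y$ and $i_!Y_0\to LY_0$ into acyclic cofibrations of the transferred structure, against which $Y$ does lift. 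This yields an $n$-equivalence $w$ from $F_!(\Sk^{n+1}Y_0)$ to $Y$. With $w$ in hand, simply set $v:=u\circ w$. The AP2 square then commutes strictly, and the homotopy you propose to absorb never arises; it only came from extending $u$ separately along $i_!Y_0\to LY_0$.
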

\begin{proof}
    We prove that $\gnerve[\infty] \from \Graph \to \cSet_n$ satisfies the right approximation properties, which suffices by \cref{AP-properties-suffices}.
    Property (AP1) follows from \cref{nerve-reflects-equivs}.

    To show property (AP2), fix a map $f \from X \to \gnerve[\infty] G$ where $X \in \cSet$ is $n$-fibrant in the transferred model structure for $n$-equivalences.
    We structure the proof into the following steps:
    \begin{enumerate}
        \item[] Step 1: Construct a semicubical set $X' \in \cSeti$ and an $n$-equivalence $w \from F_! X' \weto X$.
        \item[] Step 2: Construct a filtration $(X'_m \subseteq X')_{m = n+2}^\infty$ such that $X' = \colim_m X'_m$ and the map $fw \from F_! X' \to \gnerve[\infty] G$ restricts to a natural transformation between filtrations $(fw)_m \from F_! X'_m \to \gnerve[m] G$.
        \item[] Step 3: Construct a filtration of graphs $(H_m \subseteq H_\infty)_{m=n+2}^\infty$ such that each object in the filtration admits a graph map $H_m \to G$, and the cubical map $(fw)_m \from F_! X'_m \to \gnerve[m] G$ factors through the nerve of this graph map as
        \[ F_! X'_m \to \gnerve[m] H_m \to G \]
        % \item[] Step 3: Construct a filtration of graphs $(H_m \subseteq H_\infty)_{m=n+2}^\infty$.
        % This will induce 
        \item[] Step 4: The diagonal of the two-variable filtration 
        \[ \big( \! \gnerve[m_1](H_{m_2}) \subseteq \gnerve[\infty] H_\infty \big)_{m_1, m_2 = n+2}^\infty \] 
        of cubical sets will admit a natural transformation $\gnerve[m] H_m \to \gnerve[m] G$.
        We conclude by constructing a natural $n$-equivalence $F_! X'_m \weto \gnerve[m] H_m$ and showing that the square
        \[ \begin{tikzcd}
            X \ar[r, "f"] & \gnerve[\infty] G \\
            F_! X' \ar[u, "w", "\sim"'] \ar[r, "\sim"] & \gnerve[\infty] H_\infty \ar[u]
        \end{tikzcd} \]
        commutes.
    \end{enumerate}
    We now proceed with the proof.

    \textit{Step 1.} We invoke \cref{fondind-presheaf-cat-equiv,nonsingular-approximation-cospan} to construct a cospan of weak equivalences
    \[ X \weto \tilde{X} \weot i_! \tilde{X}' \]
    where $\tilde{X}'$ is a semicubical set and $X \weto \tilde{X}$ is an acyclic cofibration in the Grothendieck model structure.
    The functor $\Sk^{n+1} \from \cSet \to \cSet$ is a left Quillen functor from the Grothendieck model structure to the $n$-transferred model structure, thus yielding a cospan
    \[ \Sk^{n+1} X \weto \Sk^{n+1} \tilde{X} \weot \Sk^{n+1} i_! \tilde{X}' \]
    where all maps are $n$-equivalences and $\Sk^{n+1} X \to \Sk^{n+1} \tilde{X}$ is an acyclic cofibration in the $n$-transferred model structure.
    Since $X$ is $n$-fibrant, the triangle
    \[ \begin{tikzcd}
        \Sk^{n+1} X \ar[r, hook, "\sim"] \ar[d, "\sim"'] & X \\
        \Sk^{n+1} \tilde{X} \ar[ur, dotted, "{(\sim)}"']
    \end{tikzcd} \]
    admits a lift, which is moreover an $n$-equivalence by 2-out-of-3.

    We now invoke \cref{F-shriek-approx-cospan} to construct a cospan of weak equivalences
    \[ i_! \tilde{X}' \weto L \tilde{X}' \weot F_! \tilde{X}' \]
    where $i_! \tilde{X}' \weto L \tilde{X}'$ is an acyclic cofibration in the Grothendieck model structure.
    Again, applying $\Sk^{n+1}$ yields a cospan of $n$-equivalences
    \[ \Sk^{n+1} i_! \tilde{X}' \weto \Sk^{n+1} L \tilde{X}' \weot \Sk^{n+1} F_! \tilde{X}' \]
    where $\Sk^{n+1} i_! \tilde{X}' \weto \Sk^{n+1} L \tilde{X}$ is an acyclic cofibration in the $n$-transferred model structure.
    Again, $X$ is $n$-fibrant, hence the triangle
    \[ \begin{tikzcd}
        \Sk^{n+1} i_! \tilde{X}' \ar[r, "\sim"] \ar[d, "\sim"] & \Sk^{n+1} \tilde{X} \ar[r, dotted, "\sim"] & X \\
        \Sk^{n+1} L \tilde{X}' \ar[urr, dotted, "(\sim)"']
    \end{tikzcd} \]
    admits a lift, which is an $n$-equivalence by 2-out-of-3.
    % Observe that we have natural isomorphisms $\Sk^{n+1} i_! \cong i_! \Sk^{n+1}$ and $\Sk^{n+1} F_! \cong F_! \Sk^{n+1}$.
    Observe that we have a natural isomorphism $\Sk^{n+1} F_! \cong F_! \Sk^{n+1}$.
    We define $X'$ to be the semicubical set $\Sk^{n+1} \tilde{X}'$ and let $w \from F_! X' \weto X$ denote the composite $n$-equivalence
    \[ \begin{tikzcd}
        F_! X' \ar[r, "\cong"] & \Sk^{n+1} F_! \tilde{X}' \ar[r, "\sim"] & \Sk^{n+1} L \tilde{X}' \ar[r, dotted, "\sim"] & X \\
    \end{tikzcd}  \]
    % We introduce the notation $X' := \Sk^n \tilde{X}'$ and 
    % We use this lift to define an $n$-equivalence $w \from i_! (\Sk^n X') \to X$ as the composite
    % \[ i_! (\Sk^n X') \weto \Sk^n \tilde{X} \weto X. \]

    \textit{Step 2}. Pulling back along $fw$ forms a functor
    \[ (fw)^* \from \cSet \slice \gnerve[\infty] G \to \cSet \slice F_! X' \]
    which preserves colimits since $\cSet$ is a presheaf category.
    Recall the functor $F_! \from \cSeti \to \cSet$ admits a right adjoint, which we denote by $F^* \from \cSet \to \cSeti$.
    This adjunction induces an adjunction on slice categories
    \[ \begin{tikzcd}
        \cSeti \slice X' \ar[r, bend left, "F_!"{name=U}] & \ar[l, bend left, "\eta^* F^*"{name=B}] \cSet \slice F_! X' \ar[from=U, to=B, phantom, "\perp"]
    \end{tikzcd} \]
    where the right adjoint takes an object in the slice $Y \to F_! X'$, applies $F^*$ to obtain $F^* Y \to F^* F_! X'$, then pulls back along the unit map $\eta \from X' \to F^* F_! X'$ to obtain the object in the slice category $\cSeti \slice X'$ depicted on the right in:
    \[ \begin{tikzcd}
        \eta^* (F^* Y) \ar[r] \ar[d] \ar[rd, phantom, "\pbtick" very near start] & F^* Y \ar[d] \\
        X' \ar[r, "\eta"] & F^* F_! X'
    \end{tikzcd} \]
    This functor preserves colimits since applying $F^*$ preserves colimits (because $F^*$ admits a further right adjoint $F_*$) and pulling back along $\eta$ preserves colimits (again, because $\cSet$ is a presheaf category).
    With this, we have a composite adjunction
    \[ \begin{tikzcd}
        \cSeti \slice X' \ar[r, bend left, "F_!"{name=U}] & \ar[l, bend left, "\eta^* F^*"{name=B}] \cSet \slice F_! X' \ar[r, bend left, "{(fw)_!}"{name=UU}] & \ar[l, bend left, "{(fw)^*}"{name=BB}] \cSet \slice \gnerve[\infty] G \ar[from=U, to=B, phantom, "\perp"] \ar[from=UU, to=BB, phantom, "\perp"]
    \end{tikzcd} \]
    where the right adjoint preserves colimits.
    Thus, the sequential colimit
    \[ \big( \gnerve[n+2] G \ito \gnerve[n+3] G \ito \dots \ito \gnerve[m] G \ito \dots \big) \ito \gnerve[\infty] G \]
    in $\cSet \slice \gnerve[\infty] G$ is sent to a sequential colimit in $\cSeti \slice X'$, which we denote by
    \[ \big( X'_{n+2} \ito X'_{n+3} \ito \dots \ito X'_m \ito \dots \big) \ito X'. \]
    The counit of this composite adjunction instantiated at the object $\gnerve[m] G \in \cSet \slice \gnerve[\infty] G$ arises as the top morphism in the commutative square 
    \[ \begin{tikzcd}[column sep = 3.2em, row sep = 2.8em]
        F_! X'_m \ar[r, dotted, "{(fw)_m}"] \ar[d, hook] & \gnerve[m] G \ar[d, hook] \\
        F_! X' \ar[r, "fw"] & \gnerve[\infty] G
    \end{tikzcd} \]
    Denote this morphism by $(fw)_m$.
    As $m$ varies, we obtain a map between filtrations $(fw)_\bullet \from F_! X'_\bullet \to \gnerve[\bullet] G$.
    
    % where $i_m$ denotes the structure map $X'_{m} \ito X'$.

    \textit{Step 3}. For each $m$, we factor the map $(fw)_m \from F_! X'_m \to \gnerve[m] G$ using the universal property of the unit $\eta^m_{F_! X'_m} \from F_! X'_m \to \gnerve[m]\reali[m]{F_! X'_m}$ as
    \[ F_! X'_m \to \gnerve[m]\reali[m]{F_! X'_m} \xrightarrow{\gnerve[m] g_m} \gnerve[m] G \]
    for a unique graph map $g_m \from \reali[m]{F_! X'_m} \to G$.
    We rewrite $\reali[m]{F_! X'_m}$ as $\Gn[m]{X'_m}$ so that $g_m$ is a graph map $\Gn[m] X'_m \to G$.

    Let $i_m$ denote the sequence inclusion $X'_m \ito X'_{m+1}$.
    We claim that if $m$ is odd then the left square in:
    \[ \begin{tikzcd}[column sep = 3.5em, row sep = 3em]
        \Gn[m+1]{X'_m} \ar[r, "{\Gn[m+1]i_m}", hook] \ar[d, "\ell_!"'] & \Gn[m+1]{X'_{m+1}} \ar[d, "g_{m+1}"] \\
        \Gn[m]{X'_m} \ar[r, "g_m"] & G
    \end{tikzcd} \qquad \begin{tikzcd}[column sep = 3.5em, row sep = 3em]
        \Gn[m+1]{X'_m} \ar[r, "{\Gn[m+1]i_m}"] \ar[d, "r_!"'] & \Gn[m+1]{X'_{m+1}} \ar[d, "g_{m+1}"] \\
        \Gn[m]{X'_m} \ar[r, "g_m"] & G
    \end{tikzcd} \]
    commutes, and if $m$ is even then the right square commutes.
    We prove this claim for $m$ odd, as the case of $m$ even is analogous.
    By adjointness, it suffices to verify that the square commutes after applying $\gnerve[m+1]$ and precomposing with the unit $\eta^{m+1}_{F_! X'_m} \from F_! X'_m \to \gnerve[m+1] { \Gn[m+1] X'_m}$:
    \[ \begin{tikzcd}[column sep = 3.4em, row sep = 3em]
        F_! X'_m \ar[r, "\eta^{m+1}_{F_! X'_m}"] & \gnerve[m+1]\Gn[m+1]{X'_m} \ar[r, hook, "{\gnerve[m+1] \Gn[m+1] i_m}"] \ar[d, "{\gnerve[m+1] \ell_!}"'] &[+1.5em] \gnerve[m+1]\Gn[m+1]{X'_{m+1}} \ar[d, "{\gnerve[m+1] g_{m+1}}"] \\
        {} & \gnerve[m+1]\Gn[m]{X'_m} \ar[r, "{\gnerve[m+1] g_m}"] & \gnerve[m+1]G
    \end{tikzcd} \]
    We now compute:
    \[ \begin{array}{r@{\ }l l}
        \gnerve[m+1] g_{m+1} \circ \gnerve[m+1] \Gn[m+1] i_m \circ \eta^{m+1}_{F_! X'_m} &= \gnerve[m+1] g_{m+1} \circ \eta^{m+1}_{F_! X'_{m+1}} \circ F_! i_m & \text{by naturality of $\eta^{m+1}$} \\
        &= (fw)_{m+1} \circ F_! i_m & \text{by definition of $g_{m+1}$} \\
        &= \ell^* \circ (fw)_m & \text{by naturality of $(fw)_\bullet$} \\
        &= \ell^* \circ \gnerve[m] g_m \circ \eta^m_{F_! X'_m} & \text{by definition of $g_m$} \\
        &= \gnerve[m+1] g_m \circ \ell^* \circ \eta^m_{F_! X'_m} & \text{by naturality of $\ell^*$} \\
        &= \gnerve[m+1] g_m \circ \gnerve[m+1] \ell_! \circ \eta^{m+1}_{F_! X'_m} & \text{by \cref{unit-ell-unit-ell-square}},
    \end{array} \]
    hence the desired square commutes.
    
    With this, we have a cone diagram
    % \[ \begin{tikzcd}[sep = large]
    %     {} & {} & {} & {} & {} \\
    %     {} & {} & {} & \Gn[m] X'_m \ar[rddd, "g_{m}"] \ar[ur, phantom, "\iddots"] & {} \\
    %     {} & \Gn[n+4] X'_{n+3} \ar[r, "{\Gn[n+4] i_{n+3}}"] \ar[d, "{\ell_!}"', "{r_!}"] & \Gn[n+4] X'_{n+4} \ar[ur, phantom, "\iddots"] \ar[rrdd, "g_{n+4}"] & {} \\
    %     \Gn[n+3] X'_{n+2} \ar[r, "{\Gn[n+3] i_{n+2}}"] \ar[d, "{\ell_!}"', "{r_!}"] & \Gn[n+3] X'_{n+3} \ar[rrrd, "g_{n+3}"] & {} & {} \\
    %     \Gn[n+2] X'_{n+2} \ar[rrrr, "g_{n+2}"] & {} & {} & {} & G
    % \end{tikzcd} \]
    \[ \begin{tikzcd}[column sep = 0.2em, row sep = 2.4em]
        {} & \Gn[n+3] X'_{n+2} \ar[ld, "\ell_!"', "r_!"] \ar[rd, "{\Gn[n+3] i_{n+2}}" description] & {} & \Gn[n+4] X'_{n+3} \ar[ld, "\ell_!"', "r_!"] \ar[rd, "{\Gn[n+3] i_{n+2}}" description] & {} & {} \ar[ld] \ar[r, phantom, "\dots"] &[+1em] {} \ar[rd] & {} & {} \ar[ld] \ar[r, phantom, "\dots"] &[+1ex] {} \\
        \Gn[n+2] X'_{n+2} & {} & \Gn[n+3] X'_{n+3} & {} & \Gn[n+4] X'_{n+4} & {} & {} & \Gn[m] X'_{m} & {} \\[+1em]
        {} & {} & {} & G \ar[from=ulll, "g_{n+2}"', bend right=15] \ar[from=ul, "g_{n+3}" description, bend right=8] \ar[from=ur, "g_{n+4}" description] \ar[from=urrrr, "g_{m}"]
        % {} & {} & {} & {} & {} & {} & {} & G \ar[from=ulllllll, "g_{n+2}"', bend right=15] \ar[from=ulllll, "g_{n+3}", bend right=8] \ar[from=ulll, "g_{n+3}"] \ar[from=u, "g_{m}"]
    \end{tikzcd} \]
    which continues for all $m \geq n+2$, where the left-facing maps $\Gn[m+1] X'_m \to \Gn[m] X'_m$ are $\ell_!$ when $m$ is odd and $r_!$ when $m$ is even.
    
    Let $H_{m} \in \Graph$ denote the colimit of the truncated diagram
    \[ H_m := \colim \left( \begin{tikzcd}[column sep = 0.2em, row sep = 2.4em]
        {} & \Gn[n+3] X'_{n+2} \ar[ld, "\ell_!"', "r_!"] \ar[rd, "{\Gn[n+3] i_{n+2}}" description] & {} & \Gn[n+4] X'_{n+3} \ar[ld, "\ell_!"', "r_!"] \ar[rd, "{\Gn[n+3] i_{n+2}}" description] & {} & {} \ar[ld] \ar[r, phantom, "\dots"] &[+1em] {} \ar[rd] & {} \\
        \Gn[n+2] X'_{n+2} & {} & \Gn[n+3] X'_{n+3} & {} & \Gn[n+4] X'_{n+4} & {} & {} & \Gn[m] X'_{m}
    \end{tikzcd} \right) \]
    The inclusions of diagrams induce maps $j_m \from H_{m} \to H_{m+1}$ for all $m \geq n+2$, and the colimit of the sequence
    \[ H_\infty := \colim \big( H_{n+2} \xrightarrow{j_{n+2}} H_{n+3} \xrightarrow{j_{n+3}} H_{n+4} \to \dots \big) \]
    is the colimit of the infinite diagram
    \[ H_\infty = \colim \left(  \begin{tikzcd}[column sep = 0.2em, row sep = 2.4em]
        {} & \Gn[n+3] X'_{n+2} \ar[ld, "\ell_!"', "r_!"] \ar[rd, "{\Gn[n+3] i_{n+2}}" description] & {} & \Gn[n+4] X'_{n+3} \ar[ld, "\ell_!"', "r_!"] \ar[rd, "{\Gn[n+3] i_{n+2}}" description] & {} & {} \ar[ld] \ar[r, phantom, "\dots"] &[+1em] {} \ar[rd] & {} & {} \ar[ld] \ar[r, phantom, "\dots"] &[+1ex] {} \\
        \Gn[n+2] X'_{n+2} & {} & \Gn[n+3] X'_{n+3} & {} & \Gn[n+4] X'_{n+4} & {} & {} & \Gn[m] X'_{m} & {}
        % {} & {} & {} & G \ar[from=ulll, "g_{n+2}"', bend right=15] \ar[from=ul, "g_{n+3}" description, bend right=8] \ar[from=ur, "g_{n+4}" description] \ar[from=urrrr, "g_{m}"]
        % {} & {} & {} & {} & {} & {} & {} & G \ar[from=ulllllll, "g_{n+2}"', bend right=15] \ar[from=ulllll, "g_{n+3}", bend right=8] \ar[from=ulll, "g_{n+3}"] \ar[from=u, "g_{m}"]
    \end{tikzcd} \right) \]
    From this, it follows that the map $g_m \from \Gn[m] X'_m \to G$ factors as:
    \[ \Gn[m] X'_m \to H_m \to H_\infty \to G . \]

    For $m \geq n+2$, we show the colimit cone component $\lambda_m \from \Gn[m] X'_m \to H_m$ is an $n$-equivalence.
    In the base case, $\Gn[n+2] X'_{n+2} \to H_{n+2}$ is an isomorphism by definition.
    For the inductive step, observe that the colimit $H_{m+1}$ can be computed as a pushout:
    \[ \begin{tikzcd}[column sep = 3.8em]
        \Gn[m+1] X'_{m} \ar[d, "{\ell_!}", "{r_!}"'] \ar[r, "{\Gn[m+1] i_m}", hook] & \Gn[m+1] X'_{m+1} \ar[dd, "\lambda_{m+1}"] \\
        \Gn[m] X'_m \ar[d, "\lambda_m"'] \ar[rd, phantom, "\potick" very near end] & {} \\
        H_m \ar[r, hook, "j_{m}"] & H_{m+1} \tag{$\ast$}
    \end{tikzcd} \]
    % where the map $H_m \to H_{m+1}$ coincides with the diagram restriction map.
    By the inductive hypothesis, the bottom left map is an $n$-equivalence.
    By \cref{ell-r-n-equiv}, the top left map is an $n$-equivalence.
    The claim now follows from \cref{po-weq-along-cubical-mono-is-weq}.
    
    \textit{Step 4}. Applying the functors $\gnerve[n+2], \gnerve[n+3], \dots$ to the sequence
    \[ \big( H_{n+2} \ito H_{n+3} \ito H_{n+4} \ito \dots \ito H_{m} \ito \dots \big) \to H_\infty \to G \]
    yields a commutative grid:
    \[ \begin{tikzcd}[column sep = small, cramped]
        { \big( \gnerve[n+2] H_{n+2}} \ar[r, hook] \ar[d, "{\ell^*}"', "{r^*}", hook] & {\gnerve[n+2] H_{n+3}} \ar[r, hook] \ar[d, "{\ell^*}"', "{r^*}"] & {\gnerve[n+2] H_{n+4}} \ar[r, hook] \ar[d, "{\ell^*}"', "{r^*}"] & {\dots} \ar[r, hook] & {\gnerve[n+2] H_{m}} \ar[r, hook] \ar[d, "{\ell^*}"', "{r^*}", hook] & {\dots \big)} \ar[r, hook] & {\gnerve[n+2] H_\infty} \ar[r] \ar[d, "{\ell^*}"', "{r^*}"] & {\gnerve[n+2] G} \ar[d, "{\ell^*}"', "{r^*}"] \\
        { \big( \gnerve[n+3] H_{n+2}} \ar[r, hook] \ar[d, "{\ell^*}"', "{r^*}", hook] & {\gnerve[n+3] H_{n+3}} \ar[r, hook] \ar[d, "{\ell^*}"', "{r^*}", hook] & {\gnerve[n+3] H_{n+4}} \ar[r, hook] \ar[d, "{\ell^*}"', "{r^*}"] & {\dots} \ar[r, hook] & {\gnerve[n+3] H_{m}} \ar[r, hook] \ar[d, "{\ell^*}"', "{r^*}", hook] & {\dots \big)} \ar[r, hook] & {\gnerve[n+3] H_\infty} \ar[r] \ar[d, "{\ell^*}"', "{r^*}", hook] & {\gnerve[n+3] G} \ar[d, "{\ell^*}"', "{r^*}"] \\
        % { \big( \gnerve[n+4] H_{n+2}} \ar[r, hook] \ar[d, "{\ell^*}"', "{r^*}"] & {\gnerve[n+4] H_{n+3}} \ar[r, hook] \ar[d, "{\ell^*}"', "{r^*}"] & {\gnerve[n+4] H_{n+4}} \ar[r, hook] \ar[d, "{\ell^*}"', "{r^*}"] & {\dots} \ar[r, hook] & {\gnerve[n+4] H_{m}} \ar[r, hook] \ar[d, "{\ell^*}"', "{r^*}"] & {\dots \big)} \ar[r, hook] & {\gnerve[n+4] H_\infty} \ar[r] \ar[d, "{\ell^*}"', "{r^*}"] & {\gnerve[n+4] G} \ar[d, "{\ell^*}"', "{r^*}"] \\
        % { \big( \gnerve[n+4] H_{n+2}} \ar[r, hook] & {\gnerve[n+4] H_{n+3}} \ar[r, hook] & {\gnerve[n+4] H_{n+4}} \ar[r, hook] & {\dots} \ar[r, hook] & {\gnerve[n+4] H_{m}} \ar[r, hook] & {\dots \big)} \ar[r, hook] & {\gnerve[n+4] H_\infty} \ar[r] & {\gnerve[n+4] G} \\
        \vdots \ar[d, hook] & \vdots \ar[d, hook] & \vdots \ar[d, hook] & \ddots & \vdots \ar[d, hook] & \ddots & \vdots \ar[d, hook] & \vdots \ar[d, hook] \\
        { \big( \gnerve[\infty] H_{n+2}} \ar[r, hook] & {\gnerve[\infty] H_{n+3}} \ar[r, hook] & {\gnerve[\infty] H_{n+4}} \ar[r, hook] & {\dots} \ar[r, hook] & {\gnerve[\infty] H_{m}} \ar[r, hook] & {\dots \big)} \ar[r, hook] & {\gnerve[\infty] H_\infty} \ar[r] & {\gnerve[\infty] G}
    \end{tikzcd} \]
    where each vertical column is a sequential colimit.
    From this, we extract a map from the diagonal filtration to the rightmost filtration
    \[ \begin{tikzcd}
        \gnerve[n+2] H_{n+2} \ar[r] \ar[d, hook] & \gnerve[n+2] G \ar[d, hook] \\
        \gnerve[n+3] H_{n+3} \ar[r] \ar[d, hook] & \gnerve[n+3] G \ar[d, hook] \\
        \vdots \ar[d, hook] & \vdots \ar[d, hook] \\
        \gnerve[\infty] H_\infty \ar[r] & \gnerve[\infty] G
    \end{tikzcd} \]
    where, again, both vertical columns are sequential colimits.

    For $m \geq n+2$, the square
    \[ \begin{tikzcd}[column sep = 4em, row sep = 3em]
        F_! X'_{m} \ar[r, "{\eta^{m}_{F_! X'_m}}"] \ar[d, "F_! i_m"'] & \gnerve[m] \Gn[m]{X'_m} \ar[r, "{\gnerve[m] \lambda_m }"] & \gnerve[m] H_m \ar[d, "{\ell^* \circ \gnerve[m]j_m}"] \\
        F_! X'_{m+1} \ar[r, "{\eta^{m+1}_{F_! X'_{m+1}}}"] & \gnerve[m+1] \Gn[m+1]{X'_{m+1}} \ar[r, "{\gnerve[m+1] \lambda_{m+1}}"] & \gnerve[m+1] H_{m+1}
    \end{tikzcd} \]
    commutes by a straightforward calculation
    \[ \begin{array}{r@{\ }l l}
        \ell^* \circ \gnerve[m]j_m \circ \gnerve[m]\lambda_m \circ \eta^{m}_{F_! X'_m} &= \ell^* \circ \gnerve[m] (j_m \circ \lambda_m) \circ \eta^{m}_{F_! X'_m} \\
        &= \gnerve[m+1](j_m \circ \lambda_m) \circ \ell^* \circ \eta^{m}_{F_! X'_m} & \text{by naturality of $\ell^*$} \\
        &= \gnerve[m+1](j_m  \circ \lambda_m) \circ \gnerve[m+1] \ell_! \circ \eta^{m+1}_{F_! X'_m} & \text{by \cref{unit-ell-unit-ell-square}} \\
        &= \gnerve[m+1] (j_m \circ \lambda_m \circ \ell_!) \circ \eta^{m+1}_{F_! X'_m} \\
        &= \gnerve[m+1] (\lambda_{m+1} \circ \Gn[m+1] i_m) \circ \eta^{m+1}_{F_! X'_m} & \text{by commutativity of $(\ast)$} \\
        &= \gnerve[m+1] \lambda_{m+1} \circ \gnerve[m+1] \Gn[m+1] i_m \circ \eta^{m+1}_{F_! X'_m} \\
        &= \gnerve[m+1] \lambda_{m+1} \circ \eta^{m+1}_{F_! X'_{m+1}} \circ F_! i_m & \text{by naturality of $\eta^{m+1}$}.
    \end{array} \]
    With this, we have a map between filtrations $F_! X'_{\bullet} \to \gnerve[\bullet] H_\bullet$.
    Moreover, the components of the filtration
    \[ F_! X'_m \xrightarrow{\eta} \gnerve[m] \Gn[m] X'_m \xrightarrow{\gnerve[m] \lambda_m} \gnerve[m] H_m \]
    are $n$-equivalences: the left map $\eta$ is an $n$-equivalence by \cref{Gn-weq}, and we showed previously that the right map is an $n$-equivalence.
    Both filtrations consist of monomorphisms, hence they are homotopy colimits in the Cisinski model structure for $n$-types.
    From this, it follows that the induced map on colimits $F_! X' \to \gnerve[\infty] H_\infty$ is an $n$-equivalence.

    It remains only to show that the square
    \[ \begin{tikzcd}
        X \ar[r, "f"] & \gnerve[\infty] G \\
        F_! X' \ar[u, "w", "\sim"'] \ar[r, "\sim"] & \gnerve[\infty] H_\infty \ar[u]
    \end{tikzcd} \]
    commutes.
    This can be rephrased as commutativity of the triangle
    \[ \begin{tikzcd}
        F_! X' \ar[rr, "fw"] \ar[rd, "\sim"] & {} & \gnerve[\infty] G \\
        {} & \gnerve[\infty] H \ar[ur, "\sim"]
    \end{tikzcd} \]
    This triangle arises from a triangle of maps between sequential colimit diagrams whose components are given by:
    \[ \begin{tikzcd}
        F_! X'_m \ar[rr, "(fw)_m"] \ar[rd, "\sim"] & {} & \gnerve[m] G \\
        {} & \gnerve[m] H_m \ar[ur, "\sim"]
    \end{tikzcd} \]
    By construction, $(fw)_m$ factored as
    \[ \begin{tikzcd}
        F_! X'_m \ar[r, "{\eta_{F_! X'_m}}"] \ar[d, "\sim"'] & \gnerve[m] \Gn[m] X'_m \ar[d, "{\gnerve[m] g_m}"] \\
        \gnerve[m] H_m \ar[r, "\sim"] & \gnerve[m] G 
    \end{tikzcd} \]
    and, again, $g_m$ factored as
    \[ \begin{tikzcd}
        F_! X'_m \ar[r, "{\eta_{F_! X'_m}}"] \ar[d, "\sim"'] & \gnerve[m] \Gn[m] X'_m \ar[d, "{\gnerve[m] g_m}"] \ar[ld, "{\gnerve[m] \lambda_m}" description] \\
        \gnerve[m] H_m \ar[r, "\sim"] & \gnerve[m] G 
    \end{tikzcd} \]
    The top triangle now commutes by definition.
\end{proof}

 \section{Computing explicit discrete homotopy groups} \label{sec:applications}

In this final section of the paper, we derive some consequences of our earlier results as they pertain to specific applications of new discrete homotopy invariants.
One particularly active area of research is identifying discrete models of spheres, with several candidates proposed, for example, $\bd \gcube{5}{3}$.
Despite the seeming similarity to $S^2$, its discrete homotopy groups were only known up to degree $2$.
Contemporary research on the topic includes work by Babson who, during his talk at the American Institute for Mathematics in 2023, sketched a plausible way of constructing a non-trivial element in its third discrete homotopy group.

Our methods subsume previous approaches, allowing us to compute the entire group, rather than construct a non-trivial element therein, and additionally give a description of the fourth and fifth discrete homotopy groups.
For larger models of spheres, the situation is even better and, likewise, more can be said about tentative models of higher $S^n$'s.

In the second part of this section, we give an explicit prescription for turning a simplicial complex into a graph whose nerve is $n$-equivalent to the complex.
Given the generality in which the construction works (an arbitrary simplicial complex), we lament that it quickly balloons in size.
Indeed, to recover the $2$-type of $\mathbb{RP}^2$, we construct a graph with $1,801$ vertices and more than $2,700$ edges.

\subsection{Discrete models of spheres}

We begin by showing that the cylinder functor $\Cyl{m} \from \Fun(\Lambda, \Graph) \to \Graph$ is homotopical with respect to $n$-equivalences, provided the length $m$ is sufficiently large.
In particular, this shows that the suspension functor $\susp[m] \from \Graph \to \Graph$ preserves $n$-equivalences for large enough $m$.
\begin{theorem} \label{Cyl-preserves-n-equivs}
    Let $\Lambda$ denote the category with three objects $0, 1, 2$ and morphisms $0 \to 1$ and $0 \to 2$.
    For $m \geq 0$, the functor
    \[ \Cyl{m+2} \from \Fun(\Lambda, \Graph) \to \Graph \]
    sends objectwise $n$-equivalences between diagrams to $n$-equivalences for all $n \in \{ 0, \dots, m-2 \}$.
\end{theorem}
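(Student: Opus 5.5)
Fix a morphism of spans, i.e.\ a natural transformation $(\alpha_G, \alpha_H, \alpha_K)$ from $H \xleftarrow{f} G \xrightarrow{g} K$ to $H' \xleftarrow{f'} G' \xrightarrow{g'} K'$ whose components are $n$-equivalences, with $n \leq m-2$. I would present $\Cyl{m+2}(f,g)$ as a square and compare the two squares for $(f,g)$ and $(f',g')$, in the style of \cref{pi-map-n-equiv}. Take $p = m$ and $q_1 = q_2 = 1$ in \cref{double-mapping-cylinder-is-n-skeletal-po}. Then the square
\[ \begin{tikzcd}[column sep = 3em]
    G \gtimes I_{m} \ar[r] \ar[d] & \Cyl{m+1}(f, \id[G]) \ar[d] \\
    \Cyl{m+1}(\id[G], g) \ar[r] & \Cyl{m+2}(f, g)
\end{tikzcd} \]
is an $(m+1)$-skeletal pushout of monomorphisms. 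By \cref{double-mapping-cyl-inclusions-hopo}, applying $\gnerve[1]$ (or any $\gnerve[k]$) gives a homotopy pushout of $m$-types. Since $n \leq m$, it is in particular a homotopy pushout of $n$-types, because left Bousfield localization preserves homotopy pushouts. The same holds for $(f',g')$.

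The morphism of spans induces, by functoriality of each construction, a map between these two squares, i.e.\ a commutative cube. I will then check that each of the three corner maps other than the terminal one becomes an $n$-equivalence after applying $\gnerve[1]$. For the top-left corner, $\alpha_G \gtimes I_m$ sits over $\alpha_G$ via the projections $G \gtimes I_m \to G$, and these projections are homotopy equivalences (the contraction $(v,t)\mapsto (v,\min(t,t'))$ of $I_m$). For the top-right corner there is a commutative square with the retractions $\sigma(f)\from \Cyl{m+1}(f,\id[G]) \to H$ and $\sigma(f')$, which are homotopy equivalences by \cref{cyl-inclusion-is-htpy-equiv}; naturality of $\sigma$ in the span is routine, and the bottom map is $\alpha_H$. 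The bottom-left corner is handled symmetrically using $\sigma(g)$ and $\alpha_K$. By 2-out-of-3, all three corner maps are $n$-equivalences.

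Applying $\gnerve[\infty]$ (or $\gnerve[1]$) turns the cube into a map of homotopy pushouts of $n$-types in the Cisinski model structure for $n$-types, with three components that are $n$-equivalences. The gluing lemma in that model category then shows that the induced map $\gnerve[1]\Cyl{m+2}(f,g) \to \gnerve[1]\Cyl{m+2}(f',g')$ is an $n$-equivalence. By \cref{nerve-reflects-equivs}, the map $\Cyl{m+2}(f,g)\to\Cyl{m+2}(f',g')$ is an $n$-equivalence of graphs.

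The main point to watch is the index bookkeeping. I must confirm that taking $p=m$ is permitted, that is, that $\Cyl{p+q_1+q_2} = \Cyl{m+2}$ with $q_i \ge 1$, and that the resulting homotopy-pushout degree $p = m$ dominates $n$. This already seems to give the stronger range $n \le m$, which comfortably covers the stated $n \le m-2$. The remaining subtlety is to verify that the maps $\ell_{m+1}, r_{m+1}$ and the retractions $\sigma$ are genuinely natural in morphisms of spans, so that the cube commutes. This should be direct from the explicit formulas given earlier.
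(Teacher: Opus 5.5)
Your proposal is correct and follows the paper's proof essentially step for step. Both use the same cube built from the square with $p = m$ and $q_1 = q_2 = 1$, get the front and back homotopy pushouts from \cref{double-mapping-cyl-inclusions-hopo}, apply the gluing lemma, and handle the three span corners by homotopy equivalences down to $G$, $H$ and $K$. Two small points in your favour: your use of the retractions $\sigma(f)$ and $\sigma(g)$ for the side corners is the accurate comparison (the paper loosely writes $H \gtimes I_m$ and $K \gtimes I_m$ there), and the range $n \le m$ you obtain is exactly the one used in \cref{susp-preserves-n-equivs}.
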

\begin{proof}
    Suppose
    \[ \begin{tikzcd}
        H \ar[d, "\sim"'] & G \ar[l, "f"'] \ar[r, "g"] \ar[d, "\sim"] & K \ar[d, "\sim"] \\
        H' & G' \ar[l, "{f'}"'] \ar[r, "{g'}"] & K'
    \end{tikzcd} \]
    is a morphism of diagrams which is an objectwise $n$-equivalence.
    From this, we obtain a commutative cube
    \[ \begin{tikzcd}
        \gnerve[1]{(G \gtimes I_m)} \ar[rr] \ar[dd] \ar[rd] & {} & \gnerve[1]{\Cyl{m+1}(f, \id[G])} \ar[dd] \ar[rd] & {} \\
        {} & \gnerve[1]{(G' \gtimes I_m)} \ar[rr, crossing over] & {} & \gnerve[1]{\Cyl{m+1}(f', \id[G'])} \ar[dd] \\
        \gnerve[1]{\Cyl{m+1}(\id[G], g)} \ar[rr] \ar[rd] & {} & \gnerve[1]{\Cyl{m+2}(f, g)} \ar[rd] & {} \\
        {} & \gnerve[1]{\Cyl{m+1}(\id[G'], g')} \ar[rr] \ar[from=uu, crossing over] & {} & \gnerve[1]{\Cyl{m+2}(f', g')}
    \end{tikzcd} \] 
    where the front and back squares are homotopy pushouts by \cref{double-mapping-cyl-inclusions-hopo}.
    To show that the diagonal map between the bottom-right objects is an $n$-equivalence, it suffices to show that the other three diagonal maps are $n$-equivalences.

    The three diagonal maps are obtained by applying $\gnerve[1]$ to the top maps in the squares:
    \[ \begin{tikzcd}
        G \gtimes I_m \ar[r] \ar[d, "\sim"'] & G' \gtimes I_m \ar[d, "\sim"] \\
        G \ar[r, "\sim"] & G'
    \end{tikzcd} \quad \begin{tikzcd}
        H \gtimes I_m \ar[r] \ar[d, "\sim"'] & H' \gtimes I_m \ar[d, "\sim"] \\
        H \ar[r, "\sim"] & H'
    \end{tikzcd} \quad \begin{tikzcd}
        K \gtimes I_m \ar[r] \ar[d, "\sim"'] & K' \gtimes I_m \ar[d, "\sim"] \\
        K \ar[r, "\sim"] & K'
    \end{tikzcd} \]
    The vertical maps are homotopy equivalences and the bottom maps are $n$-equivalences by assumption, hence the top maps are $n$-equivalences.
\end{proof}
\begin{corollary} \label{susp-preserves-n-equivs}
    For $m \geq 0$, the suspension functor $\susp[m+2] \from \Graph \to \Graph$ preserves $n$-equivalences for all $n \in \{ 0, \dots, m \}$. \qed
\end{corollary}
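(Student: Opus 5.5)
The corollary should follow directly from \cref{Cyl-preserves-n-equivs}, because the suspension is a special case of the double mapping cylinder. By definition, $\susp[m+2]{G} = \Cyl{m+2}(\bang, \bang)$ for the span $I_0 \xleftarrow{\bang} G \xrightarrow{\bang} I_0$. A graph map $h \from G \to G'$ therefore yields a morphism of diagrams in $\Fun(\Lambda, \Graph)$:
\[ \begin{tikzcd}
    I_0 \ar[d, equal] & G \ar[l, "\bang"'] \ar[r, "\bang"] \ar[d, "h"] & I_0 \ar[d, equal] \\
    I_0 & G' \ar[l, "\bang"'] \ar[r, "\bang"] & I_0
\end{tikzcd} \]
The induced map on double mapping cylinders is $\susp[m+2]{h}$.

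Suppose $h$ is an $n$-equivalence. Identity maps are isomorphisms, hence $n$-equivalences, so the morphism above is an objectwise $n$-equivalence. Applying \cref{Cyl-preserves-n-equivs} then shows that $\susp[m+2]{h}$ is an $n$-equivalence.

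The only point that needs care is the range of $n$. As stated, \cref{Cyl-preserves-n-equivs} gives the conclusion only for $n \leq m-2$, while the corollary asks for $n \leq m$. To cover all of $n \leq m$, I would redo the proof of \cref{Cyl-preserves-n-equivs} in this special case. There both legs of the span are $\bang \from G \to I_0$, so the cube of that proof should use the decomposition of $\Cyl{m+2}$ from \cref{double-mapping-cyl-inclusions-hopo}(1) with $p = m$ and $q_1 = q_2 = 1$. This produces homotopy pushouts of $m$-types, so the cube argument goes through for every $n \leq m$.

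The remaining inputs are the same as in that proof. First, the maps on the cylinders $\Cyl{m+1}(\bang, \id)$ and $\Cyl{m+1}(\id, \bang)$ are weak equivalences, since both are contractible by \cref{cyl-inclusion-is-htpy-equiv}. Second, the map $G \gtimes I_m \to G' \gtimes I_m$ is an $n$-equivalence, since it is homotopy equivalent to $h$.
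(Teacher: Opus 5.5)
Your proof is correct and follows the paper's argument: the corollary is \cref{Cyl-preserves-n-equivs} specialized to the span $I_0 \xleftarrow{\bang} G \xrightarrow{\bang} I_0$, with identities on the two outer objects. Your concern about the range is justified. The paper's proof of \cref{Cyl-preserves-n-equivs} already uses exactly the $p = m$, $q_1 = q_2 = 1$ decomposition you describe, so it actually proves the range $n \leq m$; the bound $m-2$ in that theorem's statement looks like a misprint. Passing from homotopy pushouts of $m$-types to homotopy pushouts of $n$-types for $n \leq m$ is harmless, since every $m$-equivalence is an $n$-equivalence.
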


% \subsection{Discrete models of spheres}

We use \cref{cyl-cset-graph-equiv-mono,cyl-cset-graph-equiv-zigzag} to construct discrete models of the topological spheres up to $n$-equivalence.

Recall from \cref{ex:boundary-as-cyl} that the boundary of the $k$-cube $\bd \gcube{m}{k}$ of length $m$ arises as a double mapping cylinder.
The analogous statement is true in cubical sets, giving the following:
\begin{theorem} \label{sphere-model-bd-n-equiv}
    For $k, n \geq 0$ and $m \geq n$, the unit map
    \[ \eta_{\bd \cube{k}} \from \bd \cube{k} \to \gnerve[m]{\bd \gcube{m}{k}} \]
    is an $n$-equivalence.
\end{theorem}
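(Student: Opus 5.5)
We argue by induction on $k$, writing $\bd \cube{k}$ as a cubical double mapping cylinder and invoking \cref{cyl-cset-graph-equiv-mono}. For $k = 0$, $\bd \cube{0} = \varnothing$ and $\bd \gcube{m}{0} = \varnothing$, so the unit map is an isomorphism. For $k = 1$, $\bd \cube{1} = \cube{0} \sqcup \cube{0}$ and $\bd \gcube{m}{1}$ consists of two vertices (for $m \geq 1$), so again the unit is an isomorphism. The case $m = 0$ is degenerate: then $n = 0$, and we only need a bijection on components. This we check directly, or we exclude it by noting that both sides are connected for $k \geq 2$.

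For the inductive step, the cubical analogue of \cref{ex:boundary-as-cyl} gives an isomorphism $\bd \cube{k+1} \cong \Cyl{}^{\boxcat}(i^k, i^k)$, where $i^k \from \bd \cube{k} \ito \cube{k}$ is the boundary inclusion. Concretely, $\cube{k+1} \cong \cube{k} \gprod \cube{1}$, and $\bd \cube{k+1}$ is the union of $\bd \cube{k} \gprod \cube{1}$ with the two ends $\cube{k} \gprod \bd\cube{1}$. This is exactly the colimit defining $\Cyl{}^{\boxcat}(i^k, i^k)$. Under $\reali[m]{\uvar}$ and \cref{cyl-reali-iso}, this isomorphism is sent to the graph isomorphism $\Cyl{m}(i^k_m, i^k_m) \cong \bd \gcube{m}{k+1}$ of \cref{ex:boundary-as-cyl}, using $\reali[m]{\bd \cube{k}} \cong \bd \gcube{m}{k}$. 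That last identification itself follows inductively from the same decomposition and the fact that realization preserves colimits. Naturality of the unit then identifies $\eta_{\bd \cube{k+1}}$ with the comparison map $\eta \from \Cyl{}^{\boxcat}(i^k, i^k) \to \gnerve[m]\big(\Cyl{m}(\reali[m]{i^k}, \reali[m]{i^k})\big)$.

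Both maps $i^k$ are monomorphisms, so \cref{cyl-cset-graph-equiv-mono} applies once we know that $\eta_{\bd \cube{k}}$ and $\eta_{\cube{k}}$ are $n$-equivalences. The first holds by the inductive hypothesis. For the second, $\reali[m]{\cube{k}} = \gcube{m}{k}$ is contractible via the evident homotopy contracting each coordinate. Hence $\gnerve[m]\gcube{m}{k}$ is weakly contractible by \cref{nerve-reflects-equivs}, and $\cube{k}$ is contractible as well. So $\eta_{\cube{k}}$ is a weak equivalence, in particular an $n$-equivalence. This completes the induction.

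The main obstacle is the identification in the second paragraph. We must check carefully that the isomorphism $\Cyl{m}(i^k_m, i^k_m) \cong \bd \gcube{m}{k+1}$ of \cref{ex:boundary-as-cyl} is the realization of the cubical isomorphism, and not merely an abstract isomorphism. Only then does the unit map at $\bd \cube{k+1}$ agree with the comparison map $\eta$ of \cref{cyl-cset-graph-equiv-mono}. Two further points need care. One is the hypothesis $m \geq n$ in \cref{cyl-cset-graph-equiv-mono}, which is exactly our assumption. The other is the subtlety that $\bd \gcube{1}{k}$ is not an induced subgraph when $m = k = 1$; we would verify that this does not affect the double mapping cylinder, since $\Cyl{m}$ is computed by colimits of graph maps, not induced subgraphs.
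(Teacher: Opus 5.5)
Your proposal is correct and follows the paper's proof essentially verbatim: you induct on $k$, use the decomposition $\bd\cube{k+1}\cong\Cyl{}^{\boxcat}(i^k,i^k)$, and apply \cref{cyl-cset-graph-equiv-mono}, with the inductive hypothesis covering $\eta_{\bd\cube{k}}$ and contractibility of $\cube{k}$ and $\gcube{m}{k}$ covering $\eta_{\cube{k}}$. The only difference is where the degenerate case is split off: the paper treats $n=0$ first so that $m\geq 1$, which is needed because $\reali[0]{\uvar}$ sends $\bd\cube{1}\ito\cube{1}$ to a non-injective map; your separate treatment of $m=0$ does the same job, and your extra $k=1$ base case is harmless but unnecessary.
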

\begin{proof}
    For $n = 0$, this is straightforward to verify.
    Thus, we may assume $n \geq 1$ (and hence $m \geq 1$).

    We now proceed by induction on $k$.
    The case $k = 0$ follows since both the domain and codomain are empty.
    For the inductive step, fix $k \geq 0$ and suppose $\eta_{\bd \cube{k}}$ is an $n$-equivalence.
    The cubical set $\bd \cube{k+1}$ is a double-mapping cylinder $\Cyl{}^{\boxcat}(i^k, i^k)$, where $i^k$ denotes the boundary inclusion $\bd \cube{k} \ito \cube{k}$.
    By \cref{cyl-cset-graph-equiv-mono}, it suffices to show the unit morphisms $\eta_{\bd \cube{k}}$ and $\eta_{\cube{k}}$ are $n$-equivalences.
    For $\eta_{\bd \cube{k}} \from \bd \cube{k} \to \gnerve[m]{\bd \gcube{m}{k}}$, we apply the inductive hypothesis.
    For $\eta_{\cube{k}} \from \cube{k} \to \gnerve[m]{\gcube{m}{k}}$, we observe that both the domain and codomain are contractible.
\end{proof}
In particular, setting $m = n$, the boundary $\bd \gcube{n}{k}$ of the $k$-cube of length $n$ is a model for the $(k-1)$-sphere up to $n$-equivalence.
Homotopy and homology groups up to dimension $n$ are invariant under $n$-equivalence (\cref{cubical-homology-inverts-equivs}), yielding the following computation:
\begin{corollary}
    For $m, k \geq 0$, we have isomorphisms
    \[ A_n(\bd \gcube{m}{k}) \cong \pi_n(S^{k-1}) \quad H_n(\bd \gcube{m}{k}) \cong H_n(S^{k-1}). \]
    for all $n \in \{ 0, \dots, m \}$. \qed
\end{corollary}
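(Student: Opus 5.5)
The plan is to deduce the corollary directly from \cref{sphere-model-bd-n-equiv}, together with the invariance of homotopy and homology groups under $n$-equivalences and the comparison between discrete invariants of a graph and cubical invariants of its nerve.

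Fix $m, k \geq 0$ and $n \in \{0, \dots, m\}$. Since $n \leq m$, \cref{sphere-model-bd-n-equiv} applies with this choice of $n$. It says that the unit $\eta_{\bd \cube{k}} \from \bd \cube{k} \to \gnerve[m]{\bd \gcube{m}{k}}$ is an $n$-equivalence. (Here I use $\reali[m]{\bd \cube{k}} \cong \bd \gcube{m}{k}$, as in that theorem.)

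For homotopy groups, the argument is as follows.
\begin{enumerate}
\item Geometric realization gives $\reali{\bd \cube{k}} \cong \bd [0,1]^k \cong S^{k-1}$, with the convention that $S^{-1} = \varnothing$ when $k = 0$.
\item Since $\eta_{\bd\cube{k}}$ is an $n$-equivalence, $\pi_n$ of its realization is an isomorphism. Hence $\pi_n \reali{\gnerve[m]\bd\gcube{m}{k}} \cong \pi_n(S^{k-1})$ at every basepoint.
\item By \cref{nerve-main-thm}, $\gnerve[m] G \to \gnerve[\infty] G$ is a weak equivalence.
\item By \cite[Thm.~4.6]{carranza-kapulkin:cubical-graphs}, as used in \cref{nerve-reflects-equivs}, $A_n(G)$ agrees with the homotopy groups of $\gnerve[\infty] G$.
\end{enumerate}
Combining these gives $A_n(\bd\gcube{m}{k}) \cong \pi_n(S^{k-1})$. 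The cases $k \leq 1$, where the sphere is empty or disconnected, should be read basepointwise or as statements about $A_0$ and $\pi_0$. These are degenerate and can be checked by hand.

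For homology, \cref{cubical-homology-inverts-equivs}(2) shows that $H_n(\eta_{\bd\cube{k}})$ is an isomorphism. So $H_n(\gnerve[m]\bd\gcube{m}{k}) \cong H_n(\bd\cube{k})$. The CW identification in the proof of \cref{cubical-homology-inverts-equivs} gives $H_n(\bd\cube{k}) \cong H_n(S^{k-1})$.

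It remains to identify the left side with discrete homology. By \cref{nerve-main-thm}(2) and \cref{cubical-homology-inverts-equivs}(1), $H_n(\gnerve[m] G) \cong H_n(\gnerve[1] G)$. The observation preceding \cref{discrete-homology-inverts-equivs} identifies $H_n(\gnerve[1] G)$ with $H_n(G)$.

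The only step needing care is the case $m = 0$, where $\gnerve[0]$ appears. In that case $n = 0$, and the statement reduces to counting components, which is checked directly. Otherwise the argument is a routine chaining of the cited results.
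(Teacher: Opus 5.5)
Your deduction is the paper's own: the corollary is recorded with \qed as an immediate consequence of \cref{sphere-model-bd-n-equiv} plus invariance of homotopy and homology in degrees $\leq n$ under $n$-equivalences. Your chaining through \cref{nerve-main-thm}, \cref{nerve-reflects-equivs} and \cref{cubical-homology-inverts-equivs} unpacks that correctly. What fails is your claim that the small cases ``can be checked by hand'', because at two parameter values the statement is false.
\begin{itemize}
\item At $(m,k,n)=(1,2,1)$, every edge of $\gcube{1}{2}$ has its other coordinate in $\{0,1\}$. So $\bd\gcube{1}{2}=\gcube{1}{2}\cong C_4$ (equally $\reali[1]{\bd\cube{2}}\cong C_4$), which is contractible. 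Thus $A_1=0$ and $H_1=0$, not $\ZZ$.
\item At $(m,k,n)=(0,1,0)$, the only vertex of $\gcube{0}{1}=I_0$ has $x_1=0$, so $\bd\gcube{0}{1}$ is a single vertex, while $S^0$ has two components. Here even the identification $\reali[0]{\bd\cube{1}}\cong\bd\gcube{0}{1}$ that you invoke fails, since the left side is $I_0\sqcup I_0$.
\end{itemize}

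The $(1,2)$ case shows that \cref{sphere-model-bd-n-equiv} itself is wrong at $m=n=1$, so no chain of citations can rescue it. Its inductive step applies \cref{cyl-cset-graph-equiv-mono} to $f=g=(\bd\cube{1}\ito\cube{1})$, and hence uses the $q_1=q_2=0$ case of \cref{double-mapping-cylinder-is-n-skeletal-po}. The distance argument there only puts the \emph{vertices} of a map $\gcube{1}{p+1}\to L$ into one piece. Factoring the map also requires the pieces to be induced subgraphs, i.e.\ $\reali[m]{f}$ and $\reali[m]{g}$ must be induced inclusions (cf.\ \cref{induced-subgraph-double-mapping-cylinder-mono}). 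But $\reali[1]{\bd\cube{1}}\ito I_1$ is injective and not induced, as \cref{ex:boundary-as-cyl} itself notes. Concretely, the two copies of $I_3$ each cover all four vertices of $C_4$ but each misses one edge, so the identity $\gcube{1}{2}\to C_4$ factors through neither. The square is therefore not $2$-skeletal, and not even a homotopy pushout of $1$-types: it would predict $S^1$, while $\gnerve[1]C_4$ is contractible.

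For $m\geq 2$ every inclusion $\reali[m]{\bd\cube{k}}\ito\gcube{m}{k}$ is induced, and your argument goes through unchanged. The corollary should therefore assume $m\geq 2$, or exclude $(m,k)\in\{(0,1),(1,2)\}$; the remaining small cases are genuinely checkable by hand (e.g.\ $\bd\gcube{1}{k}=\gcube{1}{k}$ is contractible for $k\geq 3$, and $S^{k-1}$ is simply connected).
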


Another model for the $k$-sphere comes from the suspension functor $\susp[m] \from \Graph \to \Graph$.
We write $\susp[\boxcat] \from \cSet \to \cSet$ for the functor that sends a cubical set $X$ to the double-mapping cylinder $\Cyl{}^{\boxcat}(\bang, \bang)$ of the terminal map $\bang \from X \to \cube{0}$ with itself.
Additionally, let $\cube{1} / \bd \cube{1}$ denote the pushout:
\[ \begin{tikzcd}
    \bd \cube{1} \ar[r, hook] \ar[d] \ar[rd, phantom, "\potick" very near end] & \cube{1} \ar[d] \\
    \cube{0} \ar[r] & \cube{1} / \bd \cube{1}
\end{tikzcd} \]
Explicitly, $\cube{1} / \bd \cube{1}$ consists of a single non-degenerate 1-cube whose source and target have been identified.
The geometric realization of $\cube{1} / \bd \cube{1}$ is homeomorphic to the circle $S^1$.
\begin{theorem}
    For $k \geq 0$, $m \geq k$ and $L \geq 5$, there is a zig-zag of $n$-equivalences:
    \[ \susp[\boxcat]^k (\cube{1} / \bd \cube{1}) \simeq \gnerve[1]{\susp[m+2]^k(C_L)}. \]
\end{theorem}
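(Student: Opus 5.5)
The plan is to induct on $k$. At each step we compare the cubical suspension with the graph suspension using the zig-zag of \cref{construction:zig-zag}, with all $n$-equivalence claims supplied by \cref{cyl-cset-graph-equiv-zigzag}. The statement should be read with $n$ bounded in terms of $m$ (for instance $n \leq m$), which is the range in which \cref{Cyl-preserves-n-equivs} and \cref{cyl-cset-graph-equiv-zigzag} apply.

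The base case $k=0$ asks for a zig-zag of $n$-equivalences $\cube{1}/\bd\cube{1} \simeq \gnerve[1] C_L$ for $L \geq 5$. Both sides realize to $S^1$. On the graph side, $A_1(C_L) \cong \ZZ$ and the higher groups vanish by the known computation of discrete homotopy of cycles (cf. \cite{carranza-kapulkin:cubical-graphs}). I would make the comparison explicit rather than appeal to abstract classification. Send the non-degenerate $1$-cube of $\cube{1}/\bd\cube{1}$ to a generating loop in the Kan complex $\gnerve[\infty] C_L$; this is possible after subdividing, i.e.\ by replacing $\cube{1}/\bd\cube{1}$ by a weakly equivalent cubical circle with $L$ edges that maps into $\gnerve[1]C_L$. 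That map is an isomorphism on $\pi_0$ and $\pi_1$, and both sides are $K(\ZZ,1)$'s. It is therefore a weak equivalence, and in particular an $n$-equivalence. Finally, $\gnerve[1] \to \gnerve[\infty]$ is a weak equivalence by \cref{nerve-main-thm}.

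For the inductive step, suppose we have a zig-zag of $n$-equivalences $Y_k := \susp[\boxcat]^k(\cube{1}/\bd\cube{1}) \simeq \gnerve[1] G_k$, where $G_k := \susp[m+2]^k(C_L)$. First, $\susp[\boxcat] = \Cyl{}^{\boxcat}(\bang,\bang)$ preserves objectwise weak equivalences. It also preserves $n$-equivalences, since it is a homotopy pushout along monomorphisms in the Cisinski model structure for $n$-types. So $Y_{k+1} \simeq \susp[\boxcat]\gnerve[1] G_k$. It remains to compare $\susp[\boxcat]\gnerve[1]G_k$ with $\gnerve[1]\susp[m+2]G_k$. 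I would apply \cref{cyl-cset-graph-equiv-zigzag} with $m$ replaced by $1$ (allowed, since the hypothesis only needs $1 \geq \min(n,1)$), with $X = \gnerve[1]G_k$ and $Y=Z=\cube{0}$. This gives a zig-zag of $n$-equivalences $\susp[\boxcat]\gnerve[1]G_k \simeq \gnerve[1]\susp[3]\reali[1]{\gnerve[1]G_k}$. The hypotheses on the units are satisfied: $\eta_{\gnerve[1]G_k}$ is a weak equivalence by \cref{unit-graph-weq}, and the units at $\cube{0}$ are isomorphisms. Next, the counit $\reali[1]{\gnerve[1]G_k}\to G_k$ is a weak equivalence by \cref{counit-acyclic-fib}, and the suspension of length $m+2$ turns it into an $n$-equivalence by \cref{susp-preserves-n-equivs}. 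What is still missing is a comparison between lengths $3$ and $m+2$.

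The main obstacle is exactly this length mismatch: the zig-zag produces suspensions of length $3$ (or $m+2$ if we use the $m$-nerve), while the target has length $m+2$ applied to $G_k$. To handle it, I would run \cref{construction:zig-zag} with the $m$-nerve instead, so that the output is $\gnerve[m]\Cyl{m+2}(\bang,\bang)$ on $\reali[m]{\gnerve[m] G_k}$. Then I would transfer to $\gnerve[1]$ via the weak equivalences $\gnerve[1]\ito\gnerve[m]$, again invoking \cref{unit-graph-weq} and \cref{counit-acyclic-fib} at level $m$. The requirement $m \geq k$, together with $n \leq m$, keeps every use of \cref{Cyl-preserves-n-equivs} and \cref{pi-map-n-equiv} within its valid range. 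Concatenating the resulting zig-zags completes the induction.
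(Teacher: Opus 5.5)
This is correct and is essentially the paper's proof. Both arguments induct on $k$, use that $\susp[\boxcat]$ preserves $n$-equivalences, apply the zig-zag of \cref{cyl-cset-graph-equiv-zigzag} at level $m$ with $X = \gnerve[m] G_k$ and $Y = Z = \cube{0}$, and close with \cref{unit-graph-weq}, \cref{counit-acyclic-fib} and \cref{susp-preserves-n-equivs}. The only bookkeeping difference is that you do the base case at level $1$, via the cubical $L$-cycle $C_L^{\boxcat} \to \gnerve[1] C_L$, and move between $\gnerve[1]$ and $\gnerve[m]$ inside each inductive step. The paper instead passes to $\gnerve[m]$ once at the start and replaces $C_L$ by $C_{mL}$, so that $\reali[m]{C_L^{\boxcat}} \cong C_{mL}$.

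Do discard your intermediate detour through the $1$-nerve. The hypothesis of \cref{cyl-cset-graph-equiv-zigzag} has to be read as $m \geq \max(n,1)$: its proof uses \cref{cyl-cset-graph-equiv-mono}, which needs $m \geq n$, and \cref{pi-map-n-equiv} with $p = m$. At level $1$ that zig-zag is therefore only known to consist of $1$-equivalences.
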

\begin{proof}
    Let $C_L^{\boxcat} \in \cSet$ denote the coequalizer:
    \[ C_L^{\boxcat} := \colim \left( \begin{tikzcd}[column sep = 4em]
        \displaystyle \coprod\limits_{k \in \ZZ / L} \cube{0} \ar[r, yshift=1ex, "{[i_k \circ \face{}{1,1}]_k}"] \ar[r, yshift=-1ex, "{[i_{k + 1} \circ \face{}{1,0}]_k}"'] & \displaystyle \coprod\limits_{k \in \ZZ / L} \cube{1}
    \end{tikzcd} \right) \]
    That is, $C_L^{\boxcat}$ is the cubical set with $L$ many non-degenerate 1-cubes glued into a directed cycle. 
    The map $C_L^{\boxcat} \to \cube{1} / \bd \cube{1}$ which contracts all but one 1-cube to a point is a weak equivalence.
    Since the cubical suspension functor $\susp[\boxcat] \from \cSet \to \cSet$ preserves weak equivalences, we are reduced to constructing a zig-zag of $n$-equivalences:
    \[ \susp[\boxcat]^k (C_L^{\boxcat}) \simeq \gnerve[m]{\susp[m+2]^k(C_L)}. \]
    Similarly, the map between cycle graphs $C_{mL} \to C_L$ defined by the assignment $t \mapsto \lfloor t / m \rfloor$ is a weak equivalence; the domain and codomain have trivial discrete homotopy groups except in dimension 1 (see \cite[Thm.~1.1]{lutz}), and manual inspection shows this map induces an isomorphism on discrete fundamental groups.
    By \cref{susp-preserves-n-equivs}, we are reduced to constructing a zig-zag of $n$-equivalences:
    \[ \susp[\boxcat]^k (C_L^{\boxcat}) \simeq \gnerve[m]{\susp[m+2]^k(C_{mL})}. \]

    We now proceed by induction on $k$.
    For the base case $k = 0$, we observe that the $m$-realization of the cubical set $C_L^{\boxcat}$ is the cycle graph $C_{mL}$; that is, we have an isomorphism $\reali[m]{C_L^{\boxcat}} \cong C_{mL}$.
    With this, the unit of the realization-nerve adjunction instantiates to a map $C_L^{\boxcat} \to \gnerve[m]{C_{mL}}$.
    By \cref{nerve-main-thm}, both objects have trivial homotopy groups except in dimension $1$, hence one can directly show this map is a weak equivalence. 

    For the inductive step, fix $k \geq 0$ and suppose we have a zig-zag of $n$-equivalences $\susp[\boxcat]^k (C_L^{\boxcat}) \simeq \gnerve[m]{\susp[m+2]^k(C_{mL})}$.
    Observe that, for any cubical set $X \in \cSet$, the cubical suspension $\susp[\boxcat] X$ can be written as a pushout
    \[ \begin{tikzcd}
        X \gprod \bd \cube{1} \ar[r, hook] \ar[d, "\pi_{\bd \cube{1}}"'] \ar[rd, phantom, "\potick" very near end] & X \gprod \cube{1} \ar[d] \\
        \bd \cube{1} \ar[r] & \susp[\boxcat] X
    \end{tikzcd} \] 
    This is a pushout along a monomorphism, hence a homotopy pushout in the Cisinski model structure for $n$-types.
    From this fact, one can deduce that the cubical suspension functor preserves $n$-equivalences.
    Combining this fact with our inductive hypothesis, we have a zig-zag of $n$-equivalences:
    \[ \susp[\boxcat] \big( \susp[\boxcat]^k (C_L^{\boxcat}) \big) \simeq \susp[\boxcat] \big( \gnerve[m]{\susp[m+2]^k(C_{mL})} \big). \]
    By \cref{unit-graph-weq}, the unit of the realization-nerve adjunction applied to the cubical set $\gnerve[m]{\susp[m+2]^k(C_{mL})}$ is a weak equivalence.
    We instantiate \cref{cyl-cset-graph-equiv-zigzag} at $X = \gnerve[m]{\susp[m+2]^k(C_{mL})}$ and $Y = Z = \cube{0}$ to further extend the right-hand side by a zig-zag of $n$-equivalences:
    \[ \susp[\boxcat] \big( \susp[\boxcat]^k (C_L^{\boxcat}) \big) \simeq \susp[\boxcat] \big( \gnerve[m]{\susp[m+2]^k(C_{mL})} \big) \simeq \gnerve[m]\big( \susp[m+2] \reali[m]{ \gnerve[m]{\susp[m+2]^k(C_{mL})} } \big) \]
    The counit $\reali[m]{ \gnerve[m]{\susp[m+2]^k(C_{mL})} } \to {\susp[m+2]^k(C_{mL})}$ is a weak equivalence by \cref{counit-acyclic-fib}, so it is an $n$-equivalence.
    Since graph suspension preserves $n$-equivalences (\cref{susp-preserves-n-equivs}), we obtain the desired zig-zag:
    \[ \susp[\boxcat] \big( \susp[\boxcat]^k (C_L^{\boxcat}) \big) \simeq \susp[\boxcat] \big( \gnerve[m]{\susp[m+2]^k(C_{mL})} \big) \simeq \gnerve[m]\big( \susp[m+2] \reali[m]{ \gnerve[m]{\susp[m+2]^k(C_{mL})} } \big) \simeq \gnerve[m]\big( \susp[m+2] {\susp[m+2]^k(C_{mL})} \big). \qedhere \]
\end{proof}
\begin{corollary} \label{suspension-htpy-groups}
    For $k, m \geq 0$, and $L \geq 5$ we have isomorphisms
    \[ A_n \big( \susp[m+2]^k{(C_L)} \big) \cong \pi_n(S^{k+1}) \quad H_n \big( \susp[m+2]^k{(C_L)} \big) \cong H_n(S^{k+1}) \]
    for all $n \in \{ 0, \dots, m \}$. \qed
\end{corollary}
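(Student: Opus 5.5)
This follows from the preceding theorem by combining it with the invariance results for $n$-equivalences. Fix $k,m\ge 0$, $L\ge 5$ and $n\in\{0,\dots,m\}$. The preceding theorem gives a zig-zag of $n$-equivalences
\[ \susp[\boxcat]^k(\cube{1}/\bd\cube{1}) \simeq \gnerve[1]{\susp[m+2]^k(C_L)}. \]
Since the statement is only claimed in degrees $\le m$, I apply that theorem with $n=m$. Then $A_j$ and $H_j$ for $j\le m$ are computed by any member of the zig-zag.

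First, I identify the left end topologically. The realization $\reali{\cube{1}/\bd\cube{1}}$ is $S^1$. The cubical suspension $\susp[\boxcat]X=\Cyl{}^{\boxcat}(\bang,\bang)$ realizes to the topological double mapping cylinder of $\reali{X}\to\ast\leftarrow\reali{X}$, because geometric realization preserves colimits and takes $\gprod$ to products. That double mapping cylinder is the unreduced suspension. Hence $\reali{\susp[\boxcat]^k(\cube{1}/\bd\cube{1})}\cong S^{k+1}$, and the cubical homotopy groups and cubical homology of the left end are $\pi_j(S^{k+1})$ and $H_j(S^{k+1})$. For homology I use the natural isomorphism with CW homology from the proof of \cref{cubical-homology-inverts-equivs}, together with the weak equivalence $\overline{X}\to X$.

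Second, I transport along the zig-zag. Each map in it is an $m$-equivalence, so by definition it induces isomorphisms on $\pi_j$ for $j\le m$, at all basepoints. By \cref{cubical-homology-inverts-equivs}(2), each map also induces isomorphisms on $H_j$ for $j\le m$.

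Third, I identify the right end with the graph invariants. By \cite[Thm.~4.6]{carranza-kapulkin:cubical-graphs} (used in \cref{nerve-reflects-equivs}), $A_j(G)\cong\pi_j\reali{\gnerve[1]G}$. The remark before \cref{discrete-homology-inverts-equivs} gives $H_j(G)\cong H_j(\gnerve[1]G)$. For connectedness, $\susp[m+2]^k(C_L)$ is connected when $k\ge1$, and $C_L$ is connected, so the basepoint choice in $A_n$ does not matter.

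The only delicate point is the topological identification of the iterated cubical suspension. One has to check that realization commutes with $\gprod$, i.e.\ $\reali{X\gprod\cube{1}}\cong\reali{X}\times[0,1]$. This reduces to representables by cocontinuity plus the standard compactly generated product argument. After that, the rest is bookkeeping.
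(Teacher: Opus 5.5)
Your proposal is correct and follows the paper's route. The corollary is stated without proof as an immediate consequence of the preceding theorem, combined with:
\begin{itemize}
\item invariance of $\pi_{\le m}$ and $H_{\le m}$ under $m$-equivalences (\cref{cubical-homology-inverts-equivs});
\item the identifications $A_n(G)\cong\pi_n\reali{\gnerve[1]G}$ and $H_n(G)\cong H_n(\gnerve[1]G)$;
\item $\reali{\susp[\boxcat]^k(\cube{1}/\bd\cube{1})}\cong S^{k+1}$.
\end{itemize}
These are exactly the points you spell out. You implicitly read the theorem's hypothesis ``$m\geq k$'' as ``$n\leq m$''. That is the intended reading: the theorem's proof never uses $m\geq k$, only $n\leq m$ (e.g.\ through \cref{susp-preserves-n-equivs}), so no relation between $k$ and $m$ is needed.
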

The above result allows us to identify a large number of homotopy groups of $\bd \gcube{m}{n}$ and $\susp[m]^k{C_L}$ that were previously unknown. 
We organize some of these in the following table:
\setlength{\tabcolsep}{4pt} % temporarily reduce spacing between columns for this larger table
\begin{table}[H]
    \centering
    \begin{tabular}{c c c c c c c c c c c c c c c c c}
        \toprule
        {} & \multicolumn{14}{c}{Boundaries of a graph hypercube} \\
        % \cmidrule(lr){2-15}
        {} & \multicolumn{5}{c}{Models of $S^2$} & \multicolumn{5}{c}{Models of $S^3$} & \multicolumn{4}{c}{Models of $S^4$} \\
        \cmidrule(lr){2-6} \cmidrule(lr){7-11} \cmidrule(lr){12-15}
        {} & $\bd \gcube{2}{3}$ & $\bd \gcube{3}{3}$ & $\bd \gcube{4}{3}$ & $\bd \gcube{5}{3}$ & $\bd \gcube{6}{3}$ & $\bd \gcube{2}{4}$ & $\bd \gcube{3}{4}$ & $\bd \gcube{4}{4}$ & $\bd \gcube{5}{4}$ & $\bd \gcube{6}{4}$ & $\bd \gcube{7}{5}$ & $\bd \gcube{8}{5}$ & $\bd \gcube{9}{5}$ & $\bd \gcube{10}{5}$ \\
        \midrule
        $A_1$ & 0 & 0 & 0 & 0 & 0 & 0 & 0 & 0 & 0 & 0 & 0 & 0 & 0 & 0 \\
        $A_2$ & $\ZZ$ & $\ZZ$ & $\ZZ$ & $\ZZ$ & $\ZZ$ & 0 & 0 & 0 & 0 & 0 & 0 & 0 & 0 & 0 \\
        $A_3$ & {} & $\ZZ$ & $\ZZ$ & $\ZZ$ & $\ZZ$ & {} & $\ZZ$ & $\ZZ$ & $\ZZ$ & $\ZZ$ & 0 & 0 & 0 & 0 \\
        $A_4$ & {} & {} & $\ZZ_{2}$ & $\ZZ_{2}$ & $\ZZ_{2}$ & {} & {} & $\ZZ_{2}$ & $\ZZ_{2}$ & $\ZZ_{2}$ & $\ZZ$ & $\ZZ$ & $\ZZ$ & $\ZZ$ \\
        $A_5$ & {} & {} & {} & $\ZZ_{2}$ & $\ZZ_{2}$ & {} & {} & {} & $\ZZ_{2}$ & $\ZZ_{2}$ & $\ZZ_{2}$ & $\ZZ_{2}$ & $\ZZ_{2}$ & $\ZZ_{2}$ \\
        $A_6$ & {} & {} & {} & {} & $\ZZ_{12}$ & {} & {} & {} & {} & $\ZZ_{12}$ & $\ZZ_{2}$ & $\ZZ_{2}$ & $\ZZ_{2}$ & $\ZZ_{2}$ \\
        $A_7$ & {} & {} & {} & {} & {} & {} & {} & {} & {} & {} & $\ZZ \times \ZZ_{12}$ & $\ZZ \times \ZZ_{12}$ & $\ZZ \times \ZZ_{12}$ & $\ZZ \times \ZZ_{12}$ \\
        $A_8$ & {} & {} & {} & {} & {} & {} & {} & {} & {} & {} & {} & $\ZZ_{2} \times \ZZ_{2}$ & $\ZZ_{2} \times \ZZ_{2}$ & $\ZZ_{2} \times \ZZ_{2}$ \\
        $A_9$ & {} & {} & {} & {} & {} & {} & {} & {} & {} & {} & {} & {} & $\ZZ_{2} \times \ZZ_{2}$ & $\ZZ_{2} \times \ZZ_{2}$ \\
        $A_{10}$ & {} & {} & {} & {} & {} & {} & {} & {} & {} & {} & {} & {} & {} & $\ZZ_{24} \times \ZZ_{3}$ \\
        \midrule
        {} & $\susp[4]$ & $\susp[5]$ & $\susp[6]$ & $\susp[7]$ & $\susp[8]$ & $\susp[4]^2$ & $\susp[5]^2$ & $\susp[6]^2$ & $\susp[7]^2$ & $\susp[8]^2$ & $\susp[9]^3$ & $\susp[10]^3$ & $\susp[11]^3$ & $\susp[12]^3$ \\
        \cmidrule(lr){2-6} \cmidrule(lr){7-11} \cmidrule(lr){12-15}
        {} & \multicolumn{5}{c}{Models of $S^2$} & \multicolumn{5}{c}{Models of $S^3$} & \multicolumn{4}{c}{Models of $S^4$} \\
        {} & \multicolumn{14}{c}{Suspensions of a cycle graph of length $\geq 5$} \\
        \bottomrule
    \end{tabular}
    \caption{Discrete homotopy groups of the various models of spheres.
    Blank entries indicate unknown/unproven values.}
\end{table}
\setlength{\tabcolsep}{6pt} % Restore column seperation to default value

\subsection{Constructing an $n$-equivalent graph from a simplicial complex}

We conclude by unwinding the inverse constructed in \cref{nerve-equiv-main-thm} into an explicit construction that takes a simplicial complex as input and outputs a graph whose discrete homotopy groups agree with those of the starting simplicial complex up to dimension $n$.

By a simplicial complex, we mean a pair $(X, \mathcal{A})$ where $X$ is a set and $\mathcal{A}$ is a collection of finite subsets of $X$ which is downward-closed.
For convenience, we often refer to the data of the pair $(X, \mathcal{A})$ as simply $X$.
We refer to a subset $A \in \mathcal{A}$ with $(n+1)$-elements as an \emph{$n$-simplex} of $X$.
We assume the set $X$ is equipped with a total ordering; this total ordering may be used to view the simplicial complex as a simplicial set by declaring the $n$-simplices to be the order-preserving maps from the linear order $[n]$ to $X$ whose image is in $\mathcal{A}$.

To $X$, we associate a semicubical set $CX$ as follows: an $n$-cube in $CX$ consists of a pair of non-empty simplices $\varnothing \neq B \subseteq A$ such that the set complement $A - B$ has $n$ elements.
To compute the faces of an $n$-cube $(A, B)$, we first use the ordering on $X$ to sort the vertices of $A - B$ as
\[ A - B = \{ x_1 < x_2 < \dots < x_{n} \}. \]
For $i \in \{ 1, \dots, n \}$ and $\eps = 0, 1$, we define the face $(A, B)\face{}{i,\eps}$ by
\[ (A, B)\face{}{i, \eps} := \begin{cases}
    \big( A - \{ x_i \}, B \big) & \text{if } \eps = 0 \\
    \big( A, B \sqcup \{ x_i \} \big) & \text{if } \eps = 1.
\end{cases} \]
In particular, a pair $(A', B')$ is a face of $(A, B)$ if $A' \subseteq A$ and $B' \supseteq B$.
A top-dimensional cube of $CX$ is a pair of the form $(A, \{ x \})$ where $A$ is a top-dimensional simplex of $X$.

When viewing $X$ as a simplicial set, the semicubical set $CX$ coincides with its cubification as defined in \cref{sec:fondind-cubification}.
In particular, \cref{C-recovers-Sd} implies the geometric realization of $CX$ is homeomorphic to (the subdivision of) the geometric realization of $X$.

Given $m \geq 0$, we may combine the explicit description of $CX$ with the explicit description guaranteed by \cref{Gn-explicit-description} to obtain the following description of $\Gn[m]{(CX)}$:
\begin{itemize}
    \item the vertices of $\Gn[m]{(CX)}$ are tuples $\big( A, B, (w; P_i^\pm)_{i=1}^k \big)$ where $B$ and $A$ are non-empty nested simplices of $X$, and $(w; P_i^\pm)_{i=1}^k$ is an $F$-sequence in $F(m, n)$, where $n$ is the cardinality of the set complement $A - B$.
    These pairs are subject to the identifications:
    \[ \big( A - \{ x_i \}, B, (w; P_i^\pm)_{i=1}^k \big) \sim \big(A, B, (0; i^-) \cdot (w; P_i^\pm)_{i=1}^k \big) \text{ and } \big( A, B \sqcup \{ x_i \}, (w; P_i^\pm)_{i=1}^k \big) \sim \big(A, B, (0; i^+) \cdot (w; P_i^\pm)_{i=1}^k \big) \]
    for any element $x_i$ in the set complement $A - B = \{ x_1 < \dots < x_n \}$.
    \item edges in $\Gn[m]{(CX)}$ are given by pairs of the form $\big( A, B, (w_0; (P_0)_i^\pm)_{i=1}^{k_0} \big) \sim \big( A, B, (w_1; (P_1)_i^\pm)_{i=1}^{k_1} \big)$ where $(w_0, (P_0)_i^\pm)_{i=1}^{k_0}$ and $(w_1; (P_1)_i^\pm)_{i=1}^{k_1}$ admit expanded forms $(w'_0; (P'_0)_i^\pm)_{i=1}^{n}$ and $(w'_1; (P'_1)_i^\pm)_{i=1}^{n}$ such that: 
    \begin{itemize}
        \item the sign functions of $(w'_0; (P'_0)_i^\pm)_{i=1}^{n}$ and $(w'_1; (P'_1)_i^\pm)_{i=1}^{n}$ are equal;
        \item the partition functions $P'_0 = P'_1$ are equal; and
        \item there exists $i \in \{ 1, \dots, n \}$ such that $|w'_0(i) - w'_1(i)| \leq 1$ and $w'_0(j) = w'_1(j)$ for all $j \neq i$ in $\{ 1, \dots, n \}$.
    \end{itemize}
\end{itemize}
Note that the semicubical set $CX$ is nonsingular; this is an instance of \cref{C-preserves-nonsingular}, since $X$ can be viewed as a nonsingular simplicial set.
As a result, given a fixed $n$, the graph $\Gn[n+1]{X}$ is a graph whose discrete homotopy and homology groups match those of $X$ up to dimension $\leq n$ by \cref{Gn-weq}.

% \begin{remark}
    Even for small simplicial complexes $X$ and small values of $m$, the graph $\Gn[m]{(CX)}$ can be quite large.
    As an example, consider the simplicial complex $X$ depicted on the left in:
    \[ \begin{tikzpicture}[
        scale=1.3, 
        decoration={markings,mark=at position 0.57 with {\arrow[thick]{>}}}
    ]
        % \coordinate (O) at (0, 0);
        \node[Zcube, label={west:$0$}] (a) at (180:2) {};
        \node[Zcube, label={south:$2$}] (b) at (-120:2) {};
        \node[Zcube, label={north:$1$}] (d) at (120:2) {};
        \node[Zcube, label={south:$1$}] (e) at (-60:2) {};
        \node[Zcube, label={north:$2$}] (h) at (60:2) {};
        \node[Zcube, label={east:$0$}] (i) at (0:2) {};
        \node[Zcube] (c) at (180:0.8) {};
        \node[Zcube] (f) at (-50:0.9) {};
        \node[Zcube] (g) at (50:0.9) {};

        \node[font=\large] at (-90:2.8) {$X$};
        \draw
            (a) -- (b) % horizontal edges
            (a) -- (c)
            (a) -- (d)
            (b) -- (e)
            (b) -- (f)
            (c) -- (f)
            (c) -- (g)
            (d) -- (g)
            (d) -- (h)
            (i) -- (h)
            (i) -- (g)
            (i) -- (f)
            (i) -- (e) 
            (b) -- (c) % vertical edges
            (c) -- (d)
            (e) -- (f)
            (f) -- (g)
            (g) -- (h);
        \begin{scope}[on background layer]
            \path[fill=grayfill] 
                (i.center) -- (h.center) -- (d.center) -- (a.center) -- (b.center) -- (e.center) -- cycle;
        \end{scope}
    \end{tikzpicture} \hspace{5em} \begin{tikzpicture}[
        scale=1.3
    ]
        % \coordinate (O) at (0, 0);
        \node[Zcube, label={west:$0$}] (a) at (180:2) {};
        \node[Zcube, label={south:$2$}] (b) at (-120:2) {};
        \node[Zcube, label={north:$1$}] (d) at (120:2) {};
        \node[Zcube, label={south:$1$}] (e) at (-60:2) {};
        \node[Zcube, label={north:$2$}] (h) at (60:2) {};
        \node[Zcube, label={east:$0$}] (i) at (0:2) {};
        \node[Zcube] (c) at (180:0.8) {};
        \node[Zcube] (f) at (-50:0.9) {};
        \node[Zcube] (g) at (50:0.9) {};
        % Barycenters
        \path
            (a) -- node[Zcube, label={south west:$02$}] (ab) {} (b) % horizontal edges
            (a) -- node[Zcube] (ac) {} (c)
            (a) -- node[Zcube, label={north west:$01$}] (ad) {} (d)
            (b) -- node[Zcube, label={south:$12$}] (be) {} (e)
            (b) -- node[Zcube] (bf) {} (f)
            (c) -- node[Zcube] (cf) {} (f)
            (c) -- node[Zcube] (cg) {} (g)
            (d) -- node[Zcube] (dg) {} (g)
            (d) -- node[Zcube, label={north:$12$}] (dh) {} (h)
            (i) -- node[Zcube, label={north east:$02$}] (ih) {} (h)
            (i) -- node[Zcube] (ig) {} (g)
            (i) -- node[Zcube] (if) {} (f)
            (i) -- node[Zcube, label={south east:$01$}] (ie) {} (e) 
            (b) -- node[Zcube] (bc) {} (c) % vertical edges
            (c) -- node[Zcube] (cd) {} (d)
            (e) -- node[Zcube] (ef) {} (f)
            (f) -- node[Zcube] (fg) {} (g)
            (g) -- node[Zcube] (gh) {} (h);
        \node[Zcube] (abc) at (barycentric cs:ab=1,ac=1,bc=1) {};
        \node[Zcube] (acd) at (barycentric cs:ac=1,ad=1,cd=1) {};
        \node[Zcube] (bfc) at (barycentric cs:b=1,f=1,c=1) {};
        \node[Zcube] (cfg) at (barycentric cs:c=1,f=1,g=1) {};
        \node[Zcube] (cgd) at (barycentric cs:c=1,g=1,d=1) {};
        \node[Zcube] (ebf) at (barycentric cs:be=1,bf=1,ef=1) {};
        \node[Zcube] (gdh) at (barycentric cs:dg=1,dh=1,gh=1) {};
        \node[Zcube] (ief) at (barycentric cs:ie=1,if=1,ef=1) {};
        \node[Zcube] (ifg) at (barycentric cs:if=1,ig=1,fg=1) {};
        \node[Zcube] (igh) at (barycentric cs:ig=1,ih=1,gh=1) {};
        % \node[Zcube, fill=red] (cfgD) at (barycentric cs:cf=1,cg=1,fg=1) {}; % TEST

        \begin{scope}[decoration={markings,mark=at position 0.57 with {\arrow[thick]{<}}}]
            % edges to barycenters 
            \draw[postaction=decorate] (abc) -- (bc);
            \draw[postaction=decorate] (abc) -- (ab);
            \draw[postaction=decorate] (abc) -- (ac);
            \draw[postaction=decorate] (acd) -- (cd);
            \draw[postaction=decorate] (acd) -- (ac);
            \draw[postaction=decorate] (acd) -- (ad);
            \draw[postaction=decorate] (acd) -- (cd);
            \draw[postaction=decorate] (bfc) -- (bf);
            \draw[postaction=decorate] (bfc) -- (bc);
            \draw[postaction=decorate] (bfc) -- (cf);
            \draw[postaction=decorate] (cfg) -- (cf);
            \draw[postaction=decorate] (cfg) -- (cg);
            \draw[postaction=decorate] (cfg) -- (fg);
            \draw[postaction=decorate] (cgd) -- (cg);
            \draw[postaction=decorate] (cgd) -- (cd);
            \draw[postaction=decorate] (cgd) -- (dg);
            \draw[postaction=decorate] (ebf) -- (be);
            \draw[postaction=decorate] (ebf) -- (bf);
            \draw[postaction=decorate] (ebf) -- (ef);
            \draw[postaction=decorate] (gdh) -- (dg);
            \draw[postaction=decorate] (gdh) -- (dh);
            \draw[postaction=decorate] (gdh) -- (gh);
            \draw[postaction=decorate] (ief) -- (ie);
            \draw[postaction=decorate] (ief) -- (if);
            \draw[postaction=decorate] (ief) -- (ef);
            \draw[postaction=decorate] (ifg) -- (if);
            \draw[postaction=decorate] (ifg) -- (ig);
            \draw[postaction=decorate] (ifg) -- (fg);
            \draw[postaction=decorate] (igh) -- (ig);
            \draw[postaction=decorate] (igh) -- (ih);
            \draw[postaction=decorate] (igh) -- (gh);
            % edges
            \draw[postaction=decorate] (ab) -- (b);
            \draw[postaction=decorate] (ac) -- (c);
            \draw[postaction=decorate] (ad) -- (d);
            \draw[postaction=decorate] (be) -- (e);
            \draw[postaction=decorate] (bf) -- (f);
            \draw[postaction=decorate] (cf) -- (f);
            \draw[postaction=decorate] (cg) -- (g);
            \draw[postaction=decorate] (dg) -- (g);
            \draw[postaction=decorate] (dh) -- (h);
            \draw[postaction=decorate] (ih) -- (h);
            \draw[postaction=decorate] (ig) -- (g);
            \draw[postaction=decorate] (if) -- (f);
            \draw[postaction=decorate] (ie) -- (e);
            \draw[postaction=decorate] (bc) -- (c);
            \draw[postaction=decorate] (cd) -- (d);
            \draw[postaction=decorate] (ef) -- (f);
            \draw[postaction=decorate] (fg) -- (g);
            \draw[postaction=decorate] (gh) -- (h);
        \end{scope}
        \begin{scope}[decoration={markings,mark=at position 0.57 with {\arrow[thick]{>}}}]
            % edges
            \draw[postaction=decorate] (a) -- (ab);
            \draw[postaction=decorate] (a) -- (ac); 
            \draw[postaction=decorate] (a) -- (ad); 
            \draw[postaction=decorate] (b) -- (be); 
            \draw[postaction=decorate] (b) -- (bf); 
            \draw[postaction=decorate] (c) -- (cf); 
            \draw[postaction=decorate] (c) -- (cg); 
            \draw[postaction=decorate] (d) -- (dg); 
            \draw[postaction=decorate] (d) -- (dh); 
            \draw[postaction=decorate] (i) -- (ih); 
            \draw[postaction=decorate] (i) -- (ig); 
            \draw[postaction=decorate] (i) -- (if); 
            \draw[postaction=decorate] (i) -- (ie); 
            \draw[postaction=decorate] (b) -- (bc); 
            \draw[postaction=decorate] (c) -- (cd); 
            \draw[postaction=decorate] (e) -- (ef); 
            \draw[postaction=decorate] (f) -- (fg); 
            \draw[postaction=decorate] (g) -- (gh); 
        \end{scope}

        % CX label
        \node[font=\large] at (-90:2.8) {$CX$};
        \begin{scope}[on background layer]
            \path[fill=grayfill] 
                (i.center) -- (h.center) -- (d.center) -- (a.center) -- (b.center) -- (e.center) -- cycle;
        \end{scope}
    \end{tikzpicture} \]
    whose geometric realization is homeomorphic to the real projective plane $\mathbb{RP}^2$.
    This simplicial complex has 6 vertices, 15 edges, and 10 triangles.
    The cubification of this simplicial complex $CX$ is depicted on the right (1-cubes with the same labelled source and target are identical).
    We count the number of vertices in $\Gn[3]{X}$:
    \begin{itemize}
        \item Each 0-simplex of $X$ contributes a 0-cube in $CX$.
        Each 0-cube in $CX$ contributes a vertex in $\Gn[3]{X}$.
        There are 6 vertices in $X$, which contribute a total of 6 vertices in $\Gn[3]{X}$.
        \item Each 1-simplex of $X$ contributes 1 additional 0-cube and 2 additional 1-cubes in $CX$.
        For $m = 3$, each 1-cube in $CX$ contributes 5 vertices in $\Gn[3]{X}$, one for each $F$-sequence in $F(3, 1)$ which does not lie on the boundary $\bd F(3, 1)$.
        There are 15 total 1-simplices in $X$, which contributes a total of $15 + (15*2*5) = 165$ additional vertices in $\Gn[3]{X}$.
        \item Each 2-simplex of $X$ contributes 1 additional 0-cube, 3 additional 1-cubes, and 3 additional 2-cubes in $CX$.
        For $m = 3$, each 2-cube in $CX$ contributes 49 vertices in $\Gn[3]{X}$, one for each $F$-sequence in $F(3, 2)$ which does not lie on the boundary $\bd F(3, 2)$.
        There are 10 total 1-simplices in $X$, which contributes a total of $10 + (10*3*5) + (10*3*49) = 1,630$ additional vertices in $\Gn[3]{X}$.
    \end{itemize}
    In total, the graph $\Gn[3]{CX}$ has $1,801$ vertices.
    Manual inspection shows that each vertex has degree at least 3, hence there are more than $2,700$ edges in $\Gn[3]{CX}$.

\appendix
\renewcommand{\thesection}{\Alph{section}}

\begin{appendices}
  \section{Nonsingular cubical sets} \label{sec:fondind-cubification}

In this appendix, we collect the necessary facts about Reedy theory and combinatorial models of spaces, including simplicial and cubical sets.
Our first main goal (\cref{fondind-presheaf-cat-equiv}) is to identify \emph{fondind} presheaves over a Reedy category $A$ with presheaves over its direct subcategory $A_+$.
We then show (\cref{nonsingular-approximation-cospan}) that every homotopy type can be modelled by a \emph{nonsingular} cubical set, meaning a cubical set with the property that every non-degenerate cube determines a monomorphism $\cube{n} \to X$.
This is done via the \emph{cubification} construction, which is a functor $C \from \sSet \to \cSet$ which recovers the standard subdivision operation after post-composing with the triangulation functor $T \from \cSet \to \sSet$.

The notion of fondind presheaves is due to Yuki Maehara.
We learned of many ideas presented in this appendix from Kensuke Arakawa and Mitsunobu Tsutaya, and suspect that many of them are well-known to experts.

\subsection{Reedy categories}

We begin by recalling the basic definitions of Reedy theory.
\begin{definition}
    A \emph{Reedy category} is a category $A$ with a function $\deg \from \ob A \to \NN \cup \{ 0 \}$ and two subcategories $A_-$ and $A_+$ of $A$ such that:
    \begin{enumerate}
        \item Every identity morphism is contained in both $A_-$ and $A_+$.
        % \item For $r, r' \in R$, if $\deg r = \deg r'$ then $r = r'$.
        \item If a non-identity map $a \to b$ is in $A_-$ then $\deg a > \deg b$; if a non-identity map $a \to b$ is in $A_+$ then $\deg a < \deg b$.
        \item For any morphism $\varphi \in A$, there are unique morphisms $\varphi_- \in A_-$ and $\varphi_+ \in A_+$ such that $\varphi = \varphi_+ \varphi_-$.
    \end{enumerate}
\end{definition}
% \begin{definition}
%     A \emph{Reedy category} is a tuple $(A, A_-, A_+, \deg)$ consisting of:
%     \begin{itemize}
%         \item a small category $A$;
%         \item two subcategories $A_-$ and $A_+$; and
%         \item a \emph{degree} function $\deg \from \ob A \to \NN \cup \{ 0 \}$.
%     \end{itemize}
%     satisfying the following:
%     \begin{enumerate}
%         \item $A_+$ and $A_-$ contain all identity morphisms;
%         \item if $a \to b$ is a morphism in $A_-$ then $\deg a > \deg b$; if $a \to b$ is a morphism in $A_+$ then $\deg a < \deg b$;
%         \item for every morphism $\varphi$ in $A$, there are unique $\varphi_- \in A_-$ and $\varphi_+ \in A_+$ such that $\varphi = \varphi_+ \varphi_-$.
%     \end{enumerate}
% \end{definition}
Note these conditions imply that $A$ has no non-identity isomorphisms.
\begin{example} \label{ex:delta-box-reedy}
    The simplex category $\Delta$ and several variants of the box category $\Box$ (e.g., with or without connections) are Reedy categories (cf.~\cite[Cor.~1.17]{doherty-kapulkin-lindsey-sattler}).
\end{example}
Borrowing language used for the simplex category, we refer to morphisms in $A_-$ as \emph{degeneracy maps} or \emph{degeneracies}; morphisms in $A_+$ are referred to as \emph{face maps} or \emph{faces}.

\begin{example} \label{ex:direct-inverse-reedy}
    A category $I$ is \emph{direct} (respectively, \emph{inverse}) if there exists a function $\deg \colon \ob I \to \nat$ such that, for every non-identity morphism $i \to j$ in $I$, we have $\deg i < \deg j$ (respectively, $\deg i > \deg j$).
    With these definitions, every direct or inverse category is a Reedy category.
    Moreover, for any Reedy category $R$, the subcategory $R_-$ is inverse and the subcategory $R_+$ is direct.
\end{example}

\begin{example} \label{ex:opposite-reedy}
    If $(A, A_-, A_+, \deg)$ is a Reedy category then the opposite category $(A^\op, (A_+)^\op, (A_-)^\op, \deg)$ is again a Reedy category.
\end{example}

% We use the theory of Reedy categories in two separate contexts.
% The first is \emph{covariant} in nature: if $A$ is a Reedy category and $M$ is a model category then the category of functors $A \to M$ carries the \emph{Reedy model structure}, which we will briefly review.
% The second, more substantial, usage of Reedy categories is \emph{contravariant} in nature: we consider the category of presheaves over a Reedy category, i.e.\ functors $A^\op \to \Set$.
% \subsection{The Reedy model structure on diagrams}

\subsection{Presheaves over Reedy categories}

Throughout, let $A$ be a Reedy category.
We write $\aSet$ for the category of presheaves on $A$, i.e.\ functors $A^\op \to \Set$.
We denote objects of $\aSet$ by $X, Y, \dots$.
Given an object $a \in A$, we write $X_a$ for the set $X(a)$.
Given a morphism $\varphi \from a \to b$ in $A$ and $x \in X_b$, we write $x\varphi$ for the application of the function $X(\varphi) \from X_b \to X_a$ to the element $x$. 
For $a \in A$, the associated representable presheaf is denoted $\hat{a}$.  

\begin{definition}
    Let $A$ be a Reedy category and $X \in \aSet$ be a presheaf over $A$.
    Given an element $x \in X_a$,
    \begin{enumerate}
        \item a \emph{face of $x$} is any element of the form $x \delta$ where $\delta$ is a face map.
        \item a \emph{degeneracy of $x$} is any element of the form $x \sigma$ where $\sigma$ is a degeneracy map.
        \item An element $x \in X_a$ is \emph{degenerate} if there exists a degeneracy map $\sigma \from a \to b$ and an element $y \in X_b$ such that $\sigma \neq \id[a]$ and $x = y\sigma$.
        We say $x$ is \emph{non-degenerate} if it is not degenerate.
        % \item We say $K$ is \emph{fondind} if 
    \end{enumerate}
\end{definition}
\begin{definition} \label{def:EZ-lemma-elegant}
    We say a presheaf $X \in \aSet$ \emph{satisfies the Eilenberg-Zilber lemma} if, for any element $x \in X_a$, there exists a unique degeneracy $\sigma \from a \to b$ and a unique non-degenerate $y \in X_b$ such that $x = y\sigma$.
    % \begin{enumerate}
    %     \item We say a presheaf $X \in \aSet$ satisfies the \emph{Eilenberg-Zilber lemma} if, for any element $x \in X_a$, there exists a unique degeneracy $\sigma \from a \to b$ and a unique non-degenerate $y \in X_b$ such that $x = y\sigma$.
    %     \item A Reedy category $A$ is \emph{elegant} if every presheaf over $A$ satisfies the Eilenberg-Zilber lemma.
    % \end{enumerate}
\end{definition}
\begin{example}
    In any Reedy category, every representable presheaf satisfies the Eilenberg-Zilber lemma.
    This follows by uniqueness of factorizations.
\end{example}
\begin{example}
    If $A$ is a direct category then every presheaf (trivially) satisfies the Eilenberg-Zilber lemma.
\end{example}
\begin{remark}
    In general, every element $x$ in a presheaf $X$ can be written as $x = y\sigma$ for some degeneracy $\sigma$ and non-degenerate $y$ (see the proof of \cref{i-shriek-N-epi-mono}).
    The content of \cref{def:EZ-lemma-elegant} is the \emph{uniqueness} of such a factorization, which fails in general.
    For a counterexample, consider the category $A$ given by two parallel arrows $1 \rightrightarrows 0$, both of which are in $A_-$.
    Then, the presheaf
    \[ X_0 = \{ a, b \} \rightrightarrows \{ \ast \} = X_1 \]
    does not satisfy the Eilenberg-Zilber lemma.
\end{remark}

\begin{definition} \label{def:fondind}
    A presheaf $X \in \aSet$ is \emph{fondind} if, for any non-degenerate element $x \in X_a$ and morphism $\varphi \from b \to a$ in $A_+$, the element $x\varphi \in X_b$ is again non-degenerate.
    % Let $X \in \aSet$ be a presheaf over $A$.
    % \begin{enumerate}
    %     \item A presheaf $X \in \aSet$ is \emph{nonsingular} if, for any non-degenerate element $x \in X_a$, the corresponding map $x \from \hat{a} \to K$ is a monomorphism.
    %     \item A presheaf $X \in \aSet$ is \emph{fondind} if, for any non-degenerate element $x \in X_a$ and morphism $\varphi \from b \to a$ in $A_+$, the element $x\varphi \in X_b$ is again non-degenerate.
    % \end{enumerate}
\end{definition}
The term \emph{fondind} is an abbreviation: it stands for \textbf{F}ace of \textbf{O}f \textbf{N}on-\textbf{D}egenerate \textbf{I}s \textbf{N}on-\textbf{D}egenerate.
% \begin{proposition}
%     Every nonsingular presheaf is fondind.
% \end{proposition}
% \begin{proof}
%     \fxnote{Need to know that maps in $A^+$ are mono, and a mono in $A_-$ must be identity.}
%     Suppose $X \in \aSet$ is nonsingular.
%     Let $x \in X_a$ be a non-degenerate element and $\varphi \from b \to a$ be a map in $A_+$.
%     We write $x\varphi = y\sigma$ for some degeneracy $\sigma \from b \to c$ and non-degenerate $y \in K_c$.
%     Since both morphisms $x \from \hat{a} \to K$ and $\hat{\varphi} \from \hat{b} \to \hat{a}$ are monomorphisms, the composite $x\hat{\varphi} = y \hat{\sigma}$ is a monomorphism, from which we deduce that $\hat{\sigma} \from b \to c$ is a monomorphism.
%     Therefore, $\sigma = \id[b]$, which proves that $x\varphi$ is non-degenerate.
% \end{proof}

The subcategory $A_+ \subseteq A$ is again a Reedy category (because it is a direct category).
We write $\aSeti$ for the category of presheaves over $A_+$.
% \begin{proposition}    
% \end{proposition}
The inclusion functor $i \from A_+ \ito A$ induces an adjoint triple:
\[ \begin{tikzcd}
    \aSeti \ar[r, bend left=50, "{i_!}"{name=T}] \ar[r, bend right=50, "{i_*}"'{name=B}] &[+1.8em] \aSet \ar[l, "{i^*}"{name=M, description}]
    \arrow[from=M, to=T, phantom, "\perp"]
    \arrow[from=B, to=M, phantom, "\perp"]
\end{tikzcd} \]
We use the Reedy structure on $A$ to give a description of the functor $i_!$.
\begin{proposition} \label{i-shriek-explicit-description}
    Given a presheaf $X \in \aSeti$, the presheaf $i_!(X)$ has the following explicit description: 
    \begin{itemize}
        \item at an object $a \in A$, the set $i_!(X)_a$ is given by
        \[ i_!(X)_a := \{ (\sigma, x) \mid \sigma \from a \to b \text{ in } A_- \text{ and } x \in X_b \}, \]
        where $\sigma \from a \to b$ is any degeneracy with domain $a$ and $x$ is an element of $X_b$.
        \item at a morphism $\varphi \from c \to a$ in $A$, the function $i_!(X)(\varphi)$ is given by the formula $(\sigma, x)\varphi := \big( (\sigma \varphi)_-, x (\sigma \varphi)_+ \big)$.
    \end{itemize}
\end{proposition}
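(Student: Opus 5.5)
The plan is to compute the left Kan extension $i_! X$ pointwise via the coend/colimit formula and then use the unique Reedy factorization to collapse the colimit to the stated set of pairs. For $a \in A$, the standard formula gives
\[ i_!(X)_a \cong \colim_{(b,\, \psi \from a \to b) \in (a \slice i)^{\op}} X_b , \]
that is, the quotient of $\coprod_{b \in A_+} A(a, b) \times X_b$ by the relation generated by $(\delta\psi, x) \sim (\psi, x\delta)$ for $\psi \from a \to b$ in $A$, $\delta \from b \to b'$ in $A_+$ and $x \in X_{b'}$. Restriction along $\varphi \from c \to a$ is given by $[\psi, x] \mapsto [\psi\varphi, x]$.

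The main step is to show that the map
\[ \{ (\sigma, x) \mid \sigma \from a \to b \in A_-,\ x \in X_b \} \to i_!(X)_a, \qquad (\sigma, x) \mapsto [\sigma, x], \]
is a bijection. For surjectivity, take a representative $[\psi, x]$, factor $\psi = \psi_+\psi_-$, and apply the generating relation with $\delta = \psi_+$ to get $[\psi, x] = [\psi_-, x\psi_+]$. For injectivity, define an inverse on representatives by $(\psi, x) \mapsto (\psi_-, x\psi_+)$. This is well defined on the quotient: for $\delta \in A_+$, the composite $\delta\psi = (\delta\psi_+)\psi_-$ is a factorization with $\delta\psi_+ \in A_+$ and $\psi_- \in A_-$. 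By uniqueness of Reedy factorizations, $(\delta\psi)_- = \psi_-$ and $(\delta\psi)_+ = \delta\psi_+$. So both sides of the generating relation are sent to $(\psi_-, x\delta\psi_+)$. Applied to a pair $(\sigma, x)$ with $\sigma \in A_-$, the inverse returns $(\sigma, x)$, because $\sigma = \id \circ \sigma$ is its own factorization. The two maps are therefore mutually inverse. I expect this well-definedness check to be the only real point, and it rests entirely on uniqueness of factorizations (axiom (3)).

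Finally, transport the functorial action through the bijection. The class $[\sigma, x]$ restricts along $\varphi$ to $[\sigma\varphi, x]$, and normalizing as in the surjectivity step gives $[(\sigma\varphi)_-, x(\sigma\varphi)_+]$, which is the stated formula $(\sigma, x)\varphi = ((\sigma\varphi)_-, x(\sigma\varphi)_+)$. Functoriality is inherited from the colimit description, so it need not be checked separately.
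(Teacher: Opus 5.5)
Your proposal is correct and follows essentially the paper's argument. The paper starts from the same colimit formula over $(a \slice i)^{\op}$ and uses uniqueness of Reedy factorizations to show that the full subcategory of degeneracies $a \to b$ is discrete and initial in $a \slice i$, which collapses the colimit to a coproduct; your normal-form map $(\psi, x) \mapsto (\psi_-, x\psi_+)$ is the element-level version of that finality argument, and you also spell out the transport of the restriction maps, which the paper leaves implicit.
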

\begin{proof}
    The colimit formula for the left Kan extension $i_!(X)$ is given by 
    \[ i_!(X)_a := \colim\limits_{\varphi \in (a \slice i)^\op} K_{\cod(\varphi)}, \]
    where $a \slice i$ denotes the comma category whose objects are arrows $a \to b$ in $A$ and whose morphisms are arrows $b \to c$ in $A_+$ that make the induced triangle commute.
    By the uniqueness of factorizations in a Reedy category, this comma category admits an initial subcategory which is the full subcategory spanned by objects $a \to b$ which are in $A_-$.
    Moreover, this full subcategory is a discrete category, again by uniqueness of factorizations.
    Thus, this colimit simplifies to a coproduct, whose explicit description recovers the theorem statement.
\end{proof}
We will show that the functor $i_!$ identifies $\aSeti$ with the subcategory of $\aSet$ spanned by fondind presheaves and maps that preserve non-degenerate elements.

We say a morphism $f \from K \to L$ in $\aSet$ is \emph{locally non-degenerate} if, for all objects $a \in A$, the function $f_a \from X_a \to L_a$ sends non-degenerate elements to non-degenerate elements.
Let $\aSet_{\mathrm{nd}} \subseteq \aSet$ denote the subcategory consisting of fondind presheaves and locally non-degenerate morphisms.
Define a functor $N \from \aSet_{\mathrm{nd}} \to \aSeti$ as follows:
\begin{itemize}
    \item for an object $X \in \aSet$, the presheaf $N(K) \from A_+^\op \to \Set$ is defined as follows: given an object $a \in A$, we define $N(K)_a$ to be the set $(X_a)_{\mathrm{nd}}$ of non-degenerate elements of $X_a$; a morphism $\varphi \from b \to a$ in $A_+$ is sent to the restricted function $\restr{K(\varphi)}{(X_a)_{\mathrm{nd}}}$. 
    This is well-defined because $K$ is fondind.
    \item for a morphism $f \from K \to L$, the natural transformation $N(f)$ is defined on an object $a \in A_+$ as the restricted function $\restr{f_a}{(X_a)_{\mathrm{nd}}}$.
    This is well-defined because $f$ is locally non-degenerate.
\end{itemize}

Let $X \in \aSeti$ be a presheaf over $A_+$.
From the explicit description of $i_!(X)$ given in \cref{i-shriek-explicit-description}, we see that an element $(\sigma, x) \in i_!(X)_a$ is non-degenerate if and only if $\sigma = \id[a]$.
This implies $i_!$ takes values in the subcategory $\aSet_{\mathrm{nd}}$.
Moreover, there is an evident natural isomorphism $X \xrightarrow{\cong} N(i_!(X))$ which sends an element $x \in X_a$ to $(\id[a], x) \in N(i_!(X))$. 

For a presheaf $X \in \aSet$ over $A$, there is a natural transformation $i_!(N(K)) \to K$ which sends an element $(\sigma, x) \in i_!(N(K))_a$ to the element $x\sigma \in X_a$.
\begin{proposition} \label{i-shriek-N-epi-mono}
    For a presheaf $X \in \aSet$, the natural map $i_!(N(K)) \to K$ is an epimorphism.
    Moreover, it is a monomorphism (hence an isomorphism) if and only if $K$ satisfies the Eilenberg-Zilber lemma.
\end{proposition}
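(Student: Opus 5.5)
Surjectivity and injectivity are handled separately. For surjectivity, we fix $x \in X_a$ and show it has the form $y\sigma$ with $\sigma \from a \to b$ in $A_-$ and $y \in X_b$ non-degenerate, by strong induction on $\deg a$. If $x$ is non-degenerate, take $\sigma = \id[a]$ and $y = x$. Otherwise there is a non-identity degeneracy $\tau \from a \to c$ and $z \in X_c$ with $x = z\tau$. By the Reedy axioms $\deg c < \deg a$, so the inductive hypothesis gives $z = y\sigma'$ with $\sigma'$ a degeneracy and $y$ non-degenerate. Then $x = y(\sigma'\tau)$, and $\sigma'\tau$ lies in $A_-$ because $A_-$ is a subcategory. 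Hence $(\sigma'\tau, y) \in i_!(N(X))_a$ maps to $x$, so the map is levelwise surjective and therefore an epimorphism of presheaves.

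For injectivity, we use the explicit description in \cref{i-shriek-explicit-description}: the elements of $i_!(N(X))_a$ are exactly the pairs $(\sigma, y)$ with $\sigma \from a \to b$ in $A_-$ and $y \in (X_b)_{\mathrm{nd}}$. These are not quotiented further, because the comma category reduces to a discrete one. Two such pairs have the same image precisely when $y\sigma = y'\sigma'$. So levelwise injectivity says exactly that every $x \in X_a$ has at most one decomposition $x = y\sigma$ with $\sigma$ a degeneracy and $y$ non-degenerate. Combined with the existence just shown, this is literally the Eilenberg--Zilber lemma of \cref{def:EZ-lemma-elegant}. Thus the map is a monomorphism if and only if $X$ satisfies the Eilenberg--Zilber lemma. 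Since presheaf categories have the property that a map which is both mono and epi is an isomorphism, we get the isomorphism claim.

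One bookkeeping point needs care. The functor $N$ was defined on fondind presheaves, so for a general $X$ it is not clear that $N(X)$ is a presheaf on $A_+$ at all. Faces of non-degenerate elements need not be non-degenerate. I would handle this by reading the statement only for those $X$ on which $N$ is defined, that is, fondind $X$. Alternatively, one can note that the argument only ever uses the underlying graded set of non-degenerate elements, to which the formula of \cref{i-shriek-explicit-description} applies verbatim. The main thing to get right is the identification of the elements of $i_!(N(X))_a$ with the set of pairs without extra identifications, since injectivity rests on it. That is already supplied by the proof of \cref{i-shriek-explicit-description}, so I expect no real obstacle beyond this care about where $N$ is defined.
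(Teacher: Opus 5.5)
Your proof is correct and follows the paper's argument. Your strong induction on $\deg a$ is the same degree-descent as the paper's choice of a decomposition $x = y\sigma$ with $\deg b$ minimal, and matching levelwise injectivity with the uniqueness clause of the Eilenberg--Zilber lemma is what the paper means by ``follows by definition.'' Your caveat that $N$ is only defined on fondind presheaves, so the statement should be read for fondind $X$, is a fair correction of the paper's phrasing rather than a gap in your argument.
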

\begin{proof}
    The second statement follows by definition.
    For the first statement, fix an element $x \in X_a$.
    Let $n \in \NN$ be minimal such that there exists $\sigma \from a \to b$ in $A_-$ and $y \in X_b$ satisfying $x = y\sigma$ and $\deg(b) = n$.
    By minimality of $n$, the element $y$ is non-degenerate, hence $(\sigma, y)$ is an element of $i_!(N(K))_a$ in the pre-image of $x$.
\end{proof}
\begin{theorem}
    The natural transformations $\id \Rightarrow N \circ i_!$ and $i_! \circ N \Rightarrow \id$ are the unit and counit of an adjunction
    \[ \begin{tikzcd}
        \aSeti \ar[r, bend left=20, "{i_!}"{name=T}] \ar[r, bend right=20, "{N}"'{name=B}, leftarrow] &[+1.8em] \aSet_{\mathrm{nd}}
        \arrow[from=B, to=T, phantom, "\perp"]
    \end{tikzcd} \]
    Moreover, the left adjoint $i_!$ is full and faithful, and its essential image consists of those presheaves $X \in \aSet$ which satisfy the Eilenberg-Zilber lemma.
\end{theorem}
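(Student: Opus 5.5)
We verify the triangle identities directly, then deduce full faithfulness and the essential image from the behaviour of the unit and counit. Write $\eta_X \from X \to N(i_!(X))$ for the map $x \mapsto (\id[a], x)$ and $\varepsilon_K \from i_!(N(K)) \to K$ for the map $(\sigma, x) \mapsto x\sigma$.

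First, check that both transformations are well defined and natural in the correct categories. By \cref{i-shriek-explicit-description}, an element $(\sigma, x)$ of $i_!(X)$ is non-degenerate if and only if $\sigma = \id$, so:
\begin{itemize}
    \item $i_!(X)$ is fondind, since a face of $(\id, x)$ along $\varphi \in A_+$ is $(\id, x\varphi)$;
    \item $i_!$ of any map is locally non-degenerate;
    \item $\eta_X$ is a bijection, hence an isomorphism in $\aSeti$.
\end{itemize}
Next, check that $\varepsilon_K$ is a map of $A$-presheaves. For $\varphi \from c \to a$, the formula $(\sigma,x)\varphi = ((\sigma\varphi)_-, x(\sigma\varphi)_+)$ is sent to
\[ x(\sigma\varphi)_+(\sigma\varphi)_- = x\sigma\varphi, \]
as required. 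Also $\varepsilon_K$ is locally non-degenerate, because non-degenerate elements $(\id, x)$ go to $x$, which is non-degenerate. Naturality of both transformations is a routine unwinding of the formulas.

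For the triangle identities, take $X \in \aSeti$. The composite
\[ i_!X \xrightarrow{i_!\eta_X} i_!N i_!X \xrightarrow{\varepsilon_{i_!X}} i_!X \]
sends $(\sigma, x) \mapsto (\sigma, (\id, x)) \mapsto (\id, x)\sigma$. Computing this last element gives $(\sigma_-, x\sigma_+) = (\sigma, x)$, since $\sigma \in A_-$. Now take $K \in \aSet_{\mathrm{nd}}$. The composite
\[ NK \xrightarrow{\eta_{NK}} N i_! N K \xrightarrow{N\varepsilon_K} NK \]
sends a non-degenerate $x$ to $(\id, x)$ and then to $x$. So both composites are identities and we have an adjunction. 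Since the unit $\eta$ is a natural isomorphism, the left adjoint $i_!$ is full and faithful.

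For the essential image, note first that every object $i_!X$ satisfies the Eilenberg--Zilber lemma. Indeed, by \cref{i-shriek-N-epi-mono} applied to $K = i_!X$, the counit at $i_!X$ is an isomorphism by the triangle identity (its composite with the iso $i_!\eta_X$ is the identity), hence a monomorphism. The Eilenberg--Zilber property is invariant under isomorphism, so the whole essential image satisfies it.

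Conversely, suppose $K$ satisfies the Eilenberg--Zilber lemma. We must first check that $K$ lies in $\aSet_{\mathrm{nd}}$, that is, that $K$ is fondind; this is the main obstacle. The plan is as follows. Let $x$ be non-degenerate and $\varphi \in A_+$, and suppose for contradiction that $x\varphi = y\tau$ with $\tau \in A_-$ non-identity. The hope is to use uniqueness of the Eilenberg--Zilber decomposition, together with a choice of section of $\tau$ (degeneracies should be split epimorphisms, as in $\Delta$ and $\boxcat$), to produce a second decomposition of some element and reach a contradiction. If this fails for a general Reedy category, we instead state the essential image as consisting of the fondind presheaves satisfying the Eilenberg--Zilber lemma, which is what is needed for $\boxcat$. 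Once $K$ is known to be fondind, \cref{i-shriek-N-epi-mono} shows that $\varepsilon_K$ is an isomorphism, so $K \cong i_!(NK)$ lies in the essential image.
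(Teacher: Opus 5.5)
Your verification of the triangle identities, the deduction of full faithfulness from the unit being an isomorphism, and the argument that every $i_!X$ satisfies the Eilenberg--Zilber lemma are correct. They follow the paper's (much terser) proof. The genuine problem is the step you flag as the main obstacle. The implication ``Eilenberg--Zilber $\Rightarrow$ fondind'' is false, so no argument based on sections of $\tau$ can prove it. It fails even in a Reedy category whose degeneracies are split epimorphisms and whose faces are monomorphisms, namely $\Delta$. Every simplicial set satisfies the Eilenberg--Zilber lemma. Now let $K$ be the quotient of $\simp{2}$ obtained by collapsing the edge $\{1,2\}$ to a point. Its top simplex $\iota$ is non-degenerate, because its three faces $\iota\face{}{0}$, $\iota\face{}{1}$, $\iota\face{}{2}$ are pairwise distinct:
\begin{itemize}
\item $\iota$ is not $y\degen{}{0}$, since that would force $\iota\face{}{0}=\iota\face{}{1}$;
\item $\iota$ is not $y\degen{}{1}$, since that would force $\iota\face{}{1}=\iota\face{}{2}$;
\item $\iota$ is not a degeneracy of a vertex, since then all three faces would coincide.
\end{itemize}
Yet its face $\iota\face{}{0}$ is degenerate. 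So $K$ is not fondind.

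No such implication is needed. In the statement, $i_!$ is a functor $\aSeti \to \aSet_{\mathrm{nd}}$, so its essential image consists of objects of $\aSet_{\mathrm{nd}}$, and these are fondind by definition. The claim should be read as ``those objects of $\aSet_{\mathrm{nd}}$ satisfying the Eilenberg--Zilber lemma'', which is exactly how \cref{fondind-presheaf-cat-equiv} records it. With that reading the converse is immediate from \cref{i-shriek-N-epi-mono}: for fondind $K$ satisfying the lemma, $\varepsilon_K$ is a bijective map of presheaves, hence an isomorphism $i_!(NK)\cong K$. This is precisely the paper's argument. Your ``fallback'' is therefore not a weakening but the correct statement. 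You should drop the attempted proof that Eilenberg--Zilber implies fondind.
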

\begin{proof}
    For the adjunction, we need only observe that the maps $K \to N(i_!(X))$ and $i_!(N(K)) \to K$ satisfy the triangle identities.
    Since the unit is an isomorphism, the functor $i_!$ is full and faithful.
    The characterization of the essential image follows from \cref{i-shriek-N-epi-mono}.
\end{proof}
\begin{corollary} \label{fondind-presheaf-cat-equiv}
    The functors $i_!$ and $N$ induce an equivalence of categories between $\aSeti$ and the subcategory of $\aSet$ whose objects are fondind presheaves satisfying the Eilenberg-Zilber lemma and whose morphisms are maps sending non-degenerate elements to non-degenerate elements. \qed
\end{corollary}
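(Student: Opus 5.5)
The corollary should follow formally from the preceding theorem by restricting both sides of the adjunction $i_! \dashv N$ to the appropriate subcategories. The plan is to check that $i_!$ lands in the subcategory described, that $N$ restricted to that subcategory is an inverse up to natural isomorphism, and that morphisms match.

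First, for $X \in \aSeti$, the presheaf $i_!(X)$ is fondind and satisfies the Eilenberg--Zilber lemma. By \cref{i-shriek-explicit-description}, the non-degenerate elements of $i_!(X)_a$ are exactly the pairs $(\id[a], x)$. A face $(\id[a], x)\varphi$ with $\varphi \in A_+$ equals $(\id, x\varphi)$, which is again non-degenerate, so $i_!(X)$ is fondind. Every element $(\sigma, x)$ is $(\id, x)\sigma$. Uniqueness of such a factorization follows from uniqueness of Reedy factorizations: if $(\id,y)\tau = (\tau, y)$ with $\tau \in A_-$, then this pair determines $\tau$ and $y$. The theorem also shows $i_!$ lands in $\aSet_{\mathrm{nd}}$, so morphisms in its image preserve non-degenerate elements.

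Next, let $\mathcal{E} \subseteq \aSet_{\mathrm{nd}}$ be the full subcategory on fondind presheaves satisfying the Eilenberg--Zilber lemma. This is full inside $\aSet_{\mathrm{nd}}$, since morphisms there are already exactly the locally non-degenerate maps. The adjunction $i_! \dashv N$ of the preceding theorem restricts to an adjunction between $\aSeti$ and $\mathcal{E}$, because $i_!$ factors through $\mathcal{E}$ and restricting the right adjoint to a full subcategory containing the image of the left adjoint preserves the adjunction. The unit $X \to N i_! X$ is an isomorphism, as noted in the text. The counit $i_! N K \to K$ is an isomorphism for $K \in \mathcal{E}$ by \cref{i-shriek-N-epi-mono}. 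An adjunction whose unit and counit are both invertible is an equivalence, which proves the claim.

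The only point needing care is the Eilenberg--Zilber property of $i_!(X)$, and more precisely checking that $(\sigma,x)$ is degenerate exactly when $\sigma \neq \id$. This is where uniqueness of factorization $\varphi = \varphi_+\varphi_-$ is used: $(\tau, y)\sigma'$ has first component $(\tau\sigma')_-$. Everything else is formal.
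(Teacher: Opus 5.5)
Your proposal is correct and matches the paper's argument. The paper also obtains the corollary by restricting the adjunction $i_! \dashv N$ of the preceding theorem: its unit is invertible, and its counit $i_! N K \to K$ is invertible exactly when $K$ satisfies the Eilenberg--Zilber lemma, by \cref{i-shriek-N-epi-mono}. Your direct check that $i_!(X)$ is fondind and satisfies the Eilenberg--Zilber lemma is harmless extra work, since the paper gets this from the essential-image clause of that theorem.
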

The notion of fondind presheaves is closely related to the notion of \emph{nonsingular} presheaves.
\begin{definition}
    A presheaf $X \in \aSet$ is \emph{nonsingular} if, for any non-degenerate element $x \in X_a$, the corresponding map $x \from \hat{a} \to X$ is a monomorphism.
 \end{definition}
 \begin{proposition}
     Suppose $A$ is a Reedy category in which:
     \begin{itemize}
         \item every morphism in $A_-$ is a split epimorphism, and
         \item every morphism in $A_+$ is a monomorphism.
     \end{itemize}
     Then, every nonsingular presheaf over $A$ is fondind.
 \end{proposition}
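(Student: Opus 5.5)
The plan is a direct argument from the definitions. Let $X$ be nonsingular, let $x \in X_a$ be non-degenerate, and let $\varphi \from b \to a$ be a morphism in $A_+$. We must show that $x\varphi$ is non-degenerate. Suppose, for contradiction, that $x\varphi = y\sigma$ for some degeneracy $\sigma \from b \to c$ in $A_-$ with $\sigma \neq \id[b]$ and some $y \in X_c$. We will translate this equation into an equation of maps between representables and derive a contradiction from the monomorphism hypotheses.

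By Yoneda, elements correspond to maps out of representables, and precomposition corresponds to restriction. So $x\varphi$ corresponds to the composite $x \circ \hat{\varphi} \from \hat{b} \to \hat{a} \to X$, and $y\sigma$ corresponds to $y \circ \hat{\sigma} \from \hat{b} \to \hat{c} \to X$. By the second hypothesis $\varphi$ is a monomorphism in $A$, so $\hat{\varphi}$ is a monomorphism of presheaves, since the Yoneda embedding preserves monomorphisms. By nonsingularity $x \from \hat{a} \to X$ is a monomorphism. Hence $x \circ \hat{\varphi} = y \circ \hat{\sigma}$ is a monomorphism, and therefore so is $\hat{\sigma}$, the first factor of a monomorphism. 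Because the Yoneda embedding is fully faithful, it reflects monomorphisms, so $\sigma$ is a monomorphism in $A$.

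By the first hypothesis $\sigma$ is a split epimorphism. A split epimorphism that is also a monomorphism is an isomorphism. But a Reedy category has no non-identity isomorphisms; concretely, a non-identity map in $A_-$ strictly lowers degree and so cannot be invertible. Hence $\sigma = \id[b]$, contradicting $\sigma \neq \id[b]$. Therefore $x\varphi$ is non-degenerate, and $X$ is fondind.

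There is no real obstacle in this argument. The only point needing care is the Yoneda bookkeeping: checking that restriction of elements along $\varphi$ corresponds to precomposition with $\hat{\varphi}$, and that monomorphisms transfer between $A$ and $\aSet$ in both directions.
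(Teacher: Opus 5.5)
Your proof is correct and is essentially the paper's argument: nonsingularity of $x$ together with monicity of $\hat{\varphi}$ makes $y \circ \hat{\sigma}$ a monomorphism, so $\sigma$ is a monic split epimorphism and hence an identity; your version just makes the Yoneda bookkeeping explicit. One small imprecision is that ``strictly lowers degree'' does not by itself rule out invertibility, since the absence of non-identity isomorphisms in a Reedy category comes from uniqueness of the factorization $\varphi = \varphi_+ \varphi_-$ (as the paper notes right after the definition); the fact you invoke is nonetheless true, so nothing breaks.
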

 \begin{proof}
     % \fxnote{Need to know that maps in $A^+$ are mono, and a mono in $A_-$ must be identity.}
     Let $X \in \aSet$ be a nonsingular presheaf.
     Fix a non-degenerate element $x \in X_a$ and a face map $\delta \from b \to a$.
     Suppose $x\delta = y\sigma$ for some degeneracy $\sigma \from b \to c$ and non-degenerate $y \in K_c$.
     Since both morphisms $x \from \hat{a} \to K$ and $\hat{\delta} \from \hat{b} \to \hat{a}$ are monomorphisms, the composite $x\hat{\delta} = y \hat{\sigma}$ is a monomorphism, from which we deduce that $\hat{\sigma} \from b \to c$ is a monomorphism.
     Since $\hat{\sigma}$ is also a split epimorphism, we must have $\sigma = \id[b]$. 
 \end{proof}

% \section{Nonsingular cubical sets}

\subsection{Triangulation and cubification}

We now turn our attention to cubical sets; our goal is to show that every cubical set is weakly equivalent to a nonsingular cubical set (\cref{nonsingular-approximation-cospan}).

Recall that cubical sets and simplicial sets are related via the \emph{triangulation} adjunction.
More specifically, we have a cocubical object $\boxcat \to \sSet$ in simplicial sets which sends the object $[1]^n$ to the simplicial $n$-cube $(\simp{1})^n$.
The triangulation functor $T \from \cSet \to \sSet$ is defined as the extension by colimits of this cocubical object.
\[ \begin{tikzcd}
    \boxcat \ar[r, "{[1]^n \mapsto (\simp{1})^n}"] \ar[d, "\yo"'] &[+2ex] \sSet \\
    \cSet \ar[ur, dotted, bend right, "T"']
\end{tikzcd} \]
This functor admits a right adjoint, denoted $U \from \sSet \to \cSet$, whose set of $n$-cubes is given by
\[ (UK)_n := \sSet \big( (\simp{1})^n, X \big). \]
The adjunction $T \adj U$ forms a Quillen equivalence between the Grothendieck model structure on cubical sets and the Kan--Quillen model structure on simplicial sets \cite[Prop.~8.4.30]{cisinski:presheaves}.

We now define a seperate functor $\sSet \to \cSet$, the \emph{cubification} functor.
Define a co-semisimplicial object $C \from \Delta \to \cSet$ which sends the $n$-simplex to the lower half $(n+1)$-cube
\[ C[n] := \bigcup\limits_{k = 1}^{n+1} \face{}{k, 1}, \]
i.e.\ the union of the $\eps = 1$ faces of the $(n+1)$-cube.

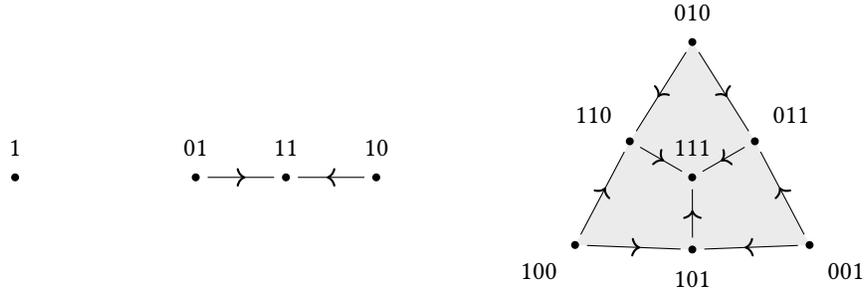
\begin{figure}[H]
    \centering
    \begin{tikzpicture}[scale=1.2, decoration={markings,mark=at position 0.57 with {\arrow[thick]{>}}}]
        \node[Zcube, label={$1$}] (A0) {};
        
        \path (A0) -- +(3, 0) node[Zcube, label={$11$}] (B11) {};
        % \node[vertex, label={$11$}] (B00) at (Bcenter) {};
        \path (B11) -- +(-1, 0) node[Zcube, label={$01$}] (B01) {};
        \path (B11) -- +(1, 0) node[Zcube, label={$10$}] (B10) {};
        \draw[postaction={decorate}] (B01) -- (B11);
        \draw[postaction={decorate}] (B10) -- (B11);

        \path (B11) -- +(4.5, 0) node[Zcube, label={$111$}] (C111) {};
        % \node[vertex, label={$111$}] (C111) at (Ccenter) {};
        \path (C111) -- +(90:1.5) node[Zcube, label={$010$}] (C010) {};
        \path (C111) -- +(210:1.5) node[Zcube, label={south west:$100$}] (C100) {};
        \path (C111) -- +(-30:1.5) node[Zcube, label={south east:$001$}] (C001) {};
        \path (C111) -- +(150:0.8) node[Zcube, label={north west:$110$}] (C110) {};
        \path (C111) -- +(-90:0.8) node[Zcube, label={south:$101$}] (C101) {};
        \path (C111) -- +(30:0.8) node[Zcube, label={north east:$011$}] (C011) {};
        \draw[postaction={decorate}] (C100) -- (C110);
        \draw[postaction={decorate}] (C100) -- (C101);
        \draw[postaction={decorate}] (C010) -- (C110);
        \draw[postaction={decorate}] (C010) -- (C011);
        \draw[postaction={decorate}] (C001) -- (C011);
        \draw[postaction={decorate}] (C001) -- (C101);
        \draw[postaction={decorate}] (C110) -- (C111);
        \draw[postaction={decorate}] (C101) -- (C111);
        \draw[postaction={decorate}] (C011) -- (C111);

        % Fill the squares
        \begin{scope}[on background layer]
            \path[fill=grayfill] (C100.center) -- (C110.center) -- (C111.center) -- (C101.center) -- cycle;
            \path[fill=grayfill] (C010.center) -- (C110.center) -- (C111.center) -- (C011.center) -- cycle;
            \path[fill=grayfill] (C001.center) -- (C101.center) -- (C111.center) -- (C011.center) -- cycle;
        \end{scope}
    \end{tikzpicture}
    \caption{Depiction of the cubical sets $C[0]$, $C[1]$, and $C[2]$.}
\end{figure}

To define the action of $C$ on morphisms of $\Delta$, let ${d_i}$ denote the top composite arrow in
\[ \begin{tikzcd}
    \{ 1, \dots, n \} \ar[d, "-1"', "\cong"] \ar[r, dotted, "{d_i}"] & \{ 1, \dots, n+1 \} \\
    {[n-1]} \ar[r, "\face{}{i}"] & {[n]} \ar[u, "+1"', "\cong"]
\end{tikzcd} \]
Explicitly, given $i \in \{ 0, \dots, n \}$, the set function $d_i$ ``skips'' the element $i+1$ in $\{ 1, \dots, n+1 \}$.

For $i \in \{ 0, \dots, n \}$, we define the simplicial face morphism $C\face{}{i} \from C[n-1] \to C[n]$ separately on each face of $C[n-1]$; given $k \in \{ 1, \dots, n \}$, the restriction $\restr{C\face{}{i}}{\face{}{k, 1}}$ is defined to be the cubical face inclusion $\face{}{{d}_i(k), 0} \from \cube{n} \to C[n+1]$.
For $i \in \{ 0, \dots, n \}$, the simplicial degeneracy morphism $C\degen{}{i} \from C[n+1] \to C[n]$ is the restriction of the cubical connection map
\[ \restr{\conn{}{i+1, 0}}{C[n+1]} \from C[n+1] \to C[n]. \]
\begin{proposition} \label{C-simp-ids}
    The morphisms $C\face{}{i}$ and $C\degen{}{i}$ satisfy the simplicial identities.
\end{proposition}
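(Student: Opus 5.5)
The proof amounts to checking simplicial identities between maps of cubical sets. I would reduce each one to an identity among cubical operators by restricting to the generating faces. Since $C[n]$ is defined as the union $\bigcup_{k} \operatorname{Im}\face{}{k,1}$ inside $\cube{n+1}$, a map out of $C[n]$ is determined by its restrictions to the faces $\face{}{k,1} \from \cube{n} \to C[n]$. So two composites $C[p] \to C[q]$ agree if and only if they agree after precomposing with each $\face{}{k,1}$.

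Both kinds of structure maps are restrictions of honest cubical operators on $\cube{n+1}$. For $C\face{}{i}$, the restriction to the $k$-th face is $\face{}{d_i(k),0}$ by definition. A compatible global description is the cubical map $\cube{n} \to \cube{n+1}$ that inserts a coordinate $0$ in position $i+1$. Under this map, the $\eps=1$ face in coordinate $k$ of $\cube{n}$ lands in the $\eps=1$ face in coordinate $d_i(k)$ of $\cube{n+1}$, so it preserves the lower-half subobjects. For $C\degen{}{i}$ we have $\conn{}{i+1,0}$. I need to check that $\conn{}{i+1,0}$ sends $C[n+1]$ into $C[n]$. Using $\max$, a point with some coordinate equal to $1$ is sent to a point with some coordinate equal to $1$, and the cubical identities make this precise on faces.

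With these global descriptions in hand, the simplicial identities become identities among face and connection operators $\face{}{j,0}$, $\conn{}{j,0}$ on cubes. I would verify them in the order below, using the table of cubical identities.
\begin{enumerate}
\item $\face{}{j}\face{}{i} = \face{}{i}\face{}{j-1}$ for $i<j$ follows from $\face{}{j,\eps'}\face{}{i,\eps} = \face{}{i+1,\eps}\face{}{j,\eps'}$ with all $\eps=0$, after the index shift by one coming from $d_i$.
\item $\degen{}{j}\degen{}{i} = \degen{}{i}\degen{}{j+1}$ for $i \le j$ is the connection identity $\conn{}{j,\eps}\conn{}{i,\eps} = \conn{}{i,\eps}\conn{}{j+1,\eps}$ with $\eps=0$. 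The case $i=j$ uses the associativity clause $\conn{}{i}\conn{}{i} = \conn{}{i}\conn{}{i+1}$.
\item The mixed relations $\degen{}{j}\face{}{i}$ are the $\conn{}{j,0}\face{}{i,0}$ clauses. For $i=j,j+1$ (the shifted indices $j+1$, $j+2$ adjacent) the composite is the identity, because $\eps=\eps'=0$. The remaining cases commute with an index shift.
\end{enumerate}

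Since everything is a restriction of maps on full cubes, and the inclusions $C[n] \ito \cube{n+1}$ are monomorphisms, each identity in $C$ follows from the corresponding identity in $\cube{\bullet}$.

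The main obstacle is the first reduction: confirming that the face-wise definition of $C\face{}{i}$ really is the restriction of the coordinate-insertion map $\cube{n} \to \cube{n+1}$. In particular, I must check that the separately defined pieces $\face{}{d_i(k),0}$ glue, that is, agree on the overlaps $\face{}{k,1} \cap \face{}{k',1}$. Once that is done, the rest is bookkeeping of the index shifts between $\{0,\dots,n\}$ and $\{1,\dots,n+1\}$.
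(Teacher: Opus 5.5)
Your proof is correct, but it takes a different route from the paper's. The paper moves the problem to simplicial sets. It uses that triangulation $T \from \cSet \to \sSet$ is faithful and identifies $TC[n]$ with the nerve of the poset $P^n = [1]^{n+1} \setminus \{(0,\dots,0)\}$, that is, with $\sd[n]$. Under this identification $C\face{}{i}$ and $C\degen{}{i}$ become the usual ``insert a $0$'' and ``max of two adjacent coordinates'' maps of the subdivided simplex, for which the simplicial identities are standard. You instead stay inside $\cSet$. Since $C[n] \ito \cube{n+1}$ is a monomorphism and the structure maps are restrictions of the $\boxcat$-morphisms $\face{}{i+1,0}$ and $\conn{}{i+1,0}$, each simplicial identity becomes an instance of the table of cubical identities with all signs equal to $0$. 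Your index matching is right, including the identity cases $\conn{}{j+1,0}\face{}{j+1,0} = \id = \conn{}{j+1,0}\face{}{j+2,0}$. The case $i=j$ of $\degen{}{j}\degen{}{i}=\degen{}{i}\degen{}{j+1}$ is associativity of $\max$, as you say.

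Your version is more self-contained: it needs neither the faithfulness of $T$ nor the identification $TC[n] \cong NP^n$. The paper's version, in exchange, produces exactly the isomorphism $TC[n] \cong \sd[n]$ that it reuses immediately afterwards to show $TC \cong \Sd$.

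The gluing check you single out as the main obstacle is vacuous. If you define $C\face{}{i}$ as the restriction of $\face{}{i+1,0}$ to $C[n-1]$, its pieces on the faces $\face{}{k,1}$ automatically agree on overlaps. This global ``insert a $0$'' description is also what the paper's own verification uses; the face-wise formula in the construction is not literally well-typed as written.
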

\begin{proof}
    Since the triangulation functor $T \from \cSet \to \sSet$ is faithful, it suffices to verify the simplicial identities after applying $T$.

    The triangulation $TC[n]$ of the lower half cube is the nerve of the partially ordered subset $P^n \subseteq [1]^{n+1}$ which omits the zero vertex $(0, \dots, 0)$; that is,
    \[  TC[n] = NP^n, \text{ where } P^n = [1]^{n+1} - \big\{ (0, \dots, 0) \}. \]
    The simplicial set $NP^n$ is isomorphic to the subdivided $n$-simplex $\sd [n]$.
    One verifies that $C\face{}{i}$ and $C\degen{}{i}$ are mapped to the usual face and degeneracy functions $\face{}{i} \from P^{n-1} \to P^n$ and $\degen{}{i} \from P^{n+1} \to P^n$, whose formulas are given by
    \[ \begin{array}{l l}
        % \face{}{i} \from P^{n-1} \to P^{n} & \degen{}{i} \from P^{n+1} \to P^{n} \\
        \face{}{i}(x_0, \dots, x_{n-1}) & \degen{}{i}(x_0, \dots, x_{n+1}) \\
        \ = (x_0, \dots, x_{i-1}, 0, x_{i}, \dots, x_{n-1}) & \ = (x_0, \dots, x_{i-1}, \max(x_i, x_{i+1}), x_{i+2}, \dots, x_{n+1}).
    \end{array} \]
    % These maps are in the image of the triangulation functor: they arise from the cubical maps $\face{}{i} \from C[n-1] \to C[n]$ and $\degen{}{i} \from C[n+1] \to C[n]$ given by
\end{proof}

We extend by colimits to define the \emph{cubification} functor $C \from \sSet \to \cSet$. 
\begin{proposition} \label{C-recovers-Sd}
    The triangle of functors
    \[ \begin{tikzcd}
        {} & \cSet \ar[rd, "T"] & {} \\
        \sSet \ar[ur, "C"] \ar[rr, "\Sd"] & {} & \sSet
    \end{tikzcd} \]
    commutes up to natural isomorphism.
\end{proposition}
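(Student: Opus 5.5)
Both $T \circ C$ and $\Sd$ are cocontinuous functors $\sSet \to \sSet$: $C$ is defined as a left Kan extension along the Yoneda embedding, $T$ is a left adjoint, and $\Sd$ is likewise the extension by colimits of $[n] \mapsto \sd[n] = N(\text{nonempty subsets of }[n])$. It therefore suffices to produce an isomorphism of cosimplicial objects $T C[\uvar] \cong \sd[\uvar]$ in $\sSet$, i.e.\ isomorphisms $TC[n] \cong \sd[n]$ natural in $[n] \in \Delta$. Naturality of the extended isomorphism then follows formally from the universal property of left Kan extension (both functors are $\operatorname{Lan}_{\yo}$ of isomorphic diagrams).

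For the objectwise isomorphism I will reuse the identification already made in the proof of \cref{C-simp-ids}. Since $C[n] = \bigcup_k \operatorname{Im}\face{}{k,1} \subseteq \cube{n+1}$ and $T$ preserves colimits and monomorphisms, $TC[n]$ is the union inside $T\cube{n+1} = (\simp{1})^{n+1} = N([1]^{n+1})$ of the subcomplexes $N(\{x : x_k = 1\})$. A simplex of $N([1]^{n+1})$, i.e.\ a chain $x^0 \le \dots \le x^p$, lies in this union exactly when some coordinate is $1$ throughout, i.e.\ when $x^0 \neq 0$. So $TC[n] = NP^n$ with $P^n = [1]^{n+1} - \{0\}$. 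This uses that $N$ of a poset is determined by chains, and that a union of nerves of up-closed subposets is computed chainwise; I will verify the latter carefully because unions of nerves are not nerves of unions in general. Here, however, every chain meeting the condition starts at $x^0$, and $x^0$ already lies in some $\{x_k = 1\}$, which is up-closed. Next, $P^n$ is isomorphic to the poset of nonempty subsets of $[n]$ via $x \mapsto \{ i : x_{i+1} = 1 \}$, so $TC[n] \cong \sd[n]$.

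For naturality, I check that under this identification $TC\face{}{i}$ and $TC\degen{}{i}$ become the maps $\sd\face{}{i}$ and $\sd\degen{}{i}$ (direct image of subsets). The formulas in the proof of \cref{C-simp-ids} show $T C\face{}{i}$ inserts a $0$ in coordinate $i+1$, which matches the image of a subset under $\face{}{i}$. Similarly $TC\degen{}{i}$ takes the $\max$ of coordinates $i+1, i+2$, which matches the direct image under $\degen{}{i}$, where membership of the merged index is an ``or'', i.e.\ a $\max$. Since $\Delta$ is generated by faces and degeneracies, this gives the natural isomorphism of cosimplicial objects.

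The main obstacle is this last compatibility, specifically that $T$ applied to a restricted connection is the nerve of the $\max$ map. I will handle it by noting that $T$ sends any map of $\boxcat$ (a poset map $[1]^a \to [1]^b$) to its nerve, and restriction to $C[n+1]$ commutes with $T$ because $T$ preserves monomorphisms and unions.
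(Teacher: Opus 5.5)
Your proposal is correct and follows the same route as the paper: both $T \circ C$ and $\Sd$ are cocontinuous, so it suffices to give an isomorphism of cosimplicial objects $TC[\uvar] \cong \sd[\uvar]$, which comes from the identification $TC[n] = NP^n$ and the face and degeneracy formulas in the proof of \cref{C-simp-ids}. You also supply details the paper leaves implicit: that $T$ carries the union of the $\face{}{k,1}$-faces to the union of the corresponding nerves (via the $x^0 \neq 0$ criterion), and that the structure maps correspond to direct images of subsets.
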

\begin{proof}
    As a triangle of left adjoints, it suffices to construct the isomorphism on representables $\simp{n}$, where this is immediate (see the proof of \cref{C-simp-ids}).
\end{proof}
\begin{corollary} \label{C-Quillen-equiv}
    The functor $C \from \sSet \to \cSet$ is a left Quillen equivalence between the Kan-Quillen model structure on simplicial sets and the Grothendieck model structure on cubical sets.
\end{corollary}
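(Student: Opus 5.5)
We need to prove that $C \from \sSet \to \cSet$ is a left Quillen equivalence. The plan is to deduce it from \cref{C-recovers-Sd} together with the known Quillen equivalences $T \adj U$ and $\Sd \adj \Ex$, using two-out-of-three for Quillen equivalences (equivalently, for the induced equivalences of homotopy categories).

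\emph{Step 1: $C$ is left Quillen.} Since $C$ is defined as an extension by colimits, it is cocontinuous and has a right adjoint $R$, given by $(RX)_n = \cSet(C[n], X)$. To show $C$ preserves cofibrations, it suffices to check the boundary inclusions $\bd\simp{n} \ito \simp{n}$. Their images are the inclusions $\bigcup_i C\face{}{i}(C[n-1]) \ito C[n]$ of subobjects of $\cube{n+1}$, which are monomorphisms. (One could alternatively use that $T$ reflects monomorphisms and that $TC \cong \Sd$ preserves them.)

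For acyclic cofibrations, we use that $T$ preserves and reflects weak equivalences: $T$ is a left Quillen equivalence in which every object is cofibrant, and the cubical weak equivalences are created by $T$ (cf.~\cite{cisinski:presheaves}). Given a horn inclusion $j$, \cref{C-recovers-Sd} gives $TCj \cong \Sd j$. The map $\Sd j$ is a weak equivalence, since $\Sd$ is left Quillen for Kan--Quillen. Hence $Cj$ is a monomorphism and a weak equivalence, i.e.\ an acyclic cofibration.

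\emph{Step 2: Quillen equivalence.} Again because all objects are cofibrant, $C$ preserves all weak equivalences, and so does $T$. The left derived functors therefore satisfy $\mathbb{L}T \circ \mathbb{L}C \cong \mathbb{L}\Sd$ on homotopy categories. Now $\mathbb{L}\Sd$ is an equivalence, since $\Sd \adj \Ex$ is a Quillen equivalence (Kan), and $\mathbb{L}T$ is an equivalence by \cite[Prop.~8.4.30]{cisinski:presheaves}. By two-out-of-three for equivalences of categories, $\mathbb{L}C$ is an equivalence. A Quillen adjunction whose derived left adjoint is an equivalence is a Quillen equivalence.

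The main obstacle is the claim that $T$ reflects weak equivalences. If one prefers to avoid it, a fallback is to check directly that the geometric realization of $CX$ is homeomorphic to that of $\Sd X$, via \cref{C-recovers-Sd} and the compatibility $\reali{T X} \cong \reali{X}$. Cubical weak equivalences are defined by realization, so this yields both preservation of weak equivalences by $C$ and the equivalence of homotopy categories.
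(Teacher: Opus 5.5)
Your proposal is correct and is essentially the paper's proof: left Quillen-ness comes from $TC \cong \Sd$ together with $T$ reflecting monomorphisms and weak equivalences, and the Quillen equivalence comes from two-out-of-three on that triangle. Your one addition is the direct check on $\bd\simp{n} \ito \simp{n}$, which tacitly identifies $C(\bd\simp{n})$ with the union of the images of the $C\face{}{i}$ (this needs the pairwise intersections of those images to be images of $C[n-2]$); your alternative via $T$ reflecting monomorphisms, which is the paper's route, avoids that computation.
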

\begin{proof}
    To see that $C$ is a left Quillen functor, observe that triangulation reflects both monomorphisms and weak equivalences.
    As a result, it suffices (by \cref{C-recovers-Sd}) to observe that $\Sd$ preserves cofibrations and acyclic cofibrations.

    To see that $C$ is an equivalence, use 2-out-of-3 on the triangle appearing in \cref{C-recovers-Sd}.
\end{proof}

Recall that, for any simplicial set $X$, there is a natural homotopy equivalence $\Sd X \to X$.
Rewriting this map as $TC(X) \to X$, we obtain the transposed natural transformation $C(X) \to U(X)$.
\begin{proposition} \label{C-U-weq}
    If $X \in \sSet$ is a Kan complex then the natural map $CX \to UX$ is a weak equivalence.
\end{proposition}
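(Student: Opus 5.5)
The plan is to reduce the statement to the corresponding fact about simplicial sets via the triangulation functor, which reflects weak equivalences. Write $\varepsilon \from CX \to UX$ for the transpose of the last vertex map $\Sd X \cong TCX \to X$. Since $T$ reflects weak equivalences (as used in the proof of \cref{C-Quillen-equiv}), it suffices to show that $T\varepsilon \from TCX \to TUX$ is a weak equivalence. The key identity is that the composite
\[ TCX \xrightarrow{T\varepsilon} TUX \xrightarrow{\epsilon_X} X, \]
where $\epsilon_X$ is the counit of $T \adj U$, equals the original map $\Sd X \to X$ under the isomorphism of \cref{C-recovers-Sd}. This holds by the triangle identities for the adjunction $T \adj U$, since $\varepsilon$ was defined as the adjoint transpose of that map.

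With this identity, the argument is two-out-of-three. The map $\Sd X \to X$ is a weak equivalence for every simplicial set $X$, and in particular for a Kan complex. Because $T \adj U$ is a Quillen equivalence and $X$ is fibrant, the derived counit at $X$ is a weak equivalence. Every cubical set is cofibrant in the Grothendieck model structure (its cofibrations are the monomorphisms), so the derived counit is just $\epsilon_X \from TUX \to X$. Two-out-of-three then shows that $T\varepsilon$ is a weak equivalence, and reflection along $T$ shows that $\varepsilon$ is one as well.

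The main obstacle is making sure the identification of $TCX$ with $\Sd X$ in \cref{C-recovers-Sd} is compatible with the transpose, so that the composite above is literally the last vertex map. I would check this on representables $\simp{n}$ and then extend by colimits, using that every functor involved except $U$ is cocontinuous. The step that genuinely needs an external input is that $T$ reflects weak equivalences. This follows because $T$ preserves and reflects geometric realization up to homeomorphism ($\reali{TY} \cong \reali{Y}$), and that can be cited as standard.
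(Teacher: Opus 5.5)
Your argument is correct and is essentially the paper's. The paper cites the standard Quillen-equivalence criterion: since every cubical set is cofibrant and $X$ is fibrant, $CX \to UX$ is a weak equivalence iff its adjoint transpose $TCX \cong \Sd X \to X$ is, and your argument (reflection along $T$, the derived counit, and two-out-of-three) just unpacks that criterion. The compatibility you flag as the main obstacle is not an issue: because $CX \to UX$ is defined as the transpose of $TCX \cong \Sd X \to X$, the triangle identity alone makes $\epsilon_X \circ T\varepsilon$ equal to that map, so no check on representables is needed.
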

\begin{proof}
    Since $TC(X) \to X$ is a weak equivalence, this follows from \cite[Cor.~1.4.4.(b)]{hovey}.
\end{proof}

% Some example computations show that:
% \[ C(\bd \simp{n}) \cong C[n] - \{ (1, \dots, 1) \}

% \subsection{Nonsingular cubical sets}

% The cubification functor has the property that it sends nonsingular simplicial sets to nonsingular cubical sets.
% \begin{definition}
%     A cubical set $K$ (or simplicial set $K$) is \emph{nonsingular} if, for any non-degenerate cube $u$ (respectively, any non-degenerate simplex $u$), the corresponding map $\cube{n} \to K$ (respectively, $\simp{n} \to K$) is a monomorphism.
% \end{definition}
% \begin{proposition} \label{nonsingular-fondind}
%     If $K$ is a nonsingular cubical set (or simplicial set) then every face of a non-degenerate cube (respectively, non-degenerate simplex) is non-degenerate. 
% \end{proposition}
% \begin{proof}
%     If $u \from \cube{n} \to K$ is non-degenerate and $u\face{}{i, \eps}$ is a face of $K$ then $u\face{}{i, \eps} \from \cube{n-1} \to K$ is a monomorphism, from which it follows that $u$ must be non-degenerate.
% \end{proof}
\begin{proposition} \label{C-preserves-nonsingular}
    The cubification functor $C \from \sSet \to \cSet$ sends nonsingular simplicial sets to nonsingular cubical sets.
\end{proposition}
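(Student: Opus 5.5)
Since $C$ is defined by extension by colimits from the co-semisimplicial object $\Delta \to \cSet$, $[n]\mapsto C[n]$, the plan is to describe $CX$ explicitly and check nonsingularity cube by cube. First record that $C[n]\subseteq\cube{n+1}$ is a subobject of a representable. It is therefore nonsingular, and its non-degenerate cubes are exactly the faces $\cube{n+1}$-faces $\face{}{}$-composites lying in some $\face{}{k,1}$.

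Next, for a nonsingular simplicial set $X$, recall the standard fact that $X$ is the colimit of its non-degenerate simplices along face inclusions, glued as subcomplexes. Each $x \from \simp{n}\to X$ is a monomorphism, and the images intersect in unions of common faces, by Eilenberg--Zilber plus injectivity. Hence $X \cong \colim_{(\simp{n},x)} \simp{n}$ over the poset of non-degenerate simplices ordered by the face relation. This is a colimit of monomorphisms in which pairwise intersections are again simplices of the diagram (or unions of them). Because $C$ is cocontinuous, $CX \cong \colim_{x} C[n_x]$ over the same poset.

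Then I would check two facts about $C$ on faces.
\begin{enumerate}
  \item $C$ sends each face map $\simp{k}\to\simp{n}$ to a monomorphism $C[k]\to C[n]$. This holds because $C\face{}{i}$ is assembled from cubical face inclusions with disjoint supports; applying the faithful functor $T$ gives the injective poset map $P^{k}\to P^n$.
  \item $C$ preserves the relevant intersections. The key identity is $C(\simp{A}\cap\simp{B}) = C\simp{A}\cap C\simp{B}$ inside $C\simp{n}$ for faces $A,B\subseteq[n]$. Via $T$, this becomes the statement that $P^A\cap P^B = P^{A\cap B}$ in $[1]^{n+1}$, which is clear, and the cubes of $C\simp{A}$ are determined by their vertex sets.
\end{enumerate}
Granting these, $CX$ is a union of the subobjects $C[n_x]$, each embedded in $CX$, glued along sub-intersections. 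So every cube of $CX$ lies in some embedded $C[n_x]\subseteq\cube{n_x+1}$.

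Finally, let $u$ be a non-degenerate cube of $CX$. It lies in the image of some $C[n_x]\hookrightarrow CX$ and is non-degenerate there, hence it is a face of $\cube{n_x+1}$. The map $u\from\cube{k}\to CX$ factors as $\cube{k}\hookrightarrow C[n_x]\hookrightarrow CX$, a composite of monomorphisms, so $CX$ is nonsingular.

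The main obstacle is proving that $C[n_x]\to CX$ is a monomorphism, i.e.\ that the colimit of monomorphisms over the face poset stays mono in $\cSet$ after applying $C$. My first attempt would be to argue on $T$, which is faithful and reflects monomorphisms as used in \cref{C-Quillen-equiv}: $TCX\cong\Sd X$ by \cref{C-recovers-Sd}, and for nonsingular $X$ the map $\Sd\simp{n}\to\Sd X$ is injective, since $\Sd X$ is the nerve of the poset of non-degenerate simplices. Mono-ness then transfers back to $C[n]\to CX$.
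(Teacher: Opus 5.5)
Your proof is correct and is essentially the paper's argument. In both, a non-degenerate cube of $CX$ factors as a non-degenerate cube of $C[n]\subseteq\cube{n+1}$ (a face inclusion, hence monic) followed by $C(x)$ for a non-degenerate, hence monic, simplex $x$. Then $C(x)$ is monic because $C$ preserves monomorphisms: the paper takes this from \cref{C-Quillen-equiv}, which is itself proved from exactly your two facts, that $T$ reflects monomorphisms and $\Sd$ preserves them.

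Your colimit-over-the-face-poset description and the preservation of intersections are not needed. The only thing you use is that every cube lies in the image of some $C(x)$, and that follows from cocontinuity of $C$ together with the Eilenberg--Zilber lemma.
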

\begin{proof}
    Fix a simplicial set $K$.
    An $n$-cube $u \from \cube{n} \to CK$ is determined (not necessarily uniquely) by the data of a simplex $u_1 \from \simp{k} \to K$ of some dimension and an $n$-cube $u_2 \from \cube{n} \to C[k]$; from this data, the map $u$ is the composite $u = C(u_1) \circ u_2$.
    The $n$-cube $u$ is non-degenerate if and only if $u_1$ is a non-degenerate simplex and $u_2$ is a non-degenerate cube.
    In this case, the map $C(u_1)$ is a monomorphism because $C$ preserves monomorphisms and $K$ is nonsingular by assumption.
    The map $u_2$ is a monomorphism because the cubical set $C[k]$ is nonsingular (it is a subobject of the $(k+1)$-cube $\cube{k+1}$).
    % Since the restriction to the boundary is given by
    % \[ \bd \cube{n} \ito \cube{n} \xrightarrow{u_2} C[k] \xrightarrow{Cu_2} CK , \]
    % it suffices to show that if $u_1$ and $u_2$ are non-degenerate then $u_2$ and $C(u_2)$ are monomorphisms.
    % For $C(u_2)$, this follows because $C$ reflects monomorphisms and $K$ is assumed to be nonsingular.
\end{proof}

We can use the cubification functor to replace a cubical set (up to weak equivalence) by a nonsingular one.
% Recall that every simplicial Kan complex 
\begin{theorem} \label{nonsingular-approximation-cospan}
    There exist functors $\Phi, \overline{\Phi} \from \cSet \to \cSet$ together with natural weak equivalences
    \[ \id \weto \overline{\Phi} \weot \Phi \]
    such that, for any cubical set $X$,
    \begin{enumerate}
        \item $\Phi X$ is nonsingular;
        \item $\overline{\Phi} X$ is a Kan complex; and
        \item for any cubical set $X$, the map $X \to \overline{\Phi} X$ is an acyclic cofibration.
    \end{enumerate}
\end{theorem}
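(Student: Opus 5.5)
The plan is to build the cospan from the cubification and triangulation adjunctions together with a Kan fibrant replacement. Set $\overline{\Phi} X$ to be a functorial fibrant replacement of $X$ in the Grothendieck model structure. Factor $X \to \cube{0}$ functorially, using the small object argument on the open box inclusions, as an acyclic cofibration $X \weto \overline{\Phi} X$ followed by a Kan fibration. Items (2) and (3) then hold by construction. It remains to produce a nonsingular $\Phi X$ with a natural weak equivalence $\Phi X \to \overline{\Phi} X$.

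For $\Phi$ I would take $\Phi X := C(\Sd T X)$, or more simply $C(\operatorname{N}\!\operatorname{sd} TX)$: a nonsingular simplicial model of $TX$ followed by cubification. The first step is to choose a natural nonsingular replacement on the simplicial side. The natural choice is the double subdivision $\Sd^2 TX$, or the nerve of the poset of nondegenerate simplices, $D(K) := N(\text{nondegenerate simplices of } K)$. Either way there is a natural weak equivalence to $TX$; for instance, the last vertex map $\Sd^2 K \to \Sd K \to K$. The standard fact I would invoke is that $\Sd^2 K$ is nonsingular, being the nerve of a poset, since $\Sd K$ is the nerve of a category in which subdivision removes loops. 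Then \cref{C-preserves-nonsingular} shows that $\Phi X := C(\Sd^2 TX)$ is nonsingular, which gives (1).

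The second step constructs the comparison map $\Phi X \to \overline{\Phi} X$. Applying $C$ to the last vertex map gives $C\Sd^2 TX \to C TX$. Next I pass to $\overline{\Phi}$: the composite $C(\Sd^2 TX) \to C(T\overline{\Phi}X)$ is followed by the natural map $C(T\overline{\Phi} X) \to U(T\overline{\Phi}X)$, which is the transpose of $\Sd \to \id$. This lands in $UT\overline{\Phi} X$, not in $\overline{\Phi}X$ itself. So I would instead redefine $\overline{\Phi} X$ as a fibrant replacement receiving both $X$ and $\Phi X$. Concretely, take the pushout $P$ of $X \leftarrow \text{(nothing)}$; better, set $\overline{\Phi} X := R(UR'TX)$. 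Here $R'$ is a simplicial Kan replacement (say $\Ex^\infty$) and $R$ is a cubical fibrant replacement applied to the unit composite $X \to UTX \to UR'TX$. Then $X \to \overline{\Phi} X$ is the composite of $\eta \colon X \to UTX$, which is a weak equivalence because $T \adj U$ is a Quillen equivalence and every object is cofibrant. That composite is not necessarily monic, so to get (3) I factor $X \to UR'TX$ as an acyclic cofibration followed by a fibration and take $\overline{\Phi} X$ to be the middle object. The map from $\Phi X$ is then $C\Sd^2 TX \to C R'TX \to UR'TX$. The first arrow is a weak equivalence because $C$ is left Quillen, all objects are cofibrant, and $\Sd^2 TX \to R'TX$ is a weak equivalence. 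The second is a weak equivalence by \cref{C-U-weq}, since $R'TX$ is Kan. To land in $\overline{\Phi} X$ rather than in $UR'TX$, I lift along the acyclic fibration. That lift is not natural, so I replace the construction by a double factorization: factor the map $X \sqcup \Phi X \to UR'TX$, which is a cofibration on each summand, as an acyclic cofibration followed by a fibration. Because $\Phi X$ is not a summand of $X$, I use the functorial factorization of $X \sqcup \Phi X \to UR'TX$ into a cofibration followed by an acyclic fibration, and define $\overline{\Phi}X$ as the middle object. Then $X \to \overline{\Phi}X$ is a cofibration and a weak equivalence by 2-out-of-3, and likewise for $\Phi X \to \overline{\Phi} X$. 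If Kan-ness is required, I further compose with a functorial fibrant replacement, which preserves acyclic cofibrations.

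I expect the main obstacle to be the nonsingularity input: verifying that $\Sd^2 K$, or whichever poset nerve is chosen, is nonsingular, together with the naturality of its comparison to $K$. The bookkeeping to make both legs natural and the left leg monic is routine model-category manipulation, handled as above via a functorial factorization of the map out of the coproduct.
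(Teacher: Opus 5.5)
\textbf{The gap is the nonsingularity input.} It is not true that $\Sd^2 K$ is nonsingular for every simplicial set $K$. It is also not true that $\Sd K$ is the nerve of a category. The classical fact that the second barycentric subdivision is a simplicial complex holds for semi-simplicial sets ($\Delta$-complexes); degenerate faces break it.

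Take $K=\simp{2}/\bd\simp{2}$. Then $\Sd K=\Sd\simp{2}/\Sd\bd\simp{2}$. In it, the nondegenerate $2$-simplex $\tau=(\{0\}\subset\{0,1\}\subset\{0,1,2\})$ has degenerate face $d_2\tau$, because that face lies in the collapsed $\Sd\bd\simp{2}$. Since $\Sd$ preserves colimits and monomorphisms, $\Sd^2K$ is built by attaching one copy of $\Sd\simp{2}$ along $\Sd\bd\simp{2}$ for each nondegenerate $2$-simplex. So the image under $\Sd\tau$ of the chain $\{0\}\subset\{0,1\}\subset\{0,1,2\}$ is a nondegenerate $2$-simplex of $\Sd^2K$. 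Its $d_2$-face factors through $\Sd(d_2\tau)$, hence through $\Sd\simp{0}=\simp{0}$, so it is degenerate. Thus $\Sd^2K$ is not even fondind.

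The same thing happens for $K=TX$ with $X=\cube{2}/\bd\cube{2}$: the triangle $(00,10,11)$ of $(\simp{1})^2$ acquires degenerate faces. The defect persists under every $\Sd^k$. \Cref{C-preserves-nonsingular} gives you nothing here, since it needs nonsingular input. In fact $C$ transports the defect: $C(\Sd^2TX)$ has a nondegenerate $2$-cube with a degenerate $1$-face. So your $\Phi$ fails (1).

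Your fallback, the nerve of the poset of nondegenerate simplices of $K$, also fails. It has the wrong homotopy type unless $K$ is regular; for $K=\simp{1}/\bd\simp{1}$ it is $\simp{1}$. It also admits no natural map to $K$.

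\textbf{The fix} is what the paper does. Use a functor $\Psi\from\sSet\to\Pos$ with a natural weak equivalence $N\Psi\Rightarrow\id$ (the last-vertex map of HTT, Variant 4.2.3.15). Nerves of posets are nonsingular, so \cref{C-preserves-nonsingular} applies.

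With $\Sd^2TX$ replaced by $N\Psi(TX)$, the rest of your plan is sound:
\begin{itemize}
\item $\Phi X\to CR'TX\to UR'TX$ is a weak equivalence, by Ken Brown and \cref{C-U-weq}.
\item Factor $X\sqcup\Phi X\to UR'TX$ functorially as a cofibration followed by an acyclic fibration. The middle object $\overline{\Phi}X$ is automatically Kan, since it lies over the Kan complex $UR'TX$; your extra fibrant replacement is unnecessary.
\item By 2-out-of-3, $\overline{\Phi}X$ receives $X$ by an acyclic cofibration and $\Phi X$ by a weak equivalence.
\end{itemize}

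This differs from the paper, which sets $\overline{\Phi}=U\Tfib$ and $\Phi=CN\Psi\Tfib$ directly. The paper then needs the derived unit $X\to U\Tfib X$ to be a monomorphism. Your coproduct factorization avoids that monicity claim, at the cost of a less explicit $\overline{\Phi}$.

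A minor slip: the plain unit $X\to UTX$ is not shown to be a weak equivalence by the Quillen equivalence. Only the derived unit $X\to UR'TX$ is, and that is the map you end up using.
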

\begin{proof}
    Let $\Tfib \from \cSet \to \sSet$ denote the functor given by applying triangulation followed by some functorial fibrant replacement (e.g. the $\Ex^\infty$ functor).
    % We write $\eta$ for the derived unit $\id \to U \Tfib$ of the triangulation adjunction.
    Define $\overline{\Phi}$ to be $U \Tfib$.
    The derived unit $\id \to U \Tfib$ is both a monomorphism and a weak equivalence because $T \adj U$ is a Quillen equivalence.
    Additionally, the functor $U$ preserves fibrant objects, proving (2) and (3).

    Recall (e.g. from \cite[Variant 4.2.3.15]{lurie:htt}) that there exists a functor $\Psi \from \sSet \to \Pos$ from simplicial sets to posets and a natural weak equivalence $\varphi \from N \Psi \weto \id$.
    We define $\Phi$ to be the composite $C N \Psi \Tfib$, with the natural transformation $\Phi \to \overline{\Phi}$ being given by the composite natural transformation
    \[ C N \Psi \Tfib \xrightarrow{C \varphi_{\Tfib}} C \Tfib \to U \Tfib . \]
    The left map is a weak equivalence since $C$ preserves weak equivalences.
    The right map is a weak equivalence by \cref{C-U-weq}, since $\Tfib$ takes values in Kan complexes by construction.
\end{proof}
% Also recall from 
% \begin{corollary}
%     Every cubical set $X$ admits a cospan of weak equivalences
%     \[ X \weto X_1 \weot X_2 \]
%     where $X_2$ is a nonsingular cubical set.
%     Moreover, the assignments $X \mapsto X_1$ and $X \mapsto X_2$ are functorial in $X$, and each weak equivalence is natural in the variable $X$.
% \end{corollary}
% \begin{proof}
% \end{proof}
\end{appendices}

 \bibliographystyle{amsalphaurlmod}
 \bibliography{all-refs.bib}

 \newpage

\end{document}